\documentclass[12pt, letterpaper]{article}
\pdfoutput=1
\usepackage[authoryear, round]{natbib}
\usepackage[colorlinks=true]{hyperref}
\hypersetup{
  citecolor=blue,
  linkcolor=blue,
  urlcolor=blue
}
\usepackage{amsmath,amssymb,amsfonts,amsthm}
\usepackage{subcaption}
\usepackage{url}
\usepackage[colorlinks=true]{hyperref}
\usepackage{enumerate}
\usepackage{csquotes}
\usepackage{graphicx}

\newcommand{\EE}{\mathbb{E}}
\newcommand{\GG}{\mathbb{G}}

\newcommand{\II}{\mathbb{I}}

\newcommand{\PP}{\mathbb{P}}

\newcommand{\RR}{\mathbb{R}}

\def\dd{\mathrm{d}}

\renewcommand{\hat}{\widehat}

\def \heps     {\hat{\heps}}

\DeclareMathOperator{\sgn}{sgn}

\DeclareMathOperator{\Var}{Var}

\def\BP{\mbox{BP}}

\def \se    {\mbox{se}}
\def \hatse {\hat{\mbox{se}}}

\def \sgn   {\mbox{sgn}}

\newtheoremstyle{mytheoremstyle}
    {\topsep}
    {\topsep}
    {\normalfont}
    {}
    {\bfseries}
    {.}
    {.5em}
    {}
\theoremstyle{mytheoremstyle}
\newtheorem{theorem}{Theorem}

\newtheorem{lemma}{Lemma}
\newtheorem{proposition}{Proposition}
\newtheorem{remark}{Remark}
\newtheorem{corollary}{Corollary}
\newtheorem{definition}{Definition}

\makeatletter
\def\singlespace{\def\baselinestretch{2}\@normalsize}

\makeatletter
\def\singlespace{\def\baselinestretch{1}\@normalsize}

\renewcommand{\baselinestretch}{1.7}
\title{\begin{center}\textsc{The  threshold breakdown point} \end{center}}
\author{Tianjun Ke\footnote{Department of Statistics, Columbia University} \and Marco Avella Medina\footnotemark[1]}
\begin{document}
\maketitle
\begingroup
\renewcommand{\baselinestretch}{1}
\begin{abstract}
We introduce a novel approach to finite sample robustness that avoids the pessimism of traditional breakdown analyses. We define the \emph{threshold breakdown point}, the smallest contamination fraction needed to induce a prescribed deviation, and the finite sample \emph{$m$-sensitivity}, the worst-case deviation that an estimator can incur after $m$ observations are contaminated. We derive these measures for commonly used $M$-estimators, their standard errors and related test statistics. This allows us to extend the decision breakdown point of \citet{zhang1996} to obtain general breakdown characterizations for hypothesis testing, and show how these notions correspond to finite sample counterparts of the power and level breakdown functions of \citet{he:simpson:portnoy1990}. We complement our work with an inferential framework for the threshold breakdown and  $m$-sensitivity that yields consistency and asymptotic normality results, as well as a valid multiplier bootstrap for uncertainty quantification. We illustrate the practical utility of our methods in various numerical examples and an application to a two sample testing problem for a blood pressure dataset. \footnote{Code is available at \texttt{https://github.com/keanson/Threshold-Breakdown}.}
\end{abstract}
\endgroup
\section{Introduction}
The breakdown point is one of the main tools used to assess the robustness of statistical procedures \citep{huberandronchetti2009,hampeletal1986,maronna2019robust}. It does so by quantifying the fraction of contamination that could make an estimator take arbitrarily aberrant values \citep{hampel1968,hampel1971,donoho:huber1983}. While this notion is very intuitive, it is also by construction very pessimistic because it essentially identifies worst case contamination scenarios. In contrast, our work introduces a more refined version of the breakdown point that answers the  following question
\begin{displayquote}
    \it{How many observations need to be arbitrarily changed in  order to  move a statistic of interest by a fixed prescribed amount?}
\end{displayquote}
Questions of this type arise naturally in the context of differential privacy \citep{nissim:raskhodnikova:smith2007,dworkandlei2009,avellamedina:brunel2020} and seem quite natural in the context of inference.  For example, if one computes a test statistic and rejects the null hypothesis based on the value obtained, how sensitive was the conclusion to small fractions of the observed data? Our work addresses this question for M-estimators, and, in doing so, generalizes the notion of finite sample breakdown point across a broad range of problems. More precisely, our main contributions can be grouped as follows:
\begin{enumerate}[(a)]
    \item
\emph{Threshold breakdown point and sensitivity of M-estimators:} we provide tools to compute the threshold breakdown point of convex M-estimators as well as its ``dual'' notion, the \emph{$m$-sensitivity}. The latter evaluates the largest shift that a statistic can make given that $m$ out of $n$ observations are changed arbitrarily. These notions complement the classical finite sample breakdown point of \cite{donoho:huber1983} in the context of location-scale estimation with M-estimators.
    \item
    \emph{Breakdown of tests:}
    we extend our threshold breakdown to hypothesis testing, with a general treatment of tests built from M-estimators and their standard errors.
        We apply our framework to the rejection and acceptance breakdown point of tests \citep{zhang1996}, and obtain a test-agnostic characterization with exact formulas and computable bounds. This extends the test-specific results of \cite{zhang1996} both to general tests and beyond the fixed critical regions analysis considered there. Our results are based on computing the $m$-sensitivity of the location and standard error estimator used by the tests.
       Furthermore,  we find the population limit of the decision breakdown point and connect it  to the power and level breakdown function of \citet{he:simpson:portnoy1990}.
    \item
    \emph{Bootstrap inference for the threshold breakdown:} since our threshold breakdown and $m$-sensitivity are themselves statistics, we introduce a bootstrap method to assess their variability in a nonparametric way. For this, we introduce a statistical framework to analyze the performance of a multiplier bootstrap which computes the threshold breakdown of weighted M-estimators. The validity of our bootstrap scheme is established using a general  M-estimation framework for the natural population counterparts of the threshold breakdown and $m$-sensitivity.
\end{enumerate}
\subsection{Related Literature}
The robustness properties of  $M$–estimators are well studied in the literature, and has traditionally been largely focused on population functionals under $\varepsilon$–contamination neighborhoods \citep{huber1964robust,hampel1974influence,donoho:huber1983,huberandronchetti2009,rieder2012robust,maronna2019robust}.
In this context, the maxbias curve was developed as a refined characterization of asymptotic robustness, quantifying the worst–case deviation of an estimator under a given contamination \citep{martin:zamar1989,martin:yohai:zamar1989,he:simpson1993,berrendero:zamar2001}. This quantity can be seen as a population counterpart of our $m$-sensitivity for $m/n\to\varepsilon$, as will be discussed in detail in Section \ref{sec:stat_framework}.
Maxbias curves, when specialized to finite samples in the literature, have been evaluated through ad hoc calculations or simulations for particular estimators \citep{ruckdeschel2010robustness,sinova2019empirical,de2021robustness}, rather than through a general framework that produces data–dependent contamination thresholds or sensitivities as presented in our work.
We note that similar ideas have also been explored more recently in the machine learning literature, essentially exploring approximations of the empirical maxbias curve using the influence function and variants thereof \citep{kohetal2019, broderick:giordano:meager2020,fisheretal2023,huetal2024}.
The breakdown point is a very intuitive and  popular tool for quantifying  robustness in the robust statistics literature. Our threshold breakdown point recovers as a special case the  finite sample breakdown point of \cite{donoho:huber1983}. The latter was introduced in the context of univariate location estimators and was quickly extended to various multivariate settings in the robust statistics literature \citep{rousseeuw:yohai1984,rousseeuw1984,davies1987,yohai1987}. More recently, the breakdown point has been a subject of great interest in the context of multivariate quantiles \citep{konen:paindavein2025,konen:paindaveine2026,paindaveine:passeggeri2026,avellamedina:gonzalezsanz2026,gonzalezsanz:avellamedina2026}. The breakdown point was originally introduced in an asymptotic form by \cite{hampel1968,hampel1971}. In Section \ref{sec:stat_framework} we will also see how our finite sample threshold breakdown and $m$-sensitivity converge to their natural population counterparts. We will exploit this result in order to construct a consistent bootstrap inference procedure  to provide valid confidence intervals for our finite sample threshold breakdown and perhaps more interesting in practice, for the path of the $m$-sensitivity curve for $m\in \{1,2,\dots,\lfloor n/2\rfloor\}$.
Robustness notions for tests of hypothesis  mirror those for estimators, but the literature is comparatively sparse. The breakdown point of tests is particularly relevant for our work. \citet{he:simpson:portnoy1990} introduced the breakdown robustness of tests, describing the smallest contamination fraction that can nullify the test. This formalism was subsequently extended to sample–level notions of breakdown for tests \citep{zhang1996} and was applied to specific robust test constructions. However, existing work  treats each testing problem separately with no general finite–sample, data–dependent results. We note that a very different approach to robust testing is rooted in shrinking contamination neighborhoods and the influence function \citep{ronchetti1982,heritier:ronchetti1994,rieder1994}.
In differential privacy, robustness typically plays an indirect role by means of the global sensitivity, which is a popular quantity used to  control the amount of noise required for private data release \citep{dworkandroth2014,nissim:raskhodnikova:smith2007}. The seminal work of \citet{dworkandlei2009} explicitly connected differential privacy to robust statistics, exploiting the low local sensitivity of robust estimators to calibrate the privacy inducing noise required to make their randomized  counterparts private. More recent work has systematically studied the interaction between robust statistics and differential privacy, also focusing on various versions of sensitivity or resilience bounds \citep{chaudhuri:hsu2012,avellamedina2020,kamath2020private,liu2022differential,avellamedina:brunel2020,avellamedina2021,georgiev2022privacy,avella2023differentially,li:berrett:yu2023,alabietal2023,hopkinsetal2023,loh2025theoretical}. In fact, the notion of threshold breakdown point that we study in this paper has been essentially defined as part of the design of the propose-test-release procedure of \cite{avellamedina:brunel2020}. It also corresponds essentially to the length function defined in the work of \cite{asi:duchi2020} used by their inverse sensitivity mechanism and appeared again as the score function used by the exponential mechanism of \cite{hopkinsetal2023}.
\subsection{Organization of the Paper}
Section \ref{sec:thresholdBP} introduces the threshold breakdown point and $m$-sensitivity for general statistics. Section \ref{sec:convex_M} derives explicit formulas for these quantities for convex location M-estimators. Section \ref{sec:other_M} extends the result to scale M-estimators, two–stage estimators, and plug-in estimators of the standard error. Section \ref{sec:testing} introduces the breakdown point of tests, connects it to the breakdown function of \citet{he:simpson:portnoy1990}, and gives computable approximations. Section \ref{sec:stat_framework} reformulates $m$-sensitivity and threshold breakdown as (approximate) roots of coupled estimating equations. Within this framework, we establish consistency and asymptotic normality and provide a valid multiplier bootstrap inference procedure. Section \ref{sec:empirical_experiment} gives an empirical illustration based on blood-pressure data. We conclude in Section \ref{sec:conclusion} with a short discussion. All proofs and additional results are presented in the Appendix.
\section{The Threshold Breakdown Point}
\label{sec:thresholdBP}
Let $x^{(n)}=(x_1,\dots,x_n)\in\mathcal{X}^n$ be a sample of size $n$, taking values in some sample space $\mathcal{X}\subset\mathbb{R}$.
We use $\hat \theta=\hat\theta(x^{(n)})$ to denote a fixed deterministic estimate of some unknown parameter $\theta$.
For  $x^{(n)}\in\RR^n$ and $m\geq 0$, let
$
B_{\textsf{H}}(x^{(n)},m)=\{y^{(n)}\in\RR^n:\sum_{i=1}^n \mathbb{I}\{x_i\neq y_i\}\leq m\}.
$
Thus $B_{\textsf{H}}(x^{(n)},m)$ is the ``Hamming ball'' of  all datasets obtained by contaminating arbitrarily up to $m$ observations of  $x^{(n)}$.
 Throughout the paper, we use $[n]$ to denote the set $\{1, 2, \dots, n\}$.
\begin{definition}
For $\eta>0$,  the finite sample threshold breakdown point of $\hat\theta$ at $x^{(n)}$ is
\begin{align}
 \BP_{\eta}(\hat\theta,x^{(n)}):=\frac{1}{n} \min\{m \in [n]: \exists y^{(n)}\in B_\textsf{H}(x^{(n)},m),  |\hat\theta(y^{(n)})-\hat\theta(x^{(n)})| \ge \eta\}. \label{BP}
\end{align}
\end{definition}
Our definition of $\BP_{\eta}(\hat\theta,x^{(n)})$  uses  the classical $\varepsilon$-replacement contamination framework of \citet{donoho:huber1983}, which studies worst-case effect of  replacing an $\varepsilon$ fraction of the sample by arbitrary contaminating observations. Their classical finite sample  breakdown point is recovered by letting $\eta\to\infty$, i.e., it computes the minimum contamination level at which the estimator ``blows up.'' In contrast, the threshold version keeps track of potentially large but finite deviations, which are often the object of interest in applications.
Viewing the threshold breakdown points as a function of $\eta$, it is natural to also look at the inverse viewpoint: for a prescribed contamination level $m/n$, what is the largest shift that can be induced in an estimate  by altering at most $m$ data points? This leads to the following finite sample analogue of a maxbias, which we will refer to as \emph{sensitivity}, consistent with Hampel's sensitivity curve \citep{hampel1974influence} and the notion of local sensitivity in the differential privacy literature \citep{dworkandroth2014}.
\begin{definition}
\label{def:eta}
    For a data set $x^{(n)}$, the $m$-sensitivity to $n$ points is defined as
    \begin{align*}
    \eta_{m/n}(\hat \theta, x^{(n)}) := \sup \Big \{\eta:  \BP_\eta (\hat\theta, x^{(n)}) = \frac{m}{n} \Big\}.
    \end{align*}
\end{definition}
Thus, $\eta_{m/n}(\hat\theta,x^{(n)})$ quantifies the largest error that can be forced by contaminating at most $m$ out of $n$ observations. In this sense, the map $m/n \mapsto \eta_{m/n}(\hat\theta,x^{(n)})$ plays the role of a finite sample maxbias curve associated with the estimator $\hat\theta$ at the observed data $x^{(n)}$.
\section{Convex M-estimators}
\label{sec:convex_M}
We study the important class of convex M-estimators of location and find a simple explicit  characterization of the threshold breakdown point in terms of the sample order statistics and the score function $\psi$.
An M-estimate of location $\hat\theta$ is defined as
\begin{equation}
\label{M-est}
\hat\theta=\arg\min_{\theta\in\mathbb{R}} \sum_{i=1}^n\rho(x_i-\theta) \iff \sum_{i=1}^n\psi(x_i-\hat\theta)=0,
\end{equation}
where $\rho$ is a differentiable and  convex loss function and $\psi$ denotes its derivative. Let $x_{(1)}, \ldots, x_{(n)}$ be order statistics of  $x^{(n)}$ i.e.,
$
 \min_{1\leq i\leq n} x_i=x_{(1)}\leq \ldots\leq x_{(n)}=\max_{1\leq i\leq n} x_i.
$
We adapt the arguments of Theorem 3.6 in \cite{huberandronchetti2009} to our threshold breakdown point.
\begin{theorem}
\label{thm:loc_main}
    Assume that $\psi$ in \eqref{M-est} is non-decreasing and passes through 0. Then,
    \begin{equation*}
     \BP_{\eta}(\hat\theta, x^{(n)}) =   \frac{1}{n}\min \biggl\{ \!\!
     \resizebox{0.74\linewidth}{!}{
     $ \displaystyle
     m \in [n]: m \ge \min\bigg\{\frac{\sum_{i\le n-m} \psi(x_{(i)} - (\hat \theta - \eta))}{-\psi(-\infty)},\frac{\sum_{i> m} \psi(x_{(i)} - (\hat \theta + \eta))}{-\psi(\infty)} \biggr\}
     $ }
     \!\!
     \biggr \} .
    \end{equation*}
    \end{theorem}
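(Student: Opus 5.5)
We reduce the problem to deciding, for each fixed $m$, whether some $y^{(n)}\in B_{\textsf{H}}(x^{(n)},m)$ satisfies $\hat\theta(y^{(n)})\ge\hat\theta+\eta$ or $\hat\theta(y^{(n)})\le\hat\theta-\eta$. We then show this happens exactly when $m$ satisfies the displayed inequality for the corresponding side. Because $\BP_\eta$ is the smallest $m$ with a feasible shift in either direction, the minimum of the two ratios then gives the formula. We treat the upward shift; the downward case follows by the symmetric argument, or by applying the upward case to $-x^{(n)}$ with $\tilde\psi(u)=-\psi(-u)$. For the upward case the relevant quantity is $\sum_{i>m}\psi(x_{(i)}-(\hat\theta+\eta))$ divided by $-\psi(\infty)$, with the sign conventions matched to those in the statement. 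Ties and non-uniqueness of the minimizer are handled with the usual convention, as in Theorem 3.6 of \cite{huberandronchetti2009}: $\hat\theta(y)\ge t$ iff $\sum_i\psi(y_i-t)\ge 0$, up to boundary cases where $\psi$ is flat at zero.

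The key monotonicity fact is that the map $t\mapsto\sum_i\psi(y_i-t)$ is non-increasing. Hence $\hat\theta(y)\ge\hat\theta+\eta=:t$ if and only if $\sum_i\psi(y_i-t)\ge 0$. Suppose we replace a set $S$ of indices with $|S|= m$. The replaced terms $\psi(y_i-t)$ can be pushed up to, but not beyond, $\psi(\infty)$, and this supremum is approached by sending $y_i\to+\infty$. Each remaining term $\psi(x_i-t)$ is fixed. So the upward shift is achievable with $m$ replacements if and only if
\[
\max_{|S|=m}\Bigl\{m\,\psi(\infty)+\sum_{i\notin S}\psi(x_i-t)\Bigr\}\ \ge\ 0 ,
\]
with the supremum attained whenever $\psi(\infty)=\infty$ or a large enough finite value suffices. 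Since $\psi$ is non-decreasing, the maximizing $S$ removes the $m$ smallest observations, which carry the most negative terms. The condition therefore becomes $m\,\psi(\infty)\ge-\sum_{i>m}\psi(x_{(i)}-t)$, which rearranges to the threshold form in the statement.

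It remains to assemble the two directions. Necessity follows because any $y$ in the Hamming ball is dominated by the extremal configuration above. Sufficiency follows by exhibiting the explicit configuration with large replaced values. Taking the minimum over $m$ gives the result, because $\BP_\eta$ is the smallest $m$ for which either direction is feasible. The feasibility condition is also monotone in $m$, since increasing $m$ raises the left side and removes more negative terms, so "$\min$" is the right description.

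The main obstacle is the boundary behaviour. If $\psi$ is bounded, $\psi(\infty)$ is only a supremum, so equality in the inequality may correspond to a shift that is approached but not attained. Likewise, if $\psi$ vanishes on an interval, $\hat\theta$ is not unique, and "$\ge\eta$" must be checked against the selection convention. I would first fix the convention for $\hat\theta$, for example the midpoint or the set-valued version as in Huber--Ronchetti. I would then verify that the non-strict inequality "$m\ge\cdot$" is correct. If $\psi(\infty)$ is attained, this is immediate. Otherwise I would argue through limits together with the closedness implied by the definition using "$\ge\eta$", and adjust to a strict inequality if the paper's convention requires it.
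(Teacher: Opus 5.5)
Your proposal is correct and follows essentially the same route as the paper. Both arguments bound the score of any $y^{(n)}\in B_{\textsf{H}}(x^{(n)},m)$ by the extremal configuration that sends the $m$ smallest observations to $+\infty$, use that $\theta\mapsto\sum_i\psi(y_i-\theta)$ is non-increasing, and treat the downward case symmetrically.

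You are somewhat more complete than the paper's proof. The paper argues only the necessity direction, namely that no $m_0<m^*$ can work. It leaves implicit the attaining configuration at $m^*$, and also the caveats you raise: the equality case when a bounded $\psi(\infty)$ is not attained, and the choice of $\hat\theta$ when $\psi$ vanishes on an interval.
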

For a given threshold $\eta>0$, the quantities inside the inner minimum describe the minimal fractions of observations that must be moved to drive the estimator up or down by more than $\eta$.
The overall threshold breakdown point is the more vulnerable direction.
The next corollary inverts this relationship and expresses the $m$-sensitivity as a function of the contamination level $m/n$.
\begin{corollary}
\label{cor:opt_attack_m_est_main}
Assume that $\psi$ in \eqref{M-est} is non-decreasing and passes through 0. Then,
\begin{align*}
\eta_{m/n}(\hat\theta,x^{(n)})
=
\max\Biggl\{
\eta \ge 0:
\max\Biggl\{
&\sum_{i>m}\psi\bigl(x_{(i)}-(\hat\theta+\eta)\bigr)+m\psi(\infty),\, \\
&-\sum_{i\le n-m}\psi\bigl(x_{(i)}-(\hat\theta-\eta)\bigr)-m\psi(-\infty)
\Biggr\}
\ge 0
\Biggr\}.
\end{align*}
\end{corollary}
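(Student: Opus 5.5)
The plan is to derive the corollary by inverting Theorem~\ref{thm:loc_main} through Definition~\ref{def:eta}. Write $A_m(\eta)=\sum_{i>m}\psi(x_{(i)}-(\hat\theta+\eta))+m\psi(\infty)$ and $D_m(\eta)=-\sum_{i\le n-m}\psi(x_{(i)}-(\hat\theta-\eta))-m\psi(-\infty)$. Since $-\psi(-\infty)>0$ and $-\psi(\infty)<0$, multiplying through shows that the condition $m\ge \sum_{i>m}\psi(\cdot)/(-\psi(\infty))$ in the theorem is exactly $A_m(\eta)\ge 0$. Likewise, the condition $m\ge \sum_{i\le n-m}\psi(\cdot)/(-\psi(-\infty))$ is exactly $D_m(\eta)\ge 0$. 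If $\psi(\pm\infty)$ is infinite, both inequalities hold trivially for $m\ge1$, and the conventions are consistent.

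So the theorem reads $\BP_\eta = \frac1n\min\{m: \max\{A_m(\eta),D_m(\eta)\}\ge 0\}$. Equivalently, $m$ contaminations suffice to move $\hat\theta$ by at least $\eta$ if and only if $\max\{A_m(\eta),D_m(\eta)\}\ge0$. I would check this equivalence directly from the proof structure of Theorem~\ref{thm:loc_main}: the worst upward attack replaces the $m$ smallest points by $+\infty$, and $\hat\theta(y)\ge\hat\theta+\eta$ holds exactly when the estimating sum at $\hat\theta+\eta$ is nonnegative, by monotonicity of $\psi$. The downward attack is symmetric.

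Next I will record monotonicity. Because $\psi$ is non-decreasing, $A_m(\eta)$ and $D_m(\eta)$ are non-increasing in $\eta$ and non-decreasing in $m$. For $m$, increasing $m$ replaces a finite term $\psi(x_{(m+1)}-\cdot)$ by $\psi(\infty)$, which is at least as large. Hence the set $S_m=\{\eta\ge0:\max\{A_m,D_m\}\ge0\}$ is an interval $[0,\eta^*_m]$ or $[0,\eta^*_m)$. These sets are nested increasing in $m$, and $\BP_\eta\le m/n$ if and only if $\eta\in S_m$.

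Then, from Definition~\ref{def:eta}, $\BP_\eta=m/n$ exactly when $\eta\in S_m\setminus S_{m-1}$. The supremum of that set equals $\sup S_m$ whenever it is nonempty. Under the natural reading that $\eta_{m/n}$ is the largest deviation achievable with at most $m$ points, this supremum is $\sup S_m$ in either case. It remains to show the supremum is attained, i.e.\ that $S_m$ is closed, which turns the sup into the max stated in the corollary. This is the main obstacle. Since $\psi$ is the derivative of a differentiable convex function it need not be continuous, but it is monotone, so the finite sum is right- or left-continuous only. My first attempt would argue via convexity of $\rho$ that $\psi$ is in fact continuous, since derivatives of differentiable convex functions on $\RR$ are continuous (Darboux together with monotonicity). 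Then $A_m$ and $D_m$ are continuous and non-increasing, $S_m$ is closed, and it is bounded provided $\psi(\pm\infty)$ is finite, so the maximum exists. If $\psi$ is unbounded, $S_m=[0,\infty)$ for $m\ge1$ and the formula holds with value $\infty$. One edge case needs care: $S_m\setminus S_{m-1}$ empty, that is, ties where no new thresholds arise. There I will note that the formula still returns the largest achievable deviation, consistent with the interpretation of $\eta_{m/n}$ as a finite-sample maxbias.
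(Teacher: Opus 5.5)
Your proposal is correct and follows the paper's route. The paper reads the corollary off the inequalities \eqref{eq:loc_sup} and \eqref{eq:loc_inf} in the proof of Theorem~\ref{thm:loc}, and your monotonicity-in-$(m,\eta)$ and closedness arguments (continuity of $\psi=\rho'$) just make that inversion explicit. One small correction: $S_m$ need not be bounded even when $\psi$ is bounded, since for odd $\psi$ with $\psi(\infty)=B<\infty$ and $m\ge n/2$ one has $A_m(\eta)\ge(2m-n)B\ge0$ for every $\eta$, so the value $+\infty$ must be allowed in that regime as well.
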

For each contamination fraction $m/n$, the $m$-sensitivity is the largest shift $\eta$ for which the corresponding score equation remains solvable after modifying at most $m$ observations.
\section{Threshold Breakdown for Scale, Two-Stage Location, and Variance Estimators}
\label{sec:other_M}
We extend our threshold breakdown and $m$-sensitivity analysis beyond pure location to three settings that are central to robust statistics in practice: scale M-estimators, two-stage location-scale M-estimators, and variance estimators. For scale M-estimators, we obtain a closed-form expression for the threshold breakdown point. For two-stage procedures, where both location and scale are estimated, the exact breakdown point is typically intractable. Nevertheless, we derive computable upper and lower bounds that are tight enough to be informative and that connect directly back to the location case analyzed in Theorem~\ref{thm:loc_main}. We also obtain either explicit solutions or computable approximations for variance estimators.
\subsection{Scale M-estimators}
An M-estimate of scale $\hat \sigma$ is defined as the root of the equation
\begin{equation}
\label{eq:scale}
    \sum_{i=1}^n \chi \left ( \frac{x_i}{\hat \sigma} \right ) = 0,
\end{equation}
where $\chi:\mathbb{R}\to\mathbb{R}$ is the  score function for the scale parameter. A common choice is $\chi(u)=\rho(u)-b$, where $\rho$ is an even loss function and $b$ is a Fisher consistency constant. For example, when $X_1,\dots,X_n\overset{iid}{\sim}N(0,\sigma^2)$  Huber's scale estimator \citep{huberandronchetti2009} takes
$$
\chi_\delta(t)= \psi_\delta(t)^2 = t^2 \II\{ |t|\le \delta\} + \delta^2\II\{ |t|> \delta\}
$$
and defines $\hat\sigma_H$ as the positive solution of \eqref{eq:scale} with  $\chi(t) = \chi_\delta(t) - b$.
One can easily adapt Theorem~\ref{thm:loc_main} to obtain the threshold breakdown point of scale M-estimators in terms of the ordered absolute values $|x|_{(1)} \le \dots \le |x|_{(n)}$. Indeed, consider the log-scale reparametrization
$$
\tilde x_i=\log |x_i|,\qquad \tilde \sigma=\log \sigma,\qquad \tilde\chi(u)=\chi(e^u),
$$
with $\tilde x_i=-\infty$ when $x_i=0$. Then
$
\chi({x_i}/{\sigma})=\tilde\chi(\tilde x_i-\tilde \sigma),
$
and $\tilde\chi$ satisfies the assumptions of Theorem~\ref{thm:loc_main} and we obtain the following corollary.
\begin{corollary}
\label{cor:scale_main}
Assume that $\chi(x)$ is even, non-decreasing in $|x|$, and passes through $0$. Then, writing $|x^{(n)}|=(|x_1|,\dots, |x_n|)$, for $\hat \sigma(x^{(n)}) > 0$, we have that
\begin{align*}
\BP_\eta(\hat\sigma,x^{(n)})
=
\frac{1}{n}\min\Biggl\{
m\in [n]:
m\ge
\min\Biggl\{
\frac{\sum_{i>m}\chi\Bigl(\frac{|x^{(n)}|_{(i)}}{\hat\sigma+\eta}\Bigr)}{-\chi(\infty)},
\,
\frac{\sum_{i\le n-m}\chi\Bigl(\frac{|x^{(n)}|_{(i)}}{\hat\sigma-\eta}\Bigr)}{-\chi(0)}
\Biggr\}
\Biggr\}.
\end{align*}
Moreover,
\begin{align*}
\eta_{m/n}(\hat\sigma,x^{(n)})
=
\max\Biggl\{
\eta \in [0, \hat \sigma]:
\max\Biggl\{
&\sum_{i>m}\chi\biggl(\frac{|x^{(n)}|_{(i)}}{\hat\sigma+\eta}\biggr)+m\chi(\infty),
\, \\
&\quad -\sum_{i\le n-m}\chi\biggl(\frac{|x^{(n)}|_{(i)}}{\hat\sigma-\eta}\biggr)-m\chi(0)
\Biggr\}
\ge 0
\Biggr\}.
\end{align*}
\end{corollary}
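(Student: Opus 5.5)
We reduce Corollary~\ref{cor:scale_main} to Theorem~\ref{thm:loc_main} and Corollary~\ref{cor:opt_attack_m_est_main} through the log-scale reparametrization described just before the statement. Set $\tilde x_i=\log|x_i|$ (with $\tilde x_i=-\infty$ when $x_i=0$), $\tilde\sigma=\log\sigma$ and $\tilde\chi(u)=\chi(e^u)$. Since $\chi$ is even and non-decreasing in $|x|$, the map $\tilde\chi$ is non-decreasing on $\RR$ and passes through $0$, so it satisfies the hypotheses on $\psi$. Moreover, $\tilde\chi(-\infty)=\chi(0)$ and $\tilde\chi(\infty)=\chi(\infty)$, and equation \eqref{eq:scale} becomes the location score equation $\sum_i\tilde\chi(\tilde x_i-\tilde\sigma)=0$. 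Hence $\log\hat\sigma$ is the location M-estimate $\hat{\tilde\theta}$ of the transformed sample. Contaminating $m$ entries of $x^{(n)}$ contaminates exactly $m$ entries of $\tilde x^{(n)}$, and conversely, since any $\tilde y_i\in[-\infty,\infty)$ is attained by $y_i=e^{\tilde y_i}$, or $y_i=0$ for $\tilde y_i=-\infty$. The Hamming balls therefore correspond, and the order statistics of $\tilde x$ are $\log|x^{(n)}|_{(i)}$.

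The key translation step is on the deviation events. We have $|\hat\sigma(y)-\hat\sigma|\ge\eta$ if and only if either $\hat\sigma(y)\ge\hat\sigma+\eta$, that is $\hat{\tilde\theta}(\tilde y)-\hat{\tilde\theta}\ge\log(1+\eta/\hat\sigma)=:\tilde\eta_+$, or $\hat\sigma(y)\le\hat\sigma-\eta$, that is $\hat{\tilde\theta}(\tilde y)-\hat{\tilde\theta}\le\log(1-\eta/\hat\sigma)=-\tilde\eta_-$. When $\eta\ge\hat\sigma$ the downward event is empty, because $\hat\sigma(y)>0$, or it means degeneration to $0$. The asymmetric thresholds mean we cannot quote Theorem~\ref{thm:loc_main} verbatim. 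Instead, we rerun its proof in one-sided form. The argument adapted from Huber--Ronchetti shows separately that the estimate can be pushed up by at least $\tilde\eta_+$ with $m$ replacements if and only if
\[
\sum_{i>m}\tilde\chi\bigl(\tilde x_{(i)}-(\hat{\tilde\theta}+\tilde\eta_+)\bigr)+m\tilde\chi(\infty)\ge0 .
\]
The proof of this equivalence has two parts. To show that the inequality is necessary, we use monotonicity: the best attack keeps the top $n-m$ points and sends $m$ points to $+\infty$. To show that it is sufficient, we use continuity and monotonicity of the score sum in the location parameter together with an intermediate value argument. The analogous statement holds for pushing the estimate down, using the bottom $n-m$ points and $\tilde\chi(-\infty)$.

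We then substitute back. We have $\tilde x_{(i)}-(\hat{\tilde\theta}+\tilde\eta_+)=\log\bigl(|x|_{(i)}/(\hat\sigma+\eta)\bigr)$, so $\tilde\chi$ of this quantity equals $\chi\bigl(|x|_{(i)}/(\hat\sigma+\eta)\bigr)$; similarly, the downward term equals $\chi\bigl(|x|_{(i)}/(\hat\sigma-\eta)\bigr)$. Dividing by $-\chi(\infty)>0$ and $-\chi(0)>0$ respectively, we rearrange each one-sided condition into $m\ge(\cdot)$. Taking the union of the two events gives the inner minimum, and minimizing over $m$ gives $\BP_\eta$. For the $m$-sensitivity, we follow the proof of Corollary~\ref{cor:opt_attack_m_est_main}. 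Both one-sided conditions are monotone in $\eta$, since the score sums decrease in $\eta$ on the upward side and increase on the downward side. The largest $\eta$ with $\BP_\eta=m/n$ is therefore the largest $\eta$ for which the max of the two expressions is $\ge0$, restricted to $\eta\in[0,\hat\sigma]$.

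The main obstacle will be the boundary handling. First, we must treat $\eta\ge\hat\sigma$ and the convention $\chi(|x|/0)$. Second, zero observations give $\tilde x_i=-\infty$, and we must check that the sums and $\tilde\chi(-\infty)=\chi(0)$ are used consistently. Third, we must make sure the one-sided version of Theorem~\ref{thm:loc_main} really holds, including the ``$\ge$'' versus ``$>$'' issue at equality and the possibility of non-unique roots when $\chi$ is flat. We will first attempt to rely on the proof of Theorem~\ref{thm:loc_main} already being direction-by-direction. If it is not, we will reprove the one-sided equivalence directly, using the same root-selection convention as the theorem.
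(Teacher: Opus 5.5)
Your reduction is the one the paper uses. The paper proves this result, stated as Corollary~\ref{cor:scale}, in one line: it applies the same log reparametrization and reads off \eqref{eq:loc_sup} and \eqref{eq:loc_inf} on the original scale. Theorem~\ref{thm:loc} is already stated one direction at a time with an arbitrary threshold, so you do not need to rerun it. Inserting $\tilde\eta_+=\log(1+\eta/\hat\sigma)$ in \eqref{eq:loc_sup} and $\tilde\eta_-=-\log(1-\eta/\hat\sigma)$ in \eqref{eq:loc_inf} is the whole computation.

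Two small points. First, $-\chi(\infty)<0$, not $>0$; the upward condition is rearranged by dividing by $\chi(\infty)>0$. Second, the paper's proof of Theorem~\ref{thm:loc} only shows that fewer than $m^*$ replacements cannot succeed, so your planned sufficiency argument is genuinely needed. Downward it is exact, since $y_i=0$ realizes $\tilde\chi(-\infty)=\chi(0)$. Upward, $\chi(\infty)$ is only approached unless $\chi$ attains its supremum (Huber's $\chi_\delta$ does), and that is where the equality case needs the sup convention.

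The step that fails is the last one, ``restricted to $\eta\in[0,\hat\sigma]$''. You correctly note that the downward event is void once $\eta\ge\hat\sigma$, but that only caps the downward branch. The upward expression $\sum_{i>m}\chi\bigl(|x^{(n)}|_{(i)}/(\hat\sigma+\eta)\bigr)+m\chi(\infty)$ is defined and non-increasing on all of $[0,\infty)$, and its largest root can exceed $\hat\sigma$. Here is a counterexample:
\begin{itemize}
\item Take $\chi(t)=\min(t^2,1)-\tfrac12$, $n=100$ and $|x_i|=1/\sqrt2$ for all $i$, so $\hat\sigma=1$.
\item For $m=49$ the upward expression equals $25.5/(1+\eta)^2-1$, with root $\eta=\sqrt{25.5}-1\approx 4.05$.
\item This value is attained by moving $49$ points to $100$: the contaminated equation reduces to $25.5/\sigma^2-1=0$, whose unique root is $\sigma=\sqrt{25.5}$.
\item The capped formula returns $1$.
\end{itemize}
So the displayed sensitivity identity cannot be proved as written. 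What your argument gives, and what Corollary~\ref{cor:scale} actually states, is $\eta_{m/n}=\max\{\eta_{m/n+},\eta_{m/n-}\}$, with the upward maximum over $[0,\infty)$ and the downward one over $[0,\hat\sigma)$.

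The same boundary must be imposed in the $\BP_\eta$ formula: for $\eta\ge\hat\sigma$ the second fraction has to be read as $+\infty$. Substituting $\hat\sigma-\eta<0$ literally is not harmless, because evenness of $\chi$ makes that fraction finite. In the example with $\eta=10$ it is negative, so the formula returns $1/100$. The true value is $50/100$: only the upward direction is possible, and it first succeeds at $m=50$, the explosion point. Fixing these two conventions is exactly the boundary handling you deferred, and with them your argument goes through.
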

We see that the threshold breakdown point of scale M-estimators is driven by the extreme observations. The two terms inside the inner minimum correspond to inflating the scale
and deflating it. As in the location case, the resulting breakdown point is fully explicit and can be evaluated directly from the data and the score function $\chi$.
\subsection{Two-stage Location M-estimators}
\label{sec:two_stage}
A two-stage M-estimator of location $\hat\theta$ is defined as follows. Given a sample $x^{(n)}=(x_1,\ldots,x_n)$,
\begin{enumerate}
\item[(i)] Compute an M-estimate of scale $\hat \sigma(x^{(n)})$.
\item[(ii)] Obtain $\hat\theta$ as any solution of
\begin{equation}
\label{M-est-two}
\sum_{i=1}^n \psi\left(\frac{x_i-\theta}{\hat \sigma(x^{(n)})}\right)=0 .
\end{equation}
\end{enumerate}
Because the contamination simultaneously alters both the numerator and the scale in \eqref{M-est-two}, the exact threshold breakdown point of $\hat\theta$ is not computable in closed form. Therefore, we derive informative upper and lower bounds that quantify how far one can move the two-stage estimator by perturbing $m$ out of $n$ observations. For this, we introduce additional notation. Define the sets of left- and right-oriented contamination maps  $\mathsf{C}_{m,c}^L,\mathsf{C}_{m,c}^R:\mathbb R^n\to \overline{\mathbb R}^n$ as
$$
\bigl(\mathsf{C}_{m,c}^L(x^{(n)})\bigr)_{(i)}
=
\begin{cases}
c, & 1\le i\le m,\\
x_{(i)}, & m<i\le n,
\end{cases}
\qquad
\bigl(\mathsf{C}_{m,c}^R(x^{(n)})\bigr)_{(i)}
=
\begin{cases}
x_{(i)}, & 1\le i\le n-m,\\
c, & n-m<i\le n.
\end{cases}
$$
Letting $\mathcal C \subset \mathbb{R}$, we further write the collection of contaminated samples as
$
\mathsf C^L_{m, \mathcal C_L} (x^{(n)}):= \{\mathsf{C}_{m,c}^L(x^{(n)}): c\in \mathcal C \Big\}$ and $ \mathsf C^R_{m, \mathcal C} := \{\mathsf{C}_{m,c}^R(x^{(n)}): c \in \mathcal C \}.
$
Proposition~\ref{prop:two_stage_main} shows that, even though we cannot write down the exact threshold breakdown point in closed form, we can upper bound it in terms of a small number of explicit contamination patterns. These bounds are easy to compute and  capture how sensitive a two-stage procedure is to extreme alterations of a subset of the data.
\begin{proposition}
\label{prop:two_stage_main}
Define
$$
\mathsf T_\eta(\tilde x^{(n)}) :=
\begin{cases}
-\dfrac{\sum_{i\le n-m}\psi\big((\tilde x_{(i)}-\hat\theta (x^{(n)})-\eta)/\hat\sigma(\tilde x^{(n)})\big)}
{\max\!\left\{\psi\big((\tilde x_{(n)}-\hat\theta (x^{(n)})-\eta)/\hat\sigma(\tilde x^{(n)})\big),\,0\right\}},
& \tilde x^{(n)} \in \mathsf C^L_{m,\{\infty,\,x_{(n)}\}}(x^{(n)}),\\[2.2ex]
-\dfrac{\sum_{i>m}\psi\big((\tilde x_{(i)}-\hat\theta (x^{(n)})+\eta)/\hat\sigma(\tilde x^{(n)})\big)}
{\min\!\left\{\psi\big((\tilde x_{(1)}-\hat\theta (x^{(n)})+\eta)/\hat\sigma(\tilde x^{(n)})\big),\,0\right\}},
& \tilde x^{(n)} \in \mathsf C^R_{m,\{-\infty,\,x_{(1)}\}}(x^{(n)}).
\end{cases}
$$
Assume that $\psi$ in \eqref{M-est-two} is bounded, non-decreasing,  and passes through $0$. Then,
$$
\BP_{\eta}(\hat\theta,x^{(n)})
\le
\frac1n \min\Bigl\{m \in [n] : m \ge \min_{\tilde x^{(n)} \in \mathsf C^L_{m,\{\infty,\,x_{(n)}\}}\cup \mathsf C^R_{m,\{-\infty,\,x_{(1)}\}}} \mathsf T_\eta(\tilde x^{(n)})\Bigr\}.
$$
Moreover, let $\tilde\theta(\tilde x^{(n)})$ denote the largest solution to \eqref{M-est-two} when $\tilde x^{(n)} \in \mathsf C^L_{m,\{\infty,\,x_{(n)}\}}(x^{(n)})$ and the smallest solution when $\tilde x^{(n)} \in \mathsf C^R_{m,\{-\infty,\,x_{(1)}\}}(x^{(n)})$. Then
$$
\eta_{m/n}(\hat\theta,x^{(n)})
\ge
\max\Biggl\{
\max_{\tilde x^{(n)} \in \mathsf C^L_{m,\{\infty,\,x_{(n)}\}}} \bigl(\tilde\theta(\tilde x^{(n)})-\hat\theta (x^{(n)})\bigr),\,
\max_{\tilde x^{(n)} \in \mathsf C^R_{m,\{-\infty,\,x_{(1)}\}}} \bigl(\hat\theta (x^{(n)})-\tilde\theta(\tilde x^{(n)})\bigr)
\Biggr\}.
$$
\end{proposition}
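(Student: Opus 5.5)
The plan is to prove both bounds by exhibiting explicit contaminations. An upper bound on $\BP_\eta$ and a lower bound on $\eta_{m/n}$ each only need one dataset in $B_{\textsf H}(x^{(n)},m)$ that moves $\hat\theta$ by at least $\eta$, or by the claimed amount. I will not try to optimise over all contaminations. I will restrict to the four families $\mathsf C^L_{m,\{\infty,x_{(n)}\}}$ and $\mathsf C^R_{m,\{-\infty,x_{(1)}\}}$, which replace the $m$ smallest points by the maximum or by $+\infty$, or the $m$ largest by the minimum or by $-\infty$. Each such $\tilde x^{(n)}$ lies in $B_{\textsf H}(x^{(n)},m)$, with $\pm\infty$ read as a limit of large finite values. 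This is legitimate because $\psi$ is bounded, so $\psi(\pm\infty)$ and the limiting scale are well defined. I will treat the left family in detail; the right family follows by the reflection $x\mapsto -x$.

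\textbf{Step 1 (monotonicity reduction).} Fix $\tilde x^{(n)}\in\mathsf C^L_{m,c}(x^{(n)})$ and write $\tilde\sigma=\hat\sigma(\tilde x^{(n)})$. The scale is computed first and does not depend on $\theta$. So the map
\[
g(\theta)=\sum_i\psi\bigl((\tilde x_i-\theta)/\tilde\sigma\bigr)
\]
is non-increasing in $\theta$, as in the proof of Theorem~\ref{thm:loc_main} adapted from Huber--Ronchetti. Hence some root of \eqref{M-est-two} satisfies $\tilde\theta\ge\hat\theta+\eta$ whenever $g(\hat\theta+\eta)\ge 0$. Using this largest root, which is the $\tilde\theta$ of the statement, handles the non-uniqueness of roots.

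\textbf{Step 2 (algebraic rewriting).} The $m$ contaminated points all equal $c$, and in the ordered sample they occupy positions $n-m+1,\dots,n$. So
\[
g(\hat\theta+\eta)=\sum_{i\le n-m}\psi\bigl((\tilde x_{(i)}-\hat\theta-\eta)/\tilde\sigma\bigr)+m\,\psi\bigl((\tilde x_{(n)}-\hat\theta-\eta)/\tilde\sigma\bigr).
\]
If the last $\psi$ value is positive, the condition $g\ge0$ is equivalent to $m\ge \mathsf T_\eta(\tilde x^{(n)})$.

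If that value is $\le 0$, the max in the denominator is $0$. Then $\mathsf T_\eta$ is $+\infty$ when the numerator sum is negative, so the bound holds vacuously. When the sum is nonnegative, $g\ge0$ holds outright. I will need to check this degenerate case carefully: the convention $0/0$ and whether $\mathsf T_\eta\le m$ can hold spuriously. One resolution is to note that if the sum is $\ge0$ with zero denominator, we already get $g\ge0$ directly.

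\textbf{Step 3 (conclusion).} Let $m$ be such that $m\ge\min\mathsf T_\eta$ over the four families. Then some $\tilde x^{(n)}\in B_{\textsf H}(x^{(n)},m)$ gives $|\hat\theta(\tilde x^{(n)})-\hat\theta(x^{(n)})|\ge\eta$, so the minimum in \eqref{BP} is at most that $m$. This yields the $\BP_\eta$ bound.

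For the sensitivity, every $\eta$ with $\eta\le\tilde\theta(\tilde x^{(n)})-\hat\theta$ is attained with $m$ contaminations. So $\BP_\eta\le m/n$, and the supremum defining $\eta_{m/n}$ in Definition~\ref{def:eta} is at least $\tilde\theta-\hat\theta$. Taking the maximum over the families and both directions gives the claim.

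I expect the main obstacle to be the passage to $c=\pm\infty$. For that case I need $\hat\sigma(\tilde x^{(n)})$ to have a limit as $c\to\infty$, and roots of \eqref{M-est-two} to converge, so that "contaminating with $\infty$" is realised by large finite $c$. Boundedness of $\psi$ and the monotone, even structure of $\chi$ in Corollary~\ref{cor:scale_main} should give continuity of $\hat\sigma$ in $c$. A strict inequality in Step 2 then survives at large finite $c$. The boundary case of equality needs the weak inequality $\ge\eta$ together with a limiting argument; if that fails, I would fall back to the infimum over finite $c$.
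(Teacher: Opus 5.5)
Your proposal is correct and is essentially the paper's proof. The shared ingredients are monotonicity of $\theta\mapsto\sum_i\psi\bigl((\tilde x_i-\theta)/\hat\sigma(\tilde x^{(n)})\bigr)$, the rewriting of this sum at $\hat\theta\pm\eta$ using the $m$ tied extreme order statistics so that the sign condition is exactly $m\ge\mathsf T_\eta(\tilde x^{(n)})$, and the inclusion of the contamination families in $B_{\textsf{H}}(x^{(n)},m)$ for the sensitivity bound; your handling of the zero-denominator case and of $c=\pm\infty$ is more careful than the paper, which simply treats $\pm\infty$ as admissible values. One small repair is needed in Step~3: $\BP_\eta\le m/n$ does not by itself place $\eta$ in the set $\{\eta:\BP_\eta=m/n\}$ of Definition~\ref{def:eta} (that set can even be empty), so conclude instead, as the paper does, from $\eta_{m/n}(\hat\theta,x^{(n)})=\sup_{y^{(n)}\in B_{\textsf{H}}(x^{(n)},m)}|\hat\theta(y^{(n)})-\hat\theta(x^{(n)})|$.
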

\begin{remark}
Proposition~\ref{prop:two_stage_main} uses the strategy of moving points to infinity or to the first/last order statistic for simplicity. One can construct more sophisticated contamination classes than $\mathsf C^L_{m,\{\infty,\,x_{(n)}\}}\cup \mathsf C^R_{m,\{-\infty,\,x_{(1)}\}}$ to achieve tighter bounds, see Appendix \ref{sec:con} for an example.
\end{remark}
The next proposition provides complementary lower bounds on the threshold breakdown point leveraging the scale invariance of the M-estimator.
\begin{proposition}
\label{prop:two_stage_lb_main}
Assume that $\psi$ is non-decreasing, odd, and passes through $0$. Let $\hat\theta(x^{(n)})$ denote the two-stage estimator defined in \eqref{M-est-two}. For $m\in[n]$, set
$
\underline{\sigma}_{m/n}
:=
\max\bigl\{\hat \sigma(x^{(n)}) - \eta_{m/n}(\hat \sigma, x^{(n)}), 0 \bigr\},
\overline{\sigma}_{m/n}
:=
\hat \sigma(x^{(n)}) + \eta_{m/n}(\hat \sigma, x^{(n)}).
$
Define the sign-split rescalings
$$
r_i
=
\frac{x_i}{\underline{\sigma}_{m/n}} \II\{x_i \ge 0\}
+
\frac{x_i}{\overline{\sigma}_{m/n}} \II\{x_i < 0\},
\qquad
r_i'
=
\frac{x_i}{\underline{\sigma}_{m/n}} \II\{x_i < 0\}
+
\frac{x_i}{\overline{\sigma}_{m/n}} \II\{x_i \ge 0\},
$$
and we adopt the convention $r_i=\infty$ and $r_i'=\infty$  if either $\underline{\sigma}_{m/n}=0$ or $\overline{\sigma}_{m/n}=0$. Let $\hat\theta_{\mathrm{loc}}(x^{(n)})$ denote the solution to \eqref{M-est}. Denote
\begin{align*}
\sigma_{m/n+}
&:=
\underline{\sigma}_{m/n}
\II\bigl\{\hat\theta_{\rm loc}(r^{(n)}) + \eta_{m/n}(\hat\theta_{\rm loc}, r^{(n)}) \le 0\bigr\}
+
\overline{\sigma}_{m/n}
\II\bigl\{\hat\theta_{\rm loc}(r^{(n)}) + \eta_{m/n}(\hat\theta_{\rm loc}, r^{(n)}) > 0\bigr\},
\\
\sigma_{m/n-}
&:=
\underline{\sigma}_{m/n}
\II\bigl\{\hat\theta_{\rm loc}(r^{(n)'}) - \eta_{m/n}(\hat\theta_{\rm loc}, r^{(n)'}) \ge 0\bigr\}
+
\overline{\sigma}_{m/n}
\II\bigl\{\hat\theta_{\rm loc}(r^{(n)'}) - \eta_{m/n}(\hat\theta_{\rm loc}, r^{(n)'}) < 0\bigr\}.
\end{align*}
Then the $m$-sensitivity satisfies
\begin{align*}
\eta_{m/n}(\hat\theta,x^{(n)})
\le\overline{\eta}_{m/n}(\hat\theta,x^{(n)}) := \max \Bigl\{
&\sigma_{m/n+}
\Bigl(\hat\theta_{\rm loc}(r^{(n)}) + \eta_{m/n}(\hat\theta_{\rm loc}, r^{(n)})\Bigr)
-
\hat\theta(x^{(n)}), \\
&~\!\hat\theta(x^{(n)})
-
\sigma_{m/n-}
\Bigl(\hat\theta_{\rm loc}(r^{(n)'}) - \eta_{m/n}(\hat\theta_{\rm loc}, r^{(n)'})\Bigr)
\Bigr \}.
\end{align*}
Furthermore,
$$
\BP_{\eta}(\hat\theta,x^{(n)})
\ge
\frac{1}{n}
\min\Bigl\{
m\in[n]:
\overline{\eta}_{m/n}(\hat\theta,x^{(n)}) \ge \eta
\Bigr\}.
$$
\end{proposition}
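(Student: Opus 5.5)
Fix $m$ and an arbitrary $y^{(n)}\in B_{\textsf H}(x^{(n)},m)$. The plan is to bound $\hat\theta(y^{(n)})$ from above and below by sandwiching the contaminated scale and then applying the location result to a rescaled sample.

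\emph{Step 1 (sandwich the scale).} By Definition~\ref{def:eta} applied to $\hat\sigma$ (computable via Corollary~\ref{cor:scale_main}), the contaminated scale satisfies $s:=\hat\sigma(y^{(n)})\in[\underline{\sigma}_{m/n},\overline{\sigma}_{m/n}]$. If $\underline{\sigma}_{m/n}=0$, the convention $r_i=r_i'=\infty$ makes the bound trivial. Otherwise, by scale equivariance of \eqref{M-est-two}, $\hat\theta(y^{(n)})=s\,\hat\theta_{\rm loc}(y^{(n)}/s)$, where $\hat\theta_{\rm loc}$ is the location M-estimator \eqref{M-est} with the same $\psi$.

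\emph{Step 2 (dominate the rescaled clean points by $r^{(n)}$).} For every clean index $i$ (where $y_i=x_i$) and every $s\in[\underline\sigma,\overline\sigma]$, we have $x_i/s\le r_i$. Indeed, for $x_i\ge0$ the ratio is largest when divided by $\underline\sigma$, and for $x_i<0$ when divided by $\overline\sigma$. Hence $y^{(n)}/s$ agrees, up to at most $m$ coordinates, with a sample that is coordinatewise no larger than $r^{(n)}$. Since $\psi$ is non-decreasing, $\hat\theta_{\rm loc}$ is monotone in each coordinate. We may replace those $m$ arbitrary coordinates by $+\infty$ (or by arbitrary values) in both samples, so $\hat\theta_{\rm loc}(y^{(n)}/s)\le \sup_{z\in B_{\textsf H}(r^{(n)},m)}\hat\theta_{\rm loc}(z)=\hat\theta_{\rm loc}(r^{(n)})+\eta_{m/n}(\hat\theta_{\rm loc},r^{(n)})=:U$. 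The last equality is Corollary~\ref{cor:opt_attack_m_est_main} together with the observation, from its proof, that the upward maximum is attained by the right-oriented attack. I would make this precise as a lemma: the coordinatewise monotonicity of the largest root, combined with the Hamming-ball supremum, equals the one-sided sensitivity. One subtlety is that $\eta_{m/n}$ is defined as the maximum over both directions, so it only upper-bounds the one-sided shift. This is still fine for an upper bound.

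\emph{Step 3 (optimise over $s$).} Then $\hat\theta(y^{(n)})\le sU\le \max_{s\in[\underline\sigma,\overline\sigma]}sU$. Because $sU$ is linear in $s$, the maximum is $\underline\sigma U$ if $U\le 0$ and $\overline\sigma U$ if $U>0$, which is exactly $\sigma_{m/n+}U$. Symmetrically, using $x_i/s\ge r_i'$ and the downward attack, $\hat\theta(y^{(n)})\ge \sigma_{m/n-}\bigl(\hat\theta_{\rm loc}(r^{(n)'})-\eta_{m/n}(\hat\theta_{\rm loc},r^{(n)'})\bigr)$. Subtracting $\hat\theta(x^{(n)})$ and taking the supremum over $y^{(n)}$ gives $\eta_{m/n}(\hat\theta,x^{(n)})\le\overline\eta_{m/n}$. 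Oddness of $\psi$ is used so that $\hat\theta_{\rm loc}$ is equivariant under $x\mapsto -x$, which justifies the symmetric treatment and the sign-split construction.

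\emph{Step 4 (breakdown bound).} If $|\hat\theta(y^{(n)})-\hat\theta(x^{(n)})|\ge\eta$ for some $y^{(n)}\in B_{\textsf H}(x^{(n)},m)$, then $\overline\eta_{m/n}\ge\eta_{m/n}\ge\eta$. Therefore the minimal breaking $m$ belongs to the set $\{m:\overline\eta_{m/n}\ge\eta\}$, and its minimum is no larger, which is the claimed lower bound on $\BP_\eta$.

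\emph{Main obstacle.} I expect Step 2 to be the hardest. It requires rigorously justifying that the Hamming-ball supremum of $\hat\theta_{\rm loc}$ is monotone under coordinatewise domination of the uncontaminated points, handling non-unique roots (taking the largest or smallest root), and treating $\pm\infty$ values. I would first prove monotonicity of the largest root of $\sum\psi(z_i-\theta)=0$ in each $z_i$, using that the score sum is non-increasing in $\theta$ and non-decreasing in each $z_i$.
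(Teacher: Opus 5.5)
Your proposal is correct and follows essentially the same route as the paper. Both arguments rest on scale equivariance $\hat\theta(y^{(n)})=s\,\hat\theta_{\rm loc}(y^{(n)}/s)$, on domination of every admissible rescaling of the clean points by the sign-split sample $r^{(n)}$ (resp.\ $r^{(n)'}$), on the location $m$-sensitivity at that sample, and on linear optimisation in $s$ to select $\sigma_{m/n\pm}$. The differences are minor: the paper first proves the sharper one-sided version (Proposition~\ref{prop:two_stage_lb}) and then loosens it to the two-sided quantities, whereas you work with the two-sided quantities directly; and you justify the domination step by coordinatewise monotonicity of the largest/smallest root, where the paper simply asserts that the supremum is attained.
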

\begin{remark}
    Proposition~\ref{prop:two_stage_lb_main} shows that the $m$-sensitivity of a two-stage M-estimator can be controlled by the $m$-sensitivity of a corresponding location M-estimator applied to rescaled data. Notice that $\eta_{m/n}(\hat \sigma, x^{(n)})$ can be computed by Corollary \ref{cor:scale_main} for scale M-estimators.
\end{remark}
We illustrate Proposition \ref{prop:two_stage_main} and Proposition \ref{prop:two_stage_lb_main} by reporting the resulting upper/lower bounds for the threshold breakdown point and the corresponding $m$-sensitivity curves for the two-stage estimator across different thresholds $\eta$ or different contamination sizes $m$. We follow the common simulation setup in Section \ref{sec:NI_location}.
\begin{figure}[htbp]
  \centering
  \begin{subfigure}[b]{0.45\textwidth}
    \centering
    \includegraphics[width=\linewidth]{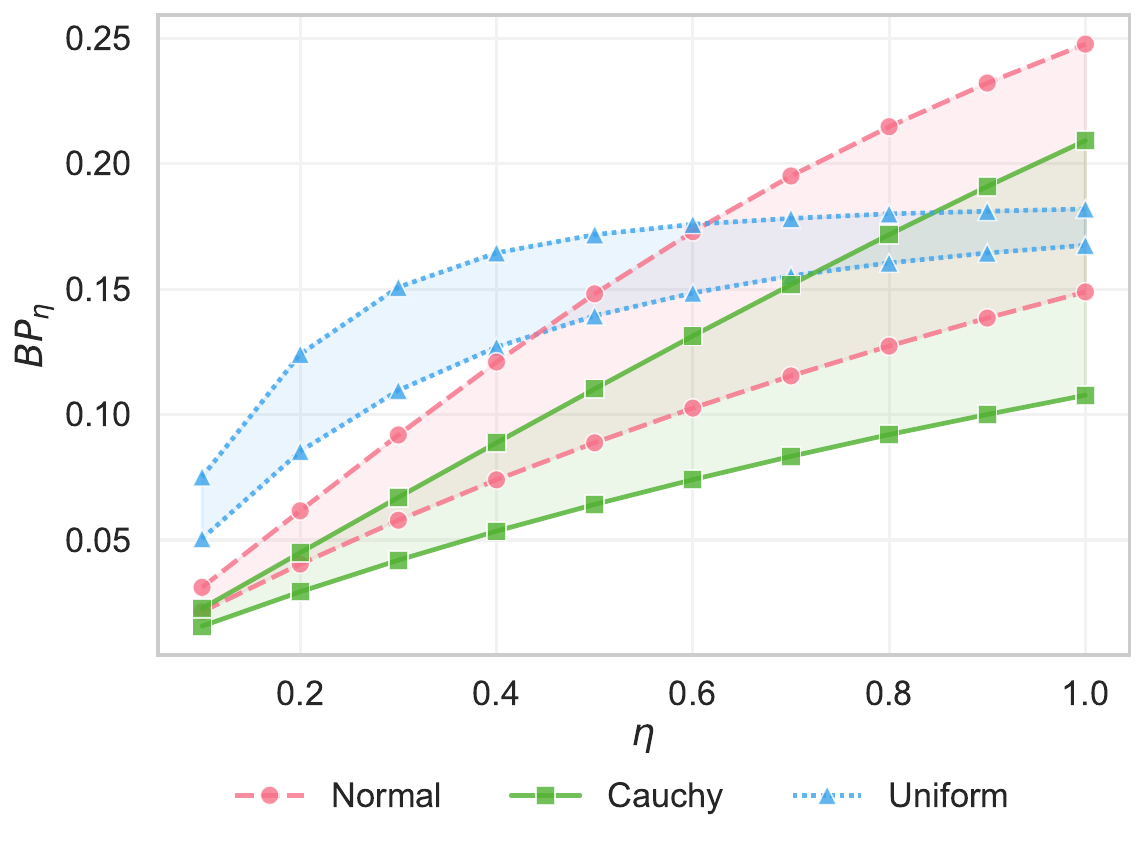}
    \caption{The threshold breakdown point.}
  \end{subfigure}
  \hfill
  \begin{subfigure}[b]{0.45\textwidth}
    \centering
    \includegraphics[width=\linewidth]{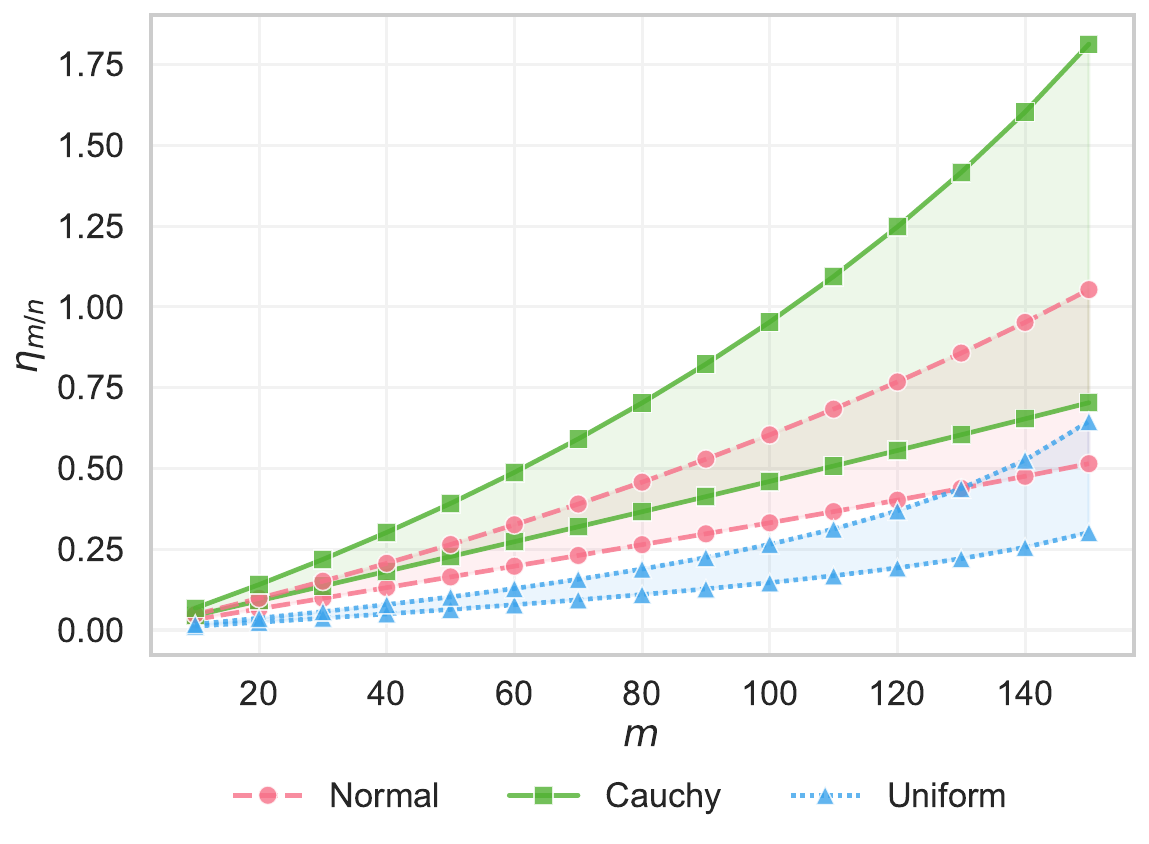}
    \caption{Sensitivity curve.}
  \end{subfigure}
  \caption{ \small The threshold breakdown points and $m$-sensitivity for the two-stage M-estimator under $\mathcal N(0,1)$, $\mathrm{Cauchy}(0,1)$, and $\mathrm{Unif}(0,1)$ distributions with $n=1000$, $m \in \{10, 20, \dots, 150\}$, and $\eta \in \{0.1, 0.2, \dots, 1\}$, using Huber's loss for location and Huber's proposal 2 for scale \citep{huber1964robust}. The figure suggests a tail-dependent finite sample effect: when $m$ is sufficiently large for contamination to interact with the sample extremes and the estimated scale, heavier-tailed samples tend to yield larger sensitivity and smaller threshold breakdown points.}
  \label{fig:bp_location_two_stage}
\end{figure}
Figure \ref{fig:bp_location_two_stage} shows that the threshold breakdown point and $m$-sensitivity show a clear distributional dependence. In particular, heavier-tailed samples tend to produce larger estimator shifts and smaller threshold breakdown points at the same contamination level.
This is to be contrasted with the classical breakdown point which does not depend on the distribution of the data.
\subsection{Variance Estimation Threshold BP}
\label{sec:varianceBP}
Variance estimators are essential for both uncertainty quantification and hypothesis testing. When using M-estimators, it is common practice  to estimate their standard error via a plug-in formula based on the score function $\psi$. Specifically, the plug-in estimate of the standard error takes the form
\begin{equation}
    \label{eq:se}
\hatse(\hat \theta) = \sqrt{\frac{\sum_{i=1}^n \psi(x_i - \hat \theta)^2}{(\sum_{i = 1}^n \psi'(x_i - \hat \theta))^2}},
\end{equation}
where $\psi$ is differentiable almost everywhere, and $\psi'$ denotes a well-defined function on $\RR$ agreeing with the a.e. derivative of $\psi$ almost everywhere: for instance, one can take $\psi_\delta'(x) = \II\{|x| \le \delta\}$ for Huber's loss with parameter $\delta$. This expression appears in the asymptotic variance formulas for M-estimators and underlies Wald-type tests.
Alternatively, in the context of hypothesis testing, under $H_0:\theta=\theta_0$, one may consider the restricted plug-in estimate of the standard error
$\hatse(\theta_0)$
for a fixed $\theta_0$. Evaluating the standard error at $\theta_0$ rather than at $\hat\theta$ is sometimes advocated to mitigate the Hauck–Donner effect\footnote{This occurs when Wald test statistic fails to increase  monotonically as a function of its distance from the null value.} \citep{hauck:donner1977,yee2022}. In this subsection, we analyze the threshold breakdown point of $\hatse(\hat\theta)$ and $\hatse(\theta_0)$, and relate it to the $m$-sensitivity of the underlying location estimator.
\begin{lemma}
\label{lem:se_at_theta_0_main}
Assume that $\psi$ is odd, bounded, differentiable a.e., passes through $0$  and such that  $\psi'(x)$ is non-increasing in $|x|$. Further assume that the data are centered at $\theta_0$. Let $\pi$ be a permutation such that
$
|x_{\pi_1}| \le |x_{\pi_2}| \le \cdots \le |x_{\pi_n}|,
$
and define
$
\psi_{\max}=\max\{|\psi(-\infty)|,|\psi(\infty)|\}.
$
Then the threshold breakdown point of $\hatse(\theta_0)$ is
$$
\BP_{\eta}\bigl(\hatse(\theta_0),x^{(n)}\bigr)
=
\frac{1}{n}\min\left\{
m\in[n]:
\eta_{m/n}\bigl(\hatse(\theta_0),x^{(n)}\bigr)>\eta
\right\},
$$
and its $m$-sensitivity is
\begin{align*}
\eta_{m/n}\bigl(\hatse(\theta_0),x^{(n)}\bigr) =
\resizebox{0.77\linewidth}{!}{$\displaystyle\max\Biggl\{
\sqrt{
\frac{
m\psi_{\max}^2+\sum_{i>m}\psi\bigl(x_{\pi_i}\bigr)^2
}{
\bigl(\sum_{i>m}\psi'\bigl(x_{\pi_i}\bigr)\bigr)^2
}
}
-\hatse(\theta_0),
\;
\hatse(\theta_0)
-
\sqrt{
\frac{
\sum_{i\le n-m}\psi\bigl(x_{\pi_i}\bigr)^2
}{
\bigl(\sum_{i\le n-m}\psi'\bigl(x_{\pi_i}\bigr)+m\psi'(0)\bigr)^2
}
}
\Biggr\}.
$}
\end{align*}
\end{lemma}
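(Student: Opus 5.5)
We prove the lemma by treating the two directions of the $m$-sensitivity separately. In each direction we solve the worst-case problem over the Hamming ball $B_{\textsf{H}}(x^{(n)},m)$ exactly. The threshold breakdown formula then follows by inverting, using Definition~\ref{def:eta}. Since $\theta_0$ is fixed, we may take $\theta_0=0$ (the data are centered at $\theta_0$). Then $\hatse(\theta_0)^2=A/B^2$ with $A=\sum_i\psi(x_i)^2$ and $B=\sum_i\psi'(x_i)$. Replacing a point $x_i$ by $y$ changes $A$ by $\psi(y)^2-\psi(x_i)^2$ and $B$ by $\psi'(y)-\psi'(x_i)$. Because $\psi$ is odd and non-decreasing, $\psi(y)^2=\psi(|y|)^2$ is non-decreasing in $|y|$. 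By assumption, $\psi'(y)$ is non-increasing in $|y|$. So a contaminating value $y$ has two extreme choices:
\begin{itemize}
\item $|y|\to\infty$ maximizes its numerator contribution, at $\psi_{\max}^2$, and minimizes its denominator contribution, at $\psi'(\infty)\ge 0$;
\item $y=0$ minimizes the numerator contribution, at $\psi(0)^2=0$, and maximizes the denominator contribution, at $\psi'(0)$.
\end{itemize}
These two choices are simultaneously optimal in both coordinates, so no trade-off between numerator and denominator arises for a single replaced value.

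\textbf{Upward direction.} Fix the set $S$ of $|S|=m$ replaced indices. The ratio $A/B^2$ is increasing in $A$ and decreasing in $B>0$. Hence, for fixed $S$, the supremum is reached by sending all replaced points to $\pm\infty$. Since $\psi$ is bounded, we use $\psi'(\infty)=0$, as for Huber's loss; in general one takes $\psi'(\infty)$ as a limit, and the formula in the lemma implicitly uses $0$. It then remains to choose $S$ to maximize
\[
\frac{m\psi_{\max}^2+\sum_{i\notin S}\psi(x_i)^2}{\bigl(\sum_{i\notin S}\psi'(x_i)\bigr)^2}.
\]
Removing the points of smallest $|x_i|$ makes the retained $\psi(x_i)^2$ as large as possible and the retained $\psi'(x_i)$ as small as possible. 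An exchange argument makes this rigorous: swapping a retained point of smaller $|x|$ for a removed point of larger $|x|$ weakly decreases the numerator and weakly increases the denominator, so the ratio cannot increase. Thus $S=\{\pi_1,\dots,\pi_m\}$ is optimal, which gives the first term in the maximum.

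\textbf{Downward direction.} Symmetrically, we send the replaced points to $0$, contributing $0$ to $A$ and $\psi'(0)$ to $B$. We remove the $m$ points with the largest $|x|$, namely $\pi_{n-m+1},\dots,\pi_n$, by the same exchange argument run in reverse. This gives the second term. The $m$-sensitivity is the larger of the two deviations. That the supremum in Definition~\ref{def:eta} equals this worst-case deviation uses two facts:
\begin{itemize}
\item the worst deviation is non-decreasing in $m$, because the Hamming balls are nested;
\item $\BP_\eta=m/n$ exactly for $\eta$ between the worst deviations at $m-1$ and $m$.
\end{itemize}
The breakdown formula then follows by taking the first $m$ whose worst deviation exceeds $\eta$.

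\textbf{Main obstacle.} The delicate part is the strict versus non-strict inequality, since the upward supremum may only be approached as $|y|\to\infty$ and not attained. This explains the strict ``$>\eta$'' in the breakdown formula. I would handle it by checking directly whether the upward supremum is attained at finite $y$, which happens when $\psi$ saturates, as Huber's loss does. I would also verify the boundary conventions: $B>0$ whenever retained points remain, the case $m=n$, and the possibility that a denominator is $0$, in which case the deviation is $+\infty$.
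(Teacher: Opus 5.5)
Your proposal is correct and follows essentially the same route as the paper's proof. You send replaced points to $\pm\infty$ (contributing $\psi_{\max}^2$ and $0$) or to $0$ (contributing $0$ and $\psi'(0)$), use the monotonicity of $\psi^2$ and $\psi'$ in $|x|$ to replace the $m$ points of smallest (resp.\ largest) $|x|$, and invert in $m$; your exchange argument makes rigorous the ordering step that the paper only asserts. Your strictness concern is real and is not addressed in the paper either: the downward extremum is always attained at $y=0$, and the upward one is attained at finite $y$ for saturating $\psi$ such as Huber's, so the ``$>\eta$'' formula can be off by one exactly when $\eta$ equals a worst-case deviation; also, $\psi'(\infty)=0$ is forced rather than a convention, since $\psi'\ge 0$ is non-increasing in $|x|$ with $\int_0^\infty\psi'\le\psi(\infty)<\infty$.
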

Lemma~\ref{lem:se_at_theta_0_main} shows that, for the plug-in variance estimator, the threshold breakdown point can be expressed exactly in terms of ordered absolute residuals. In particular, the map $m/n \mapsto \eta_{m/n}(\hatse(\theta_0),x^{(n)})$ can be viewed as a finite sample maxbias curve for the standard error.
The  lemma characterizes the threshold breakdown behavior when the location is fixed at some $\theta_0$. This regime is relevant when the standard error is evaluated at the null value, or when we assume one cannot access or manipulate the plug-in estimate $\hat\theta$ used to form the standard error. Allowing the contamination to act on $\hatse(\hat\theta)$ leads to a substantially more complicated problem since any perturbation of the data $x$ simultaneously affects both $\hat\theta$ and the standard error computed at that value. We consider this more complicated setting and  provide computable upper and lower bounds on the sensitivities $\eta_{m/n}(\hatse(\hat\theta),x^{(n)})$ in Appendix~\ref{sec:opt_attack_m_est_se}.
\section{Hypothesis Testing Threshold Breakdown Point}
\label{sec:testing}
Once an M-estimator $\hat\theta$ has been obtained, a natural next step is to conduct hypothesis testing about the underlying $\theta$. We consider tests induced by data-dependent intervals, i.e.,
$$
\phi_n(x^{(n)}) := \II\{\theta_0 \notin C_n(x^{(n)})\}.
$$
A basic example are confidence intervals based on asymptotic normality of the form
\begin{align}
\label{eq:wald}
    C_n(x^{(n)})
=
\bigl[\hat\theta - z_{1-\alpha/2}\hatse,\ \hat\theta + z_{1-\alpha/2}\hatse\bigr],
\end{align}
where $\hatse$ is a standard error estimate, possibly evaluated at $\hat\theta$ or $\theta_0$. This includes the usual and restricted Wald tests as special cases. In the next section, we extend threshold breakdown to such interval-based procedures.
\subsection{Breakdown Point of Tests}
\label{sec:BPoftests}
We now study how many observations must be modified to reverse the outcome of such tests. Specifically, we seek the smallest number of contaminations needed to change the decision from reject to accept (fail to reject), or vice versa. This decision-level notion was also considered by \citet{zhang1996}, who defines the sample breakdown point of a test as the smallest proportion of arbitrary contamination capable of flipping the test decision, thereby tying the breakdown directly to the realized sample and the critical region. \citet{zhang1996} computed this quantity for several commonly used one- and two-sided tests, including the $t$-test, Hotelling's $T^2$, and various $M$- and score tests, and also studied their asymptotic behavior. We take that definition as our starting point, but pursue a different goal. Rather than analyzing individual tests case by case, we derive a general characterization in terms of the $m$-sensitivity from Definition~\ref{def:eta}, leading to computable bounds, and in several settings exact formulas, for broad classes of interval-based tests. Moreover, while \citet{zhang1996} assumes a fixed critical region, we also allow the data-dependent critical region to be modified simultaneously.
\begin{definition}
    The finite sample threshold breakdown point of a deterministic test $\phi:\mathbb{R}^n\to \{0,1\}$ based on $x^{(n)}$ is
    $$
        \BP(\phi,x^{(n)}) := \frac{1}{n} \min\{m \in [n]: \exists y^{(n)}\in B_\textsf{H}(x^{(n)},m),\ |\phi(x^{(n)}) - \phi(y^{(n)})| = 1\}.
    $$
    We also define the one-sided finite sample test breakdown points
    \begin{align*}
        \BP_{\text{accept}}(\phi,x^{(n)}) &:= \frac{1}{n} \min\{m\in [n]: \exists y^{(n)}\in B_\textsf{H}(x^{(n)},m),\phi(x^{(n)})=0, \ \phi(y^{(n)}) = 1\},  \\
        \BP_{\text{reject}}(\phi,x^{(n)}) &:= \frac{1}{n} \min\{m\in [n]: \exists y^{(n)}\in B_\textsf{H}(x^{(n)},m), \, \phi(x^{(n)})=1,\ \phi(y^{(n)}) = 0\}.
    \end{align*}
\end{definition}
Clearly, for interval-based tests of the form
$$
\phi(x^{(n)})=\II\Bigl\{\theta_0\notin \bigl[\hat\theta - c_{\alpha} \cdot \hatse,\
  \hat\theta + c'_{\alpha} \cdot \hatse\bigr]\Bigr\}, \qquad c_\alpha, c'_\alpha \in \RR,
$$
the breakdown is determined by the smallest contamination level that can push a relevant interval bound across $\theta_0$. Consequently, the hypothesis testing breakdown point can be bounded by means of the $m$-sensitivities of $\hat\theta$ and $\hatse$.
This viewpoint is especially useful in cases where the $m$-sensitivities admit closed-form expressions or sharp upper bounds, as in Corollary \ref{cor:opt_attack_m_est_main}, Corollary \ref{cor:scale_main}, Proposition \ref{prop:two_stage_lb_main}, Lemma \ref{lem:se_at_theta_0_main}, and Lemma \ref{lem:opt_attack_m_est_se}. In such settings, the corresponding test breakdown points can be computed explicitly. In particular, if the test uses a fixed null standard deviation $\sigma_0$ rather than a plug-in standard error $\hatse(\cdot)$, then the standard-error $m$-sensitivity vanishes, and the resulting breakdown formulas become exact. The precise technical bounds, together with application to common robust tests, including one-sample Wald-type tests, two-sample tests, score-type tests, and two-stage M-estimate tests, are given in Appendix~\ref{sec:tests}.
\subsection{Numerical Illustration: Wald-type and Score-type Tests}
\label{sec:NI_wald}
We follow the common simulation setup in Section \ref{sec:NI_location} for the choice of losses and tuning.
We study one-sample two-sided tests of $H_0:\theta=0$ at level $\alpha$ based on (i) the regular Wald statistic, (ii) the restricted Wald statistic, (iii) the regular score test, and (iv) the restricted score test, as specified in Appendix~\ref{sec:tests} (see also Appendix~\ref{sec:score} for score test details).
To evaluate $\BP_{\mathrm{reject}}(\phi,x^{(n)})$, we generate data from $\mathcal N(\theta,1)$ over a grid of values of $\theta\in[-2,2]$.
For each replication, we repeatedly draw  a sample
until the realized dataset satisfies $\phi(x^{(n)})=1$ (rejects), and then compute the upper and lower bounds on $\BP_{\mathrm{reject}}(\phi,x^{(n)})$ given by Theorem~\ref{thm:meta} (and its specialization in Appendix~\ref{sec:tests}).
We use $n\in\{500,1000,2000\}$.
\begin{figure}[htbp]
  \centering
  \begin{subfigure}[b]{0.45\textwidth}
    \centering
    \includegraphics[width=\linewidth]{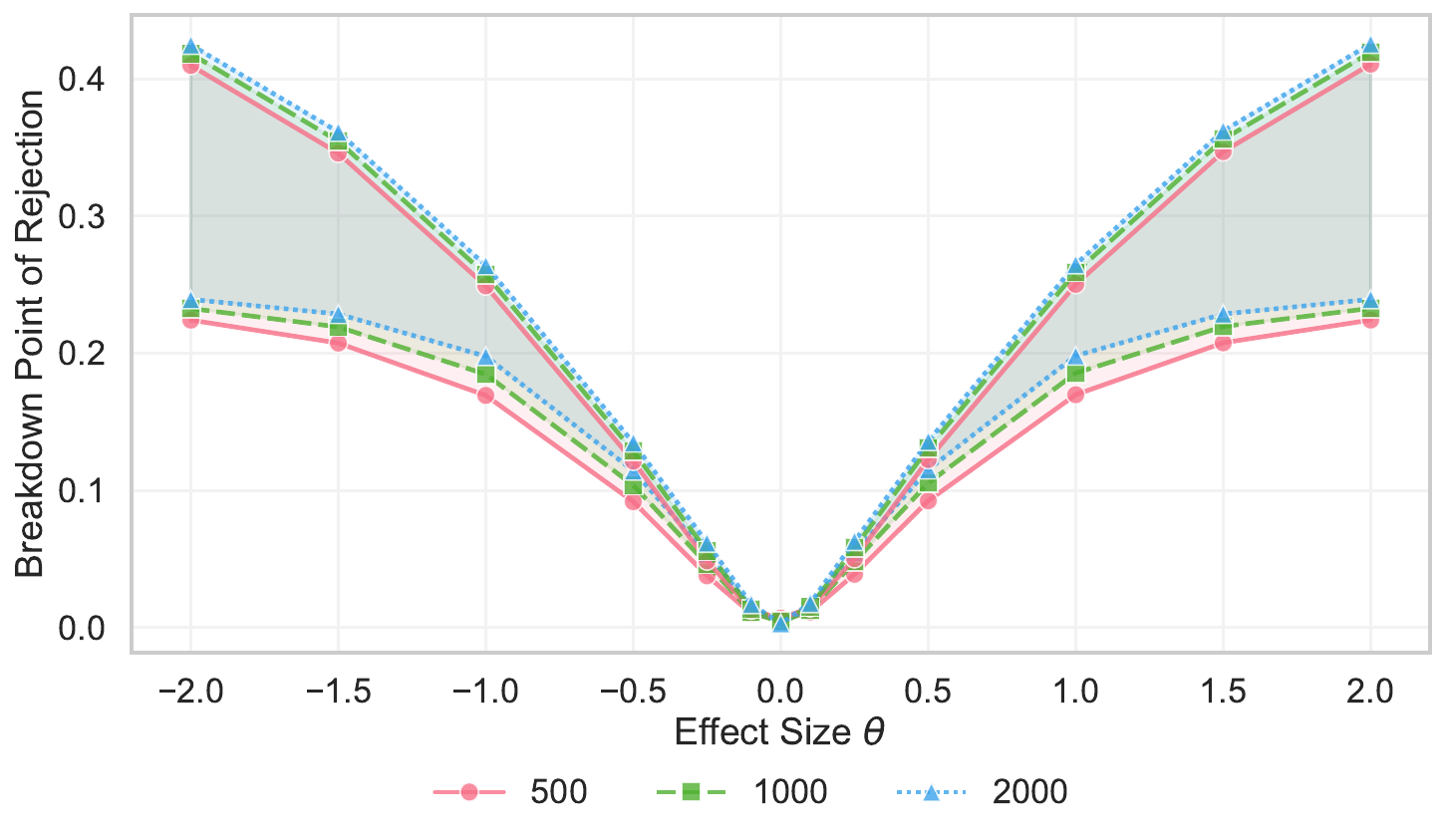}
    \caption{Regular Wald test.}
    \label{fig:huber_reg_ratio}
  \end{subfigure}
  \hfill
  \begin{subfigure}[b]{0.45\textwidth}
    \centering
    \includegraphics[width=\linewidth]{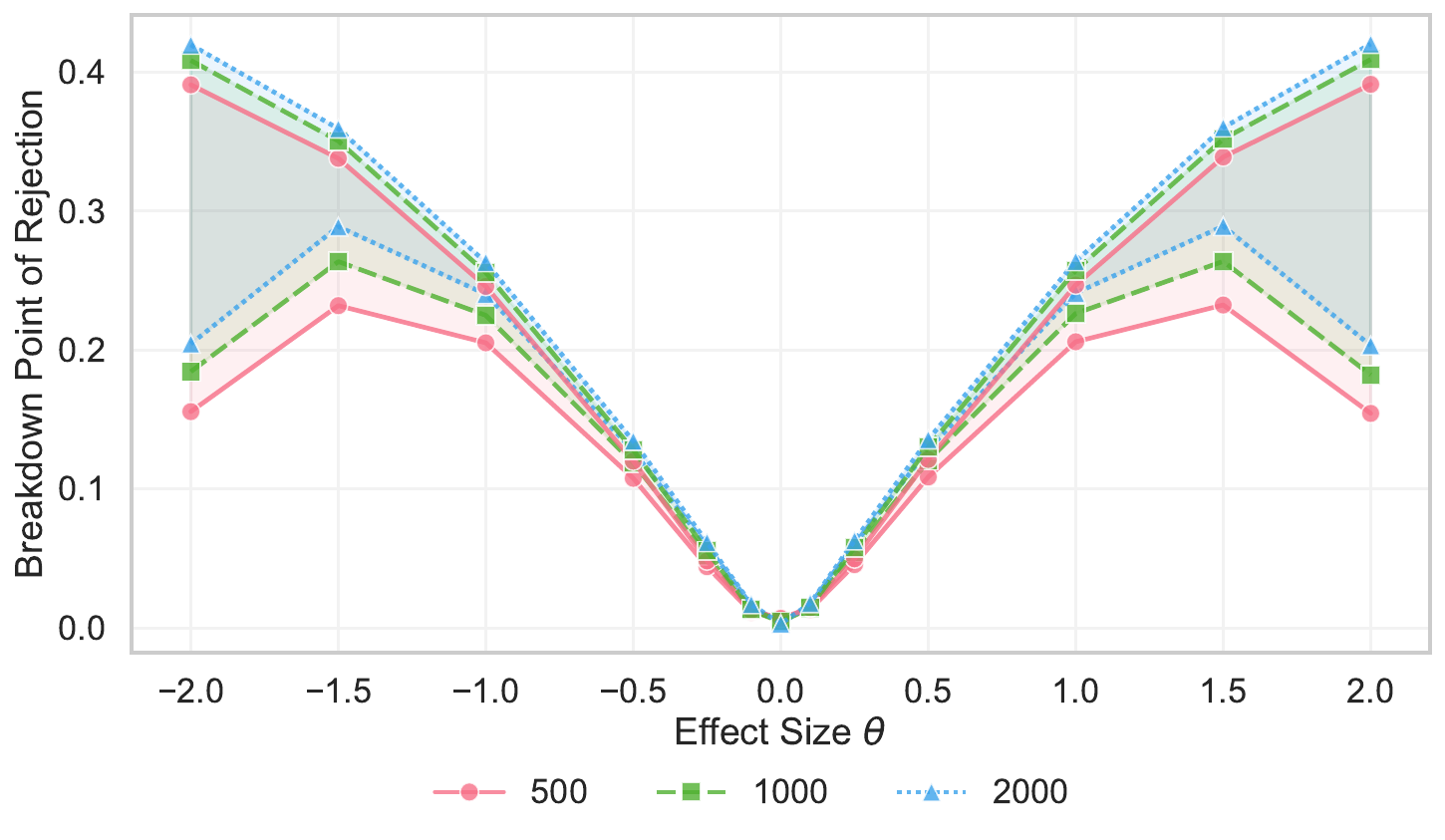}
    \caption{Restricted Wald test.}
    \label{fig:huber_restricted_ratio}
  \end{subfigure}
  \vspace{1em}
  \begin{subfigure}[b]{0.45\textwidth}
    \centering
    \includegraphics[width=\linewidth]{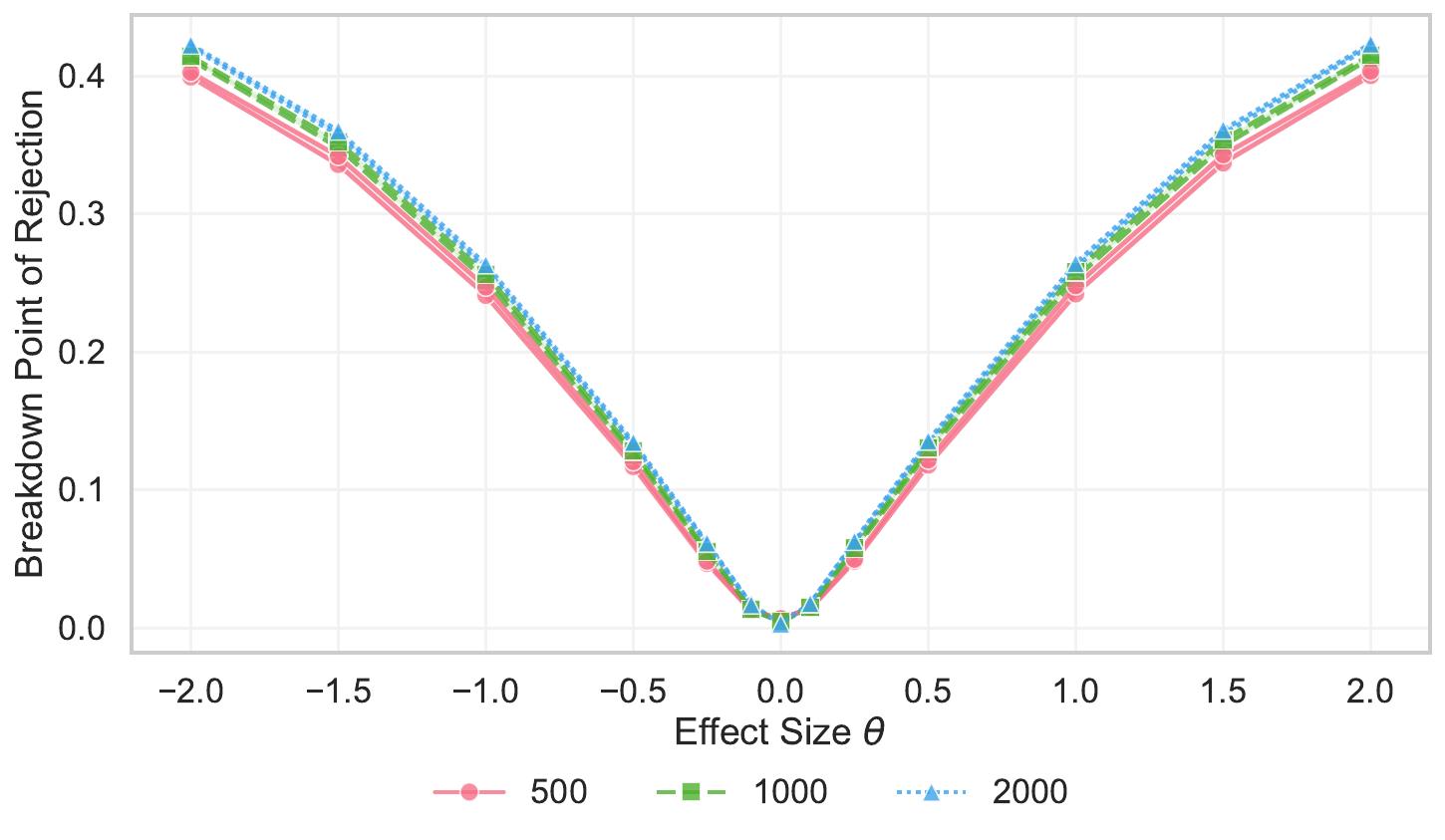}
    \caption{ \small Regular score test.}
    \label{fig:score_ratio}
  \end{subfigure}
  \hfill
  \begin{subfigure}[b]{0.45\textwidth}
    \centering
    \includegraphics[width=\linewidth]{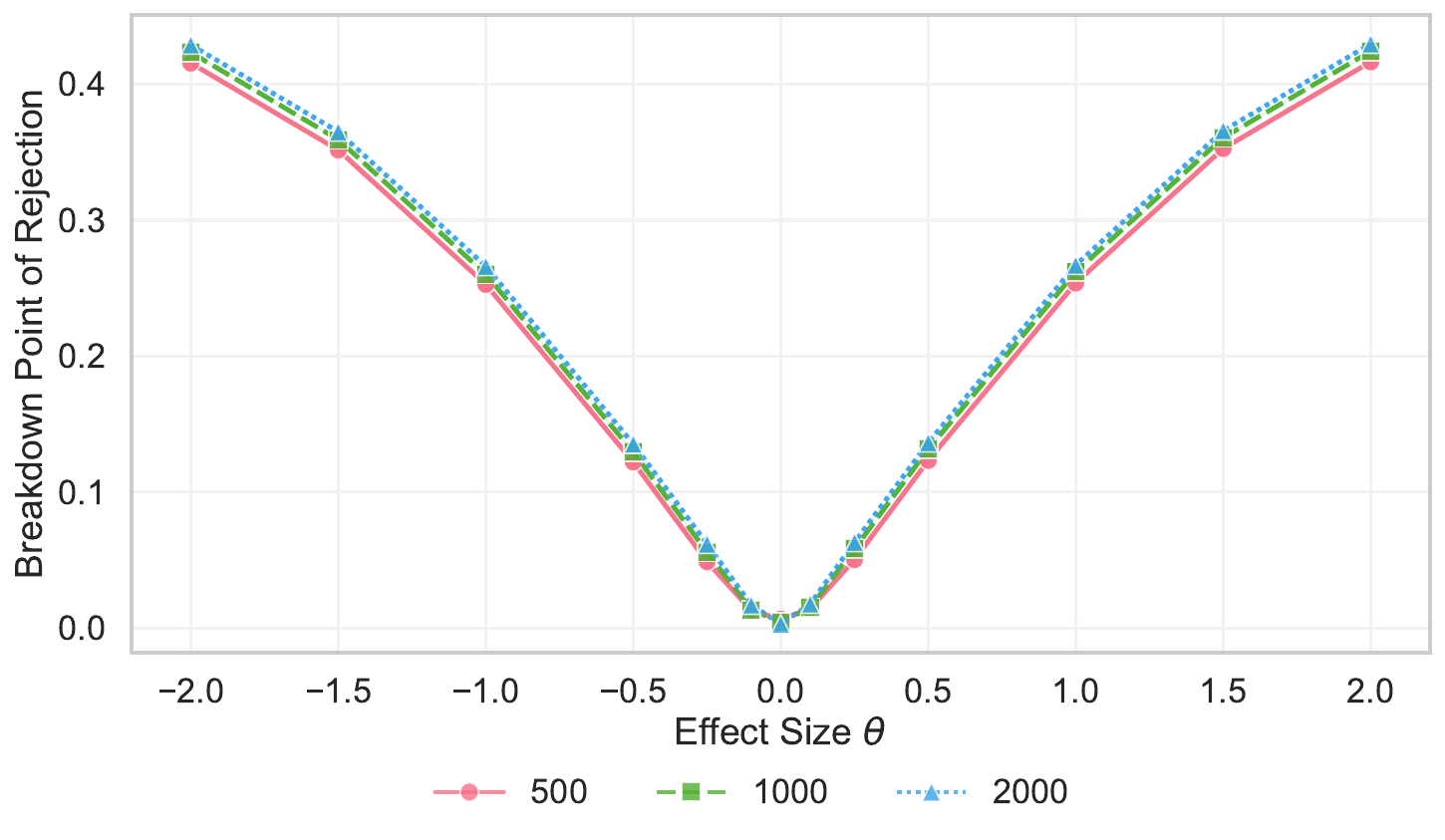}
    \caption{ \small Restricted score test.}
    \label{fig:score_restricted_ratio}
  \end{subfigure}
  \caption{ \small Upper and lower bounds for the rejection breakdown point $\BP_{\mathrm{reject}}(\phi,x^{(n)})$ as a function of the effect size $\theta$ under $\mathcal N(\theta,1)$.
  Line types distinguish $n\in\{500,1000,2000\}$; the shaded band indicates the interval between the lower and upper bounds.
  }
  \label{fig:bp_test_ratio}
\end{figure}
Across all four procedures, Figure~\ref{fig:bp_test_ratio} shows a pronounced V-shape in $\theta$: the rejection breakdown point is smallest near $\theta=0$ and increases with $|\theta|$, reflecting that stronger signals are harder to overturn by a fixed contamination level.
Increasing $n$ slightly raises $\BP_{\mathrm{reject}}(\phi,x^{(n)})$ while preserving the overall shape, indicating improved robustness at larger samples.
Finally, the bound gap is smallest around weak signals, where the inequalities in Theorem~\ref{thm:meta} are empirically tightest for these procedures. The contrast between the score and Wald tests is informative. For the score test, the band is very narrow, indicating that the upper and lower controls from our sensitivity analysis are closely aligned and thus provide a sharp characterization of finite sample rejection breakdown. For the Wald test, the band is visibly wider. This is consistent with the fact that contamination affects the procedure through both the point estimate and the standard error. Furthermore,  our current analysis controls these two effects separately and is therefore a little crude. Figure~\ref{fig:bp_test_ratio} suggests that our sensitivity-based approach is especially sharp for the score test, and somewhat more conservative for the Wald test. Further numerical results in Appendix \ref{sec:NI_wald_appendix} show that in fact, for the Wald test, this gap decreases as the sample size grows.
\subsection{Connection to Power and Level Breakdown Functions}
\label{sec:pbp-connection}
The robustness of hypothesis tests under gross-error contamination has been studied from two complementary perspectives. \citet{he:simpson:portnoy1990} introduce power and level breakdown functions at the population level, typically through test-statistic functionals, while \citet{zhang1996} defines finite sample breakdown points directly in terms of the accept/reject decision and derives asymptotic results for a range of classical tests. We will show that under mild regularity conditions, the finite sample decision breakdown points converge almost surely to a population breakdown functional determined by the limiting test. This provides a direct bridge between the functional perspective of \citet{he:simpson:portnoy1990} and the decision-level perspective of \citet{zhang1996}. Formal definitions are deferred to Appendix~\ref{sec:def_pbp}.
While the Wald interval in \eqref{eq:wald} is the basic motivating example, our formulation in Theorem \ref{thm:conv} covers more general data-dependent procedures, such as bootstrap-calibrated tests, in which the acceptance interval and critical value may depend on the data nontrivially.
\begin{theorem}
\label{thm:conv}
Let
$\phi_n(X^{(n)}) = \II\{\theta_0 \notin C_n(X^{(n)})\}$ and $
\phi(F) := \II\{\theta_0 \notin C(F)\},
$
where $C_n(X^{(n)}) \subseteq \Theta$ is a random acceptance region and $C(F)\subseteq \Theta$ is its population target. Assume that for all $\theta_0\in\Theta_0$, we have $\theta_0 \in C(F_{\theta_0})$, so that $\phi(F_{\theta_0})=0$, and for  all $\theta\in\Theta_1$, we have $\theta_0 \notin C(F_\theta)$, so that $\phi(F_\theta)=1$. Further assume that
for every  deterministic sequence $x^{(n)}$ whose empirical measure $\hat F_{x^{(n)}}$  converges weakly to some probability measure $F$ for which $C(F)$ is defined, we have
$ \phi_n(x^{(n)})=\II\{\theta_0 \notin C_n(x^{(n)})\}\to \phi(F)=\II\{\theta_0 \notin C(F)\}.$
Then, for a fixed $\theta\in\Theta_1$ and $\{X_i\}\overset{iid}{\sim}F_\theta$, we have that
$$
\BP_{\mathrm{reject}}\bigl(\phi_n,X^{(n)}\bigr)\xrightarrow{a.s.}\varepsilon^*_\theta(\phi),
\qquad
\BP_{\mathrm{accept}}\bigl(\phi_n,X^{(n)}\bigr)\xrightarrow{a.s.}\varepsilon^{**}_\theta(\phi),
$$
where $\varepsilon^*_\theta(\phi)$ and $\varepsilon^{**}_\theta(\phi)$ are the power and level breakdown functionals of the limiting test $\phi$ defined in Appendix~\ref{sec:def_pbp}.
\end{theorem}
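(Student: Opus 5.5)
The plan is to work pathwise. Fix an $\omega$ in the probability-one event on which the empirical measure $\hat F_{X^{(n)}}$ converges weakly to $F_\theta$; this event exists by Varadarajan's theorem. On this event every statement becomes a deterministic one about the sequence $x^{(n)}=X^{(n)}(\omega)$. The hypothesis on $\phi_n$ then immediately gives $\phi_n(x^{(n)})\to\phi(F_\theta)=1$. So for all large $n$ the original decision is ``reject'', and $\BP_{\mathrm{reject}}$ is the relevant quantity.

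I will use the definition of $\varepsilon^*_\theta(\phi)$ from Appendix~\ref{sec:def_pbp} in the form
\[
\varepsilon^*_\theta(\phi)=\inf\{\varepsilon:\ \exists\,Q\in\mathcal N_\varepsilon(F_\theta),\ \phi(Q)=0\},
\]
where $\mathcal N_\varepsilon(F_\theta)$ is the replacement neighbourhood consisting of measures $F_\theta-R+G$ with $R\le F_\theta$ and $G\ge0$ of equal mass at most $\varepsilon$. The level functional $\varepsilon^{**}_\theta$ is the same with the roles of $0$ and $1$ exchanged. I will show $\limsup \BP_{\mathrm{reject}}\le\varepsilon^*_\theta$ and $\liminf\BP_{\mathrm{reject}}\ge\varepsilon^*_\theta$ separately. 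The accept case is verbatim symmetric.

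\emph{Upper bound.} Take $\varepsilon'>\varepsilon^*_\theta$ and a witness $Q=F_\theta-R+G$ of mass defect $\varepsilon'$ with $\phi(Q)=0$.
\begin{itemize}
\item I build $y^{(n)}\in B_{\textsf H}(x^{(n)},m_n)$ with $m_n=\lceil\varepsilon' n\rceil$.
\item The replaced indices are chosen so that the removed points have empirical measure converging to $R$. This is possible by a thinning/quantile-matching argument, since $R\le F_\theta$ and $\hat F_{x^{(n)}}\to F_\theta$.
\item The replacement values are a deterministic quantile sequence of $G/\varepsilon'$.
\item Then $\hat F_{y^{(n)}}\to Q$ weakly, so the hypothesis yields $\phi_n(y^{(n)})\to0$.
\item Hence the decision flips for large $n$, and $\BP_{\mathrm{reject}}\le m_n/n\to\varepsilon'$.
\end{itemize}

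\emph{Lower bound.} Suppose, for contradiction, that along a subsequence $\BP_{\mathrm{reject}}\le\varepsilon<\varepsilon^*_\theta$. Then there are $y^{(n)}$ with at most $\varepsilon n$ replacements and $\phi_n(y^{(n)})=0$.
\begin{itemize}
\item Split $\hat F_{y^{(n)}}$ into the kept part, which is dominated by $\hat F_{x^{(n)}}$, and the injected part, of mass at most $\varepsilon$.
\item Extract a further subsequence along which the kept part converges to some $F_\theta-R$ with $R\le F_\theta$ of mass at most $\varepsilon$. This uses domination and tightness of $\hat F_{x^{(n)}}$.
\item The injected part converges to some $G$.
\item The limit therefore lies in $\mathcal N_\varepsilon(F_\theta)$, and passing the hypothesis along the subsequence gives $\phi$ of that limit equal to $0$. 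This contradicts $\varepsilon<\varepsilon^*_\theta$.
\end{itemize}

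The main obstacle is this compactness step. The injected points may escape to $\pm\infty$, so $G$ may lose mass and the limit need not be a probability measure on $\RR$. I would first handle this by working on the compactified line $\overline{\RR}$, which matches the $\psi(\pm\infty)$ conventions used throughout the paper, and by interpreting ``$C(F)$ is defined'' to include such limits. Failing that, I would add the escaping mass to $G$ as a point mass at $\pm\infty$ and check that the appendix definition of the breakdown functionals allows such contaminations. A secondary technical point is matching the replacement-type neighbourhood to the appendix's contamination-type definition; I expect the two to coincide for infima of this kind.
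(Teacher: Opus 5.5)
The step that fails is the one you defer to the end. The appendix does not define $\varepsilon^*_\theta(\phi)$ through your replacement neighbourhood $\mathcal N_\varepsilon(F_\theta)=\{F_\theta-R+G\}$, which is the total-variation ball. It uses the gross-error neighbourhood $\mathcal Q_\varepsilon(F_\theta)=\{(1-\varepsilon)F_\theta+\varepsilon G\}$, which can only add mass and never delete it, and the two infima do not coincide.

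Since $\mathcal Q_\varepsilon\subseteq\mathcal N_\varepsilon$, your upper bound still gives $\limsup\BP_{\mathrm{reject}}\le\varepsilon^*_\theta(\phi)$. Your lower bound, however, only gives $\liminf\BP_{\mathrm{reject}}\ge\varepsilon^{\mathrm{TV}}_\theta:=\inf\{\varepsilon:\exists Q\in\mathcal N_\varepsilon(F_\theta),\ \phi(Q)=0\}$. This can be strictly smaller than $\varepsilon^*_\theta(\phi)$ even under the theorem's hypotheses:
\begin{itemize}
\item Take $\Theta_0=\{0\}$, $F_\vartheta=N(\vartheta,1)$, a fixed $c>0$, $C_n(x^{(n)})=[\mathrm{med}(x^{(n)})-c,\,\mathrm{med}(x^{(n)})+c]$ and $C(F)=[\mathrm{med}(F)-c,\,\mathrm{med}(F)+c]$.
\item Let $C(F)$ be defined only when $F$ has a unique median different from $\pm c$. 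Sample medians then converge along every weakly convergent sequence with such a limit, so the continuity hypothesis holds, with $\Theta_1=\{|\vartheta|>c\}$.
\item Fix $\theta>c$ and set $a:=\Phi(c-\theta)\in(0,1/2)$. Moving about $n(1/2-a)$ observations from above $c$ to $0$ flips the decision. Fewer replacements cannot bring half the sample to or below $c$. Hence $\BP_{\mathrm{reject}}(\phi_n,X^{(n)})\to 1/2-a$ almost surely.
\item On the other side, $\phi(H)=0$ forces $\mathrm{med}(H)<c$. For $H=(1-\varepsilon)F_\theta+\varepsilon G$ this requires $(1-\varepsilon)a+\varepsilon\ge 1/2$, so $\varepsilon^*_\theta(\phi)=(1/2-a)/(1-a)>1/2-a$.
\end{itemize}
So the missing step cannot be supplied: with the appendix's definition, the claimed lower bound, and hence the theorem as stated, is false. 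What your argument actually establishes is convergence to $\varepsilon^{\mathrm{TV}}_\theta$, which is the natural target for a Hamming-ball breakdown point.

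The paper's proof crosses the same gap without flagging it. Its lower bound writes $\hat F_{\tilde X^{(n)}}=(1-k_n/n)\hat F_{X^{(n)}}+(k_n/n)G_n$ with $G_n$ a probability measure. This is false for replacement: absent ties, a removed observation receives mass $1/n-1/k_n<0$ under $G_n$. Your decomposition into a kept part dominated by $\hat F_{x^{(n)}}$ plus an injected part is the correct one, and it is exactly what exposes the mismatch.

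Three further remarks.
\begin{itemize}
\item \emph{Upper bound.} The paper avoids deterministic thinning by an i.i.d.\ coupling: independent Bernoulli$(p)$ coins, replacements drawn from $G$, then the SLLN and Glivenko--Cantelli. For $\mathcal N_\varepsilon$ the same trick works with coins of success probability $\frac{dR}{dF_\theta}(X_i)$, which is simpler than your quantile matching.
\item \emph{Domain of $C$ at the limit.} In the lower bound, invoking the continuity hypothesis at a subsequential limit $Q$ requires $C(Q)$ to be defined. This is a problem even when no mass escapes to $\pm\infty$. Both you and the paper (which uses the same one-point compactification) need an extra assumption, such as $C$ being defined on $\mathcal N_\varepsilon(F_\theta)$ for every $\varepsilon<\varepsilon^{\mathrm{TV}}_\theta$.
\item \emph{Accept half.} It is not verbatim symmetric under $F_\theta$. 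There $\phi_n(X^{(n)})=1$ eventually, so $\BP_{\mathrm{accept}}$ is a minimum over an empty set. The statement only makes sense for data drawn from a null $F_{\theta_0}$, a switch the paper's proof also makes tacitly.
\end{itemize}
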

Theorem~\ref{thm:conv} shows that the finite sample accept/reject breakdown points have a clean population target that depends only on the induced limiting test $\phi$, and therefore aligns naturally with the decision-level robustness notion of \citet{zhang1996} while remaining compatible with the functional breakdown perspective of \citet{he:simpson:portnoy1990}.
\section{Asymptotic Theory and Bootstrap Inference}
\label{sec:stat_framework}
In previous sections, we established that threshold breakdown points and sensitivities provide a useful and finite sample way to quantify robustness of estimators and tests. These quantities look intrinsically combinatorial and dataset-specific as they are defined through worst-case contamination of $m$ data points.  This raises three basic questions: (i) under a probabilistic model, what population objects do they approximate? (ii) Can we perform classical inference on them? And (iii) can we assess their variability using  bootstrap procedures?
We address these questions by placing our finite sample robustness measures in an M-estimation framework. For this we need to  introduce one-sided versions of the threshold breakdown point and the $m$-sensitivity.
\begin{definition}
    \label{def:BP_directional}
    For $\eta>0$ and a data set $x^{(n)}$, the one-sided finite sample threshold breakdown point of $\hat\theta$ is
    \begin{align*}
     & \BP_{\eta+}(\hat\theta,x^{(n)}):= \frac{1}{n} \min\{m \in [n]: \exists y^{(n)}\in B_\textsf{H}(x^{(n)},m), \hat \theta(y^{(n)})-\hat\theta(x^{(n)})\ge\eta\}, \\
     & \BP_{\eta-}(\hat\theta,x^{(n)}):= \frac{1}{n} \min\{m \in [n]: \exists y^{(n)}\in B_\textsf{H}(x^{(n)},m), \hat\theta(x^{(n)})- \hat \theta(y^{(n)})\ge\eta\}.
    \end{align*}
\end{definition}
\begin{definition}
\label{def:eta_directional}
 For $m \in [n]$, the one-sided $m$-sensitivities of $\hat\theta$ at $x^{(n)}$ are defined as
\begin{gather*}
    \eta_{m/n\pm}(\hat \theta, x^{(n)}) := \sup \Big \{\eta:  \BP_{\eta\pm} (\hat\theta, x^{(n)}) = \frac{m}{n} \Big \}.
\end{gather*}
\end{definition}
We first identify their population counterparts and relate them to existing robustness notions in the literature. The key step is to introduce a coupled system of estimating equations tracking the location parameter, the $m$-sensitivity, a critical trimming quantile, and the breakdown fraction, so that  $(\eta_{m/n\pm},\BP_{\eta\pm})$ can be characterized by (approximate) solutions to this generalized system of estimating equations. This characterization allows us to invoke standard M-estimation theory to obtain consistency and asymptotic normality, and to justify a multiplier bootstrap that consistently approximates the resulting limiting distribution. As a consequence, $\eta_{m/n\pm}$ and $\BP_{\eta\pm}$ are no longer purely descriptive, dataset-specific diagnostics: they can be treated as estimators of well-defined population targets with  intrinsic sampling variability.
\subsection{Assumptions}
\label{sec:assumptions}
We will assume that $X_1,\dots,X_n \overset{\text{i.i.d.}}{\sim} F$ for some distribution $F$ and require the following assumptions in our theoretical analysis.
\begin{enumerate}
    \item[(A1)] Fix $\varepsilon \in (0,0.5)$ and set $m \in \{\lceil \varepsilon n\rceil,\, \lfloor \varepsilon n\rfloor\}$ with $m \le 0.5n$.
    \item[(A2)] The score $\psi:\RR\to\RR$ is odd, nondecreasing, {$L$-Lipschitz}, and $\|\psi\|_\infty  = B \in \mathbb{R}_{\geq 0}$.
\end{enumerate}
Assumption (A1) fixes the asymptotic contamination level as $m\approx \varepsilon n$  with $\varepsilon\in(0,0.5]$ so that a limiting population quantity is well-defined.
Denote  the $\varepsilon$-quantile of $F$ as
$
q_\varepsilon := F^{-1}(\varepsilon)
$ and let $\theta_0$ be the zero for $\EE_F[\psi(X-\theta)]$, which exists under (A2).
We require the following technical condition to avoid ill-posed population $m$-sensitivities.
\begin{enumerate}
    \item[(A3)] $F$ has a positive density $f$ at a small neighborhood around $q_\varepsilon$ and $q_{1-\varepsilon}$.
\end{enumerate}
Equipped with (A3), we can  define
\begin{align}
\label{eq:population_eta+}
    H_{+}(\eta)
    &= (1-\varepsilon)\,\EE_F[\psi\big(X-(\theta_0+\eta)\big)\,\big|\,X>q_\varepsilon] + \varepsilon \|\psi\|_\infty, \\ \label{eq:population_eta-}
    H_{-}(\eta)
    &= (1-\varepsilon)\,\EE_F[\psi\big(X-(\theta_0-\eta)\big)\,\big|\,X<q_{1-\varepsilon}] - \varepsilon\|\psi\|_\infty.
\end{align}
Because $H_{\pm}(\eta)$ have different signs when $\eta$ goes to $\pm \infty$, under (A2), there exist $\eta_{\varepsilon\pm}\in\RR$ such that $H_{\pm}(\eta_{\varepsilon\pm})=0$. The uniqueness and regularity at these zeros is guaranteed by our last condition.
\begin{enumerate}
    \item[(A4)] $\EE_F[\psi'(X-\theta_0)]>0$, $H'_+(\eta_{\varepsilon+})<0$ and $H'_-(\eta_{\varepsilon-})>0$, where $\psi'$ is an a.e.\ derivative.
\end{enumerate}
\subsection{Population Sensitivity and the Maxbias Curve}
\label{sec:maxbias}
The next result summarizes the regularity of the induced population maps $\varepsilon\mapsto\eta_{\varepsilon\pm}$.
\begin{proposition}
\label{prop:sensitivity_map}
    Suppose (A2)-(A4) is true for $\varepsilon \in (0, 0.5)$, then the solution map for \eqref{eq:population_eta+} and \eqref{eq:population_eta-} $\varepsilon \mapsto \eta_{\varepsilon\pm}$ is $C^1((0, 0.5), \mathbb{R}_{\geq 0})$ and strictly increasing.
    Specifically, the solution maps $\eta_{\varepsilon\pm}  = \eta_\pm(\varepsilon)$ for $\varepsilon \in [0, 0.5)$ are characterized by the following ordinary differential equations (ODEs):
    \begin{equation*}
  \eta_+(\varepsilon) = \int_{0}^\varepsilon \frac{ -\psi(q_{t} - \theta_0 - \eta_+(t)) + \|\psi\|_\infty}{\int_{q_{t}}^{\infty} \psi'(x - \theta_0 - \eta_+(t)) \dd F(x)} \dd t, \quad \eta_-(\varepsilon) = \int_{0}^\varepsilon \frac{ \psi(q_{1-t} - \theta_0 + \eta_-(t)) + \|\psi\|_\infty}{\int_{-\infty}^{q_{1-t}} \psi'\big(x-\theta_0+\eta_-(t)\big) \dd F(x)} \dd t.
    \end{equation*}
\end{proposition}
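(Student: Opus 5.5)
The argument is an implicit function theorem applied to the defining equations $H_\pm(\eta;\varepsilon)=0$, regarded as functions of the pair $(\varepsilon,\eta)$; we treat $\eta_+$ in detail, since $\eta_-$ is symmetric.

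\emph{Rewriting $H_+$.} Using $\PP(X>q_\varepsilon)=1-\varepsilon$, which holds by (A3) because $F$ is continuous near $q_\varepsilon$, we write
$$
G(\varepsilon,\eta):=\int_{q_\varepsilon}^\infty \psi\bigl(x-\theta_0-\eta\bigr)\,\dd F(x)+\varepsilon B ,
$$
so that $\eta_{\varepsilon+}$ is a zero of $G(\varepsilon,\cdot)$.

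\emph{Partial derivatives of $G$.} In $\eta$: $\psi$ is Lipschitz, so dominated convergence gives
$$
\partial_\eta G=-\int_{q_\varepsilon}^\infty \psi'(x-\theta_0-\eta)\,\dd F(x).
$$
This is $H_+'(\eta)$, which is negative at the zero by (A4). In $\varepsilon$: by (A3) the map $\varepsilon\mapsto q_\varepsilon$ is $C^1$ with $\dd q_\varepsilon/\dd\varepsilon=1/f(q_\varepsilon)$. Leibniz's rule then gives
$$
\partial_\varepsilon G=-\psi(q_\varepsilon-\theta_0-\eta)\,f(q_\varepsilon)\cdot\frac{1}{f(q_\varepsilon)}+B=B-\psi(q_\varepsilon-\theta_0-\eta).
$$
Both partials are continuous in $(\varepsilon,\eta)$: continuity of $\psi$ and of $q_\varepsilon$ handles $\partial_\varepsilon G$, and $\partial_\eta G$ is continuous in $\eta$ because $\psi'$ is bounded and $F$ is continuous. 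This makes $G$ a $C^1$ function.

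\emph{Uniqueness of the zero.} We need (A4) to apply at every $\varepsilon$ in the interval. Then uniqueness follows because $G(\varepsilon,\cdot)$ is nonincreasing, so its zero set is an interval. On that interval the derivative vanishes, which is impossible when (A4) gives a strictly negative derivative at the zero. A nondegenerate zero of a monotone function is therefore unique.

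\emph{Regularity and the ODE.} The implicit function theorem now gives $\eta_+\in C^1$, with
$$
\eta_+'(\varepsilon)=-\frac{\partial_\varepsilon G}{\partial_\eta G}=\frac{B-\psi(q_\varepsilon-\theta_0-\eta_+(\varepsilon))}{\int_{q_\varepsilon}^\infty\psi'(x-\theta_0-\eta_+(\varepsilon))\,\dd F(x)}.
$$

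\emph{Strict monotonicity.} The denominator is positive, being $-H_+'$. The numerator is $B-\psi(\cdot)\ge 0$, and it is strictly positive unless $\psi$ attains its supremum at $q_\varepsilon-\theta_0-\eta$. We argue this is excluded: if $\psi$ equals $B$ at $q_\varepsilon-\theta_0-\eta$, monotonicity of $\psi$ forces $\psi'=0$ on $[q_\varepsilon-\theta_0-\eta,\infty)$, which makes the denominator zero and contradicts (A4). Hence $\eta_+'>0$ and $\eta_+$ is strictly increasing.

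\emph{Initial condition.} At $\varepsilon=0$ the equation reduces to $\EE\psi(X-\theta_0-\eta)=0$, whose zero is $\eta=0$ by the definition of $\theta_0$. Uniqueness there follows from $\EE\psi'(X-\theta_0)>0$ in (A4). Integrating the derivative from $0$ yields the stated integral equation. One point needs care: continuity of $\eta_+$ at $0^+$. We obtain it from the fact that $G(\varepsilon,\eta)\to\EE\psi(X-\theta_0-\eta)$ uniformly as $\varepsilon\to0$, combined with uniqueness of the zero at $\varepsilon=0$. Nonnegativity of $\eta_\pm$ then follows from monotonicity together with $\eta_\pm(0)=0$.

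\emph{The $\eta_-$ case.} This is symmetric. The lower tail is trimmed through $q_{1-\varepsilon}$, whose derivative is $-1/f(q_{1-\varepsilon})$, and $\psi$ is evaluated at $\eta\mapsto-\eta$. The resulting numerator is $\psi(q_{1-\varepsilon}-\theta_0+\eta_-)+B$, which matches the stated ODE.

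The main obstacle is justifying that the regularity in (A4), namely $H'_\pm\neq0$ at the zero, holds along the whole path of $\varepsilon$ rather than at a single fixed $\varepsilon$. We read the hypothesis as holding for each $\varepsilon\in(0,0.5)$. A secondary obstacle is the boundary behaviour at $\varepsilon=0$ needed to anchor the integral.
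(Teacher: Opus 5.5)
Your architecture matches the paper's (Lemma~\ref{lem:sens-bias_property} together with the proof of this proposition). Both use the implicit function theorem for $H_\pm(\eta,\varepsilon)=0$, the same two partial derivatives, nondegeneracy from (A4), and the anchor at $\varepsilon=0$ through the unique zero of $\eta\mapsto\EE_F[\psi(X-\theta_0-\eta)]$.

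The gap is the one step the paper's proof of this proposition is actually about. The implicit function theorem needs $\partial_\eta G(\varepsilon,\eta)=-\int_{q_\varepsilon}^\infty\psi'(x-\theta_0-\eta)\,\dd F(x)$ to be continuous at $(\varepsilon^*,\eta^*)$, where $\eta^*=\eta_{\varepsilon^*+}$. You justify this by ``$\psi'$ is bounded and $F$ is continuous'', and that does not suffice.
\begin{itemize}
\item Under (A2), $\psi'$ is only a bounded measurable a.e.\ derivative. Take $\psi(t)=\int_0^t\II_A(s)\,\dd s$ with $A$ a symmetric fat Cantor set. Then every version of $\psi'$ is discontinuous on a set of positive Lebesgue measure. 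So the pointwise convergence $\psi'(x-\theta_0-\eta_n)\to\psi'(x-\theta_0-\eta^*)$ that dominated convergence needs is not guaranteed.
\item Atomlessness of $F$ does not help either. Take $F$ the Cantor distribution on the middle-thirds set $C$ and $g=\II_C$. Then $\eta\mapsto\int g(x-\eta)\,\dd F(x)$ equals $1$ at $\eta=0$ but $1/2$ at every $\eta=2\cdot 3^{-k}$.
\end{itemize}
This is exactly why Lemma~\ref{lem:sens-bias_property} assumes that $\psi'$ is continuous at $x-(\theta_0\pm\eta_{\varepsilon^*\pm})$ for $F$-a.e.\ $x$. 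The proposition drops that assumption, and its proof exists to compensate.

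The missing idea is to use the density the hypotheses provide. Because (A3) holds at every $\varepsilon\in(0,0.5)$, one has $\dd F=f\,\dd x$ on $[q_{\varepsilon^*-\delta},c]$ for every $c$. For $(\varepsilon_n,\eta_n)\to(\varepsilon^*,\eta^*)$, split the difference of the integrals into three pieces:
\begin{itemize}
\item the moving lower limit, bounded by $L\,|F(q_{\varepsilon_n})-F(q_{\varepsilon^*})|=L\,|\varepsilon_n-\varepsilon^*|$;
\item the tail beyond $c$, bounded by $2L\,F((c,\infty))$;
\item the shift on $[q_{\varepsilon^*},c]$. After substituting $u=x-\theta_0-\eta$ in each term, it equals $\int\psi'(u)\,[h(u+\theta_0+\eta_n)-h(u+\theta_0+\eta^*)]\,\dd u$ with $h=f\,\II_{[q_{\varepsilon^*},c]}\in L^1$. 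It is therefore at most $L\,\|h(\cdot+\eta_n-\eta^*)-h\|_{L^1}$, which tends to $0$ by continuity of translation in $L^1$.
\end{itemize}
Then let $c\to\infty$. The same absolute continuity is what licenses differentiating under the integral to obtain $\partial_\eta G$ in the first place.

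Two smaller remarks.
\begin{itemize}
\item Your claim that $q_\varepsilon$ is $C^1$ with derivative $1/f(q_\varepsilon)$ needs $f$ to be continuous, which (A3) does not give. You do not need it: $F(q_{\varepsilon+h})-F(q_\varepsilon)=h$ together with continuity of $\psi$ yields $\partial_\varepsilon G=B-\psi(q_\varepsilon-\theta_0-\eta)$ directly, as in the paper.
\item Your positivity argument for the numerator (a vanishing denominator would contradict (A4)) is valid but differs from the paper's. The paper observes that $\psi(x-\theta_0-\eta)=B$ for all $x>q_\varepsilon$ would give $\int_{q_\varepsilon}^\infty\psi(x-\theta_0-\eta)\,\dd F(x)=B(1-\varepsilon)>0$, whereas $H_+=0$ forces this integral to equal $-\varepsilon B<0$. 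That route never touches $\psi'$.
\end{itemize}
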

The ODEs arise by differentiating the population equations $H_{\pm}(\eta_{\varepsilon\pm})=0$ in $\varepsilon$ and applying the implicit function theorem, using (A4) to ensure that the denominators are strictly positive. The monotonicity reflects the fact that, at the population level, increasing the contamination fraction $\varepsilon$ can only increase the worst-case displacement.
Proposition \ref{prop:sensitivity_map} shows that the population $m$-sensitivity and contamination ratio are linked by a one-to-one, smooth transformation. This is the same object that appears in the classical robust statistics literature under the name of the \emph{maxbias curve}: it describes the growth of the largest population bias as contamination increases, and equivalently determines the minimal contamination level required to reach a given displacement threshold \citep{martin:yohai:zamar1989,martin:zamar1989, rousseeuw1999maxbias, croux2002maxbias}.
Existing analyses typically derive this curve under parametric assumptions on $F$, such as normality or a symmetric location-scale family. Our formulation, however, works for general $F$ and characterizes the maxbias curve for M-estimators directly through the distribution $F$ and the score $\psi$.
\subsection{Coupled M-estimation for Sensitivity and Breakdown}
\label{sec:coupled Z}
We will show that the M-estimator of interest, its sensitivity and breakdown point can be expressed in a single system of equations. To do this we embed the location $\theta$, $m$-sensitivity $\eta$, trimming quantile $q$, and breakdown fraction $\varepsilon$ into the parameter
$
\vartheta = (\theta,\eta,q,\varepsilon)
$
and define the following score functions $\Psi_\pm:\mathcal{X}\times (\Theta\times \RR_{>0} \times \mathbb{R} \times (0, 0.5])\to\mathbb{R}^3$
\begin{align}
\label{eq:Z-system}
\resizebox{0.94\linewidth}{!}{
     $ \displaystyle
        \Psi_{+}(x; \vartheta)
        =
        \begin{pmatrix}
        \psi(x-\theta)\\
        \psi(x-\theta-\eta)\II\{x > q\} + \varepsilon B \\
        \II\{x\le q\}- \varepsilon
        \end{pmatrix},
        \Psi_{-}(x;\vartheta )
        =
        \begin{pmatrix}
        \psi(x-\theta)\\
        \psi(x-\theta+\eta)\II\{x < q\} - \varepsilon B \\
        \II\{x\le q\}- (1- \varepsilon)
        \end{pmatrix}.
    $}
\end{align}
At the population level, we consider the estimating equation functionals
$
\EE[\Psi_{\pm}(X; \vartheta)] = 0,
$
whose solutions $(\theta,\eta,q,\varepsilon)$ encode, in particular, the population location $\theta_0$, the trimmed sensitivities $\eta_{\varepsilon\pm}$ in \eqref{eq:population_eta+}–\eqref{eq:population_eta-}, the corresponding quantiles, and the breakdown $\varepsilon$.
Note that the parameter $\vartheta$ is four-dimensional while $\Psi_{\pm}(x;\vartheta)$ has three components. This deliberate over-parameterization encodes $m$-sensitivity and breakdown point in a single coupled system. More specifically, the next two propositions give the deterministic root characterization underlying the later random-sample analysis. For a fixed dataset $x^{(n)}$, we show that these two quantities can be represented as exact or approximate solutions of the estimating equations \eqref{eq:Z-system}. Throughout, write $F_n(t) := \tfrac{1}{n} \sum_{i=1}^n \II\{x_i \le t\}$. For $m \in [n]$, let $\hat q_{m+} \in [x_{(m)}, x_{(m+1)})$, and $\hat q_{m-} \in [x_{(n-m)}, x_{(n-m+1)})$ with the convention of $x_{(0)} = -\infty$ and $x_{(n+1)} = \infty$.
\begin{proposition}
\label{prop:finite_identification_sensitivity}
Fix $m \in [n]$ with $m \le 0.5n$. Let $\eta_{m/n\pm}(\hat \theta, x^{(n)})$ be  as in Definition~\ref{def:eta}, where $\hat \theta$ uniquely solves \eqref{M-est}. Assume $x_{(m)} < x_{(m+1)}$, $x_{(n-m)} < x_{(n-m+1)}$, and that (A2) holds. Define
$
\hat {\vartheta}_{\pm}^{\mathrm{sen}}
:=
\bigl(\hat \theta, \eta_{m/n\pm}(\hat \theta, x^{(n)}), \hat q_{m\pm}, m/n\bigr).
$
Then,
$
\EE_{F_n}\bigl[\Psi_{\pm}(X; \hat {\vartheta}_{\pm}^{\mathrm{sen}})\bigr] = 0.
$
\end{proposition}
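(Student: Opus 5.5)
The plan is to verify the three components of $\EE_{F_n}[\Psi_\pm(X;\hat\vartheta^{\mathrm{sen}}_\pm)]$ one at a time. The first two are immediate, and the real work is in the second.

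\textbf{First and third components.} The first component is $\frac1n\sum_i\psi(x_i-\hat\theta)=0$ by \eqref{M-est}, since $\hat\theta$ solves the estimating equation.

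For the third component in the $+$ case, $\hat q_{m+}\in[x_{(m)},x_{(m+1)})$ and $x_{(m)}<x_{(m+1)}$ give $F_n(\hat q_{m+})=m/n$. Hence $\frac1n\sum_i\II\{x_i\le\hat q_{m+}\}-m/n=0$.

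In the $-$ case, $\hat q_{m-}\in[x_{(n-m)},x_{(n-m+1)})$ with a strict gap gives $F_n(\hat q_{m-})=(n-m)/n=1-m/n$, so that component also vanishes.

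\textbf{Second component, $+$ case.} Since $\{i:x_i>\hat q_{m+}\}$ is exactly the set of order statistics $x_{(i)}$ with $i>m$, we must show
\[
\sum_{i>m}\psi\bigl(x_{(i)}-\hat\theta-\eta_{m/n+}\bigr)+mB=0 .
\]
Under (A2), $\psi$ is odd, bounded and nondecreasing, so $\psi(\infty)=B$ and $\psi(-\infty)=-B$.

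I will use the one-sided analogue of Corollary~\ref{cor:opt_attack_m_est_main}, obtained by the same argument restricted to upward moves. It says that
\[
\eta_{m/n+}=\max\Bigl\{\eta\ge0:\ G(\eta):=\sum_{i>m}\psi\bigl(x_{(i)}-\hat\theta-\eta\bigr)+mB\ge0\Bigr\}.
\]
The reason is that the optimal upward attack replaces the $m$ smallest points by $+\infty$. The shifted estimate is then the largest root of the contaminated score, and it reaches $\hat\theta+\eta$ iff $G(\eta)\ge0$.

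The function $G$ is continuous (because $\psi$ is Lipschitz) and nonincreasing in $\eta$. At $\eta=0$ we have $G(0)=-\sum_{i\le m}\psi(x_{(i)}-\hat\theta)+mB\ge0$, using $\sum_i\psi(x_i-\hat\theta)=0$. As $\eta\to\infty$, $G(\eta)\to-(n-m)B+mB\le0$, since $m\le n/2$.

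So the set $\{G\ge0\}$ is a closed interval $[0,\eta^*]$. Provided it is bounded, continuity forces $G(\eta^*)=0$: if $G(\eta^*)>0$, continuity would let us enlarge $\eta$, contradicting maximality. This is exactly the required identity.

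\textbf{The main obstacle.} The step I expect to be delicate is the degenerate case where $G\ge0$ for all $\eta$. This happens when $m=n/2$ and $\psi$ attains $B$ only asymptotically, so that $\eta_{m/n+}=\infty$. In that case the equation holds only in the limit, or the sensitivity is infinite. I would handle it by noting that then $G>0$ everywhere unless $\psi(t)=-B$ is attained. So I will either appeal to the convention that $\eta_{m/n+}$ is finite, implicit in $\hat\vartheta$ lying in $\RR_{>0}$, or restrict to $m<n/2$.

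A second point is to justify that $\eta_{m/n+}$, defined as a supremum of $\{\eta:\BP_{\eta+}=m/n\}$, coincides with $\eta^*$. This uses the monotonicity of $\BP_{\eta+}$ in $\eta$: since $\eta_{(m-1)/n+}<\eta_{m/n+}$, the set of $\eta$ with breakdown exactly $m/n$ is the interval $(\eta_{(m-1)/n+},\eta^*]$.

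\textbf{Second component, $-$ case.} This is symmetric. Points with $x_i<\hat q_{m-}$ are the $x_{(i)}$ with $i\le n-m$, because $\hat q_{m-}$ lies at or above $x_{(n-m)}$ and strictly below $x_{(n-m+1)}$.

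The boundary equality $x_i=\hat q_{m-}$ can occur only when $\hat q_{m-}=x_{(n-m)}$. In that case I will use the strict inequality $x<q$ in $\Psi_-$ together with ties: if $x_{(n-m)}=\hat q_{m-}$, that point is excluded. To keep the counting exactly $n-m$, I will instead choose $\hat q_{m-}$ in the open interval $(x_{(n-m)},x_{(n-m+1)})$.

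With that choice, the same monotonicity-and-continuity argument applied to
\[
\eta\mapsto\sum_{i\le n-m}\psi\bigl(x_{(i)}-\hat\theta+\eta\bigr)-mB
\]
gives that this function vanishes at $\eta_{m/n-}$.
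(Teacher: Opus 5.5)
Your proposal is correct and follows the paper's proof. You check the first and third coordinates directly, and you obtain the second from continuity and monotonicity of $\eta\mapsto\sum_{i>m}\psi(x_{(i)}-\hat\theta-\eta)+mB$ together with the characterization of $\eta_{m/n+}(\hat\theta,x^{(n)})$ in Corollary~\ref{cor:opt_attack_m_est_main} as the largest point where this map is nonnegative.

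Your two caveats are genuine refinements that the paper's proof glosses over. First, at $m=n/2$ the limit $(2m-n)B/n$ vanishes, so the superlevel set is unbounded, the sensitivity is infinite, and the intermediate-value step does not give a finite largest zero. Second, $\Psi_-$ uses $\II\{x<q\}$ in its second coordinate but $\II\{x\le q\}$ in its third. The endpoint choice $\hat q_{m-}=x_{(n-m)}$ would therefore drop the strictly positive contribution $\psi(x_{(n-m)}-\hat\theta+\eta_{m/n-}(\hat\theta,x^{(n)}))$, so taking $\hat q_{m-}$ in the open interval $(x_{(n-m)},x_{(n-m+1)})$ is indeed necessary.
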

\begin{proposition}
\label{prop:finite_identification_BP}
Fix $\eta>0$, let $\BP_{\eta \pm}(\hat \theta, x^{(n)})$ be  as in Definition~\ref{def:BP_directional}, where $\hat \theta$ uniquely solves \eqref{M-est}. Define $k_{\eta\pm} := n \cdot \BP_{\eta\pm}(\hat\theta, x^{(n)})$ and assume $x_{(k_{\eta+})} < x_{(k_{\eta+}+1)}$, $x_{(n-k_{\eta-})} < x_{(n-k_{\eta-}+1)}$, and (that A2) holds. Define
$
\hat {\vartheta}_{\pm}^{\mathrm{BP}}
:=
\bigl(\hat \theta, \eta, \hat q_{k_{\eta\pm}\pm}, \BP_{\eta \pm}(\hat \theta, x^{(n)})\bigr).
$
Then, there exists $r_{n\pm} \in [-2B/n, 2B/n]$ such that
$
\EE_{F_n}\bigl[\Psi_{\pm}(X; \hat {\vartheta}_{\pm}^{\mathrm{BP}})\bigr]
=
(0, r_{n\pm}, 0).
$
\end{proposition}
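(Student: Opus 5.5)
We verify the three components of $\EE_{F_n}[\Psi_\pm(X;\hat\vartheta^{\mathrm{BP}}_\pm)]$ one at a time, working with the ``$+$'' case and obtaining the ``$-$'' case by the reflection $x\mapsto -x$. Write $k=k_{\eta+}$.

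\emph{First component.} It equals $\frac1n\sum_i\psi(x_i-\hat\theta)=0$ by the definition \eqref{M-est} of $\hat\theta$.

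\emph{Third component.} By the hypothesis $x_{(k)}<x_{(k+1)}$ and the choice $\hat q_{k+}\in[x_{(k)},x_{(k+1)})$, exactly $k$ sample points satisfy $x_i\le\hat q_{k+}$. Hence $F_n(\hat q_{k+})=k/n=\BP_{\eta+}$, and this component vanishes. In the ``$-$'' case the hypothesis $x_{(n-k)}<x_{(n-k+1)}$ gives $F_n(\hat q_{k-})=(n-k)/n=1-\BP_{\eta-}$ in the same way.

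\emph{Second component.} This is the substantive step. It equals
\[
\frac1n\Big[\sum_{i>k}\psi\big(x_{(i)}-\hat\theta-\eta\big)+kB\Big]=:\frac1n G(k).
\]
Because $\psi$ is odd and bounded with $\|\psi\|_\infty=B$, we have $\psi(\infty)=B=-\psi(-\infty)$. We therefore use the one-sided version of Theorem~\ref{thm:loc_main} and Corollary~\ref{cor:opt_attack_m_est_main}: $\BP_{\eta+}=k/n$, where $k$ is the smallest $m$ with
\[
\sum_{i>m}\psi\big(x_{(i)}-(\hat\theta+\eta)\big)+mB\ge0 .
\]
This one-sided version follows from the same proof as Theorem~\ref{thm:loc_main}, isolating the upward direction. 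One subtlety is whether the attack is $\ge\eta$ or $>\eta$. Since $\psi$ is continuous under (A2), the estimator can be pushed to exactly $\hat\theta+\eta$ once the inequality holds, so $\ge$ is the right condition; I will state this carefully.

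Given that characterization, $G(k)\ge0$. Minimality of $k$ then gives $G(k-1)<0$ when $k\ge2$. When $k=1$ I instead use $G(0)=\sum_i\psi(x_i-\hat\theta-\eta)\le\sum_i\psi(x_i-\hat\theta)=0$, by monotonicity of $\psi$ and $\eta>0$.

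The increment is
\[
G(k)-G(k-1)=B-\psi\big(x_{(k)}-\hat\theta-\eta\big)\in[0,2B].
\]
Therefore $0\le G(k)\le 2B$, so the second component is $r_{n+}=G(k)/n\in[0,2B/n]$.

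For the ``$-$'' case the second component is $\frac1n\big[\sum_{i\le n-k}\psi(x_{(i)}-\hat\theta+\eta)-kB\big]$. The symmetric argument puts it in $[-2B/n,0]$.

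\emph{Main obstacle.} The step needing care is establishing the one-sided sensitivity/breakdown characterization. This includes handling ties and confirming that the minimal $k$ lies in $[n]$, so that $k\le n$ and the sums are well defined. Everything else is bookkeeping.
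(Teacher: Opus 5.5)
Your proposal is correct and follows the paper's proof essentially step for step. The first and third coordinates vanish by the definitions of $\hat\theta$ and $\hat q_{k\pm}$. The second coordinate is trapped in $[0,2B/n]$ (resp.\ $[-2B/n,0]$) by combining $G(k-1)\le 0\le G(k)$, from the minimality characterization of Theorem~\ref{thm:loc}, with the increment bound $G(k)-G(k-1)=B-\psi(x_{(k)}-\hat\theta-\eta)\le 2B$.

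A few small remarks. Your explicit treatment of $k=1$ via $G(0)\le\sum_i\psi(x_i-\hat\theta)=0$ patches a step the paper skips, since its appeal to minimality over $m\in[n]$ says nothing about $m=0$. Your side claim that the estimator can be pushed to $\hat\theta+\eta$ whenever $G(k)\ge 0$ fails when $G(k)=0$ and $\psi$ never attains $B$ (e.g.\ the logcosh score). This is harmless: the argument only uses $G(k-1)\le 0\le G(k)$, and that holds for the true breakdown index regardless. Indeed $G(k)\ge 0$ is necessary, and $G(k-1)>0$ would let $k-1$ large finite outliers push every root past $\hat\theta+\eta$. Finally, in the ``$-$'' case both you and the paper strictly need $\hat q_{k-}>x_{(n-k)}$, because $\Psi_-$ uses the strict indicator $\II\{x<q\}$.
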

Naturally, if we move from a deterministic $x^{(n)}$ to a random dataset $X^{(n)}$ generated by $F$, we can invoke standard M-estimation machinery to analyze $m$-sensitivity and the threshold breakdown point. We formalize this intuition in Section \ref{sec:asymptotics}.
\subsection{Multiplier Bootstrap}
\label{sec:multiplier_bootstrap}
To approximate sampling variability for estimation procedures, we use the multiplier bootstrap of \citep{jin2001simple}.
Let $W_1,\ldots,W_n$ be i.i.d.\ nonnegative weights, independent of $X^{(n)}$, such that
\begin{align}
\label{eq:boostrap_condition}
    \EE[W_1]=1,\qquad \Var(W_1)=1,\qquad \EE[W_1^{2+\kappa}]<\infty,
\end{align}
for some $\kappa>0$.
Write $\bar W := n^{-1}\sum_{i=1}^n W_i$, and define
$
\tilde F_{n}(x):= \frac{1}{n}\sum_{i=1}^n \frac{W_i}{\bar W}\,\II\{X_i \le x\}
$.
For the location M-estimator, a bootstrap replicate $\hat\theta_b$ is defined as any solution of
\begin{equation}
\EE_{\tilde F_n}\bigl[\psi(X-\hat\theta_b)\bigr] =\sum_{i=1}^n W_i\,\psi(X_i-\hat\theta_b)=0.
\label{eq:bootstrap}
\end{equation}
We use the same estimating equations introduced in Section~\ref{sec:coupled Z} to define bootstrap versions of $\hat {\vartheta}_{\pm}^{\mathrm{BP}}$ or $\hat {\vartheta}_{\pm}^{\mathrm{sen}}$  by replacing
$F_n$ with $\tilde F_n$
in the expectation.
Since the resulting multiplier bootstrap draws produce weighted empirical measures rather than samples obtained by replacing $m$ observations, the  substitution of $F_n$ with $\tilde F_n$ leads to awkward finite sample  threshold breakdown points and $m$-sensitivities. To address this, Appendix~\ref{sec:generalization_bootstrap} extends the definitions of threshold breakdown point and $m$-sensitivity to weighted empirical measures via total-variation neighborhoods which allows to split mass of the multiplier bootstrap distributions $\tilde F_n$ in order to keep comparable definitions of $m$-sensitivities. In particular, Appendix~\ref{sec:generalization_bootstrap} shows that the resulting bootstrap quantities induced by
 $\tilde F_n$
do admit the intended robustness interpretation, i.e., they lead to bootstrap threshold breakdown point taking values on the grid $\{1/n,\dots,\lceil n/2 \rceil/n\}$, just as the original breakdown point in the unweighted case.  We show that these extensions satisfy analogues of Proposition~\ref{prop:finite_identification_sensitivity}, Proposition~\ref{prop:finite_identification_BP}, and Proposition~\ref{prop:sensitivity_map_finite_main}; see Appendix~\ref{sec:generalization_bootstrap} and Appendix~\ref{sec:identification_proof}-\ref{sec:normality_proof} for details.
\subsection{Consistency, Asymptotic Normality, and Bootstrap Inference}
\label{sec:asymptotics}
We use our generalized M-estimation formulation  to give asymptotic results for both the $m$-sensitivity $\eta_{m/n\pm}$ and the threshold breakdown point $\BP_{\eta\pm}$.
Assumptions (A2) and (A4) ensure that for each $\varepsilon\in(0,0.5]$
there exist unique solutions to $\EE_F[\Psi_{\pm}(X; \vartheta)]=0$ given by
$$
\vartheta_{0+} = (\theta_0,\eta_{\varepsilon+},q_\varepsilon,\varepsilon),
\qquad
\vartheta_{0-} = (\theta_0,\eta_{\varepsilon-},q_{1-\varepsilon},\varepsilon),
$$
where $\theta_0$ and $\eta_{\varepsilon\pm}$ are the same as in Section~\ref{sec:assumptions}.  We first establish a consistency result.
\begin{theorem}
\label{thm:eta_consistency_inP}
Let $X_1,\dots,X_n \overset{\text{i.i.d.}}{\sim} F$ and suppose (A1)-(A4) hold.\footnotemark{} Let $\hat \theta$ be defined by \eqref{M-est} and remind $\eta_{m/n\pm}(\hat \theta, X^{(n)})$ is defined in Definition \ref{def:eta}. Then,
$
\eta_{m/n\pm}(\hat\theta,X^{(n)}) \overset{p}{\to} \eta_{\varepsilon\pm},
$
where $\eta_{\varepsilon \pm}$ is the unique zero to \eqref{eq:population_eta+} and \eqref{eq:population_eta-}.
\end{theorem}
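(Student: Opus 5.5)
The plan is to treat $\eta_{m/n\pm}(\hat\theta,X^{(n)})$ as the $\eta$-coordinate of a root of the empirical estimating equations and run a Z-estimation consistency argument. By Proposition~\ref{prop:finite_identification_sensitivity}, $\hat\vartheta^{\mathrm{sen}}_{\pm}=(\hat\theta,\eta_{m/n\pm},\hat q_{m\pm},m/n)$ solves $\EE_{F_n}[\Psi_\pm(X;\vartheta)]=0$. Ties among the $X_i$ near $q_\varepsilon$ occur with probability tending to one to be absent, by (A3). With $\varepsilon_n=m/n\to\varepsilon$ fixed by (A1), the remaining unknowns are $(\theta,\eta,q)$, and the system is square. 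I will deal with the coordinates in order: first $\hat\theta$, then $\hat q$, then $\eta$.

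First, $\hat\theta\to\theta_0$ in probability. This follows from the standard argument for monotone scores. $\psi$ is nondecreasing and bounded, and $\EE_F\psi(X-\theta)$ is strictly decreasing near $\theta_0$ by (A4). The pointwise LLN at $\theta_0\pm\delta$ then pins the empirical root between these two points. Second, $\hat q_{m+}\in[X_{(m)},X_{(m+1)})$ is a sample $m/n$-quantile. Since $m/n\to\varepsilon$ and $f>0$ near $q_\varepsilon$ by (A3), we get $\hat q_{m+}\to q_\varepsilon$ in probability. Symmetrically, $\hat q_{m-}\to q_{1-\varepsilon}$.

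Third, the $\eta$-coordinate. Define the random function $H_{n+}(\eta)=n^{-1}\sum_i\psi(X_i-\hat\theta-\eta)\II\{X_i>\hat q_{m+}\}+\varepsilon_n B$. By Corollary~\ref{cor:opt_attack_m_est_main} and Proposition~\ref{prop:finite_identification_sensitivity}, $\eta_{m/n+}$ is the largest zero of $H_{n+}$, and $H_{n+}$ is nonincreasing in $\eta$. Its population analogue is $H_+$ from \eqref{eq:population_eta+}. The key lemma is that $\sup_{\eta\in K}|H_{n+}(\eta)-H_+(\eta)|\to0$ in probability on compact $K$. I would prove it in three steps:
\begin{itemize}
\item[(i)] Replace $\hat\theta$ by $\theta_0$. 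This costs at most $L|\hat\theta-\theta_0|$, using the Lipschitz property in (A2).
\item[(ii)] Replace $\hat q_{m+}$ by $q_\varepsilon$. This costs at most $B\,|F_n(\hat q_{m+})-F_n(q_\varepsilon)|$, which tends to zero by Glivenko--Cantelli and $\hat q\to q_\varepsilon$.
\item[(iii)] Apply a uniform LLN to the class $\{x\mapsto\psi(x-\theta_0-\eta)\II\{x>q_\varepsilon\}:\eta\in K\}$. This class is Glivenko--Cantelli because it is monotone and bounded, or alternatively because it is Lipschitz in $\eta$ with a bracketing argument.
\end{itemize}
Monotonicity in $\eta$ also lets me avoid uniformity altogether: pointwise convergence at the two points $\eta_{\varepsilon+}\pm\delta$ suffices.

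The conclusion then follows from (A4). Because $H_+'(\eta_{\varepsilon+})<0$ and $H_+$ is nonincreasing, we have $H_+(\eta_{\varepsilon+}-\delta)>0>H_+(\eta_{\varepsilon+}+\delta)$ for small $\delta$. With probability tending to one, $H_{n+}$ has the same signs at these two points. Monotonicity then forces every zero of $H_{n+}$, and in particular its largest zero $\eta_{m/n+}$, into $[\eta_{\varepsilon+}-\delta,\eta_{\varepsilon+}+\delta]$. One caveat is that $H_+$ must be strictly negative beyond $\eta_{\varepsilon+}+\delta$ and not merely nonincreasing; this holds because $H_+$ is nonincreasing and already negative at $\eta_{\varepsilon+}+\delta$. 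The minus side is identical using $H_-$, $\hat q_{m-}$, and $H_-'(\eta_{\varepsilon-})>0$.

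I expect the main obstacle to be step (ii), controlling the random indicator threshold $\hat q$ jointly with the data-dependent $\hat\theta$. I would handle it by the bound $|\II\{x>\hat q\}-\II\{x>q_\varepsilon\}|\le\II\{x\text{ between }\hat q,q_\varepsilon\}$ together with continuity of $F$ near $q_\varepsilon$, as above. A second subtlety is matching Corollary~\ref{cor:opt_attack_m_est_main}'s two-sided maximum to the one-sided quantity. For that I would work directly with the one-sided version, which is what Proposition~\ref{prop:finite_identification_sensitivity} supplies.
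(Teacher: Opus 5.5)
Your proposal is correct and follows essentially the same route as the paper. Both arguments run as follows:
\begin{itemize}
\item establish consistency of $\hat\theta$;
\item show $H_{n+}(\eta)=n^{-1}\sum_{i>m}\psi(X_{(i)}-\hat\theta-\eta)+\tfrac{m}{n}B$ converges to $H_+$ via the same three-term split: a Lipschitz swap of $\hat\theta$ for $\theta_0$, a swap of the empirical trimming point for $q_\varepsilon$ controlled by $B\,|F_n(\cdot)-F_n(q_\varepsilon)|$, and a Glivenko--Cantelli step;
\item conclude with the sign-change-plus-monotonicity argument at $\eta_{\varepsilon+}\pm\delta$.
\end{itemize}
The only difference is in the convergence step. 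The paper proves uniform convergence on a compact set using its Donsker lemma. You observe instead that, because $H_{n+}$ is monotone in $\eta$, pointwise convergence at the two endpoints already suffices; this is a valid shortcut that avoids the lemma.
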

\footnotetext{A weaker set of sufficient conditions for consistency of M-estimators can be found in \citet[Theorem 5.9]{van2000asymptotic}. We retain (A1)-(A4) here because they are convenient for the later arguments.}
Since for a fixed $\eta$ the one-sided threshold breakdown point is essentially an empirical quantile of the one-sided $m$-sensitivity, Theorem \ref{thm:eta_consistency_inP} also suggests that the threshold breakdown point converges to the corresponding population contamination level $\varepsilon^*$ solving $H_{\pm}(\eta_{\varepsilon^*\pm})=0$. This is formalized in the next result.
\begin{corollary}
\label{cor:BP_consistency}
    Let $X_1,\dots,X_n \overset{\text{i.i.d.}}{\sim} F$ and suppose a given $\eta$ is the solution to \eqref{eq:population_eta+} or \eqref{eq:population_eta-} for some $\varepsilon^* \in (0, 0.5]$, which we denote as $H_{\pm}(\eta) := H_{\pm}(\eta_{\varepsilon^*\pm}) = 0$. Suppose (A2)-(A4) hold for $\varepsilon^*$. Then,
    $
    \BP_{\eta\pm}(\hat \theta, X^{(n)})\overset{p}{\to} \varepsilon^*.
    $
\end{corollary}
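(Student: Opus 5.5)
We derive the corollary from Theorem~\ref{thm:eta_consistency_inP} and Proposition~\ref{prop:sensitivity_map}, using the duality between the one-sided threshold breakdown point and the one-sided $m$-sensitivity. We treat the ``$+$'' case; the ``$-$'' case is symmetric.

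\textbf{Step 1: sandwich the breakdown point.} For fixed data, $\BP_{\eta+}(\hat\theta,x^{(n)}) \le m/n$ holds if and only if some $y^{(n)}\in B_{\textsf H}(x^{(n)},m)$ satisfies $\hat\theta(y^{(n)})-\hat\theta(x^{(n)})\ge\eta$. Write $\eta^{(m)}_+ := \eta_{m/n+}(\hat\theta,x^{(n)})$ for the largest upward shift achievable with $m$ contaminated points. This supremum is attained, as the explicit formula of Corollary~\ref{cor:opt_attack_m_est_main} shows, and it is nondecreasing in $m$ because the Hamming balls are nested. Hence
$$\BP_{\eta+}=\tfrac1n\min\{m:\eta^{(m)}_+\ge \eta\}.$$
This is the ``empirical quantile'' structure mentioned before the corollary.

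\textbf{Step 2: fix a margin around $\varepsilon^*$.} Fix $\delta>0$ small, with $\varepsilon^*\pm\delta\in(0,0.5)$. (If $\varepsilon^*=0.5$, only the lower side needs the argument, since $\BP$ is automatically at most about $1/2$.) By Proposition~\ref{prop:sensitivity_map}, $\varepsilon\mapsto\eta_{\varepsilon+}$ is strictly increasing and continuous, and $\eta_{\varepsilon^*+}=\eta$. Therefore
$$\eta_{(\varepsilon^*-\delta)+}<\eta<\eta_{(\varepsilon^*+\delta)+}.$$
Assumptions (A2)--(A4) must hold at $\varepsilon^*\pm\delta$ for Theorem~\ref{thm:eta_consistency_inP} to apply there. (A3) and (A4) are open conditions: the density is positive in a neighborhood, and $H'_\pm$ is continuous in $\varepsilon$ near $\varepsilon^*$ by the implicit function argument. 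So for $\delta$ small they persist, and we argue accordingly.

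\textbf{Step 3: apply consistency at the two endpoints.} Set $m_1=\lfloor(\varepsilon^*-\delta)n\rfloor$ and $m_2=\lceil(\varepsilon^*+\delta)n\rceil$; both satisfy (A1). By Theorem~\ref{thm:eta_consistency_inP},
$$\eta^{(m_1)}_+\overset{p}{\to}\eta_{(\varepsilon^*-\delta)+}<\eta \quad\text{and}\quad \eta^{(m_2)}_+\overset{p}{\to}\eta_{(\varepsilon^*+\delta)+}>\eta .$$
Consider the event $\{\eta^{(m_1)}_+<\eta\le\eta^{(m_2)}_+\}$, whose probability tends to $1$. On this event, monotonicity in $m$ from Step~1 gives
$$m_1< n\BP_{\eta+}\le m_2,$$
so $|\BP_{\eta+}-\varepsilon^*|\le\delta+1/n$. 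Since $\delta$ is arbitrary, convergence in probability follows.

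\textbf{Main obstacle.} The delicate point is Step~2: making sure that the hypotheses stated only at $\varepsilon^*$ transfer to a neighborhood of $\varepsilon^*$, together with the strict monotonicity of the population map. I would handle this through continuity of $H_+'$ in $(\eta,\varepsilon)$ under (A2)--(A3), i.e. the same implicit-function reasoning behind Proposition~\ref{prop:sensitivity_map}. A secondary point is the ties assumption implicit in Theorem~\ref{thm:eta_consistency_inP}. This is harmless because the density of $F$ is positive near the relevant quantiles, so ties there occur with probability tending to zero.
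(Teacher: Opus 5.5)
Your proposal is correct and follows essentially the same route as the paper. The paper also fixes a margin, takes $m_1=\lfloor n(\varepsilon^*-\rho)\rfloor$ and $m_2=\lceil n(\varepsilon^*+\rho)\rceil$, uses strict monotonicity of $\varepsilon\mapsto\eta_{\varepsilon+}$ at $\varepsilon^*$ to separate $\eta_{(\varepsilon^*\pm\rho)+}$ from $\eta$, and applies Theorem~\ref{thm:eta_consistency_inP} at those two levels to trap $\BP_{\eta+}$ between $m_1/n$ and $m_2/n$ with probability tending to one. Your explicit monotonicity-in-$m$ step, your remarks on transferring (A3)--(A4) to $\varepsilon^*\pm\delta$, and your handling of the $\varepsilon^*=0.5$ endpoint make precise points that the paper's proof passes over silently.
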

We strengthen these consistency results by establishing asymptotic normality and consistency of the multiplier bootstrap. We denote by $\tilde {\vartheta}_{\pm}^{\mathrm{sen}} = (\tilde \theta_\pm,\tilde \eta_\pm, \tilde q_\pm, m/n)$ the estimators obtained by solving the expected estimating equations \eqref{eq:Z-system} with respect to $\tilde F_n$.
\begin{theorem}
\label{thm:bootstrap_normality_main}
    Let $X_1,\dots,X_n \overset{\text{i.i.d.}}{\sim} F$  and suppose (A1)-(A4) hold.
    Then,
   $$
\sqrt{n}\bigl(\eta_{m/n\pm}(\hat \theta, X^{(n)}) - \eta_{\varepsilon \pm}) \bigr)
    {\ \rightsquigarrow \ }
    Z \sim
    \mathcal{N}\left(0,
    V_\pm
    \right)
    \text{ and }
    \sqrt{n}\bigl((\tilde \eta_{\pm} - \eta_{m/n\pm}(\hat \theta,  X^{(n)})\bigr)
    \overunderset{p}{W}{\rightsquigarrow}
    Z,
    $$
    where $V_\pm$ is given in \eqref{eq:var_eta+}-\eqref{eq:var_eta-}, and ${\rightsquigarrow}$ and $\overunderset{p}{W}{\rightsquigarrow}$ denote weak and  conditional weak convergence.
\end{theorem}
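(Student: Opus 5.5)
We treat the $+$ case; the $-$ case is symmetric. By Proposition~\ref{prop:finite_identification_sensitivity}, $\hat\vartheta_+^{\mathrm{sen}}=(\hat\theta,\eta_{m/n+},\hat q_{m+},m/n)$ is an exact zero of $\EE_{F_n}[\Psi_+(X;\vartheta)]$. The fourth coordinate $\varepsilon$ is fixed at $m/n$. So we regard $\Psi_+(\cdot;\theta,\eta,q,m/n)$ as a three-dimensional Z-estimation problem in $\gamma=(\theta,\eta,q)$, with a deterministic nuisance $m/n\to\varepsilon$ satisfying $m/n-\varepsilon=O(1/n)$ by (A1). The plan is to apply the standard Z-estimator asymptotic normality theorem (e.g.\ van der Vaart, Theorem 5.21 or the Donsker-class version 3.3.1 of van der Vaart--Wellner). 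Consistency of $\hat\gamma$ comes from Theorem~\ref{thm:eta_consistency_inP}, together with consistency of $\hat\theta$ and of the empirical quantile $\hat q_{m+}\to q_\varepsilon$ under (A3). The bootstrap statement will then follow from the weighted analogue, namely conditional multiplier Z-estimation in the style of Jin et al.\ (2001) or van der Vaart--Wellner Theorem 3.9.11/3.6.13, applied to $\tilde\gamma$ solving $\EE_{\tilde F_n}\Psi_+=0$.

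The key steps, in order, are as follows.
\begin{enumerate}[(i)]
\item \emph{Donsker property.} The class $\{\Psi_+(\cdot;\gamma):\gamma\text{ near }\gamma_0\}$ is Donsker. Each component is a product of a bounded Lipschitz-monotone function with an indicator of a half line, so each is VC-type with a bounded envelope. We also need $L_2$-continuity at $\gamma_0$: $\EE[(\Psi_+(X;\gamma)-\Psi_+(X;\gamma_0))^2]\to0$. This uses (A2) for the $\psi$ parts and the continuity of $F$ at $q_\varepsilon$ from (A3) for the indicator parts. Hence the empirical process term $\sqrt n(\mathbb P_n-P)(\Psi_+(\hat\gamma)-\Psi_+(\gamma_0))=o_p(1)$.
\item \emph{Differentiability of the population map.} The map $\gamma\mapsto P\Psi_+(\cdot;\gamma,\varepsilon)$ is differentiable at $\gamma_0$, and the derivative matrix is lower triangular:
$$\dot\Psi=\begin{pmatrix}-\EE\psi'(X-\theta_0)&0&0\\ -\int_{q_\varepsilon}^\infty\psi'(x-\theta_0-\eta_{\varepsilon+})\,\dd F & -\int_{q_\varepsilon}^\infty\psi'(x-\theta_0-\eta_{\varepsilon+})\,\dd F & -\psi(q_\varepsilon-\theta_0-\eta_{\varepsilon+})f(q_\varepsilon)\\ 0&0&f(q_\varepsilon)\end{pmatrix}.$$
Leibniz's rule with (A3) handles the boundary term. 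The matrix is invertible because $\EE\psi'>0$ and $f(q_\varepsilon)>0$, and the middle entry is nonzero since it is $H_+'(\eta_{\varepsilon+})<0$ by (A4).
\item \emph{Effect of $m/n\ne\varepsilon$.} The derivative in $\varepsilon$ is bounded, so this contributes $O(\sqrt n/n)=o(1)$.
\item \emph{Expansion.} Combining, $\sqrt n(\hat\gamma-\gamma_0)=-\dot\Psi^{-1}\sqrt n\,\mathbb P_n\Psi_+(\cdot;\gamma_0)+o_p(1)$. The second coordinate gives $V_+=e_2^\top\dot\Psi^{-1}\Var(\Psi_+(X;\vartheta_{0+}))\dot\Psi^{-\top}e_2$, which should match \eqref{eq:var_eta+}.
\end{enumerate}

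For the bootstrap, we repeat the expansion with $\tilde F_n$. The conditional multiplier CLT applies to $\sqrt n(\tilde F_n-F_n)\Psi_+$ for the Donsker class. It requires $\EE W^{2+\kappa}<\infty$ and $\Var W=1$, and normalization by $\bar W\to1$ a.s. It gives $\sqrt n(\tilde\gamma-\hat\gamma)=-\dot\Psi^{-1}\sqrt n(\tilde F_n-F_n)\Psi_+(\gamma_0)+o_{p}(1)$ conditionally, hence the same Gaussian limit. This step needs consistency of $\tilde\gamma$, which we get from the weighted versions of the identification results (Appendix~\ref{sec:generalization_bootstrap}), so that $\tilde\eta_+$ is indeed an exact or approximate root.

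The main obstacle is that $\Psi_+$ is discontinuous in $q$, and exact roots may fail to exist because of empirical quantile ties. I would handle this through approximate zeros: $\EE_{F_n}\Psi_+=o_p(n^{-1/2})$ suffices. Propositions~\ref{prop:finite_identification_sensitivity}--\ref{prop:finite_identification_BP} give exact zeros or $O(1/n)$ residuals, and the assumption $x_{(m)}<x_{(m+1)}$ holds a.s.\ near $q_\varepsilon$ under (A3). Everything else is routine Z-estimation.
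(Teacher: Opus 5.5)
Your proposal follows the paper's proof essentially step for step. The paper likewise freezes $\varepsilon$ and checks the hypotheses of a Donsker-class Z-estimation theorem (Kosorok's Theorem~10.16, restated as Theorem~\ref{prop:master}) with the same derivative matrix \eqref{eq:jac}. It absorbs $m/n-\varepsilon=O(1/n)$ into an $o_p(n^{-1/2})$ approximate zero supplied by Proposition~\ref{prop:finite_identification_sensitivity} and Lemma~\ref{lem:finite_identification_bootstrap}, and reads $V_\pm$ off the $(2,2)$ entry of the sandwich. The only real difference is that you import consistency from Theorem~\ref{thm:eta_consistency_inP} rather than from an identifiability condition. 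For the bootstrap replicate, that consistency should come from the weighted Glivenko--Cantelli statement of Lemma~\ref{lem:donsker} plus the same monotonicity argument, not from the identification results themselves.

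One caution for step (iv). With your $\dot\Psi$, the second row of $\dot\Psi^{-1}$ is $\bigl(1/a,\,-1/b_+,\,-\psi(q_\varepsilon-\theta_0-\eta_{\varepsilon+})/b_+\bigr)$ in the notation of \eqref{eq:jac}, whereas \eqref{eq:jac_inv} prints the last entry with a plus sign. Your sandwich will therefore reproduce \eqref{eq:var_eta-}, but it will differ from the printed \eqref{eq:var_eta+} in the signs of its last two terms. This is a sign slip in the paper (its later proofs use the correct row), not a flaw in your argument.
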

Similar to Theorem~\ref{thm:bootstrap_normality_main}, we obtain the asymptotic normality and bootstrap validity for the threshold breakdown point itself.
We denote by $\tilde {\vartheta}_{\pm}^{\mathrm{BP}} = (\tilde \theta_\pm, \eta, \tilde q_\pm, \tilde \varepsilon_\pm)$ the estimators obtained by solving the expected estimating equations \eqref{eq:Z-system} with respect to $\tilde F_n$.
\begin{theorem}
\label{thm:BP_normality_main}
    Let $X_1,\dots,X_n \overset{\text{i.i.d.}}{\sim} F$  and suppose a given $\eta$ is a solution to \eqref{eq:population_eta+} or \eqref{eq:population_eta-} for some $\varepsilon^* \in (0, 0.5)$, which we denote as $H_{\pm}(\eta) := H_{\pm}(\eta_{\varepsilon^*\pm}) = 0$. Suppose (A2)-(A4) hold for $\varepsilon^*$.
    Then,
    $$
    \sqrt{n}\bigl(\BP_{\eta \pm}(\hat \theta, X^{(n)}) - \varepsilon^* \bigr)
    {\ \rightsquigarrow \ }
    Z \sim
    \mathcal{N}\left(0,
    \sigma^2_{\pm}
    \right)
    \text{ and }
    \sqrt{n}\bigl(\tilde \varepsilon_\pm - \BP_{\eta \pm}(\hat \theta, X^{(n)})\bigr)
    \overunderset{p}{W}{\rightsquigarrow}
    Z,
    $$
    where
    \begin{align*}
        \sigma^2_+ :=
        \resizebox{0.41\linewidth}{!}{$ \displaystyle  \left (\frac{\int_{q_{\varepsilon^*}}^{\infty} \psi'(x - (\theta_0 + \eta_{\varepsilon^*+})) \dd F(x)}{ -\psi(q_{\varepsilon^*} - (\theta_0 + \eta_{\varepsilon^*+})) + \|\psi\|_\infty} \right)^2 V_+, $}
        \quad
        \sigma^2_- :=
        \resizebox{0.41\linewidth}{!}{$
        \displaystyle
        \left (\frac{-\int_{-\infty}^{q_{1-\varepsilon^*}} \psi'\big(x-(\theta_0-\eta_{\varepsilon^*-})\big) \dd F(x)}{ -\psi(q_{1-\varepsilon^*} - (\theta_0 - \eta_{\varepsilon^*-})) - \|\psi\|_\infty} \right)^2 V_- $}
    \end{align*}
    for $V_\pm$ specified in \eqref{eq:var_eta+}-\eqref{eq:var_eta-}.
\end{theorem}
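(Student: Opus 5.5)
The plan is to derive Theorem~\ref{thm:BP_normality_main} from Theorem~\ref{thm:bootstrap_normality_main} through a delta-method/inversion argument, exploiting the strict monotone, $C^1$ link between contamination fraction and sensitivity from Proposition~\ref{prop:sensitivity_map}. Equivalently, we can work directly with the coupled system \eqref{eq:Z-system}, now treating $\eta$ as fixed and $(\theta,q,\varepsilon)$ as the unknowns. Proposition~\ref{prop:finite_identification_BP} says that $\hat\vartheta^{\mathrm{BP}}_{\pm}$ solves $\EE_{F_n}[\Psi_\pm]=(0,r_{n\pm},0)$ with $|r_{n\pm}|\le 2B/n=o(n^{-1/2})$, so it is an approximate Z-estimator. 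Its bootstrap analogue (Appendix~\ref{sec:generalization_bootstrap}) is an approximate root for $\tilde F_n$. I would therefore follow exactly the M-estimation route used for Theorem~\ref{thm:bootstrap_normality_main}, applied to the three-dimensional unknown $(\theta,q,\varepsilon)$.

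\textbf{Step 1 (consistency).} Corollary~\ref{cor:BP_consistency} gives $\BP_{\eta\pm}\to_p\varepsilon^*$. The same argument, or a direct argument via quantile consistency, gives $\hat\theta\to_p\theta_0$ and $\hat q_{k\pm}\to_p q_{\varepsilon^*}$ (respectively $q_{1-\varepsilon^*}$), using (A3).

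\textbf{Step 2 (asymptotic linearity).} The class $\{\Psi_\pm(\cdot;\vartheta)\}$ is Donsker: $\psi$ is bounded and Lipschitz, and the indicators $\II\{x>q\}$ form a VC class. Combined with $L_2$-continuity at $\vartheta_0$ (which uses the positive density in (A3)), this gives the standard expansion
\[
0=\sqrt n\,\EE_{F_n}\Psi(\vartheta_0)+\dot\Psi_0\sqrt n(\hat\vartheta-\vartheta_0)+o_p(1),
\]
as in \citet[Theorem 5.21]{van2000asymptotic}. Here $\dot\Psi_0$ is the Jacobian of $\vartheta\mapsto\EE_F\Psi_\pm(X;\vartheta)$ in the coordinates $(\theta,q,\varepsilon)$.

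\textbf{Step 3 (invertibility of the Jacobian).} For $\Psi_+$ the Jacobian is block triangular:
\begin{itemize}
\item[] the first row involves only $\theta$, with entry $-\EE\psi'(X-\theta_0)\neq0$ by (A4);
\item[] the third row has entries $f(q_{\varepsilon^*})$ in $q$ and $-1$ in $\varepsilon$;
\item[] the second row has $\varepsilon$-derivative $B$, $q$-derivative $-\psi(q-\theta_0-\eta)f(q)$, and a $\theta$-derivative.
\end{itemize}
Eliminating $q$ through the third row, the effective $\varepsilon$-coefficient becomes $B-\psi(q_{\varepsilon^*}-\theta_0-\eta)$, which is strictly positive for bounded $\psi$ except in a degenerate case excluded by (A4)/(A3). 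This is exactly the denominator in $\sigma_+^2$.

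\textbf{Step 4 (identifying $\sigma^2_\pm$).} Rather than recomputing the sandwich variance, I would identify it by the implicit function theorem. Differentiating $H_+(\eta_+(\varepsilon))=0$ shows that, near $\varepsilon^*$, the $\varepsilon$-coordinate of the linear representation equals $\bigl(d\eta_+/d\varepsilon\bigr)^{-1}$ times the influence function of $\eta_{m/n+}$. By the ODE in Proposition~\ref{prop:sensitivity_map}, this factor is
\[
\frac{\int_{q_{\varepsilon^*}}^\infty\psi'\,\dd F}{-\psi(q_{\varepsilon^*}-\theta_0-\eta)+B}.
\]
Squaring it and multiplying by $V_+$ yields $\sigma_+^2$; the minus side is symmetric. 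To make this rigorous I would compare the linear expansions of the two systems: the one with $\varepsilon$ fixed and $\eta$ free, and the one with $\eta$ fixed and $\varepsilon$ free. Both share the same Jacobian columns apart from the swapped coordinate, so Cramer's rule relates their $\eta$- and $\varepsilon$-components by this ratio.

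\textbf{Step 5 (bootstrap).} Conditional weak convergence follows from the multiplier-bootstrap Z-estimator theorem (e.g.\ Kosorok, Theorem 10.16). This uses the moment condition \eqref{eq:boostrap_condition}, the same Donsker class, and the approximate-root property of $\tilde\vartheta^{\mathrm{BP}}_\pm$, reusing the argument of Theorem~\ref{thm:bootstrap_normality_main}.

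\textbf{Main obstacle.} The hardest part is the nonsmoothness in $\varepsilon$. $\BP$ lives on the grid $\{k/n\}$, and the third equation contains an indicator, so the empirical equations are solvable only approximately. I must check that the $O(1/n)$ residual from Proposition~\ref{prop:finite_identification_BP}, together with the grid discretization, is $o_p(n^{-1/2})$. I would also need a stochastic-equicontinuity bound on $\sqrt n(\EE_{F_n}-\EE_F)[\Psi(\hat\vartheta)-\Psi(\vartheta_0)]$, which should follow from the Donsker property and consistency.
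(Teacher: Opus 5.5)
Your proposal is correct and follows essentially the same route as the paper. The paper holds $\eta$ fixed, treats $\hat\vartheta^{\mathrm{BP}}_\pm$ and its multiplier-bootstrap analogue as approximate Z-estimators in $(\theta,\varepsilon,q)$ with $o(n^{-1/2})$ residual, verifies the conditions of \citet[Theorem 10.16]{kosorok2008introduction} (including nonsingularity of the Jacobian via $e_+=\|\psi\|_\infty-\psi(q_{\varepsilon^*}-\theta_0-\eta_{\varepsilon^*+})>0$), and reads off $\sigma^2_\pm=(b_\pm/e_\pm)^2V_\pm$. Your Cramer's-rule comparison is a cleaner phrasing of the paper's row comparison: the $\eta$-free and $\varepsilon$-free Jacobians share their $\theta$ and $q$ columns, so the relevant components differ by $\det V_{\vartheta_{0+}+}/\det V^{\mathrm{BP}}_{\vartheta_{0+}+}=-b_+/e_+$ (the sign is irrelevant for the variance), which is exactly the paper's observation that the second row of $(V^{\mathrm{BP}}_{\vartheta_{0+}+})^{-1}$ is $-b_+/e_+$ times that of $V^{-1}_{\vartheta_{0+}+}$.
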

A natural follow-up question is to characterize the relationship between $m$-sensitivity and threshold breakdown point, as they arise from a coupled system of M-estimating equations. This is addressed by the following proposition.
\begin{proposition}
\label{prop:sensitivity_map_finite_main}
Suppose (A1)-(A4) hold for a fixed $\varepsilon^* \in (0, 0.5)$ with $\eta^*$ being zeros of \eqref{eq:population_eta+} or \eqref{eq:population_eta-}.  Further assume that
    $
    F\!\left(\left\{x\in\mathbb R:\psi' \text{ is discontinuous at } x-(\theta_0 \pm \eta_{\varepsilon^*\pm})\right\}\right)=0.
    $
    The threshold breakdown point of $\hat \theta$ at $X^{(n)}=(X_1,\dots,X_n)$ can be expressed as
    $$
        \BP_{\eta^*\pm}(\hat \theta, X^{(n)})- \varepsilon^* = -\frac{\dd\varepsilon}{\dd\eta_{\varepsilon\pm}} \Big |_{\varepsilon = \varepsilon^*} \cdot (\eta_{m/n\pm}(\hat \theta, X^{(n)}) - \eta^*) + o_p \bigg (\frac{1}{\sqrt{n}} \bigg),
    $$
    where the derivative satisfies the analytical expression found in equation  \eqref{eq:derivative}.
\end{proposition}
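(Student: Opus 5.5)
The plan is to run a Bahadur-type delta argument that links two empirical inverses through the population map $\varepsilon\mapsto\eta_{\varepsilon\pm}$ of Proposition~\ref{prop:sensitivity_map}. I treat the ``$+$'' case; the ``$-$'' case is symmetric. Write $\hat k:=n\BP_{\eta^*+}(\hat\theta,X^{(n)})$ and $m:=m_n$ as in (A1) with $m/n\to\varepsilon^*$.

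The key object is the empirical sensitivity curve $t\mapsto \hat\eta(t):=\eta_{\lfloor tn\rfloor/n+}(\hat\theta,X^{(n)})$ near $t=\varepsilon^*$. By Definition~\ref{def:eta_directional}, this curve is a nondecreasing step function. Also $\BP_{\eta^*+}$ is its generalized inverse at level $\eta^*$: $\hat k$ is the smallest $k$ with $\eta_{k/n+}\ge\eta^*$. Hence
\[
\eta_{(\hat k-1)/n+}<\eta^*\le\eta_{\hat k/n+}.
\]
The first step is therefore to show that consecutive jumps are small: $\eta_{(k+1)/n+}-\eta_{k/n+}=O_p(1/n)$ uniformly for $k/n$ in a neighborhood of $\varepsilon^*$. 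Corollary~\ref{cor:opt_attack_m_est_main} gives $\eta_{k/n+}$ as the root in $\eta$ of
\[
G_k(\eta):=\sum_{i>k}\psi(X_{(i)}-\hat\theta-\eta)+kB .
\]
Passing from $k$ to $k+1$ changes $G$ by at most $2B$, while $-\partial_\eta G_k/n$ is bounded below by a positive constant w.p.a.\ 1 near the root, by (A4) and a uniform law of large numbers. The jumps are thus $O_p(1/n)=o_p(n^{-1/2})$, and it follows that $\eta^*=\hat\eta(\hat k/n)+o_p(n^{-1/2})$.

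The second step is a uniform expansion of $\hat\eta(t)-\eta_+(t)$ over $t$ in a shrinking neighborhood of $\varepsilon^*$. Using the coupled $Z$-system of Proposition~\ref{prop:finite_identification_sensitivity} together with the proof of Theorem~\ref{thm:bootstrap_normality_main}, I would obtain
\[
\hat\eta(t)-\eta_+(t)=\mathbb{G}_n\phi_t/\sqrt n+o_p(n^{-1/2})
\]
uniformly in $t$, where $\phi_t$ is the influence function of the system at $\vartheta_{0+}(t)$. The indices $\{\phi_t\}$ form a Donsker class: Lipschitz $\psi$ times indicators of half-lines. The added hypothesis that $\psi'$ is $F$-a.s.\ continuous at the relevant points ensures that the Jacobian of $\vartheta\mapsto \EE_F\Psi_+(X;\vartheta)$ is continuous in $t$. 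Asymptotic equicontinuity then gives $\mathbb{G}_n\phi_t-\mathbb{G}_n\phi_{\varepsilon^*}=o_p(1)$ for $t\to\varepsilon^*$. Consistency of $\hat k/n$ comes from Corollary~\ref{cor:BP_consistency}, so $\hat k/n$ lies in such a neighborhood w.p.a.\ 1. Combining this with the second step yields
\[
\hat\eta(\hat k/n)-\eta_+(\hat k/n)=\hat\eta(m/n)-\eta_+(m/n)+o_p(n^{-1/2}).
\]

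The final step is the algebra. Combining the two previous displays with $\eta_+(\varepsilon^*)=\eta^*$ gives
\[
\eta_+(\varepsilon^*)-\eta_+(\hat k/n)=\eta_{m/n+}-\eta_+(m/n)+o_p(n^{-1/2}).
\]
Proposition~\ref{prop:sensitivity_map} says $\eta_+$ is $C^1$ with positive derivative. Taylor expanding both sides around $\varepsilon^*$, and using $\hat k/n-\varepsilon^*=O_p(n^{-1/2})$ from Theorem~\ref{thm:BP_normality_main} and $m/n-\varepsilon^*=O(1/n)$, gives
\[
-\eta_+'(\varepsilon^*)(\hat k/n-\varepsilon^*)=\eta_{m/n+}-\eta^*+o_p(n^{-1/2}).
\]
Dividing by $\eta_+'(\varepsilon^*)=(\dd\varepsilon/\dd\eta_{\varepsilon+})^{-1}$ gives the claim, with the derivative given explicitly by the ODE integrand in Proposition~\ref{prop:sensitivity_map}.

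I expect the main obstacle to be the uniform-in-$t$ expansion in the second step. Theorem~\ref{thm:bootstrap_normality_main} is only pointwise, so its linearization has to be redone with $\varepsilon$ as a varying index. This requires local uniform invertibility of the Jacobian (from (A4) plus continuity) and a stochastic-equicontinuity bound for $\psi(x-\theta-\eta)\II\{x>q\}$ near $\vartheta_{0+}$. The bound is standard, since this class is VC-type with an $L_2(F)$-continuous envelope under (A3). The same device is implicit in the proof of Theorem~\ref{thm:BP_normality_main}, whose variance $\sigma_\pm^2$ is exactly $(\dd\varepsilon/\dd\eta)^2V_\pm$, consistent with the result.
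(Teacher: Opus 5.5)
Your argument is sound in outline, but it takes a genuinely different route from the paper.

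\textbf{How the paper argues.} The paper (via Proposition~\ref{prop:sensitivity_map_finite}) never forms the sensitivity curve $t\mapsto\eta_{\lfloor tn\rfloor/n+}$ and never inverts it. It treats two estimators as roots of the same system:
\begin{itemize}
\item $(\hat\theta,\eta_{m/n+},\hat q_{m+})$ is a $Z$-estimator in $(\theta,\eta,q)$, with $\varepsilon$ frozen;
\item $(\hat\theta,\BP_{\eta^*+},\hat q_{\hat k+})$ is an approximate $Z$-estimator in $(\theta,\varepsilon,q)$, with $\eta$ frozen at $\eta^*$. Proposition~\ref{prop:finite_identification_BP} supplies its residual in $[0,2B/n]$ directly.
\end{itemize}
Both are linearized by Theorem~\ref{prop:master} at the same point $\vartheta_{0+}$, against the same process $\mathbb{G}_n\Psi_+(\cdot;\vartheta_{0+})$. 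The second rows of $V_{\vartheta_{0+}+}^{-1}$ and $(V^{\mathrm{BP}}_{\vartheta_{0+}+})^{-1}$ are proportional with factor $-b_+/e_+$, so the common statistic $\Xi_n$ cancels. The implicit function theorem then identifies $b_+/e_+$ with $\dd\varepsilon/\dd\eta$.

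\textbf{What each route buys.} The paper's route needs only one single-point linearization and no control of root spacings, because the $2B/n$ slack sits on the value of the estimating equation rather than on $\eta$. It also carries over verbatim to the bootstrap. Yours is the classical inverse-map (Bahadur/Vervaat-type) delta method applied to the empirical maxbias curve. It makes transparent why the slope of the population curve appears. The price is the two extra ingredients you isolated: a spacing bound and a linearization uniform in $t$.

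\textbf{Step~1 needs a different justification.} (A2) gives no uniform law of large numbers for $n^{-1}\sum_{i>k}\psi'(X_{(i)}-\hat\theta-\eta)$. Translates of a merely bounded measurable $\psi'$ need not form a Glivenko--Cantelli class. However, you only use jumps of order $o_p(n^{-1/2})$, and these can be obtained without it:
\begin{itemize}
\item take $h=\epsilon n^{-1/2}$ and note that $0\le G_{k+1}(\eta_{k/n+})\le 2B$;
\item bound $n^{-1}[G_{k+1}(\eta)-G_{k+1}(\eta+h)]\ge b_+h/2+o_p(n^{-1/2})$, using the equicontinuity of Lemmas~\ref{lem:donsker}--\ref{lem:equicon} applied to $\psi$ itself;
\item combine this with continuity of $\int_q^\infty\psi'(x-t)\,\dd F(x)$ at $(\theta_0+\eta^*,q_{\varepsilon^*})$. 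This continuity is where the extra hypothesis enters.
\end{itemize}

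\textbf{Step~2 uses more regularity than the hypotheses give.} The extra hypothesis is imposed only at the shift $\theta_0+\eta_{\varepsilon^*+}$. It therefore does not provide a Jacobian, and hence no influence function $\phi_t$, at $\vartheta_{0+}(t)$ for $t\neq\varepsilon^*$. For the same reason you should cite Lemma~\ref{lem:sens-bias_property}, not Proposition~\ref{prop:sensitivity_map}, since the latter assumes (A2)--(A4) for every $\varepsilon$. Differentiability of $\eta_+$ at $\varepsilon^*$ is all your Taylor step uses.

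\textbf{The clean fix.} Linearize every $\hat\vartheta_t=(\hat\theta,\eta_{\lfloor tn\rfloor/n+},\hat q_{\lfloor tn\rfloor+},\lfloor tn\rfloor/n)$ at the single point $\vartheta_{0+}(\varepsilon^*)$, in all four coordinates. This gives
\[
\eta_{\lfloor tn\rfloor/n+}-\eta^*=\Xi_n/(b_+\sqrt n)+(e_+/b_+)(t-\varepsilon^*)+o_p(n^{-1/2}+|t-\varepsilon^*|),
\]
where $\Xi_n=G_{2n}-(b_+/a)G_{1n}+(c_+/d_+)G_{3n}$ and $(G_{1n},G_{2n},G_{3n})=\mathbb{G}_n\Psi_+(\cdot;\vartheta_{0+})$. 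Evaluate this at $t=m/n$, and at $t=\hat k/n$, where the left side is $o_p(n^{-1/2})$ by Step~1. This yields both the claim and the $\sqrt n$-rate of $\hat k/n$ at once, without importing Theorem~\ref{thm:BP_normality_main}. Done this way, your argument is the paper's elimination step in different clothing.
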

\begin{remark}
    Proposition~\ref{prop:sensitivity_map_finite_main} formalizes the intuition that $m$-sensitivity and threshold breakdown are two sides of the same coin: both inherit their first-order fluctuations from the same empirical process, and differ only by a deterministic rescaling given by the slope of the population maxbias curve.
\end{remark}
In the next section, we will illustrate how our asymptotic analysis can be used to assess the robustness of our statistical inference procedures in practical settings.
\section{Empirical Experiments}
\label{sec:empirical_experiment}
We analyze a dataset available in The Data and Story Library (\url{http://lib.stat.cmu.edu/DASL/Datafiles/Calcium.html}). It describes the blood pressure measurements for 21 African–American men with $10$ on calcium and $11$ on placebo. The goal is to test whether increasing calcium intake reduces blood pressure. We revisit the calcium trial, where $x^{(n_x)}$ denotes the calcium group and $y^{(n_y)}$ the placebo group, and $\hat\theta_{x^{(n_x)}}, \hat\theta_{y^{(n_y)}}$ are location estimates under the loss shown in each panel. We normalize each group separately using MAD estimator of scale tuned to achieve consistency at the normal model,
and take the tuning parameter $\delta$ as described in Appendix \ref{sec:simulation_setting} for different robust losses.
We want to test whether the location parameters are equal in both populations. In other words, we want to test $H_0:\theta_x=\theta_y$, where $\theta_x$ and $\theta_y$ denote the location parameters of the calcium and placebo groups respectively. For each discrete level $m\in\{0,1,\dots, \lceil\frac{1}{2} \min(n_x,n_y) \rceil / \min(n_x,n_y)\}$ we plot the $m$-sensitivity $\eta_{m/n}(\hat\theta_{x^{(n_x)}}-\hat\theta_{y^{(n_y)}})$ on the $y$-axis against $m/\min(n_x,n_y)$ on the $x$-axis in Figure \ref{fig:bp_two_sample_ratio_real_data}. Uncertainty bands are obtained from the bootstrap introduced in Section \ref{sec:multiplier_bootstrap}. To tighten the upper bounds for the contaminated standard error $\hatse$, in the real-data analysis we take
$
\mathcal A=\{a:\psi(a\delta)\in\{-0.1\delta,0,0.1\delta,\ldots,\delta\}\}
$
for the score function under consideration, and use this choice in the contamination schemes \eqref{eq:contamination_left_loc} and \eqref{eq:contamination_right_loc}.
The gray lines represent one-sided rejection boundaries computed by holding the pooled variance $\hatse$ fixed; explicitly, the boundary at level $\alpha$ is
$
z_{1-\alpha}\,\hatse (\hat \theta_{x^{(n_x)}}, \hat\theta_{y^{(n_y)}})-(\hat\theta_{x^{(n_x)}}-\hat\theta_{y^{(n_y)}}),
$
where $\hatse (\hat \theta_{x^{(n_x)}}, \hat\theta_{y^{(n_y)}})$ is the pooled standard error estimate. The original data do not reject at $m=0$. As the level increases, whenever the red $m$-sensitivity curve first crosses a gray line, the one-sided test at that $\alpha$ would change its decision. The corresponding $m/\min(n_x,n_y)$ is the test’s breakdown point. Across three robust losses, curvature reflects the robustness of the two-sample Wald-type tests, while all panels saturate at $m/\min(n_x,n_y)=0.5$, where complete breakdown is inevitable. As a concrete interpretation, consider Figure \ref{fig:two_sample_huber_ratio_real_data}. At $m=1$, equivalently $m/\min(n_x,n_y)=0.1$, the red $m$-sensitivity curve already crosses the $\alpha=0.05$ decision threshold, so contaminating a single observation will flip the one-sided test decision. By contrast, for the more stringent level $\alpha=0.001$, the upper end of the 95\% bootstrap confidence interval at $m=1$ still remains below the corresponding threshold, indicating that even after accounting for sampling variability, a single contaminated observation can not flip the test decision.
\begin{figure}[htbp]
  \centering
  \begin{subfigure}[b]{0.32\textwidth}
    \centering
    \includegraphics[width=\linewidth]{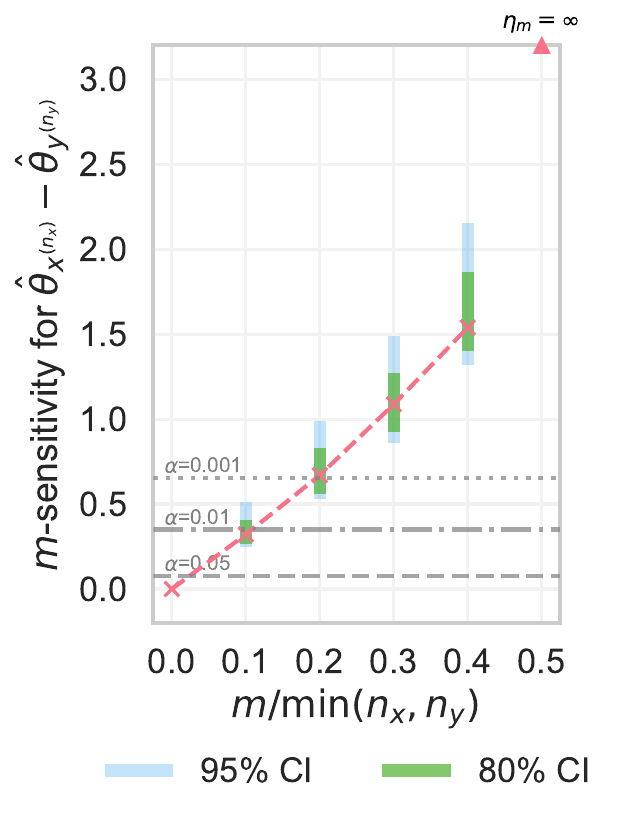}
    \caption{Huber's loss.}
    \label{fig:two_sample_huber_ratio_real_data}
  \end{subfigure}
  \hfill
  \begin{subfigure}[b]{0.32\textwidth}
    \centering
    \includegraphics[width=\linewidth]{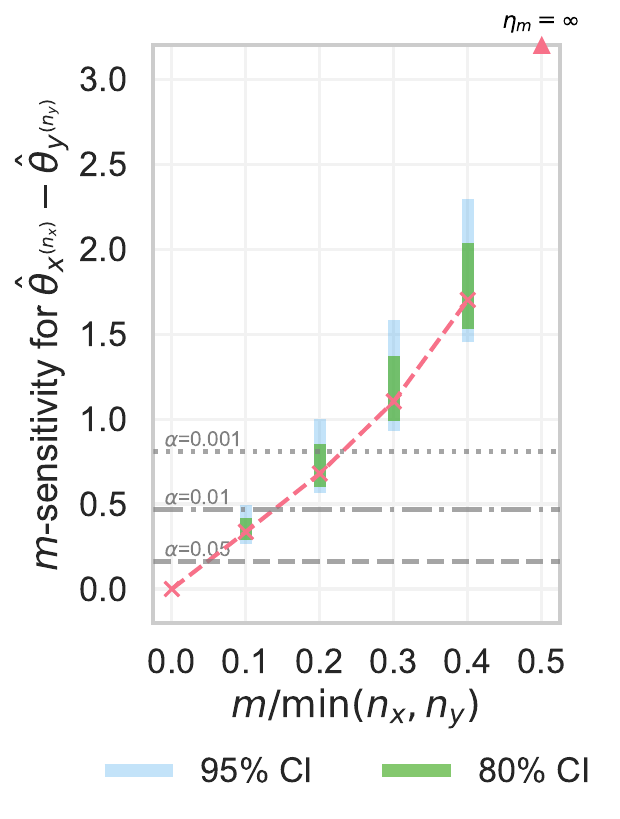}
    \caption{Logcosh loss.}
    \label{fig:two_sample_logcosh_ratio_real_data}
  \end{subfigure}
  \hfill
  \begin{subfigure}[b]{0.32\textwidth}
    \centering
    \includegraphics[width=\linewidth]{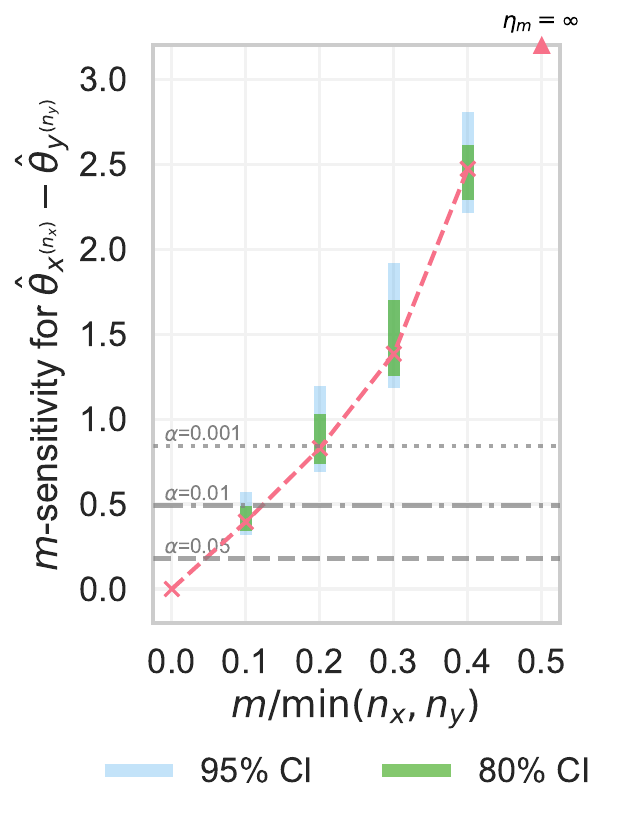}
    \caption{Self-concordant loss.}
    \label{fig:two_sample_concordant_ratio_real_data}
  \end{subfigure}
  \caption{\small Sensitivity curves for the calcium–placebo blood pressure data. Panels (a)–(c) report the $m$-sensitivity $\eta_{m/n}(\hat\theta_{x^{(n_x)}}-\hat\theta_{y^{(n_y)}})$ for Huber's, logcosh, and self-concordant losses, respectively. The horizontal axis is the discrete level $m/\min(n_x,n_y)$; at $m/\min(n_x,n_y)=0.5$ complete breakdown occurs, matching the classical finite sample regime. Red markers (and the dashed line connecting them) are the computed sensitivities on this dataset. Vertical green and blue bands are 80\% and 95\% bootstrap confidence intervals. Gray dashed horizontals are one-sided decision thresholds computed under a fixed pooled-variance estimate, equal to $z_{1-\alpha}\,\hatse(\hat \theta_{x^{(n_x)}}, \hat\theta_{y^{(n_y)}})-(\hat\theta_{x^{(n_x)}}-\hat\theta_{y^{(n_y)}})$; a crossing indicates the $\alpha$-level test would flip its decision.}
  \label{fig:bp_two_sample_ratio_real_data}
\end{figure}
\begin{figure}[htbp]
  \centering
  \begin{subfigure}[b]{0.32\textwidth}
    \centering
    \includegraphics[width=\linewidth]{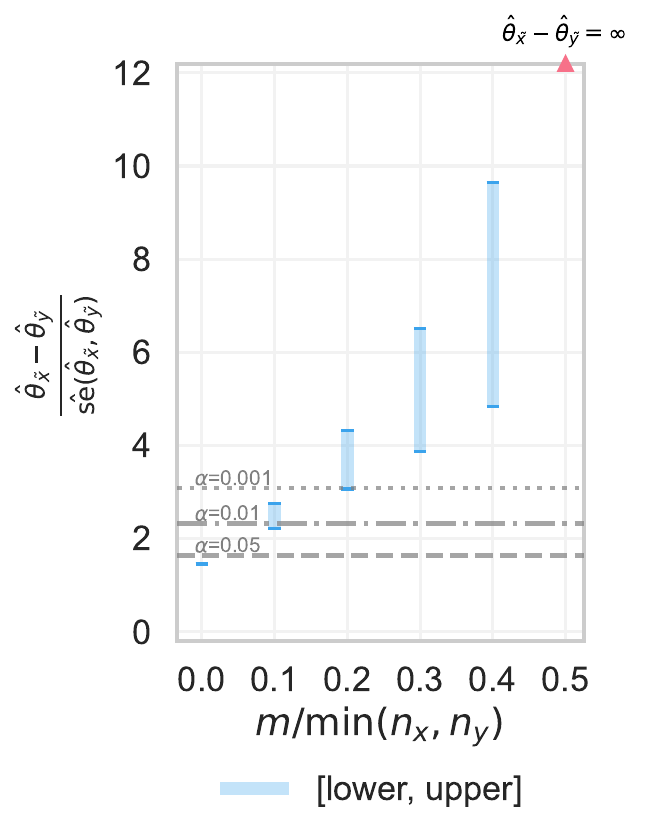}
    \caption{Huber's loss.}
    \label{fig:two_sample_test_huber_ratio}
  \end{subfigure}
  \hfill
  \begin{subfigure}[b]{0.32\textwidth}
    \centering
    \includegraphics[width=\linewidth]{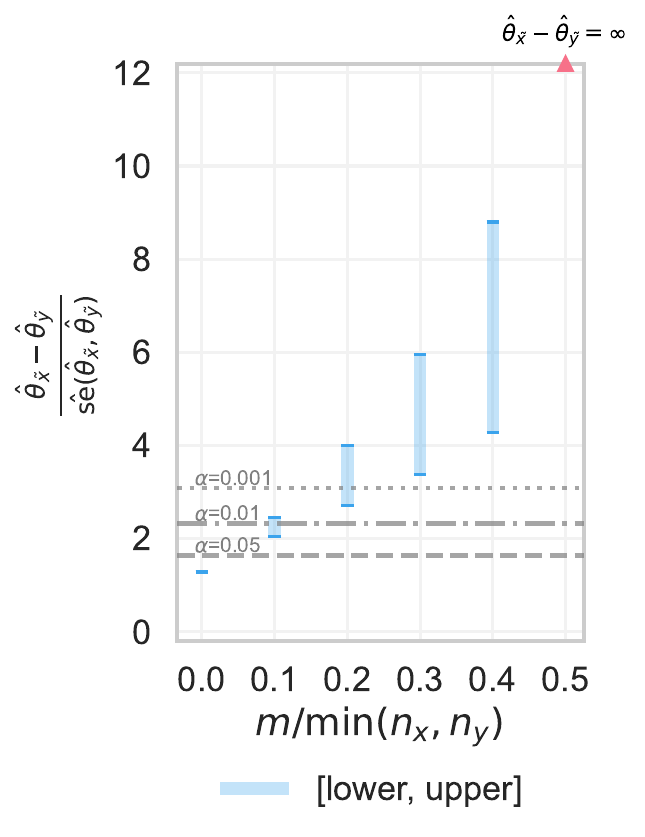}
    \caption{Logcosh loss.}
    \label{fig:two_sample_test_logcosh_ratio}
  \end{subfigure}
  \hfill
  \begin{subfigure}[b]{0.32\textwidth}
    \centering
    \includegraphics[width=\linewidth]{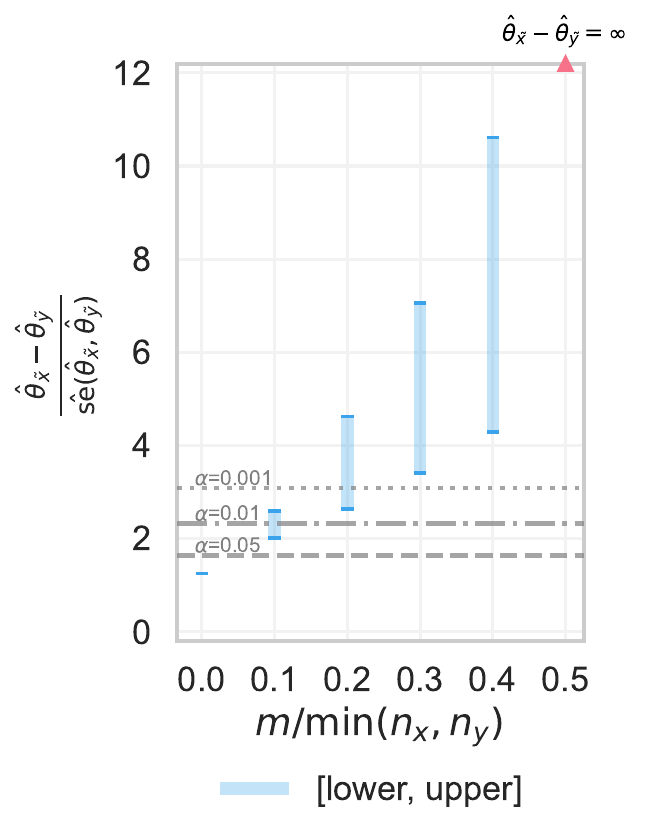}
    \caption{Self-concordant loss.}
    \label{fig:two_sample_test_concordant}
  \end{subfigure}
  \caption{ \small Standardized test statistic bands for the calcium-placebo blood pressure data. For each loss, we report with the light-blue bars, at level $m/\min(n_x,n_y)$, the lower bound and upper bound given by Theorem \ref{thm:meta} for
    $
    ({\hat\theta_{\tilde x}-\hat\theta_{\tilde y}})/{\hatse(\hat\theta_{\tilde x},\hat\theta_{\tilde y})},
    $
    where $\tilde x$ and $\tilde y$ denote the contaminated datasets at each level. Gray dashed lines are again one-sided thresholds $z_{1-\alpha}$. A bar lying entirely above a gray line certifies a flip of the $\alpha$-level test; a bar entirely below certifies no flip; straddling leaves the status unresolved. }
  \label{fig:test_two_sample_ratio_real_data}
\end{figure}
Complementing the $m$-sensitivity curves, Figure \ref{fig:test_two_sample_ratio_real_data} plots the standardized test statistic that allows contamination on both the location estimate and the pooled scale for the two samples. It is not possible to obtain an exact solution for the optimal contamination at each level, and we use the upper and lower bounds derived in Theorem \ref{thm:meta} (or more specifically, Corollary \ref{cor:two_sample_test}) to characterize the range of the test statistic under contamination. Note that the lower endpoint is attainable by construction, but the upper endpoint is a conservative bound. Gray dashed lines are again one-sided thresholds $z_{1-\alpha}$. A bar lying entirely above the line certifies a flip of the $\alpha$-level test; a bar entirely below certifies no flip; straddling leaves the status unresolved. As a concrete interpretation, also consider Figure \ref{fig:two_sample_test_huber_ratio}. At $m/\min(n_x,n_y)=0.1$, the lower end of the blue band already crosses the $\alpha=0.05$ decision threshold, so contaminating a single observation will flip the one-sided test decision. By contrast, for the more stringent level $\alpha=0.001$, the upper end of the blue band at $m=1$ still remains below the corresponding threshold, indicating a single contaminated observation is not possible to flip the test decision.

\section{Concluding Remarks}
\label{sec:conclusion}
We proposed a threshold breakdown point to quantify the amount of contamination required to move an M-estimator by a predetermined fixed amount. We extended this definition to location-scale estimators and test statistics based on simple M-estimators, and proposed a statistical framework  for carrying out a complete diagnostic of the robustness of a statistical inference procedure based on M-estimators.
Several extensions appear natural. Many modern estimators in causal inference and missing-data problems, such as inverse probability weighted and augmented IPW estimators, can be written as functionals of a weighted empirical measure; our weighted results provide a starting point for formal outlier robustness analyses in those contexts. It would also be interesting to consider extensions to the Bayesian setting following the M-posterior framework of \cite{marusic:rush_avellamedina2025}. Extensions to multivariate M-estimators are also natural and an intuitive starting point could be to consider robust linear regression M-estimation or  multivariate location–scatter M-estimators. We hope that the tools developed in this paper could be extended to such multivariate settings with suitable geometric perturbation refinements of our breakdown analysis.

{
\begingroup
\renewcommand{\baselinestretch}{1}\selectfont
\small
\bibliographystyle{plainnat}
\bibliography{Biblio}
\endgroup
}

\appendix
\newpage
{
\begingroup
\renewcommand{\baselinestretch}{1}\selectfont
\section{Additional Theoretical Results}
\subsection{Additional Result for Variance Estimation BP}
\label{sec:opt_attack_m_est_se}
This section complements the main text by analyzing the robustness of plug-in standard errors $\hatse(\hat\theta)$ in \eqref{eq:se}. Unlike the restricted regime in which the standard error is evaluated at a fixed value $\theta_0$ and treated as non-manipulable, the mapping $x^{(n)} \mapsto \hatse(\hat\theta(x^{(n)}))$ is more involved. Any perturbation of the sample $x^{(n)}$ simultaneously changes the estimator $\hat\theta(x^{(n)})$ and the empirical quantities entering the standard-error formula. This  prevents a direct application of the threshold breakdown characterization used for $\hatse(\theta_0)$. Our goal here is therefore more modest and operational: we derive computable upper and lower bounds on the one-sided sensitivities $\eta_{m/n\pm}(\hatse(\hat\theta),x^{(n)})$. The resulting bounds are then used in Appendix~\ref{sec:tests} to instantiate Theorem~\ref{thm:meta} for testing procedures that employ $\hatse(\hat\theta)$.
The next lemma connects the $m$-sensitivity of the plug-in standard error to the $m$-sensitivity of the underlying location estimator. It provides finite sample upper bounds on $\eta_{m/n\pm}(\hatse(\hat\theta),x^{(n)})$ in terms of $\eta_{m/n}(\hat\theta,x^{(n)})$, together with additional curvature terms that only depend on $\psi$.
\begin{lemma}
\label{lem:opt_attack_m_est_se}
Assume that $\psi$ is differentiable a.e., bounded, non-decreasing and passes through $0$. Further assume that $\psi$ is odd and $\psi'(x)$ is non-increasing in $|x|$.
Define
\begin{align*}
    \Delta(t)
    &:= \max_{x} \bigl|\psi'(x+t) - \psi'(x)\bigr|, \\
    S_{m+}
    &:= \max_{\substack{I \subseteq [n] \\ |I| = n - m}}
        \max_{\tilde \theta \in [\hat \theta - \eta_{m/n-}(\hat \theta, x^{(n)}), \,
                                 \hat \theta + \eta_{m/n+}(\hat \theta, x^{(n)})]}
        \sum_{i \in I} \II\{ |x_i - \hat \theta | < |x_i - \tilde \theta| \}, \\
    S_{m-}
    &:= \max_{\substack{I \subseteq [n] \\ |I| = n - m}}
        \max_{\tilde \theta \in [\hat \theta - \eta_{m/n-}(\hat \theta, x^{(n)}), \,
                                 \hat \theta + \eta_{m/n+}(\hat \theta, x^{(n)})]}
        \sum_{i \in I} \II\{ |x_i - \hat \theta | > |x_i - \tilde \theta| \}.
\end{align*}
Let $\pi$ be a permutation such that $|x_{\pi_1}| \le |x_{\pi_2}| \le \dots \le |x_{\pi_n}|$.
Then
\begin{align*}
    \eta_{m/n+}(\hatse(\hat \theta), x^{(n)})
    \le{} &\;
    \sqrt{
      \frac{
        5m \psi_{\max}^2
        + \sum_{i>m} \psi(x_{\pi_i} - \hat \theta)^2
      }{
        \max \Bigl\{
          \sum_{i>m} \psi'(x_{\pi_i} - \hat \theta) - S_{m+}\Delta(\eta_{m/n}(\hat \theta, x^{(n)})),\, 0
        \Bigr\}^2
      }
    }
    - \hatse(\hat \theta),
    \\
    \eta_{m/n-}(\hatse(\hat \theta), x^{(n)})
    \le{} &\;
      \hatse(\hat \theta)
      - \sqrt{
        \frac{
          \max\bigl\{
            \sum_{i\le n-m} \psi(x_{\pi_i} - \hat \theta)^2
            - 4m \psi_{\max}^2,\, 0
          \bigr\}
        }{
          \bigl(
            m\psi'(0) + \sum_{i\le n-m} \psi'(x_{\pi_i} - \hat \theta)
            + S_{m-}\Delta(\eta_{m/n}(\hat \theta, x^{(n)}))
          \bigr)^2
        }
      }.
\end{align*}
Furthermore,
\begin{align*}
    S_{m+}
    ={}&\max \Biggl\{
      \sum_{i=1}^{n-m} \II \bigl\{ \hat \theta + \eta_{m/n+}(\hat \theta, x^{(n)}) \ge 2x_{(i)} - \hat \theta \bigr\},
      \sum_{i=m+1}^{n} \II \bigl\{ \hat \theta - \eta_{m/n-}(\hat \theta, x^{(n)}) \le 2x_{(i)} - \hat \theta \bigr\}
    \Biggr\}, \\
    S_{m-}
    ={}& \max \Biggl\{
      \sum_{i=1}^{n} \II \{x_{(i)} \ge \hat \theta \},\;
      \sum_{i=1}^{n} \II \{x_{(i)} \le \hat \theta \}
    \Biggr\} - m.
\end{align*}
As a special case, for Huber's loss with parameter $\delta$ one takes $\psi'(x) = \II\{|x| \le \delta\}$. Defining
\begin{align*}
    q_{m+}
    &:= \min_{\tilde \theta \in [\hat \theta - \eta_{m/n-}(\hat \theta, x^{(n)}), \,
                                 \hat \theta + \eta_{m/n+}(\hat \theta, x^{(n)})]}
        \sum_{i=1}^n
        \bigl(
          \II\{x_i - \tilde \theta \in [-\delta, \delta]\}
          - \II\{x_i - \hat \theta \in [-\delta, \delta]\}
        \bigr), \\
    q_{m-}
    &:= \max_{\tilde \theta \in [\hat \theta - \eta_{m/n-}(\hat \theta, x^{(n)}), \,
                                 \hat \theta + \eta_{m/n+}(\hat \theta, x^{(n)})]}
        \sum_{i=1}^n
        \bigl(
          \II\{x_i - \tilde \theta \in [-\delta, \delta]\}
          - \II\{x_i - \hat \theta \in [-\delta, \delta]\}
        \bigr),
\end{align*}
we have that
\begin{align*}
    \eta_{m/n+}(\hatse(\hat \theta), x^{(n)})
    \le{} &\;
    \sqrt{
      \frac{
        5 m \psi_{\max}^2
        + \sum_{i>m} \psi(x_{\pi_i} - \hat \theta)^2
      }{
        \max \Bigl\{
          \sum_{i>m} \psi'(x_{\pi_i} - \hat \theta)
          + \delta \cdot (q_{m+} - m),\, 0
        \Bigr\}^2
      }
    }
    - \hatse(\hat \theta),
    \\
    \eta_{m/n-}(\hatse(\hat \theta), x^{(n)})
    \le{} &\;
      \hatse(\hat \theta)
      - \sqrt{
        \frac{
          \max \Bigl\{
            \sum_{i\le n-m} \psi(x_{\pi_i} - \hat \theta)^2
            - 4m \psi_{\max}^2,\, 0
          \Bigr\}
        }{
          \Bigl(
            m\psi'(0) + \sum_{i\le n-m} \psi'(x_{\pi_i} - \hat \theta)
            + \delta \cdot (q_{m-} + m)
          \Bigr)^2
        }
      }.
\end{align*}
\end{lemma}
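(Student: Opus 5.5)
The plan is to bound numerator and denominator of $\hatse(\hat\theta)^2$ separately over all contaminated samples $y^{(n)}\in B_{\textsf H}(x^{(n)},m)$. Write $\tilde\theta=\hat\theta(y^{(n)})$, let $I$ with $|I|=n-m$ be the untouched indices, and put $N(y,\tilde\theta)=\sum_i\psi(y_i-\tilde\theta)^2$ and $D(y,\tilde\theta)=\sum_i\psi'(y_i-\tilde\theta)$. By the definition of the one-sided $m$-sensitivities, $\tilde\theta\in[\hat\theta-\eta_{m/n-}(\hat\theta,x^{(n)}),\hat\theta+\eta_{m/n+}(\hat\theta,x^{(n)})]$, so $|\tilde\theta-\hat\theta|\le\eta_{m/n}(\hat\theta,x^{(n)})$. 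The upper bound on $\eta_{m/n+}$ then follows from an upper bound on $N$ and a lower bound on $D$. The lower bound on the contaminated standard error, needed for $\eta_{m/n-}$, follows from a lower bound on $N$ and an upper bound on $D$. Monotonicity of $\sqrt{N}/D$ in each argument finishes both inequalities.

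\emph{Numerator.} The $m$ contaminated terms contribute at most $m\psi_{\max}^2$. For $i\in I$, I will compare $\psi(x_i-\tilde\theta)^2$ with $\psi(x_i-\hat\theta)^2$. The key input is the estimating equation at $\tilde\theta$: since $\sum_{i\in I}\psi(x_i-\tilde\theta)$ must cancel at most $m\psi_{\max}$, and the same holds at $\hat\theta$ for the full sample, the total change satisfies
\[
\sum_{i\in I}\bigl|\psi(x_i-\tilde\theta)-\psi(x_i-\hat\theta)\bigr|\le 2m\psi_{\max}.
\]
This uses monotonicity of $\psi$: all summands have the same sign, because $\psi(x-\cdot)$ moves in one direction. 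Writing $a^2-b^2=(a-b)(a+b)$ with $|a+b|\le2\psi_{\max}$ gives
\[
\Bigl|\sum_{i\in I}\bigl(\psi(x_i-\tilde\theta)^2-\psi(x_i-\hat\theta)^2\bigr)\Bigr|\le 4m\psi_{\max}^2.
\]
Finally, I bound $\sum_{i\in I}\psi(x_i-\hat\theta)^2$ above by dropping the $m$ smallest-$|\cdot|$ terms and below by dropping the $m$ largest. Because $\psi$ is odd and monotone, $\psi^2$ is increasing in $|x|$, and this is where the ordering $\pi$ enters. Combining, the totals are $5m\psi_{\max}^2$ for the upper bound and $-4m\psi_{\max}^2$ for the lower bound. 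I will need to check that the ordering is by $|x_i-\hat\theta|$; presumably the data are centered at $\hat\theta$ as in Lemma~\ref{lem:se_at_theta_0_main}.

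\emph{Denominator.} Contaminated points contribute between $0$ and $m\psi'(0)$, since $\psi'$ is maximized at $0$ and is nonnegative. For $i\in I$, $\psi'(x_i-\tilde\theta)$ differs from $\psi'(x_i-\hat\theta)$ by at most $\Delta(\eta_{m/n})$. Since $\psi'$ is non-increasing in $|x|$, it can only decrease when $|x_i-\tilde\theta|>|x_i-\hat\theta|$ and only increase in the reverse case. Hence the loss is at most $S_{m+}\Delta$ and the gain at most $S_{m-}\Delta$, after maximizing over $I$ and $\tilde\theta$. The baseline $\sum_{i\in I}\psi'(x_i-\hat\theta)$ is bounded below by the $n-m$ largest-$|x|$ terms and above by the $n-m$ smallest, again using that $\psi'$ is non-increasing in $|x|$. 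Truncating at $0$ where needed gives the two displayed bounds. The explicit formulas for $S_{m\pm}$ follow from a direct count. The condition $|x_i-\hat\theta|<|x_i-\tilde\theta|$ means $\tilde\theta$ lies on the far side of the reflection $2x_i-\hat\theta$. The count is maximized at an endpoint of the interval, with $I$ taken as the $n-m$ extreme order statistics on the appropriate side. For $S_{m-}$, the reverse inequality holds exactly for points on the side toward which $\tilde\theta$ moves, giving the stated count minus the $m$ removed points.

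\emph{Huber case.} Here $\psi'$ is an indicator, so the net change $\sum_i\bigl(\II\{|x_i-\tilde\theta|\le\delta\}-\II\{|x_i-\hat\theta|\le\delta\}\bigr)$ over the original points lies in $[q_{m+},q_{m-}]$, and removing the $m$ replaced points changes the count by at most $m$. The factor $\delta$ I would trace to the normalization in the setup. I expect the main obstacle to be the numerator comparison: obtaining exactly the constants $5$ and $4$ requires the sign argument from the estimating equations rather than a crude Lipschitz bound, and I would verify it first.
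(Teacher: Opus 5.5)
Your argument for the two main inequalities is the same as the paper's: bound numerator and denominator separately, get the $4m\psi_{\max}^2$ term from the same-sign trick plus the estimating equations, and control the change of $\psi'$ by $S_{m\pm}\,\Delta(\eta_{m/n}(\hat\theta,x^{(n)}))$, with $\pi$ ordering $|x_i-\hat\theta|$ (which is what the paper's proof actually uses). You run the same-sign trick only on the untouched indices, using both estimating equations. That is a harmless variant, and it avoids the paper's unneeded and unsupported assertion that $\tilde\theta\ge\hat\theta$ ``in the $+$ case''. Add one line that $\Delta$ is nondecreasing on $[0,\infty)$; this holds because $\psi'$ is even and non-increasing in $|x|$, and it is needed to pass from $\Delta(|\tilde\theta-\hat\theta|)$ to $\Delta(\eta_{m/n}(\hat\theta,x^{(n)}))$. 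The genuine problems are in the two auxiliary claims, and both originate in the statement itself.

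\textbf{Closed form of $S_{m-}$.} Your step ``the stated count minus the $m$ removed points'' does not follow from the definition, which \emph{maximizes} over $I$. For $\tilde\theta\in(\hat\theta,\hat\theta+\eta_{m/n+}(\hat\theta,x^{(n)})]$, the inequality $|x_i-\hat\theta|>|x_i-\tilde\theta|$ holds iff $x_i>(\hat\theta+\tilde\theta)/2$. Letting $\tilde\theta\downarrow\hat\theta$ and choosing $I$ to contain all such points gives $\min\{\#\{i:x_i>\hat\theta\},\,n-m\}$, and symmetrically on the left. For example, take $x^{(n)}=(-5,\dots,-1,1,\dots,5)$, $\hat\theta=0$, $m=2$: the definition gives $S_{m-}=5$, while the displayed formula gives $3$. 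Subtracting $m$ amounts to minimizing over $I$, whereas a contamination pushing $\tilde\theta$ slightly right replaces left-hand points and keeps every right-hand one. Since $S_{m-}$ is used as an upper bound on the increase of the denominator, plugging in the smaller closed form is not justified and can invalidate the $\eta_{m/n-}$ bound. The paper's proof derives only $S_{m+}$ and calls the rest analogous, so it does not settle this. You should prove the inequality with $S_{m-}$ as defined, which your argument does. When both one-sided location sensitivities are positive, the correct count is $\max\{\min(\#\{i:x_i>\hat\theta\},n-m),\,\min(\#\{i:x_i<\hat\theta\},n-m)\}$.

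\textbf{The factor $\delta$ in the Huber case.} With $\psi'(x)=\II\{|x|\le\delta\}$, as the statement prescribes, every index entering or leaving $[-\delta,\delta]$ changes $\psi'$ by exactly $1$. Your decomposition (net change over all $n$ original points in $[q_{m+},q_{m-}]$, at most $m$ more from the replaced ones) therefore gives coefficient $1$. The resulting denominators are $\sum_{i>m}\psi'(x_{\pi_i}-\hat\theta)+(q_{m+}-m)$ and $m\psi'(0)+\sum_{i\le n-m}\psi'(x_{\pi_i}-\hat\theta)+(q_{m-}+m)$. The factor $\delta$ arises only if $\psi'=\delta\,\II\{|x|\le\delta\}$, which is what the paper's proof silently switches to. Because $q_{m+}\le 0\le q_{m-}$ (take $\tilde\theta=\hat\theta$), the displayed $\delta$-versions follow from the coefficient-one versions when $\delta\ge 1$ (e.g.\ $\delta=1.345$), but not in general. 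State this explicitly instead of deferring to ``the normalization''.
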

Lemma~\ref{lem:opt_attack_m_est_se} makes explicit how the robustness of the variance estimator is driven by two components: the location $m$-sensitivity $\eta_{m/n}(\hat\theta,x^{(n)})$, and the way in which contamination changes which observations fall into the high-weight region of $\psi'$. The specialisation to Huber's loss shows that, in this case, the dependence is entirely through changes in the set $\{i\in[n]: |x_i-\hat\theta|\le\delta\}$.
{
For small sample sizes, one can sharpen the bound by explicitly optimizing over the admissible shift values.
\begin{corollary}
\label{lem:opt_attack_m_est_se_envelope}
Assume the same conditions and notation as in Lemma~\ref{lem:opt_attack_m_est_se}. Define
$$
I_m
:=
\Bigl[
-\eta_{m/n-}(\hat\theta,x^{(n)}),\,
\eta_{m/n+}(\hat\theta,x^{(n)})
\Bigr].
$$
For each $t\in I_m$, let
$$
h_i(t):=\psi(x_i-\hat\theta-t)^2,\qquad i=1,\dots,n,
$$
and let
$$
h_{(1)}(t)\le \cdots \le h_{(n)}(t)
$$
be the order statistics of $\{h_i(t)\}_{i=1}^n$. Define
\begin{align*}
\overline N_m
&:=
\sup_{t\in I_m}
\left\{
m\psi_{\max}^2+\sum_{i=m+1}^n h_{(i)}(t)
\right\},\\
\underline N_m
&:=
\inf_{t\in I_m}
\sum_{i=1}^{n-m} h_{(i)}(t).
\end{align*}
Then
\begin{align*}
    \eta_{m/n+}(\hatse(\hat \theta), x^{(n)})
    \le{} &\;
    \sqrt{
      \frac{
        \overline N_m
      }{
        \max \Bigl\{
          \sum_{i>m} \psi'(x_{\pi_i} - \hat \theta) - S_{m+}\Delta(\eta_{m/n}(\hat \theta, x^{(n)})),\, 0
        \Bigr\}^2
      }
    }
    - \hatse(\hat \theta),
    \\
    \eta_{m/n-}(\hatse(\hat \theta), x^{(n)})
    \le{} &\;
      \hatse(\hat \theta)
      - \sqrt{
        \frac{
          \underline N_m
        }{
          \bigl(
            m\psi'(0) + \sum_{i\le n-m} \psi'(x_{\pi_i} - \hat \theta)
            + S_{m-}\Delta(\eta_{m/n}(\hat \theta, x^{(n)}))
          \bigr)^2
        }
      }.
\end{align*}
\end{corollary}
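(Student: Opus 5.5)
The plan is to reuse the proof of Lemma~\ref{lem:opt_attack_m_est_se} without change for the denominators and to sharpen only the numerators. In that proof the contaminated standard error $\hatse(\hat\theta(y^{(n)}))$, for $y^{(n)}\in B_{\textsf H}(x^{(n)},m)$, is controlled through two separate bounds. The first is on the squared-score sum $\sum_i\psi(y_i-\tilde\theta)^2$. The second is on the derivative sum $\sum_i\psi'(y_i-\tilde\theta)$. Here $\tilde\theta=\hat\theta(y^{(n)})$ is the contaminated location estimate. By Definition~\ref{def:eta_directional} and Corollary~\ref{cor:opt_attack_m_est_main}, $\tilde\theta-\hat\theta=:t$ always lies in $I_m$. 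The denominator bounds, involving $S_{m\pm}$ and $\Delta(\eta_{m/n}(\hat\theta,x^{(n)}))$, depend only on this localization and on the count of points whose $|\cdot|$-distance to the center changes. They therefore carry over verbatim, and I will simply cite them.

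For the numerator in the ``$+$'' direction, I fix an arbitrary contaminated sample $y^{(n)}$ and set $t=\tilde\theta-\hat\theta\in I_m$. Let $J$ be the set of at most $m$ replaced indices. Each replaced term satisfies $\psi(y_i-\tilde\theta)^2\le\psi_{\max}^2$. The retained indices form a subset of size at least $n-m$, and their terms equal $h_i(t)=\psi(x_i-\hat\theta-t)^2$. The sum over any subset of size $n-m$ of the $h_i(t)$ is at most the sum of the $n-m$ largest, namely $\sum_{i>m}h_{(i)}(t)$. This gives
\[
\sum_i\psi(y_i-\tilde\theta)^2\le m\psi_{\max}^2+\sum_{i=m+1}^n h_{(i)}(t)\le\overline N_m,
\]
where the last step takes the supremum over $t\in I_m$. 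If fewer than $m$ points are replaced, each extra retained term is still at most $\psi_{\max}^2$, so the bound is unchanged. The ``$-$'' direction works the same way. Dropping the replaced terms, which are nonnegative, gives $\sum_i\psi(y_i-\tilde\theta)^2\ge\sum_{i\notin J}h_i(t)\ge\sum_{i\le n-m}h_{(i)}(t)\ge\underline N_m$.

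Next I combine these with the denominator bounds, exactly as in Lemma~\ref{lem:opt_attack_m_est_se}. This gives $\hatse(\hat\theta(y^{(n)}))\le\sqrt{\overline N_m}/\max\{\cdots,0\}$ and $\hatse(\hat\theta(y^{(n)}))\ge\sqrt{\underline N_m}/(\cdots)$. Subtracting $\hatse(\hat\theta)$ and taking the supremum over $y^{(n)}$ yields the two stated bounds on $\eta_{m/n\pm}(\hatse(\hat\theta),x^{(n)})$. The one point needing care is that the numerator and denominator bounds are obtained separately but must hold for the same $y^{(n)}$, hence the same $t$. This causes no difficulty, since each bound holds uniformly in $t\in I_m$, and decoupling them only loosens the resulting inequality.

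I expect the main obstacle to be confirming that the contaminated estimate really satisfies $t\in I_m$. This requires that the one-sided $m$-sensitivities bound the displacement for every $m'\le m$, which holds because those sensitivities are monotone in $m$. One must also check that the replaced-term bound $\psi_{\max}^2$ remains valid even though it does not use the $5m$ and $4m$ slack of the Lemma. That slack was needed there only to pass from $\psi(x_i-\tilde\theta)$ back to $\psi(x_i-\hat\theta)$ via Lipschitz-type comparisons. Here that step is avoided because $h_i(t)$ is evaluated directly at the shifted center.
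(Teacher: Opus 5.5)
Your proposal is correct and matches the paper's proof essentially step for step. Fix $t=\tilde\theta-\hat\theta\in I_m$. Bound the retained terms by the largest (resp.\ smallest) $n-m$ order statistics of $h_i(t)$, and the replaced terms by $\psi_{\max}^2$ (resp.\ $0$). Then take the supremum (resp.\ infimum) over $I_m$ and combine with the denominator bounds of Lemma~\ref{lem:opt_attack_m_est_se}. Your extra checks are valid details the paper leaves implicit: the case of fewer than $m$ replacements, and the fact that bounding numerator and denominator separately for the same $y^{(n)}$ only loosens the inequality.
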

\begin{remark}
    Moreover, if each $h_i$ is continuous and piecewise $C^1$, then $\overline N_m$ and $\underline N_m$ can be computed by checking only:
    $$
    t=-\eta_{m/n-}(\hat\theta,x^{(n)}),\qquad
    t=\eta_{m/n+}(\hat\theta,x^{(n)}),
    $$
    all non-smooth points of the functions $h_i$, all switching points where $h_i(t)=h_j(t)$ for some $i\neq j$, and all interior critical points of the smooth pieces. On any interval where the active set is fixed and all $h_i$ are differentiable,
    $$
    h_i'(t)=-2\psi(x_i-\hat\theta-t)\psi'(x_i-\hat\theta-t).
    $$
    Hence the relevant critical points are those satisfying
    $$
    \sum_{i\in A} h_i'(t)=0,
    $$
    where $A$ is the active index set corresponding to the smallest $n-m$ terms for $\underline N_m$, or the largest $n-m$ terms for $\overline N_m$.
\end{remark}
}
\subsection{Formal Definitions for Power and Level Breakdown Functions}
\label{sec:def_pbp}
\citet{he:simpson:portnoy1990} introduced the (asymptotic) power and level breakdown functions to quantify how much gross-error contamination is needed to $(i)$ make the test statistic indistinguishable from an alternative and $(ii)$ induce inconsistency at a fixed alternative.  In contrast, \citet{zhang1996} defines finite sample breakdown points directly at the level of the accept/reject decision (via the critical region) and derives asymptotic properties for several classical tests.  The purpose of this subsection is to make these two viewpoints meet: we formulate the population breakdown notion as a functional of the test itself.
Let $\mathcal{P}$ denote the space of all probability distributions, let $\Theta_0$ and $\Theta_1$ denote the null and alternative parameter spaces, respectively, and define the gross-error neighborhood
$$
\mathcal{Q}_\varepsilon(F_\theta)
:=
\Bigl\{(1-\varepsilon)F_{\theta} + \varepsilon G:\mbox{ for  } G\in\mathcal{P}\Bigr\}.
$$
Following \citet{he:simpson:portnoy1990}, for a (population) test statistic functional $T:\mathcal{P}\to\mathbb{R}$
and a fixed $\theta\in\Theta_1$, the power breakdown function is
\begin{equation}
\varepsilon^*_\theta(T)
:=
\inf\Bigl\{\varepsilon\ge 0:\ \exists\,\theta_0\in\Theta_0\ \text{s.t.}\ T(F_{\theta_0})\in T\bigl(\mathcal{Q}_\varepsilon(F_\theta)\bigr)\Bigr\},
\label{eq:pbp_he}
\end{equation}
and the level breakdown function is
\begin{equation}
\varepsilon^{**}_\theta(T)
:=
\inf\Bigl\{\varepsilon\ge 0:\ \exists\,\theta_0\in\Theta_0\ \text{s.t.}\ T(F_{\theta})\in T\bigl(\mathcal{Q}_\varepsilon(F_{\theta_0})\bigr)\Bigr\}.
\label{eq:lbp_he}
\end{equation}
Equivalently, \eqref{eq:pbp_he} can be written as
$
\inf\{\varepsilon\ge 0:\ \exists\,H\in\mathcal{Q}_\varepsilon(F_\theta)\ \text{and }\theta_0\in\Theta_0\ \text{s.t.}\ T(H)=T(F_{\theta_0})\},
$
and similarly for \eqref{eq:lbp_he}.
Statistic-based formulations such as \eqref{eq:pbp_he}--\eqref{eq:lbp_he} are natural when a test is built from a scalar pivot functional.  However, the finite sample robustness notion of \citet{zhang1996} is expressed at the level of the \emph{test decision} (critical region), and many tests are most transparently specified in that language (e.g., with data-dependent critical values, randomized rules, or multivariate statistics).  For this reason we define the population breakdown functions directly for a test decision functional $\phi:\mathcal{P}\to[0,1]$.
Fix $\theta\in\Theta_1$.  The power breakdown function of $\phi$ is
\begin{equation}
\varepsilon^*_\theta(\phi)
:=
\inf\Bigl\{\varepsilon\ge 0:\ \exists\,H\in\mathcal{Q}_\varepsilon(F_\theta)\ \text{and }\theta_0\in\Theta_0\ \text{s.t.}\ \phi(H)=\phi(F_{\theta_0})\Bigr\},
\label{eq:pbp_us}
\end{equation}
and the level breakdown function is
\begin{equation}
\varepsilon^{**}_\theta(\phi)
:=
\inf\Bigl\{\varepsilon\ge 0:\ \exists\,\theta_0\in\Theta_0\ \text{and }H\in\mathcal{Q}_\varepsilon(F_{\theta_0})\ \text{s.t.}\ \phi(H)=\phi(F_{\theta})\Bigr\}.
\label{eq:lbp_us}
\end{equation}
These reduce to the statistic-based definitions whenever $\phi$ is induced by $T$ through the same reject/accept partition (e.g., $\phi(F)=\II\{T(F)\in C\}$ for some rejection region $C$), in which case \eqref{eq:pbp_us}--\eqref{eq:lbp_us} are exactly the breakdown functions studied by \citet{he:simpson:portnoy1990} but expressed at the level of the induced test rule. Notice that the most general form of a test can be written as $\phi(F) = \II \{\theta_0 \in C(F)\}$, as presented in Theorem \ref{thm:conv}.
\begin{remark}[Interpretation under consistency]
For a consistent test, $\phi(F_{\theta_0})=0$ for $\theta_0\in\Theta_0$ and $\phi(F_\theta)=1$ for $\theta\in\Theta_1$.  Then $\varepsilon^*_\theta(\phi)$ is the smallest contamination fraction under $F_\theta$ that can force the limiting decision to coincide with a null decision, i.e., the least contamination needed to break consistency at $\theta$ in the sense of \citet{he:simpson:portnoy1990}.  Similarly, $\varepsilon^{**}_\theta(\phi)$ is the least contamination (starting from some null $F_{\theta_0}$) that can force the limiting decision to coincide with the decision under $F_\theta$.
\end{remark}
\subsection{Generalization of Threshold Breakdown Point and Multiplier Bootstrap}
\label{sec:generalization_bootstrap}
While the multiplier bootstrap targets uncertainty quantification for $\hat\theta$, our focus here is robustness variability. We therefore seek a finite threshold breakdown notion that is intrinsically compatible with multiplicative weighting.
In the unweighted case, the finite sample threshold breakdown point is the smallest fraction of observations that must be replaced in order to move the estimator by more than $\eta$. Let
$$
F_{x^{(n)}}(t) := \frac{1}{n}\sum_{i=1}^n \II\{x_i \le t\}
$$
denote the empirical distribution function associated with $x^{(n)}$. Replacing exactly $m$ observations produces another empirical distribution function $F_{y^{(n)}}$, and the corresponding empirical laws are at total variation distance $m/n$. Hence the classical Hamming-ball description coincides with a total-variation ball at level $m/n$.
Under multiplicative weighting, however, each replicate corresponds to a weighted empirical distribution, and the phrase ``$m$ points'' is no longer literal. Moving a few high-weight observations can have the same effect as moving many low-weight ones. A natural replacement is therefore to measure the amount of moved mass through total variation. At the same time, to preserve interpretability and to align with the lattice-valued breakdown point in the unweighted setting, we continue to index contamination levels on the grid $\{0,1/n,\dots,1\}$.
For weights $w=(w_1,\dots,w_n)$ satisfying $\sum_{i=1}^n w_i = 1$, define the weighted empirical distribution function
$$
F_{x^{(n)}}^w(t) := \sum_{i=1}^n w_i \II\{x_i \le t\}.
$$
With a slight abuse of notation, whenever a distribution function appears inside $d_{\mathrm{TV}}$ or $B_{\mathrm{TV}}(\cdot,r)$, we identify it with the corresponding probability measure. For $m \in [n]$, define
$$
B_{\mathrm{TV}}(F_{x^{(n)}}^w,m/n)
:=
\left\{
F_{y^{(n)}}^{w'} :
d_{\mathrm{TV}}(F_{x^{(n)}}^w,F_{y^{(n)}}^{w'}) \le \frac{m}{n}
\right\},
$$
where $F_{y^{(n)}}^{w'}$ is another weighted empirical distribution function. This leads to the following generalized threshold breakdown definition.
\begin{definition}
\label{def:BP_extended}
For $\eta>0$ and a weighted empirical distribution function $F_{x^{(n)}}^w$, define the finite sample threshold breakdown point of $\hat\theta$ by
\begin{align}
\BP_{\eta}(\hat\theta,F_{x^{(n)}}^w)
=
\frac{1}{n}
\min
\left\{
m \in [n]:
\exists\, F_{y^{(n)}}^{w'} \in B_{\mathrm{TV}}(F_{x^{(n)}}^w,m/n)
\text{ such that }
\bigl|
\hat\theta(F_{y^{(n)}}^{w'})
-
\hat\theta(F_{x^{(n)}}^w)
\bigr|
\ge \eta
\right\}.
\label{BP_generalization}
\end{align}
Here $\hat\theta(F_{x^{(n)}}^w)$ denotes the solution to \eqref{eq:bootstrap} computed from $F_{x^{(n)}}^w$.
\end{definition}
The one-sided finite sample threshold breakdown point can be defined analogously. We also generalize the notion of $m$-sensitivity.
\begin{definition}
\label{def:eta_extended}
For a weighted empirical distribution function $F_{x^{(n)}}^w$, the $m$-sensitivity at level $m/n$ is defined by
\begin{equation}
\eta_{m/n}(\hat\theta,F_{x^{(n)}}^w)
=
\sup
\left\{
\eta :
\BP_{\eta}(\hat\theta,F_{x^{(n)}}^w)=\frac{m}{n}
\right\}.
\end{equation}
\end{definition}
The one-sided $m$-sensitivity is defined similarly.
\begin{remark}
(i) When $w_i = 1/n$ for all $i$, the definition reduces to the finite sample threshold breakdown point based on replacing $m$ observations.
(ii) Retaining the level on $\{0,1/n,\ldots,1\}$ preserves the familiar $m$-out-of-$n$ interpretation: here $m$ represents the mass-equivalent number of observations that must be moved.
(iii) Homogeneity of the score implies that using normalized multiplicative weights to form $F_{x^{(n)}}^w$ yields the same value of $\hat\theta(F_{x^{(n)}}^w)$ as the corresponding unnormalized formulation.
\end{remark}
Under this framework, we have the following theorem characterizing the exact threshold breakdown point and $m$-sensitivity of convex location $M$-estimators.
\begin{theorem}
\label{thm:loc_bootstrap}
Assume that $\psi$ is non-decreasing. For a weighted empirical distribution function
$$
F_{x^{(n)}}^w(t) = \sum_{i=1}^n w_i \II\{x_i \le t\},
\qquad
\sum_{i=1}^n w_i = 1,
$$
denote
$$
W_k := \sum_{i=1}^k w_i,
$$
and let
$$
W_{i_m-1} < \frac{m}{n} \le W_{i_m},
\qquad
W_{i'_m-1} \le 1-\frac{m}{n} < W_{i'_m},
$$
for some $i_m \in [n]$ and $i'_m \in [n]$, where we adopt the convention $W_0 = 0$ and $W_{n+1} = 2$ for simplicity. Assume that $x_1 \le \cdots \le x_n$. Then
\begin{align*}
\BP_{\eta+}(\hat\theta,F_{x^{(n)}}^w)
=
&~
\frac{1}{n}
\min
\left\{
m:
m \ge
\frac{
n\Big(
\sum_{i>i_m} w_i \psi(x_i-(\hat\theta+\eta))
+
W_{i_m}\psi(x_{i_m}-(\hat\theta+\eta))
\Big)
}{
\psi(x_{i_m}-(\hat\theta+\eta))-\psi(\infty)
}
\right\},
\\
\BP_{\eta-}(\hat\theta,F_{x^{(n)}}^w)
=
&~
\frac{1}{n}
\min
\left\{
m:
m \ge
\frac{
n\Big(
\sum_{i\le i'_m} w_i \psi(x_i-(\hat\theta-\eta))
+
(1-W_{i'_m-1})\psi(x_{i'_m}-(\hat\theta+\eta))
\Big)
}{
\psi(x_{i'_m}-(\hat\theta-\eta))-\psi(-\infty)
}
\right\}.
\end{align*}
Moreover,
\begin{align*}
\eta_{m/n+}(\hat\theta,F_{x^{(n)}}^w)
=
&~
\max
\left\{
\eta:
\sum_{i>i_m} w_i \psi(x_i-(\hat\theta+\eta))
+
\left(
W_{i_m}-\frac{m}{n}
\right)\psi(x_{i_m}-(\hat\theta+\eta))
+
\frac{m}{n}\psi(\infty)
\ge 0
\right\},
\\
\eta_{m/n-}(\hat\theta,F_{x^{(n)}}^w)
=
&~
\max
\left\{
\eta:
\sum_{i\le i'_m} w_i \psi(x_i-(\hat\theta-\eta))
+
\left(
1-\frac{m}{n}-W_{i'_m-1}
\right)\psi(x_{i'_m}-(\hat\theta-\eta))
+
\frac{m}{n}\psi(-\infty)
\le 0
\right\}.
\end{align*}
\end{theorem}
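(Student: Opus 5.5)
The plan is to adapt the argument behind Theorem~\ref{thm:loc_main} and Corollary~\ref{cor:opt_attack_m_est_main} to weighted empirical measures. Total-variation contamination replaces the Hamming ball. By symmetry I treat only the ``$+$'' direction; the ``$-$'' direction follows by applying the result to $-x$ with the reflected score.

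\textbf{Step 1 (reduction to a score inequality).} Since $\psi$ is non-decreasing, the map $\theta\mapsto \EE_{G}[\psi(X-\theta)]$ is non-increasing for any $G$. Hence $\hat\theta(G)\ge \hat\theta+\eta$ is achievable (taking the largest root) iff $\EE_G[\psi(X-(\hat\theta+\eta))]\ge 0$. So $\eta_{m/n+}$ is the largest $\eta$ with
\[
\sup_{G\in B_{\mathrm{TV}}(F^w_{x^{(n)}},m/n)} \EE_G\bigl[\psi(X-(\hat\theta+\eta))\bigr]\ge 0,
\]
and $\BP_{\eta+}$ is the smallest $m$ for which this supremum is nonnegative.

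\textbf{Step 2 (optimal mass transport).} Any $G$ in the TV ball is $F^w_{x^{(n)}}-\mu+\nu$, where $\mu\le F^w_{x^{(n)}}$ and $\nu\ge 0$ have equal mass at most $m/n$. By monotonicity of $g(x)=\psi(x-(\hat\theta+\eta))$, the supremum is attained by removing exactly mass $m/n$ from the lowest atoms and placing it at $+\infty$, contributing $(m/n)\psi(\infty)$. This is a greedy/rearrangement argument: swapping removed mass from a higher atom to a lower one never decreases the objective. Removing the lowest mass $m/n$ deletes atoms $x_1,\dots,x_{i_m-1}$ entirely and leaves mass $W_{i_m}-m/n$ at $x_{i_m}$, by the definition of $i_m$. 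This yields exactly the inequality in the $\eta_{m/n+}$ formula. I also need to check that the resulting measure lies in the allowed class of weighted empirical distributions on $n$ atoms; splitting mass at $x_{i_m}$ and using a freed atom (or letting the contaminating atom tend to $+\infty$ as a limit) handles this. The value of $\eta$ is attained as a max by continuity, since $\psi$ is continuous for a differentiable $\rho$.

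\textbf{Step 3 (breakdown point).} For fixed $\eta$, the maximal score $S(m)=\sum_{i>i_m}w_ig_i+(W_{i_m}-m/n)g_{i_m}+(m/n)\psi(\infty)$ is nondecreasing in $m$. I rearrange $S(m)\ge 0$ into a linear inequality in $m$ by collecting the $m/n$ terms: $(m/n)(\psi(\infty)-g_{i_m})\ge -\sum_{i>i_m}w_ig_i-W_{i_m}g_{i_m}$. Dividing by the nonpositive quantity $g_{i_m}-\psi(\infty)$ (flipping the sign) gives the displayed condition. The degenerate case $g_{i_m}=\psi(\infty)$ is treated via the convention that the condition then holds iff the numerator sign permits.

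\textbf{Main obstacle.} The main subtlety is that $i_m$ depends on $m$, so the ``inequality'' is not truly linear in $m$. I would argue via monotonicity of $S(m)$ in $m$ that the minimal $m$ satisfying $S(m)\ge0$ coincides with the minimal $m$ satisfying the stated (self-referential) condition. I would also double-check the $-$ case formula, which appears to have a typo ($\hat\theta+\eta$ instead of $\hat\theta-\eta$ in one term); I would prove it with the consistent sign.
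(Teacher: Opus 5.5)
Your route is the same as the paper's. You bound the weighted score at $\hat\theta+\eta$ over the total-variation ball by deleting the lowest $m/n$ of mass (atoms $x_1,\dots,x_{i_m-1}$ entirely, leaving $W_{i_m}-m/n$ at $x_{i_m}$) and placing it at $+\infty$. You then use that the score is non-increasing in $\theta$ and rearrange. Your decomposition $F^w_{x^{(n)}}-\mu+\nu$ with the exchange argument justifies the paper's one-line bound more cleanly. The paper's proof, however, only establishes this necessary direction and takes attainability for granted. You try to prove attainability, and that is where your argument breaks.

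\textbf{The gap: attainability when no atom is freed.} A ``freed atom'' exists only if some original atom is removed in full, i.e.\ $i_m\ge 2$ or $W_{i_m}=m/n$. Suppose $i_m=1$ and $w_1>m/n$, with distinct $x_i$. Then the extremal measure $(w_1-m/n)\delta_{x_1}+\sum_{i\ge2}w_i\delta_{x_i}+(m/n)\delta_{+\infty}$ has $n+1$ support points, so it is not of the form $F^{w'}_{y^{(n)}}$. Sending the contaminating atom to $+\infty$ as a limit does not help, because the obstruction is the number of atoms, not where they sit.

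Write $g=\psi(\cdot-(\hat\theta+\eta))$ and $S$ for your extremal value. There are two possibilities for an $n$-point measure $G$ in the ball.
\begin{enumerate}
\item If $G$ has an atom outside $\{x_1,\dots,x_n\}$, it must delete some whole atom $x_j$ with $j\ge 2$ and $w_j\le m/n$. This costs at least $w_j\bigl(g(x_j)-g(x_1)\bigr)$ relative to $S$.
\item Otherwise $G$ is supported on the $x_i$, and the best it can do costs $(m/n)\bigl(\psi(\infty)-g(x_n)\bigr)$.
\end{enumerate}
For example, take $n=3$, $m=1$, $w=(0.5,0.3,0.2)$. The supremum of the score over the ball is then
$S-\min\{\tfrac13(\psi(\infty)-g(x_3)),\,0.2(g(x_3)-g(x_1)),\,0.3(g(x_2)-g(x_1))\}$.
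This is strictly below $S$ for $\psi=\tanh$, the logcosh score.

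So, reading $B_{\mathrm{TV}}$ literally as $n$-point weighted empirical distributions, the displayed formulas are in this case only bounds: $\eta_{m/n+}\le$ formula and $\BP_{\eta+}\ge$ formula. Equality genuinely fails, and the paper's proof does not detect this. You should either let $B_{\mathrm{TV}}$ range over all finitely supported probability measures, in which case your argument is complete, or restrict the equality claim to the case where an atom is freed.

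\textbf{Smaller points.} Your ``main obstacle'' is not one. For each fixed $m$, the displayed condition is just an algebraic rearrangement of $S(m)\ge 0$ with the same $i_m$, so the two minima agree without any monotonicity. A monotonicity argument is needed elsewhere: to identify $\eta_{m/n+}$, defined as $\sup\{\eta:\BP_{\eta+}=m/n\}$, with the largest shift achievable with mass $m/n$. Your Step 1, like the paper, takes this for granted.

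Finally, the ``$-$'' formulas contain a second typo besides $\hat\theta+\eta$. The sums must run over $i<i'_m$, not $i\le i'_m$, since otherwise the retained mass is $1-m/n+w_{i'_m}$. Your reflection argument produces the corrected form.
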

With the help of Theorem \ref{thm:loc_bootstrap}, we obtain analogous asymptotic results for the bootstrap threshold breakdown point and $m$-sensitivity. We only state the asymptotic normality results in Theorem \ref{thm:bootstrap_normality} and Theorem \ref{thm:BP_normality}, since consistency follows by similar arguments under slightly weaker assumptions. Moreover, we can extend the relationship in Proposition \ref{prop:sensitivity_map_finite_main} to the bootstrap quantities as follows. Let
$$
F_n(t) := \frac{1}{n}\sum_{i=1}^n \II\{X_i \le t\},
\qquad
\tilde F_n(t) := \frac{1}{n}\sum_{i=1}^n \frac{W_i}{\bar W}\II\{X_i \le t\},
$$
with the same standing assumptions \eqref{eq:boostrap_condition} from Section \ref{sec:multiplier_bootstrap}. Following \citet{cheng2010bootstrap}, given a real-valued function $\Delta_n$, we write $\Delta_n = o_p^W(1)$ if for any $\varepsilon,\delta>0$,
\begin{equation*}
\PP_X\Bigl\{
\PP_{W\mid X}\bigl(|\Delta_n|>\varepsilon\bigr)>\delta
\Bigr\}
\to 0
\qquad \text{as } n\to\infty.
\end{equation*}
Notice that if $\Delta_n$ is independent of $W$, then
$$
\PP_{W\mid X}\bigl(|\Delta_n|>\varepsilon\bigr)
=
\II\{|\Delta_n|>\varepsilon\},
$$
so that
$$
\PP_X\Bigl\{
\PP_{W\mid X}\bigl(|\Delta_n|>\varepsilon\bigr)>\delta
\Bigr\}
=
\PP_X\bigl(|\Delta_n|>\varepsilon\bigr).
$$
Hence, for such $\Delta_n$,
\begin{equation}
\label{eq:op1_equivalence}
\Delta_n=o_p(1)
\iff
\Delta_n=o_p^W(1).
\end{equation}
\begin{proposition}
\label{prop:sensitivity_map_finite}
Suppose (A1)--(A4) hold for a fixed $\varepsilon^* \in (0,0.5)$, with $\eta^*$ solving \eqref{eq:population_eta+} or \eqref{eq:population_eta-}.
{
Then
\begin{align*}
   & \BP_{\eta^*+}(\hat\theta,F_n)-\varepsilon^*
=
-\frac{\int_{q_{\varepsilon^*}}^{\infty} \psi'(x - (\theta_0 + \eta_{\varepsilon^*+})) \dd F(x)}
{ -\psi(q_{\varepsilon^*} - (\theta_0 + \eta_{\varepsilon^*+})) + \|\psi\|_\infty}
\cdot
\bigl(
\eta_{m/n+}(\hat\theta,F_n)-\eta^*
\bigr)
+
o_p\left(\frac{1}{\sqrt{n}}\right), \\
&\BP_{\eta^*-}(\hat\theta,F_n)-\varepsilon^*
=
-\frac{-\int_{-\infty}^{q_{1-\varepsilon^*}} \psi'\bigl(x-(\theta_0-\eta_{\varepsilon^*-})\bigr) \dd F(x)}
{-\psi(q_{1-\varepsilon^*} - (\theta_0 - \eta_{\varepsilon^*-})) - \|\psi\|_\infty}
\cdot
\bigl(
\eta_{m/n-}(\hat\theta,F_n)-\eta^*
\bigr)
+
o_p\left(\frac{1}{\sqrt{n}}\right).
\end{align*}
We also have
\begin{align*}
    &~ \BP_{\eta^*+}(\hat\theta_b,\tilde F_n)
-
\BP_{\eta^*+}(\hat\theta,F_n)\\
   = &~ -\frac{\int_{q_{\varepsilon^*}}^{\infty} \psi'(x - (\theta_0 + \eta_{\varepsilon^*+})) \dd F(x)}
{ -\psi(q_{\varepsilon^*} - (\theta_0 + \eta_{\varepsilon^*+})) + \|\psi\|_\infty}
\cdot
\bigl(
\eta_{m/n+}(\hat\theta_b,\tilde F_n)
-
\eta_{m/n+}(\hat\theta,F_n)
\bigr)
+
o_p^W\left(\frac{1}{\sqrt{n}}\right), \\
&~ \BP_{\eta^*-}(\hat\theta_b,\tilde F_n)
-
\BP_{\eta^*-}(\hat\theta,F_n)\\
   = &~ -\frac{-\int_{-\infty}^{q_{1-\varepsilon^*}} \psi'\bigl(x-(\theta_0-\eta_{\varepsilon^*-})\bigr) \dd F(x)}
{-\psi(q_{1-\varepsilon^*} - (\theta_0 - \eta_{\varepsilon^*-})) - \|\psi\|_\infty}
\cdot
\bigl(
\eta_{m/n-}(\hat\theta_b,\tilde F_n)
-
\eta_{m/n-}(\hat\theta,F_n)
\bigr)
+
o_p^W\left(\frac{1}{\sqrt{n}}\right).
\end{align*}
If we further assume that
$$
F\!\left(\left\{x\in\mathbb R:\psi' \text{ is discontinuous at } x-(\theta_0 \pm \eta_{\varepsilon^*\pm})\right\}\right)=0.
$$
Then Lemma \ref{lem:sens-bias_property} implies that
$$
\BP_{\eta^*\pm}(\hat\theta,F_n)-\varepsilon^*
=
-\frac{\dd\varepsilon}{\dd\eta_{\varepsilon\pm}} \Big |_{\varepsilon = \varepsilon^*}
\cdot
\bigl(
\eta_{m/n\pm}(\hat\theta,F_n)-\eta^*
\bigr)
+
o_p\left(\frac{1}{\sqrt{n}}\right),
$$
and
$$
\BP_{\eta^*\pm}(\hat\theta_b,\tilde F_n)
-
\BP_{\eta^*\pm}(\hat\theta,F_n)
=
-\frac{\dd\varepsilon}{\dd\eta_{\varepsilon\pm}} \Big |_{\varepsilon = \varepsilon^*}
\cdot
\bigl(
\eta_{m/n\pm}(\hat\theta_b,\tilde F_n)
-
\eta_{m/n\pm}(\hat\theta,F_n)
\bigr)
+
o_p^W\left(\frac{1}{\sqrt{n}}\right).
$$
}
\end{proposition}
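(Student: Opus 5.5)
The plan is to view both $\BP_{\eta^*+}(\hat\theta,F_n)$ and $\eta_{m/n+}(\hat\theta,F_n)$ as (approximate) zeros of one and the same empirical criterion, evaluated at two different points of the $(\eta,\varepsilon)$-plane, and then linearize. The ``$-$'' case is symmetric, so I describe only the ``$+$'' case. Let $\hat q_\varepsilon$ be the empirical $\varepsilon$-quantile and define
\[
G_n(\eta,\varepsilon):=\EE_{F_n}\bigl[\psi(X-\hat\theta-\eta)\II\{X>\hat q_\varepsilon\}\bigr]+\varepsilon B,
\qquad
G(\eta,\varepsilon):=\int_{q_\varepsilon}^\infty\psi(x-\theta_0-\eta)\,\dd F(x)+\varepsilon B .
\]
Here $G(\cdot,\varepsilon)$ is exactly $H_+$ in \eqref{eq:population_eta+} at contamination level $\varepsilon$. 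By Proposition~\ref{prop:finite_identification_sensitivity} we have $G_n(\eta_{m/n+},m/n)=0$. By Proposition~\ref{prop:finite_identification_BP} we have $G_n(\eta^*,\BP_{\eta^*+})=O(1/n)$; this $O(1/n)$ term absorbs the lattice discreteness of the breakdown point. Finally, $G(\eta^*,\varepsilon^*)=0$ by definition of $\eta^*$.

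\textbf{Step 1 (consistency and rates).} Theorem~\ref{thm:eta_consistency_inP} and Corollary~\ref{cor:BP_consistency} give $\eta_{m/n+}\to\eta^*$ and $\BP_{\eta^*+}\to\varepsilon^*$ in probability. Theorems~\ref{thm:bootstrap_normality_main} and \ref{thm:BP_normality_main} give $\sqrt n$-rates for both quantities, and standard M-estimation gives $\sqrt n$-rates for $\hat\theta$ and $\hat q$ under (A3)--(A4). Also $m/n-\varepsilon^*=O(1/n)$ by (A1).

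\textbf{Step 2 (stochastic equicontinuity), the main obstacle.} I need
\[
\bigl[G_n(\eta,\varepsilon)-G(\eta,\varepsilon)\bigr]-\bigl[G_n(\eta^*,\varepsilon^*)-G(\eta^*,\varepsilon^*)\bigr]=o_p(n^{-1/2})
\]
uniformly over $O_p(n^{-1/2})$-neighbourhoods of $(\eta^*,\varepsilon^*)$, with the plug-ins $\hat\theta$ and $\hat q_\varepsilon$ included. I would do this by working with the full parameter $\vartheta=(\theta,\eta,q,\varepsilon)$ of the Z-system \eqref{eq:Z-system}. The class $\{x\mapsto\psi(x-\theta-\eta)\II\{x>q\}\}$ is a product of a bounded Lipschitz family with half-line indicators, hence VC-type and Donsker. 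Its $L_2(F)$-continuity in $(\theta,\eta,q)$ follows from (A2) and from the density in (A3) near $q_{\varepsilon^*}$. Asymptotic equicontinuity of the empirical process (van der Vaart, Lemma~19.24-type arguments) then yields the display.

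\textbf{Step 3 (linearization).} Subtracting the two identities from Step~1's setup and applying Step~2 gives
\[
G(\eta_{m/n+},m/n)-G(\eta^*,\BP_{\eta^*+})=o_p(n^{-1/2}).
\]
$G$ is differentiable near $(\eta^*,\varepsilon^*)$, with partial derivatives
\[
\partial_\eta G=-\int_{q_{\varepsilon^*}}^\infty\psi'\bigl(x-\theta_0-\eta^*\bigr)\,\dd F(x),
\qquad
\partial_\varepsilon G=-\psi\bigl(q_{\varepsilon^*}-\theta_0-\eta^*\bigr)+\|\psi\|_\infty .
\]
The $\varepsilon$-derivative comes from $\dd q_\varepsilon/\dd\varepsilon=1/f(q_\varepsilon)$, which uses (A3). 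A first-order Taylor expansion, combined with the $\sqrt n$-rates from Step~1 and $m/n=\varepsilon^*+O(1/n)$, gives
\[
\partial_\eta G\,(\eta_{m/n+}-\eta^*)-\partial_\varepsilon G\,(\BP_{\eta^*+}-\varepsilon^*)=o_p(n^{-1/2}).
\]
Here $\partial_\varepsilon G>0$ because $\psi<\|\psi\|_\infty$ at the finite point $q_{\varepsilon^*}-\theta_0-\eta^*$, as in Proposition~\ref{prop:sensitivity_map}. Dividing by $\partial_\varepsilon G$ yields the first display with the stated ratio.

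\textbf{Step 4 (bootstrap).} I repeat Steps~1--3 with $\tilde F_n$ in place of $F_n$. The zero identities are now supplied by Theorem~\ref{thm:loc_bootstrap} and the weighted analogues of the identification propositions. Equicontinuity is replaced by its conditional multiplier version under \eqref{eq:boostrap_condition}, following \citet{cheng2010bootstrap}. Subtracting the bootstrap expansion from the sample expansion gives the $o_p^W(n^{-1/2})$ statements; \eqref{eq:op1_equivalence} lets me move the $X$-only remainders into $o_p^W$.

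\textbf{Step 5 (identifying the ratio).} Under the extra null-discontinuity condition, $\eta\mapsto\int\psi'(x-\theta_0-\eta)\,\dd F$ is continuous at $\eta_{\varepsilon^*\pm}$, so Lemma~\ref{lem:sens-bias_property} (equivalently, the implicit function theorem applied to $G(\eta_{\varepsilon},\varepsilon)=0$ as in Proposition~\ref{prop:sensitivity_map}) gives
\[
\frac{\dd\varepsilon}{\dd\eta_{\varepsilon\pm}}=-\frac{\partial_\eta G}{\partial_\varepsilon G}.
\]
Substituting this into the expressions from Steps~3 and~4 yields the final two displays.
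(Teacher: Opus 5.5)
\paragraph{Comparison with the paper's proof.}
Your argument is correct in substance, but it is organized differently from the paper's. The paper treats the two square subsystems of \eqref{eq:Z-system} separately: $\varepsilon=\varepsilon^*$ fixed with free $(\theta,\eta,q)$, and $\eta=\eta^*$ fixed with free $(\theta,\varepsilon,q)$. From Theorem~\ref{prop:master} it obtains an asymptotically linear representation for each, driven by the same empirical process $\GG_n\Psi_+^*$. It then eliminates that common fluctuation by noting that the second row of $(V^{\mathrm{BP}}_{\vartheta_{0+}+})^{-1}$ is $-b_+/e_+$ times the second row of $V_{\vartheta_{0+}+}^{-1}$.

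You instead subtract the two approximate-zero identities and linearize the population map once. This needs only $\sqrt n$-rates (imported from Theorems~\ref{thm:bootstrap_normality_main} and~\ref{thm:BP_normality_main}), equicontinuity, and differentiability at $\vartheta_{0+}$. No inverse Jacobians are computed, and the coefficient appears directly as the slope $-\partial_\eta G/\partial_\varepsilon G$ of the curve $\{G=0\}$, which makes your Step~5 immediate. The paper's route is more modular: it reuses the already verified conditions (A)--(E) and produces the full influence functions, hence also $\sigma_\pm^2=(b_\pm/e_\pm)^2V_\pm$. Yours is more direct and explains the coefficient geometrically.

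\paragraph{Step 2 needs repair.}
The display there does not follow from asymptotic equicontinuity alone. With $g_{t,q}(x):=\psi(x-t)\II\{x>q\}$,
\[
G_n(\eta,\varepsilon)-G(\eta,\varepsilon)=(\EE_{F_n}-\EE_F)[g_{\hat\theta+\eta,\hat q_\varepsilon}]+\bigl(\EE_F[g_{\hat\theta+\eta,\hat q_\varepsilon}]-\EE_F[g_{\theta_0+\eta,q_\varepsilon}]\bigr),
\]
and equicontinuity controls only the first term. The second (drift) term is $O_p(n^{-1/2})$ but not $o_p(n^{-1/2})$ at each of your points. To first order it equals $-b_+(\hat\theta-\theta_0)-c_+(\hat q_\varepsilon-q_\varepsilon)$, where $b_+=-\partial_\eta G$ and $c_+=\psi(q_{\varepsilon^*}-\theta_0-\eta^*)f(q_{\varepsilon^*})$. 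The $\hat\theta$ part is common to both points. The quantile part cancels only if $(\hat q_{m/n}-q_{m/n})-(\hat q_{\BP}-q_{\BP})=o_p(n^{-1/2})$, and this needs its own argument.

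The cleanest fix is the one you allude to: work with the full $\vartheta$.
\begin{enumerate}
\item Subtract $\EE_{F_n}[\Psi_+(X;\hat\vartheta^{\mathrm{sen}}_+)]=0$ and $\EE_{F_n}[\Psi_+(X;\hat\vartheta^{\mathrm{BP}}_+)]=(0,r_{n+},0)$.
\item Replace $\EE_{F_n}$ by $\EE_F$ at cost $o_p(n^{-1/2})$ by equicontinuity.
\item Expand $\EE_F[\Psi_+]$ at $\vartheta_{0+}$ with its $3\times 4$ Jacobian. The third row gives $f(q_{\varepsilon^*})(\hat q_{m+}-\hat q_{k_{\eta^*+}+})=m/n-\BP_{\eta^*+}+o_p(n^{-1/2})$.
\item Substitute this into the second row. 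This gives $-b_+(\eta_{m/n+}-\eta^*)-e_+(\BP_{\eta^*+}-\varepsilon^*)=o_p(n^{-1/2})$ with $e_+=B-c_+/f(q_{\varepsilon^*})=\partial_\varepsilon G$, which is exactly your Step~3 relation.
\end{enumerate}
The same fix carries over verbatim to the bootstrap in Step~4, using Lemma~\ref{lem:finite_identification_bootstrap}.

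\paragraph{Sign of $\partial_\varepsilon G$.}
Your stated reason for $\partial_\varepsilon G>0$ is not right: a bounded monotone $\psi$ can equal $\|\psi\|_\infty$ at finite points (Huber's $\psi$ does on $[\delta,\infty)$). The correct argument is by contradiction. Equality would force $\psi(x-\theta_0-\eta^*)=\|\psi\|_\infty$ for all $x>q_{\varepsilon^*}$, hence $G(\eta^*,\varepsilon^*)=\|\psi\|_\infty>0$. This contradicts $G(\eta^*,\varepsilon^*)=0$.
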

\section{Additional Numerical Experiments}
The code used to produce all simulations, empirical results, and figures in this paper is available at \url{https://github.com/keanson/Threshold-Breakdown}.
\subsection{Simulation Setting}
\label{sec:simulation_setting}
Unless stated otherwise, all numerical illustrations follow the protocol described in this section.
We consider three common convex M-estimation losses, including Huber's loss, logcosh loss, and the self-concordant loss. Huber's loss is given by
\begin{equation*}
    \rho_\delta (t) = \frac{1}{2} t^2 \II\{|t| < \delta\} + \delta (|t| - \frac{1}{2} \delta) \II\{|t| \ge \delta\}.
\end{equation*}
The logcosh loss is often used as a smoothed version of Huber's loss
\begin{equation*}
    \rho(t) = \log(\cosh(t)).
\end{equation*}
\citet{ostrovskii2021} introduced the following self-concordant pseudo-Huber loss to achieve a tighter finite sample guarantees for M-estimators
\begin{equation*}
    \rho(t) = \frac{1}{2} \left [ \sqrt{1+4t^2} - 1 + \log \left ( \frac{\sqrt{1 + 4t^2} - 1}{2t^2} \right ) \right ].
\end{equation*}
The latter two smoothed Huber losses are equipped with a robustness parameter $\delta$ through the rescaling
\begin{align*}
\rho_\delta(t) \;=\; \delta^2 \rho(t/\delta),
\qquad
\psi_\delta(t) \;=\; \rho_\delta'(t) \;=\; \delta\,\psi(t/\delta).
\end{align*}
To make comparisons across losses meaningful, we choose $\delta$ so that the resulting M-estimator has $95\%$ asymptotic efficiency under the model $\mathcal N(0,1)$.
Numerically, this gives $\delta=1.345$ (Huber), $\delta=1.2047$ (logcosh), and $\delta=1.4811$ (self-concordant).
When we are referring to normal, Cauchy, and uniform distributions, we imply the standard normal distribution $\mathcal{N}(0,1)$, the standard Cauchy distribution $\mathrm{Cauchy}(0, 1)$ (located at $0$ with a scale of 1), and the standard uniform distribution $\mathrm{Unif}(0,1)$. Each configuration is repeated over $100$ Monte Carlo replications.
\begin{figure}[htbp]
  \centering
  \includegraphics[width=\linewidth]{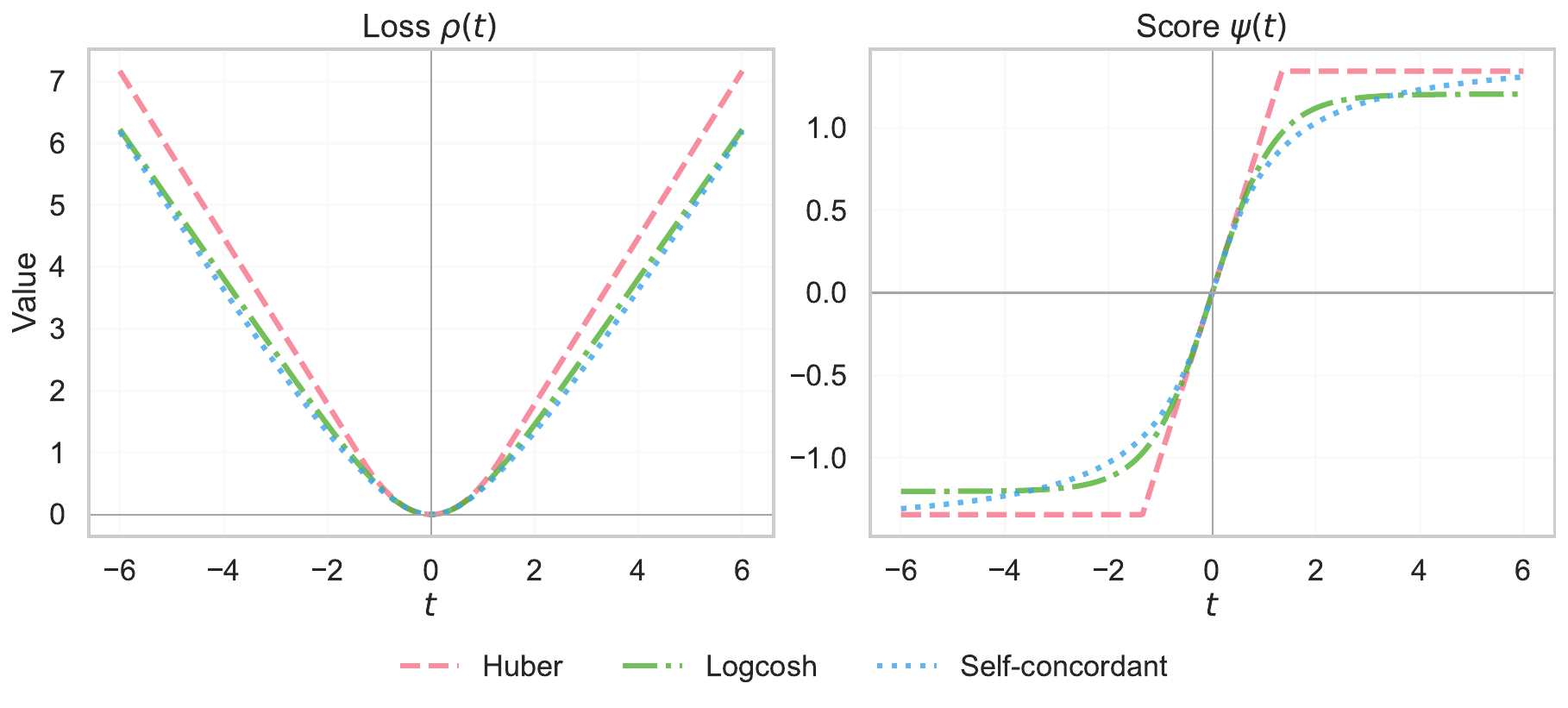}
  \caption{ \small Loss functions (left) and the corresponding score functions (right) for Huber's, logcosh, and self-concordant losses. The tuning parameters are set to $\delta = 1.345$ for Huber, $\delta = 1.2047$ for logcosh, and $\delta = 1.4811$ for the self-concordant loss.
}
  \label{fig:losses}
\end{figure}
\subsection{Numerical Illustration: Location Estimators}
\label{sec:NI_location}
\begin{figure}[htbp]
  \centering
  \begin{subfigure}[b]{0.3\textwidth}
    \centering
    \includegraphics[width=\linewidth]{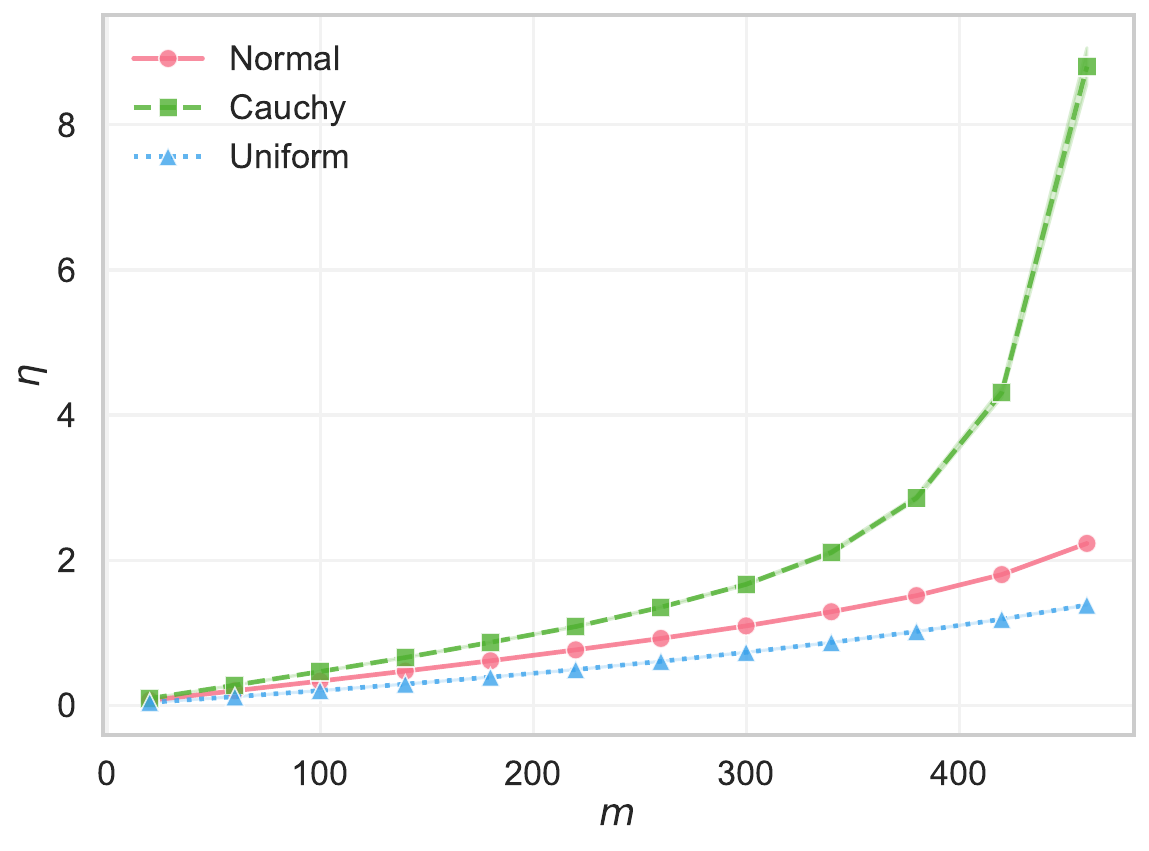}
    \caption{Huber's loss.}
    \label{fig:eta_huber}
  \end{subfigure}
  \hfill
  \begin{subfigure}[b]{0.3\textwidth}
    \centering
    \includegraphics[width=\linewidth]{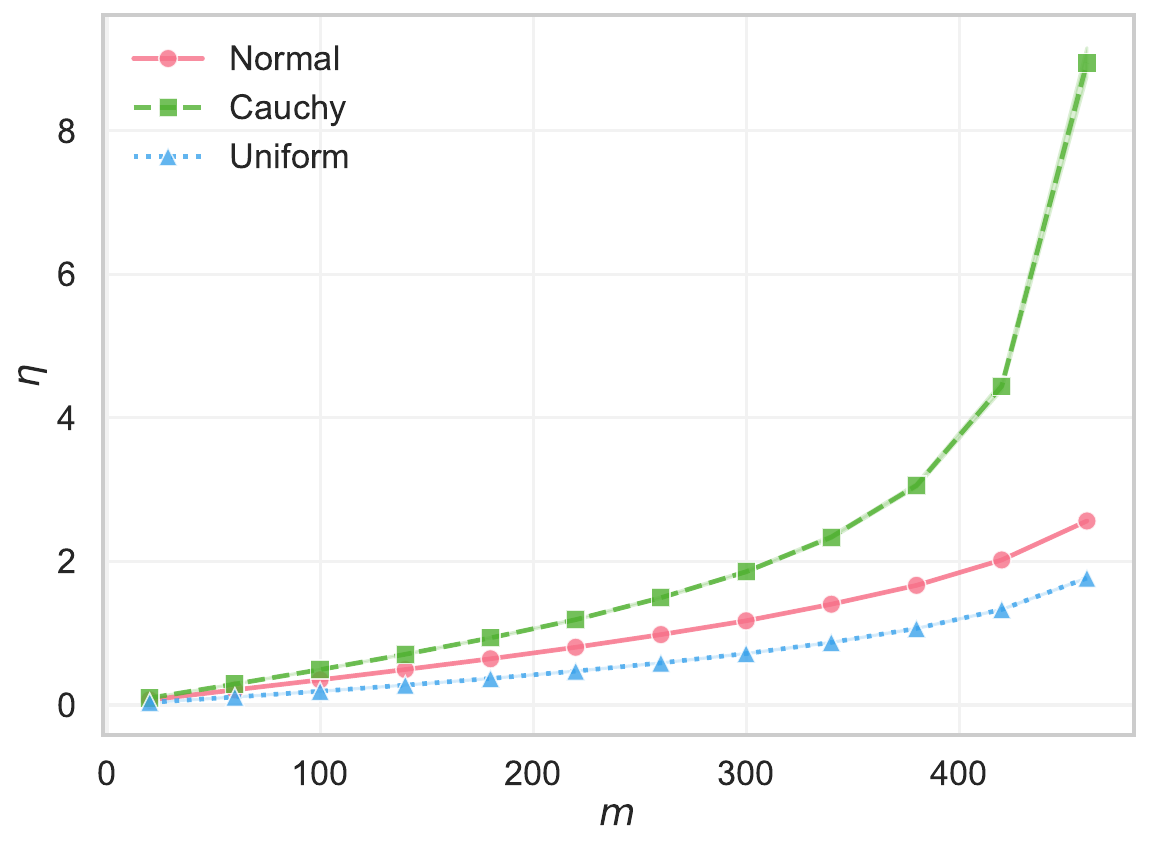}
    \caption{Logcosh loss.}
    \label{fig:eta_logcosh}
  \end{subfigure}
  \hfill
  \begin{subfigure}[b]{0.3\textwidth}
    \centering
    \includegraphics[width=\linewidth]{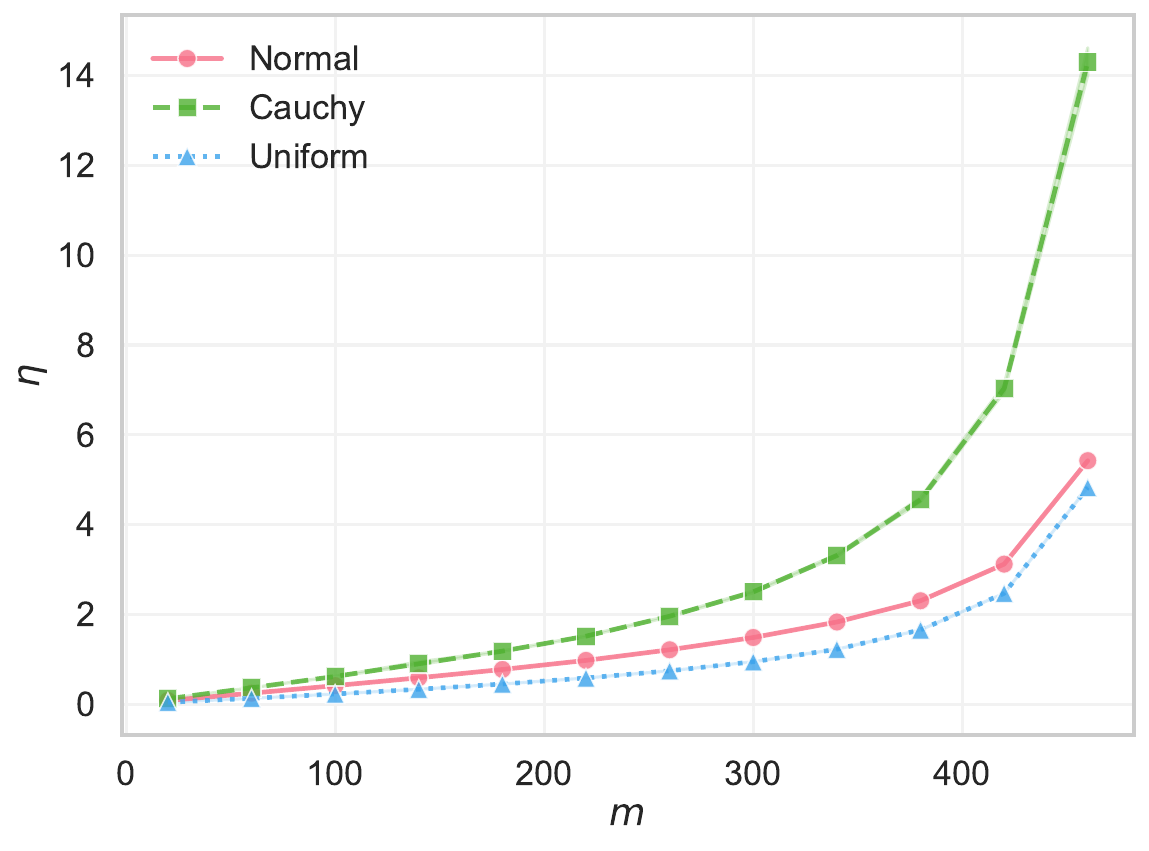}
    \caption{Self-concordant loss.}
    \label{fig:eta_con}
  \end{subfigure}
  \vspace{1em}
  \begin{subfigure}[b]{0.3\textwidth}
    \centering
    \includegraphics[width=\linewidth]{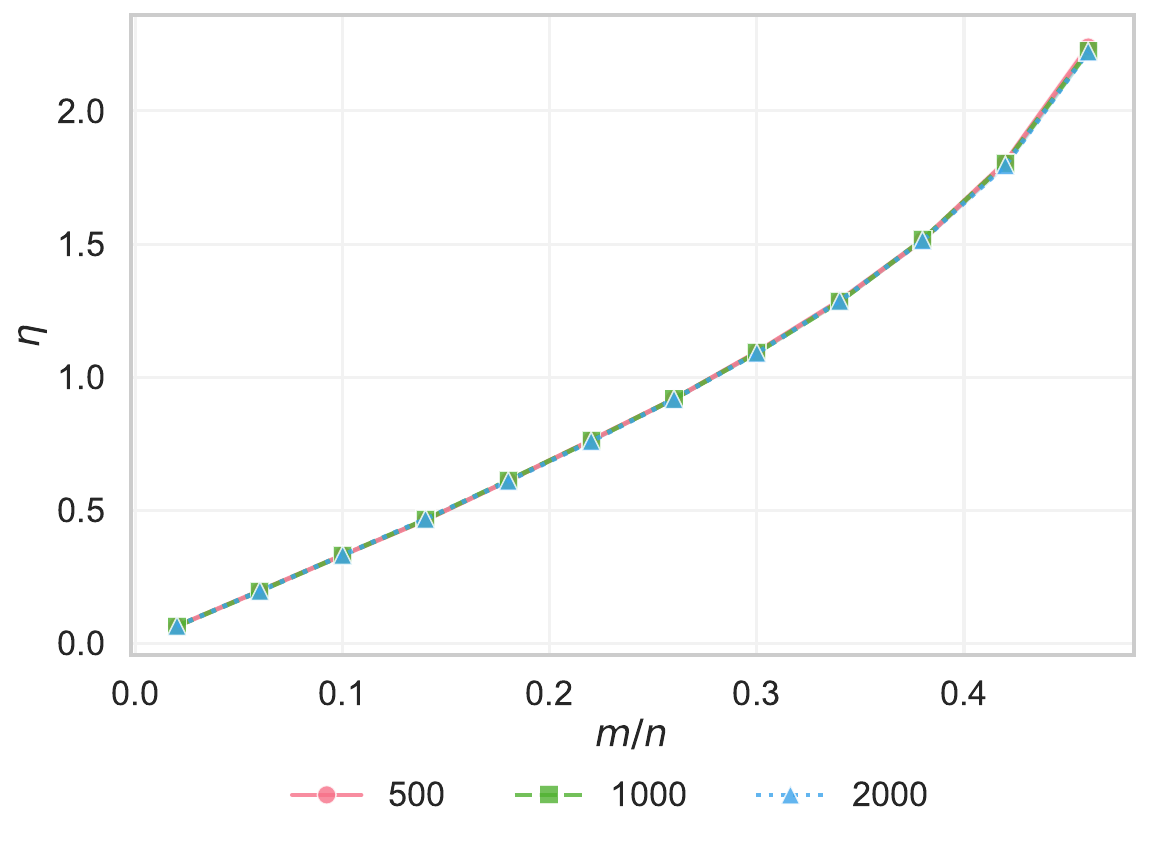}
    \caption{Huber's loss.}
    \label{fig:eta_n_huber}
  \end{subfigure}
  \hfill
  \begin{subfigure}[b]{0.3\textwidth}
    \centering
    \includegraphics[width=\linewidth]{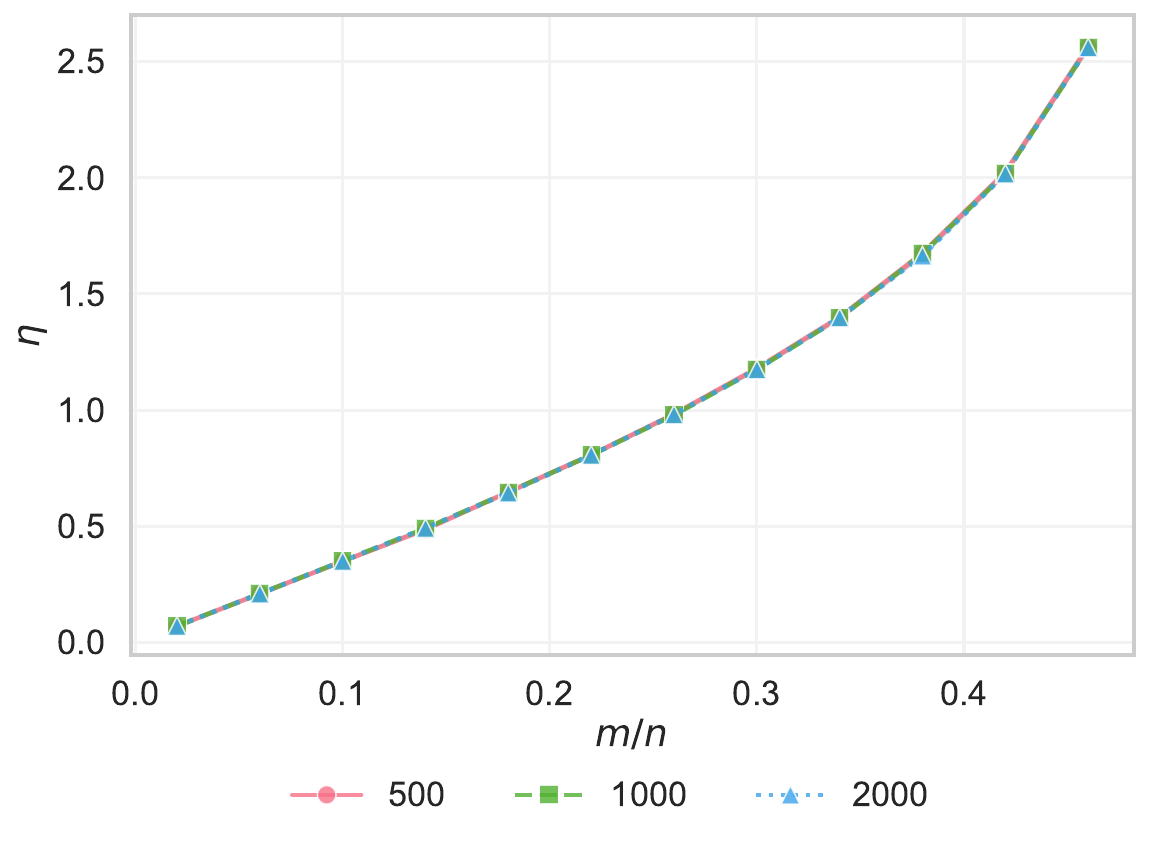}
    \caption{Logcosh loss.}
    \label{fig:eta_n_logcosh}
  \end{subfigure}
  \hfill
  \begin{subfigure}[b]{0.3\textwidth}
    \centering
    \includegraphics[width=\linewidth]{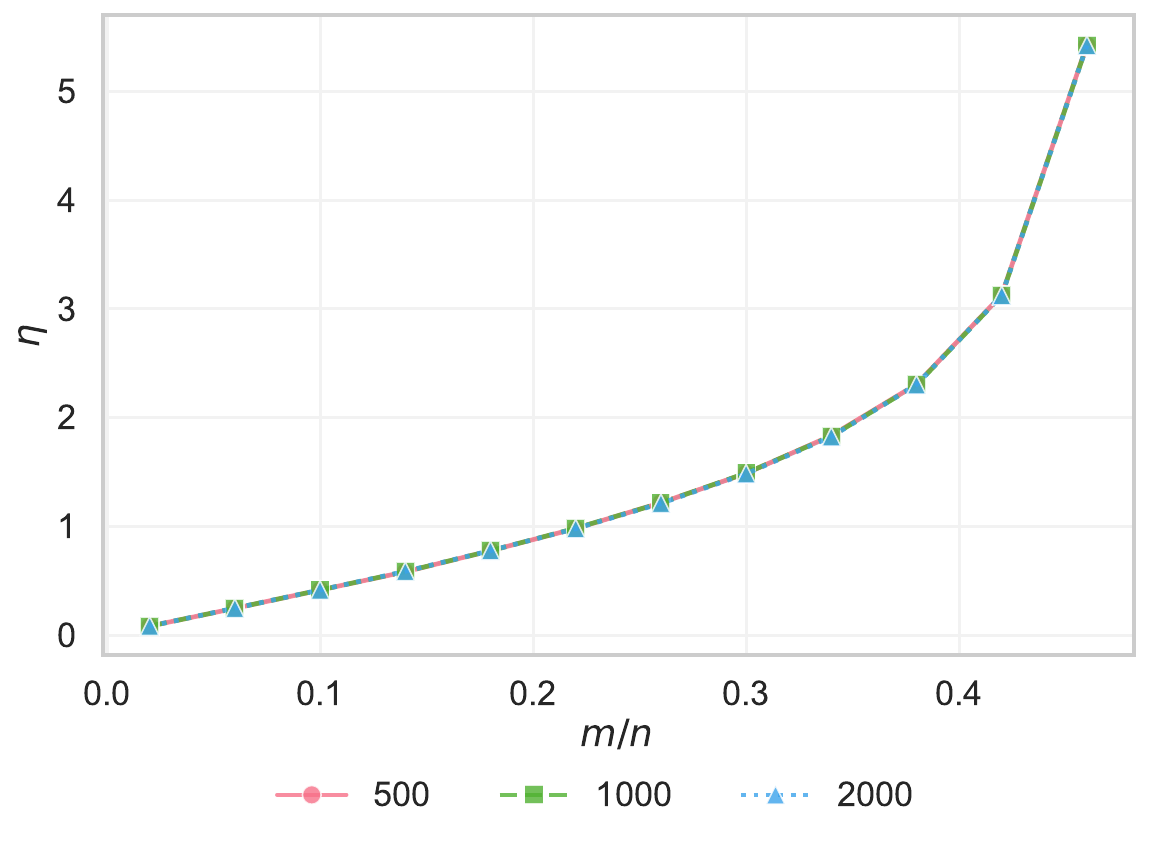}
    \caption{Self-concordant loss.}
    \label{fig:eta_n_con}
  \end{subfigure}
  \caption{ \small $m$-sensitivity $\eta_{m/n}(\hat\theta,x^{(n)})$ for location M-estimators. Top panels: $n=1000$ with $m\in\{20,60,\ldots,460\}$, and line types distinguish data generating distributions. Bottom panels: $m/n \in\{0.02,0.06,\ldots,0.46\}$, and line types distinguish sample sizes under $\mathcal N(0,1)$.
}
  \label{fig:bp_location}
\end{figure}
Now we illustrate Theorem \ref{thm:loc_main} and Corollary \ref{cor:opt_attack_m_est_main} by reporting the $m$-sensitivity curves for the location M-estimators with different data-generating mechanisms and different sample sizes in Figure \ref{fig:bp_location}. First, for fixed $m/n$, the curves for $n\in\{500,1000,2000\}$ are nearly indistinguishable (bottom row), indicating that in this regime the $m$-sensitivity is primarily governed by the contamination fraction.
Second, the underlying distribution substantially affects the slope of $m/n \mapsto \eta_{m/n}$ (top row): the Cauchy model yields earlier and sharper increases in $m$-sensitivity than the normal and uniform models, consistent with Corollary~\ref{cor:opt_attack_m_est_main} that shows the $m$-sensitivity is impacted by the order statistics.
Third, for a fixed asymptotic efficiency,  Huber's loss yields the smallest $m$-sensitivity across contamination levels, logcosh is intermediate, and the self-concordant loss is the most sensitive in these experiments.
Equivalently, by inverting $m/n \mapsto \eta_{m/n}(\hat\theta,x^{(n)})$, this ordering corresponds to larger threshold breakdown points $\BP_\eta(\hat\theta,x^{(n)})$ for Huber's loss relative to the two smooth counterparts at the same threshold $\eta$.
\subsection{Tightness of Breakdown Bounds: Wald-type Tests}
\label{sec:NI_wald_appendix}
Figure \ref{fig:bp_test_ratio} revealed that the bounds for the Wald test are visibly wider and the quality of the bounds is not easy to assess from that plot alone. We provide a complementary numerical analysis to show that our estimates improve as the sample size increases. We fix the effect size at $\theta=1$ and vary the sample size over
$
n \in \{50,250,450,\ldots,4050\}.
$
For each $n$, we generate data from $\mathcal N(\theta,1)$, $\mathrm{Unif}(0,1)+\theta$, and $\mathrm{Cauchy}(\theta,1)$ to reflect increasing tail-heaviness.
For each configuration, we repeat $100$ replications, condition on $\phi(x^{(n)})=1$, and compute the gap
$$
\mathrm{gap}(x^{(n)}) \;:=\; \BP_{\mathrm{reject}}^{\mathrm{up}}(\phi,x^{(n)})-\BP_{\mathrm{reject}}^{\mathrm{low}}(\phi,x^{(n)}),
$$
where $\BP_{\mathrm{reject}}^{\mathrm{up}}$ and $\BP_{\mathrm{reject}}^{\mathrm{low}}$ denote the upper and lower bounds in Theorem~\ref{thm:meta}, respectively.
Figure~\ref{fig:gap_new_scatter} displays $\mathrm{gap}(x^{(n)})$ versus $n$ with a fitted regression line, which shows that the bound gap decreases with $n$ across all configurations, supporting that the bounds become sharper as the sample size increases.
\begin{figure}[htbp]
  \centering
  \begin{subfigure}[b]{0.45\textwidth}
    \centering
    \includegraphics[width=\linewidth]{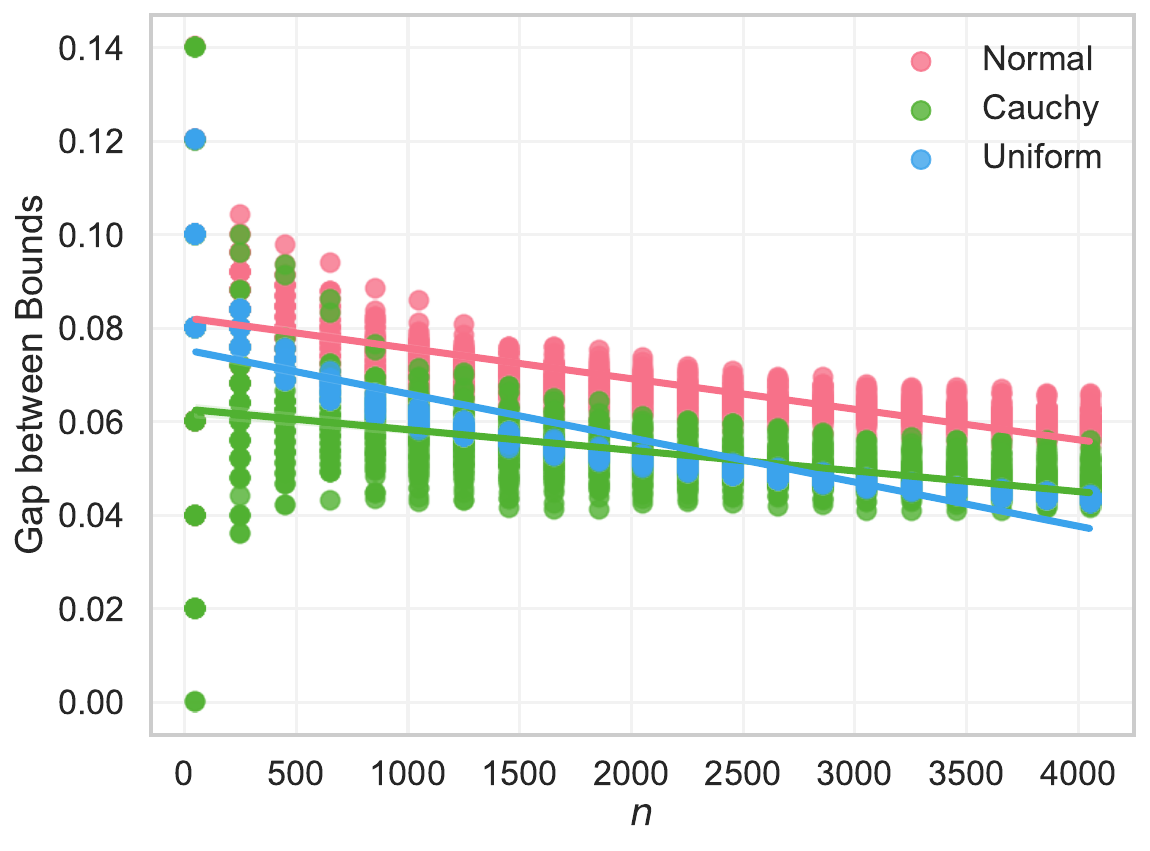}
    \caption{Huber's loss with regular Wald test.}
  \end{subfigure}
  \hfill
  \begin{subfigure}[b]{0.45\textwidth}
    \centering
    \includegraphics[width=\linewidth]{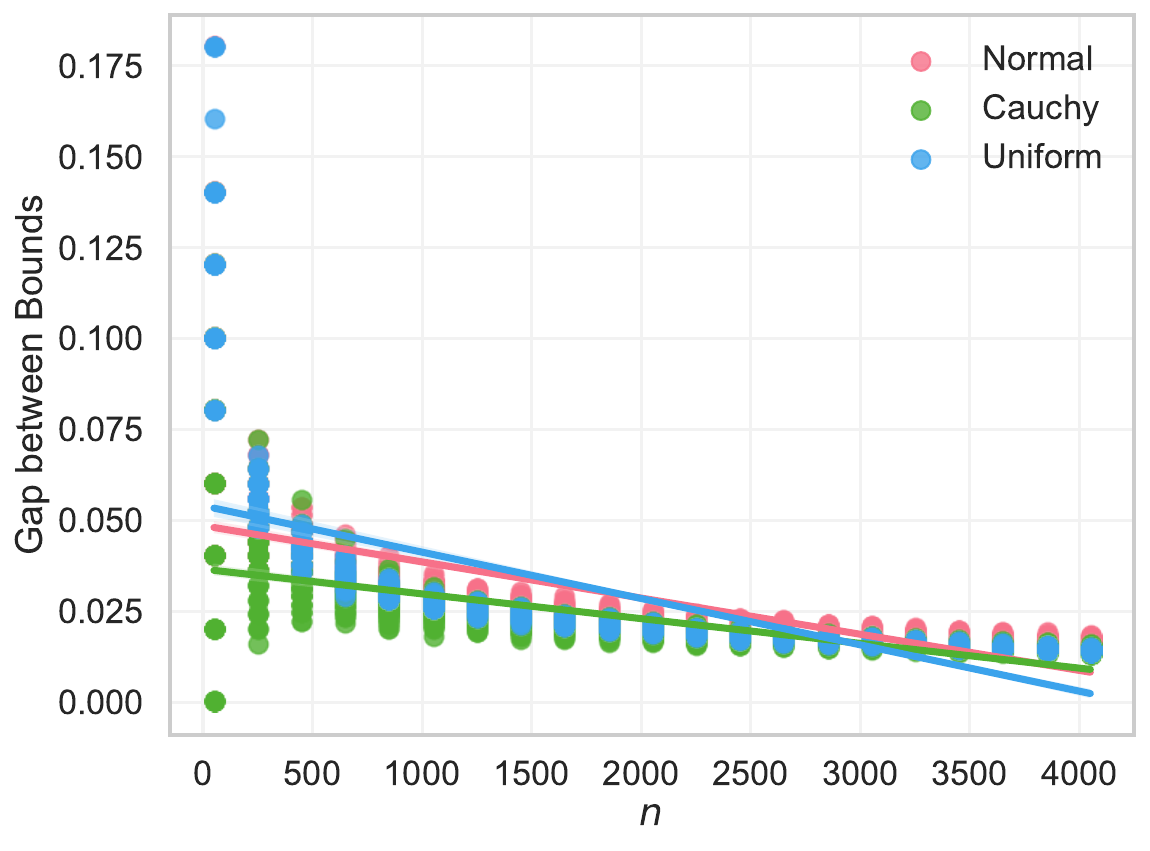}
    \caption{Huber's loss with restricted Wald test.}
  \end{subfigure}
  \vspace{1em}
  \begin{subfigure}[b]{0.45\textwidth}
    \centering
    \includegraphics[width=\linewidth]{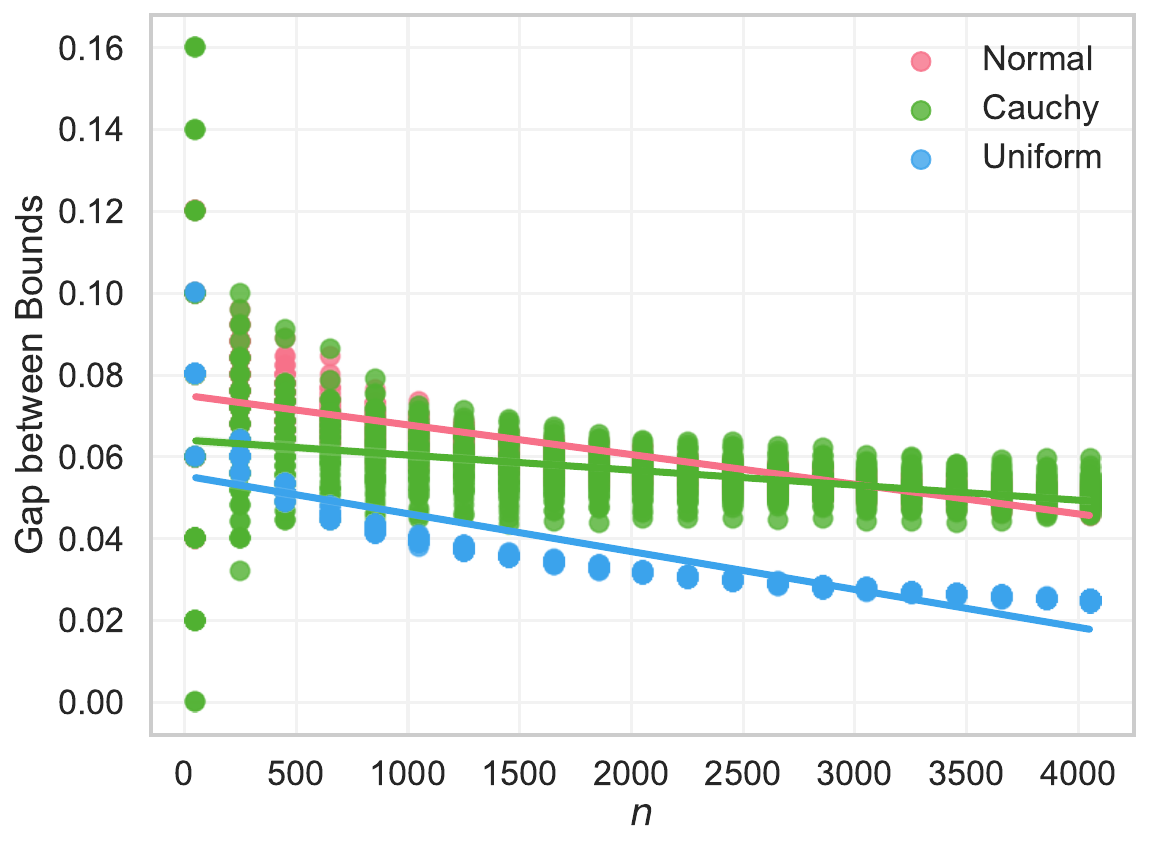}
    \caption{Logcosh loss with regular Wald test.}
  \end{subfigure}
  \hfill
  \begin{subfigure}[b]{0.45\textwidth}
    \centering
    \includegraphics[width=\linewidth]{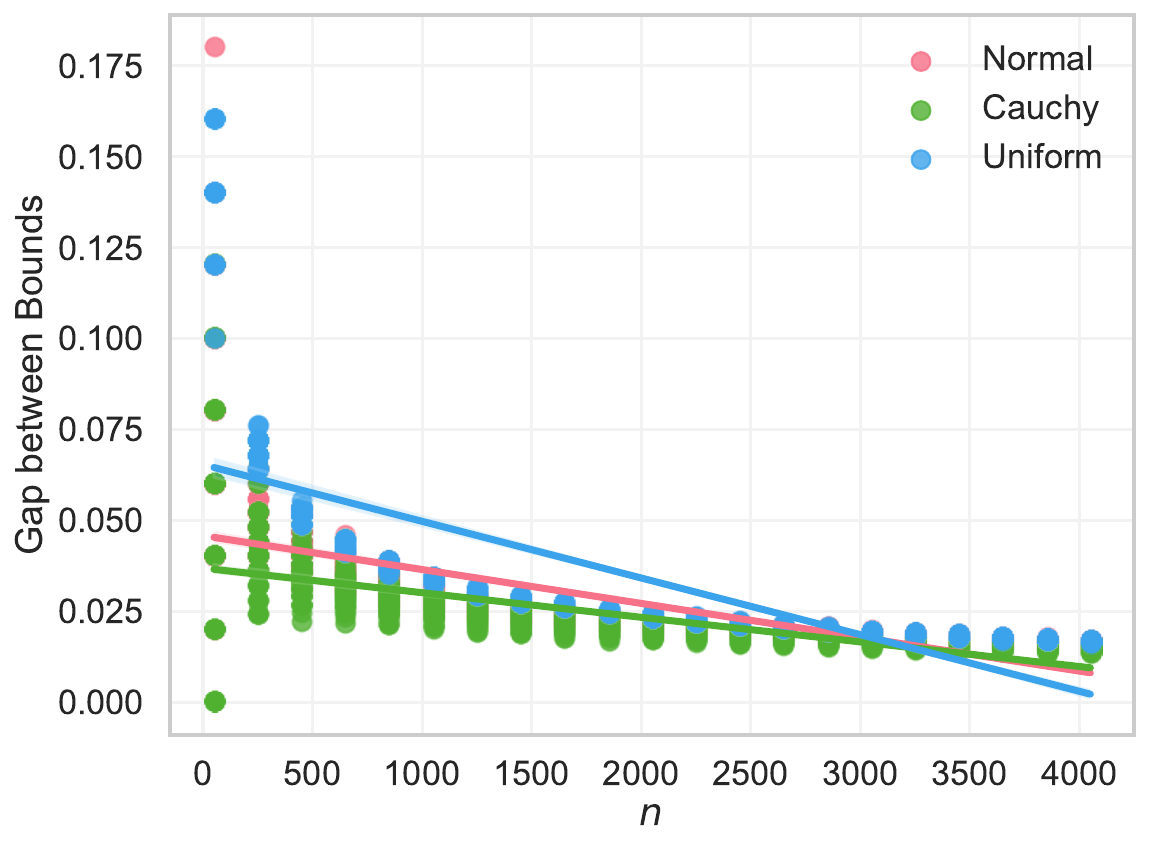}
    \caption{Logcosh loss with restricted Wald test.}
  \end{subfigure}
  \caption{ \small Gap between the upper and lower bounds on $\BP_{\mathrm{reject}}(\phi,x^{(n)})$ as a function of $n$, with $\theta=1$.
  Points are Monte Carlo replications; line and (almost invisible) band are a fitted regression and its $95\%$ confidence band.
  Line types distinguish the generating distribution.
  }
  \label{fig:gap_new_scatter}
\end{figure}
\subsection{Numerical Illustration: Two-sample Tests}\label{sec:NI_two_sample}
We follow the common simulation setup in Section \ref{sec:NI_location} for the choice of losses and tuning.
We consider the two-sample Wald-type test in \eqref{eq:test_two_sample} and evaluate the rejection breakdown point bounds described in Appendix \ref{sec:two_sample}, which is adjusted for two-sample tests.
We generate two independent samples with a location shift:
$$
x_1,\ldots,x_{n_x} \sim \mathcal N(0,1),
\qquad
y_1,\ldots,y_{n_y} \sim \mathcal N(\theta,1)
$$
for a grid of values of $\theta\in[-2,2]$ as in Section \ref{sec:NI_wald}.
We use balanced samples with $n_x=n_y\in\{50,100,200\}$.
For each replication, we repeatedly draw $(x^{(n_x)},y^{(n_y)})$ until the realized dataset satisfies $\phi(x^{(n_x)},y^{(n_y)})=1$ (rejects), and then compute the upper and lower bounds on $\BP_{\mathrm{reject}}(\phi,x^{(n_x)},y^{(n_y)})$ from Appendix~\ref{sec:two_sample}.
We report Monte Carlo averages over $100$ replications.
\begin{figure}[htbp]
  \centering
  \begin{subfigure}[b]{0.32\textwidth}
    \centering
    \includegraphics[width=\linewidth]{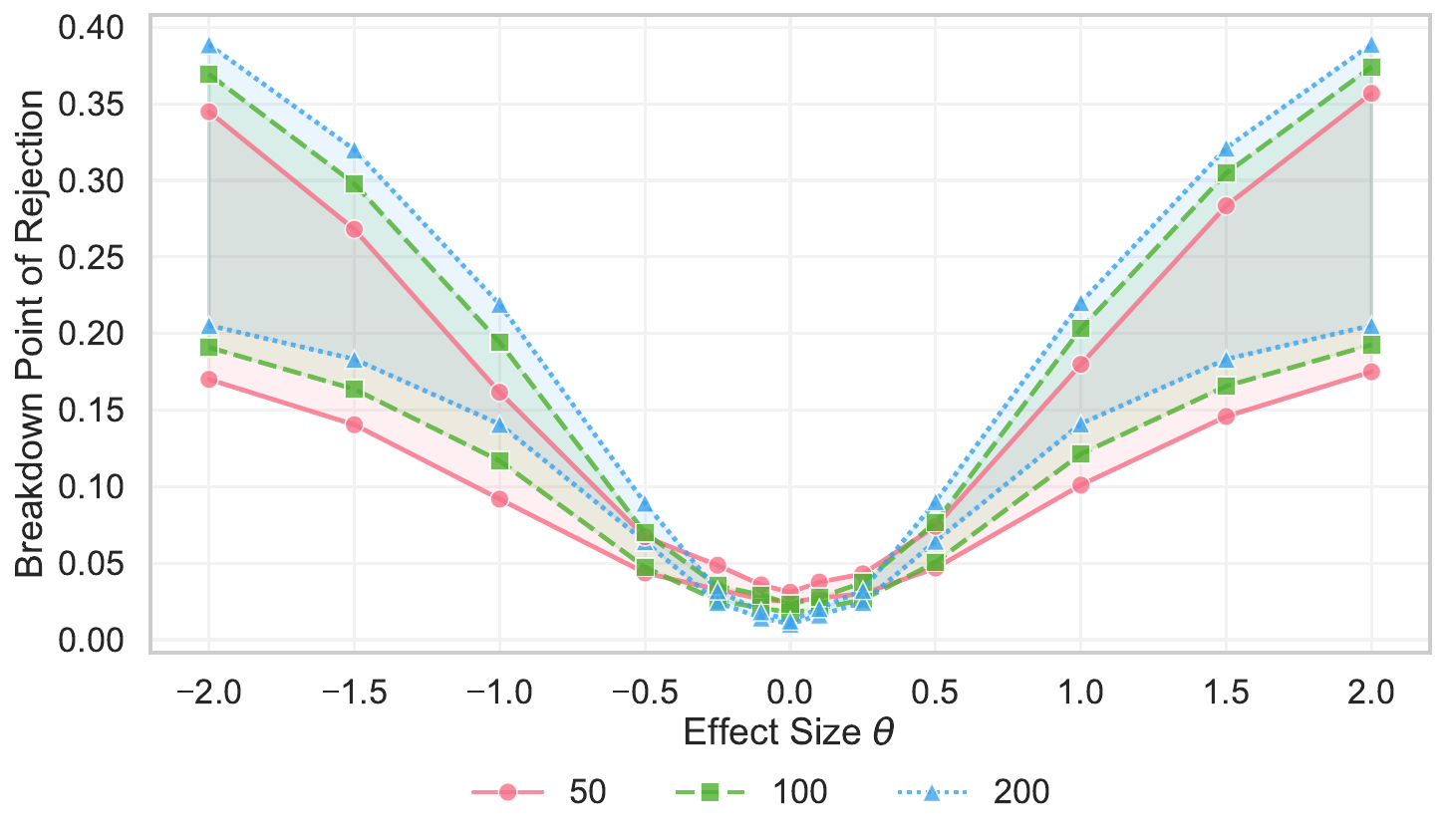}
    \caption{Huber's loss.}
    \label{fig:two_sample_huber_ratio}
  \end{subfigure}
  \hfill
  \begin{subfigure}[b]{0.32\textwidth}
    \centering
    \includegraphics[width=\linewidth]{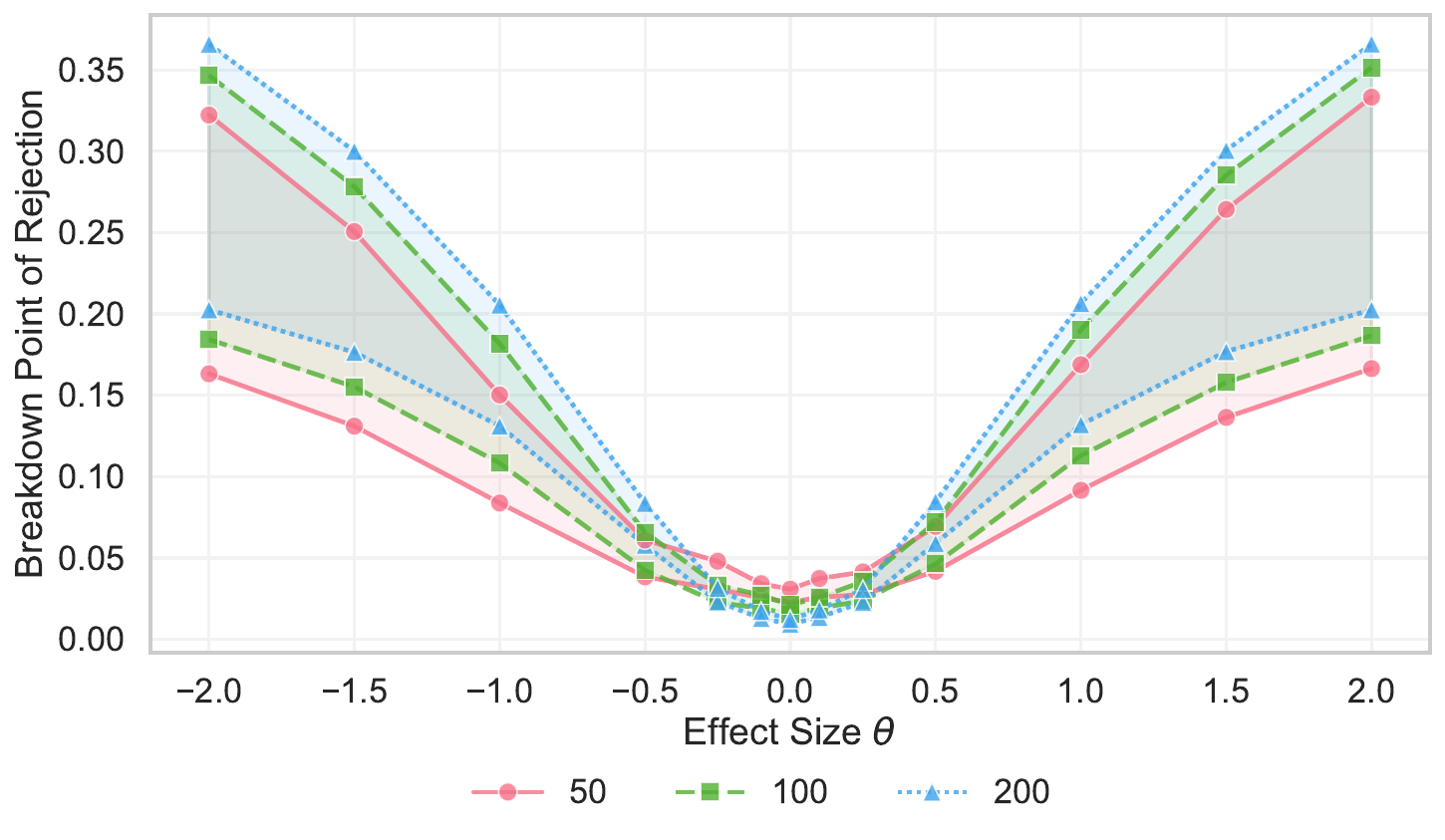}
    \caption{Logcosh loss.}
    \label{fig:two_sample_logcosh_ratio}
  \end{subfigure}
  \hfill
  \begin{subfigure}[b]{0.32\textwidth}
    \centering
    \includegraphics[width=\linewidth]{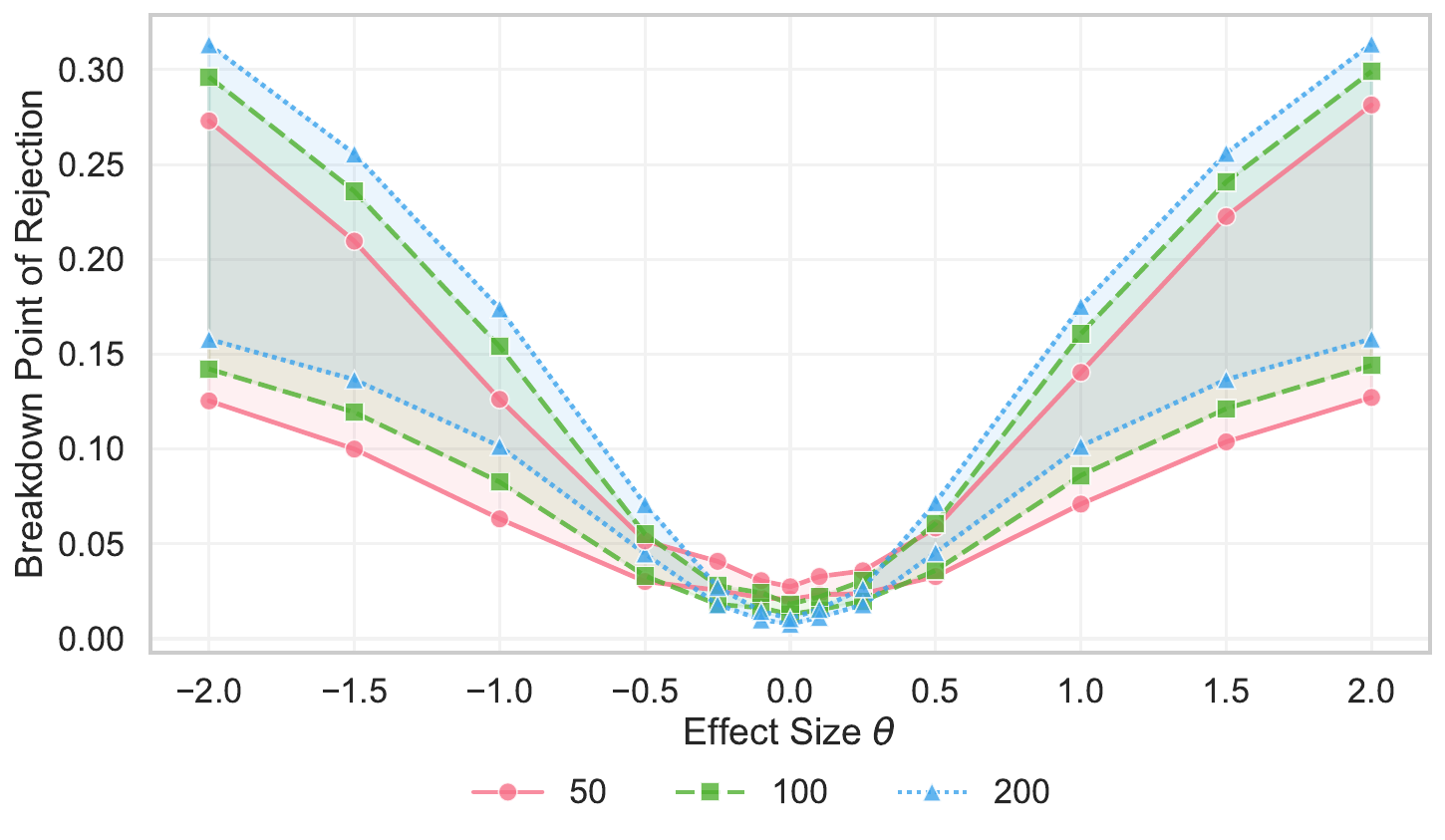}
    \caption{Self-concordant loss.}
    \label{fig:two_sample_con_ratio}
  \end{subfigure}
  \caption{\small Upper and lower bounds for the rejection breakdown point $\BP_{\mathrm{reject}}(\phi,x^{(n_x)},y^{(n_y)})$ as a function of the effect size $\theta$ in the two-sample problem.
  Line types distinguish $n_x=n_y\in\{50,100,200\}$; the shaded band indicates the interval between the lower and upper bounds.}
  \label{fig:bp_two_sample_ratio}
\end{figure}
Figure~\ref{fig:bp_two_sample_ratio} again exhibits a V-shape in $\theta$: the rejection breakdown point is minimized near $\theta=0$ and increases with $|\theta|$, reflecting that larger location separation is harder to overturn by contamination.
At matched Gaussian efficiency, the self-concordant loss yields smaller rejection breakdown points than Huber's and logcosh losses in Figure \ref{fig:bp_two_sample_ratio}, consistent with the robustness ordering observed for location estimation in Figure \ref{fig:bp_location}.
\subsection{Uniformity of the Bootstrap p-values}\label{sec:boot_simulation}
This section uses the randomized probability integral transform \citep{beran1988prepivoting, diebold1997evaluating, davidson2006power} to assess the quality of the multiplier bootstrap in approximating the sampling distribution of
$$
T=\sqrt{n}(\eta_{m/n\pm}(\hat \theta_b, \tilde F_n)-\eta_{\varepsilon \pm}).
$$
For Monte Carlo replication $j$, define
$$
T_j=\sqrt{n}(\eta_{m/n\pm}(\hat \theta^{(j)}, F_n^{(j)})-\eta_{\varepsilon \pm}), \qquad
T^{*(b)}_j=\sqrt{n}(\eta_{m/n\pm}(\hat \theta_b^{(j)}, \tilde F_n^{(b, j)})-\eta_{m/n\pm}(\hat \theta^{(j)}, F_{n}^{(j)})),\quad b=1,\dots,B,
$$
where $\eta_{m/n\pm}(\hat \theta^{(j)},  F_n^{(j)})$ is computed from i.i.d. standard normal sample $x^{(n)}$ under the Huber's loss ($\delta=1.345$), and $\eta_{m/n\pm}(\hat \theta_b^{(j)}, \tilde F_n^{(b, j)})$ is computed from the same $x^{(n)}$ using exponential multiplier weights $w_i\sim\mathrm{Exp}(1)$. To handle discreteness of the empirical bootstrap distribution, set
$$
R_j(t)=\sum_{b=1}^B \II\{T^{*(b)}_j\le t\},\qquad
U_j=\frac{R_j(T_j)+V_j}{B+1},\quad V_j\sim\mathrm{Unif}(0,1).
$$
If the bootstrap is valid for this statistic, then $\{U_j\}_{j=1}^M \overset{d}{\approx} \mathrm{Unif}(0,1)$. Accordingly, we display a PP-plot of the empirical quantiles of $\{U_j\}$ against the theoretical quantiles and a histogram of $\{U_j\}$ (with the $y=1$ reference line), and we report the Kolmogorov--Smirnov statistic and $p$-value for $H_0:U\sim\mathrm{Unif}(0,1)$. Figure~\ref{fig:pp_100} shows results for $n=100$, $M=1000$, $B=1000$, and Huber $\delta=1.345$ across $\varepsilon \in\{0.03,0.1,0.15\}$. The PP curves adhere closely to $y=x$ and the histograms concentrate around the unit baseline, indicating good agreement between the bootstrap conditional distribution and the sampling distribution, with only mild finite sample deviations in the tails. Figure~\ref{fig:pp_1000} shows results for $n=1000$, $M=3000$, $B=3000$ with less finite sample deviations.
\begin{figure}[htbp]
  \centering
  \begin{subfigure}[b]{0.32\textwidth}
    \centering
    \includegraphics[width=\linewidth]{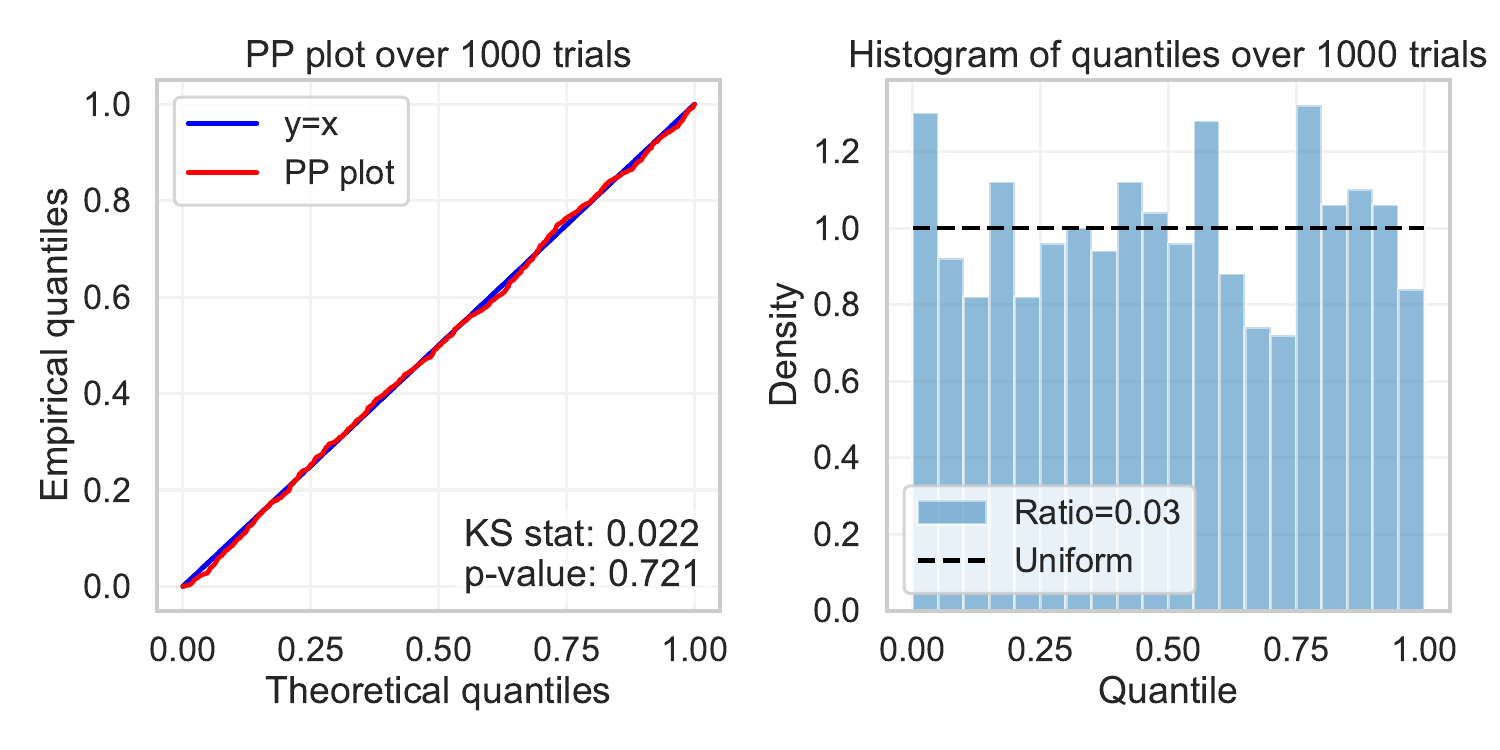}
    \label{fig:pp_3_100}
  \end{subfigure}
  \begin{subfigure}[b]{0.32\textwidth}
    \centering
    \includegraphics[width=\linewidth]{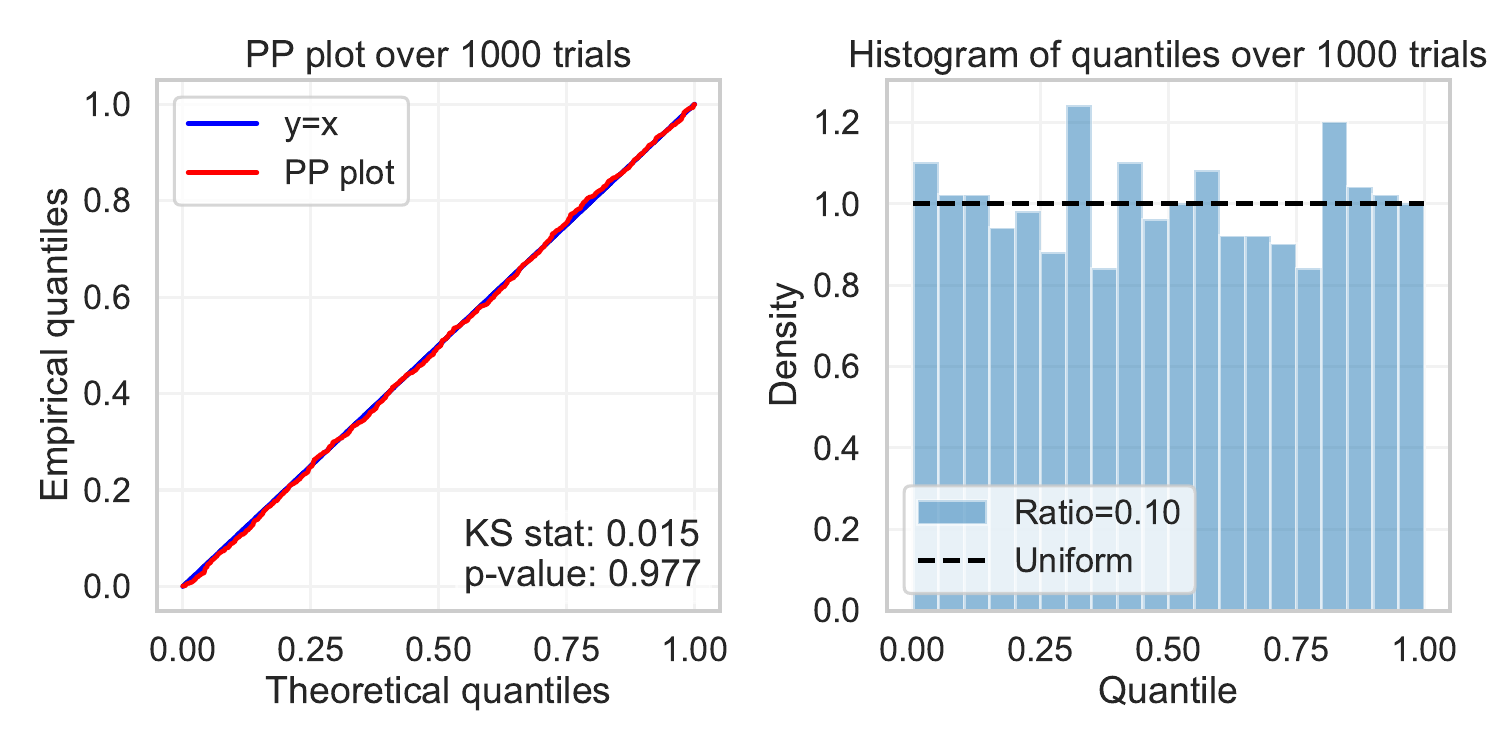}
    \label{fig:pp_10_100}
  \end{subfigure}
  \begin{subfigure}[b]{0.32\textwidth}
    \centering
    \includegraphics[width=\linewidth]{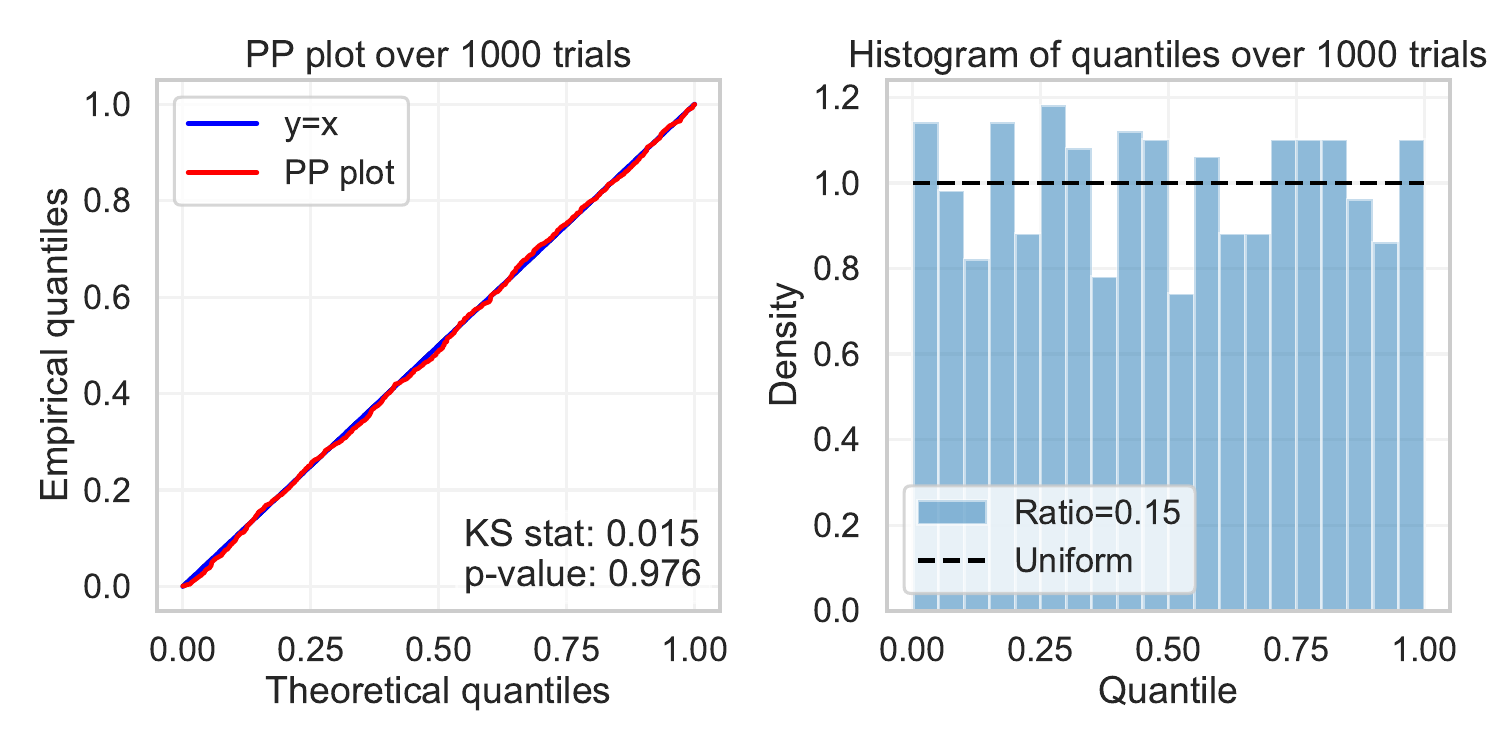}
    \label{fig:pp_15_100}
  \end{subfigure}
  \vspace{-15pt}
  \caption{ {\small Randomized PIT–based uniformity diagnostics for the multiplier bootstrap. Each panel corresponds to a different trimming proportion $\varepsilon \in \{0.03, 0.1, 0.15\}$, with $n=100$, $M=1000$, and $B=1000$. Under a valid bootstrap approximation, we expect the distribution to be uniform. Panels show PP curves near $y=x$ and histograms near the $y=1$ baseline, with in-panel Kolmogorov--Smirnov statistics and $p$-values indicating good fit.}}
  \label{fig:pp_100}
\end{figure}
\begin{figure}[htbp]
  \centering
  \begin{subfigure}[b]{0.32\textwidth}
    \centering
    \includegraphics[width=\linewidth]{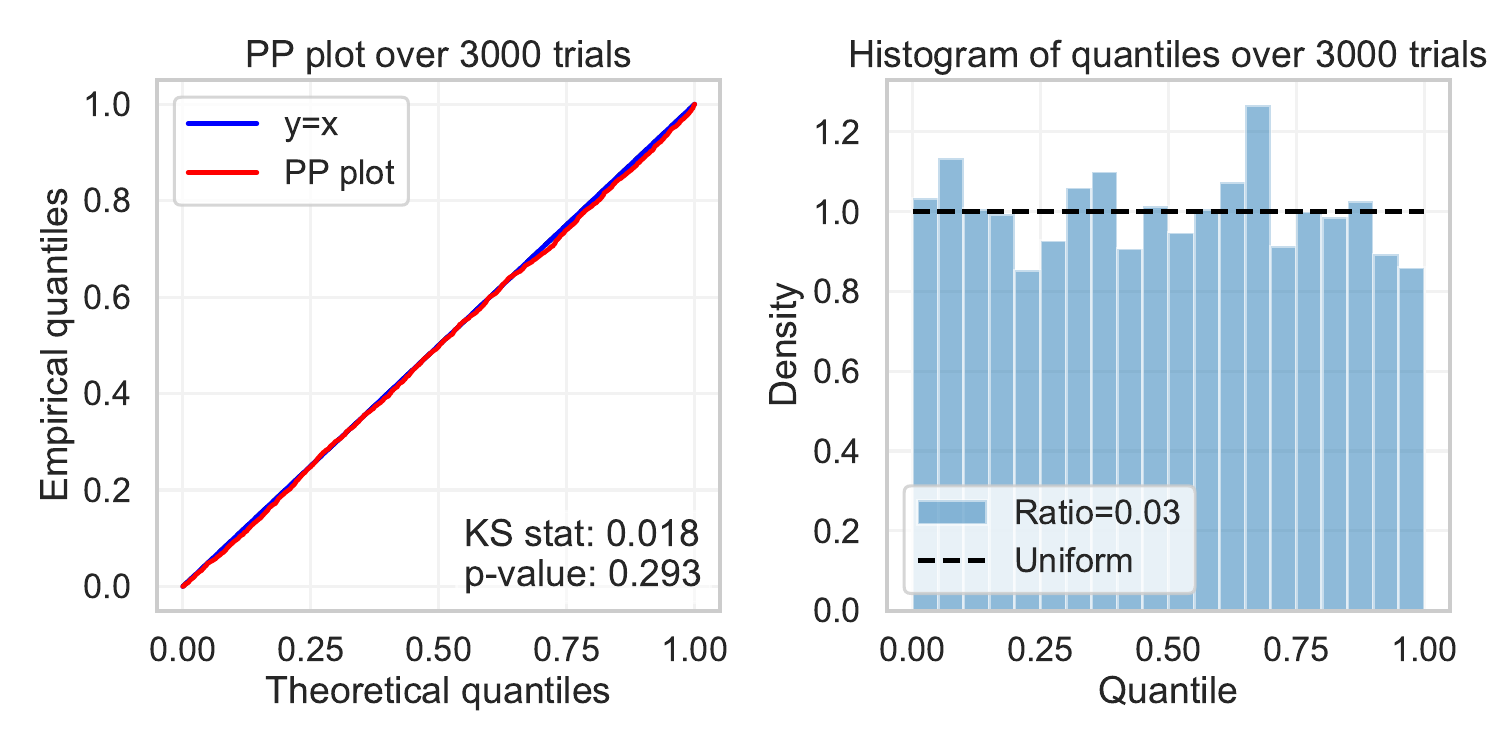}
    \label{fig:pp_3_1000}
  \end{subfigure}
  \begin{subfigure}[b]{0.32\textwidth}
    \centering
    \includegraphics[width=\linewidth]{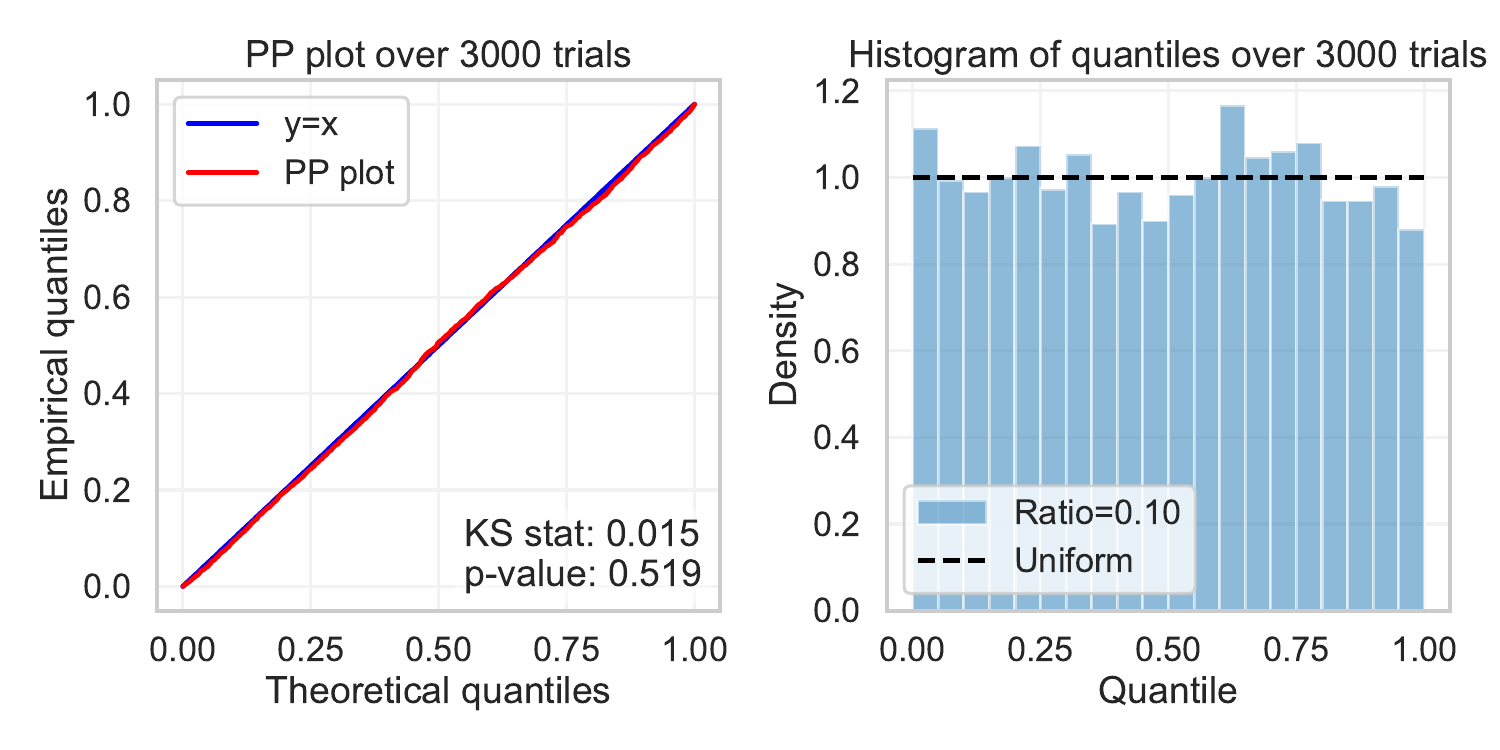}
    \label{fig:pp_10_1000}
  \end{subfigure}
  \begin{subfigure}[b]{0.32\textwidth}
    \centering
    \includegraphics[width=\linewidth]{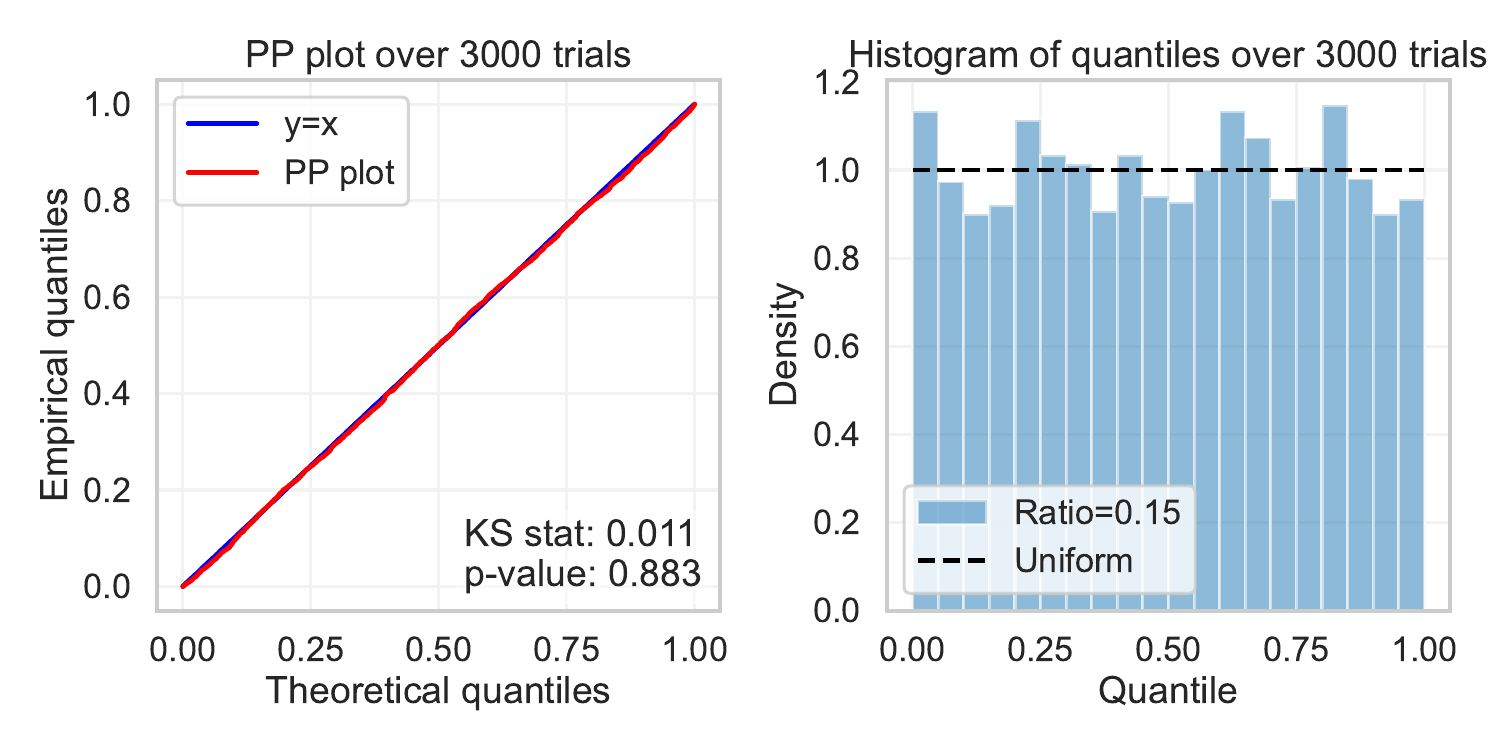}
    \label{fig:pp_15_1000}
  \end{subfigure}
  \vspace{-15pt}
  \caption{ {\small Randomized PIT–based uniformity diagnostics for the multiplier bootstrap. Each panel corresponds to a different trimming proportion $\varepsilon \in \{0.03, 0.1, 0.15\}$, with $n=1000$, $M=3000$, and $B=3000$. The finite sample deviation decreases compared to previous plots.}}
  \label{fig:pp_1000}
\end{figure}
\section{Proofs of Main Results}
\label{sec:proof}
We will restate the main results in a slightly different way and prove them below. We note that these alternative statement will be convenient for many of the breakdown point of tests results. Indeed, it will be quite useful to characterize the two directional breakdown points $\{ \BP_{\eta+},\BP_{\eta-}\}$ and $m$-sensitivities $\{\eta_{m/n+},\eta_{m/n-}\}$ for our different estimators.
\subsection{Proof for Location Estimation: Theorem \ref{thm:loc_main}}
Below we state Theorem \ref{thm:loc} which corresponds to Theorem \ref{thm:loc_main}. Note also that Corollary \ref{cor:opt_attack_m_est}   is a restatement of Corollary \ref{cor:opt_attack_m_est_main}.
\begin{theorem}
\label{thm:loc}
    Assume that $\psi(t-\theta)$ in \eqref{M-est} is non-increasing in $\theta$ that passes through 0. Then,
    \begin{align*}
        \BP_\eta (\hat\theta, x^{(n)}) = \min \left \{ \BP_{\eta+} (\hat\theta, x^{(n)}), \BP_{\eta-} (\hat\theta, x^{(n)})\right \},
    \end{align*}
    where
    \begin{align*}
        \BP_{\eta+} (\hat\theta, x^{(n)}) =  &~\frac{1}{n}\min \left\{m \in [n]: m \ge \frac{\sum_{i>m} \psi (x_{(i)} - (\hat{\theta} + \eta))}{-\psi(\infty)} \right \}, \\
    \BP_{\eta-} (\hat\theta, x^{(n)}) = &~  \frac{1}{n}\min \left\{m \in [n]: m \ge \frac{\sum_{i\le n-m} \psi(x_{(i)} - (\hat \theta - \eta))}{-\psi(-\infty)} \right \} .
    \end{align*}
\end{theorem}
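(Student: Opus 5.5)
The proof has two parts: a split into the two directions, and then a characterization of each direction by an extremal contamination. For the split, observe that $|\hat\theta(y^{(n)})-\hat\theta(x^{(n)})|\ge\eta$ holds exactly when $\hat\theta(y^{(n)})\ge\hat\theta+\eta$ or $\hat\theta(y^{(n)})\le\hat\theta-\eta$. Hence the feasible set of $m$ in the definition of $\BP_\eta$ is the union of the feasible sets in Definition~\ref{def:BP_directional}. The minimum over a union is the minimum of the two minima, which gives $\BP_\eta=\min\{\BP_{\eta+},\BP_{\eta-}\}$. By the symmetry $x\mapsto -x$ (which swaps $\psi(\infty)$ with $\psi(-\infty)$ and reverses the order statistics), it then suffices to prove the formula for $\BP_{\eta+}$.

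For $\BP_{\eta+}$ I will show that some $y^{(n)}\in B_{\textsf H}(x^{(n)},m)$ satisfies $\hat\theta(y^{(n)})\ge\hat\theta+\eta$ if and only if
$$
\sum_{i>m}\psi\bigl(x_{(i)}-(\hat\theta+\eta)\bigr)+m\,\psi(\infty)\ge 0 .
$$
This inequality is exactly $m\ge \sum_{i>m}\psi(x_{(i)}-(\hat\theta+\eta))/(-\psi(\infty))$ after dividing by $-\psi(\infty)$. The cases $\psi(\infty)=+\infty$, or $\psi(\infty)=0$ (where $\psi$ vanishes on $[0,\infty)$), need a brief separate remark about signs and the convention for division.

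The key tool is monotonicity. Write $\Lambda_y(t):=\sum_i\psi(y_i-t)$. This function is non-increasing in $t$, and by convexity of $\rho$ the M-estimate satisfies $\Lambda_y(\hat\theta(y))=0$. So $\hat\theta(y)\ge t$ if and only if $\Lambda_y(t)\ge 0$, up to the usual care about non-unique roots. I will adopt the convention, as in Huber--Ronchetti Theorem 3.6, that $\hat\theta$ is any root and the attack may select the largest root. With this, the condition becomes: is there a $y\in B_{\textsf H}(x^{(n)},m)$ with $\Lambda_y(\hat\theta+\eta)\ge 0$?

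Since $\psi$ is non-decreasing, the adversary maximizes $\Lambda_y(\hat\theta+\eta)$ in two steps. First, it replaces points with the smallest values $\psi(x_i-\hat\theta-\eta)$, i.e., the $m$ smallest order statistics $x_{(1)},\dots,x_{(m)}$. Second, it sends those replacements to $+\infty$, so each contributes $\psi(\infty)$, interpreted as a supremum. The supremum over the Hamming ball is therefore $\sum_{i>m}\psi(x_{(i)}-\hat\theta-\eta)+m\psi(\infty)$. Necessity follows because no $y$ can exceed this value. Sufficiency follows by taking the replaced points large but finite, when the supremum is attained only in the limit. Replacing fewer than $m$ points is dominated by replacing exactly $m$, since every replacement weakly increases the sum.

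Two subtleties need care. One is the boundary case where equality holds only in the limit, because $\psi(\infty)$ is not attained. The other is the tie or flat-root issue in passing from $\Lambda_y(\hat\theta+\eta)\ge0$ to $\hat\theta(y)\ge\hat\theta+\eta$. I expect this equivalence at the boundary to be the main obstacle. I would handle it by showing that, if $\psi$ is strictly below $\psi(\infty)$ everywhere, the inequality must be strict for finite contamination, and reconcile this with the stated non-strict condition using the largest-root convention. Otherwise I would note that the formula describes the infimum attained in the limit.

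Finally, the feasible set $\{m: m\ge \cdots\}$ is upward closed: as $m$ increases the left side grows and the right-hand sum loses terms. The minimum over this set therefore equals the breakdown count, and dividing by $n$ gives the stated formula. Combining with the first paragraph yields Theorem~\ref{thm:loc}.
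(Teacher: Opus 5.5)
Your proposal is correct and follows the paper's route: bound $\sum_i\psi(y_i-\theta)$ over $B_{\textsf{H}}(x^{(n)},m)$ by $\sum_{i>m}\psi(x_{(i)}-\theta)+m\psi(\infty)$, then transfer this to $\hat\theta$ via monotonicity in $\theta$. It is in fact more complete than the paper's proof, which only shows that an $\eta$-shift with $m$ replacements forces the inequality (hence $\BP_{\eta+}\ge m^*/n$ for the smallest admissible $m^*$) and never exhibits the large-but-finite contamination you use for the converse.

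On your boundary worry, suppose $\psi$ never attains $\psi(\infty)$ (e.g.\ the logcosh score $\tanh$) and $\sum_{i>m}\psi(x_{(i)}-\hat\theta-\eta)+m\psi(\infty)=0$ exactly. Then every contaminated score is strictly negative at $\hat\theta+\eta$, so all its roots lie strictly below $\hat\theta+\eta$, and no largest-root convention can rescue the non-strict inequality. Your fallback reading is therefore the right one: the formula is exact for the criterion $\sup_{y\in B_{\textsf{H}}(x^{(n)},m)}\hat\theta(y)\ge\hat\theta+\eta$ (which is how the paper's proof is implicitly phrased), or, with your largest-root convention, when $\psi$ attains its supremum as Huber's does.
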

\begin{proof}[Proof of Theorem \ref{thm:loc}]
If $\psi(\infty)$ is unbounded, then moving one point is enough to change the estimator to the right by any $\eta \in \RR_{>0}$, so $\BP_{\eta+}(\hat \theta, x^{(n)})=1/n$ in this case. Similarly, if $\psi(-\infty)$ is unbounded, $\BP_{\eta-}(\hat \theta, x^{(n)})=1/n$. In the following, we will assume $\psi(\pm \infty)$ is bounded.
Let's start with moving $\hat\theta(x^{(n)})$ to the right.
Consider any $m\in[n]$ and note that since $\psi(\cdot)$ is non-decreasing, for all $y^{(n)} \in B_\textsf{H}(x^{(n)},m)$,
\begin{align*}
\nonumber \frac{1}{n}\sum_{i=1}^n\psi(y_i-\theta)&\leq \frac{1}{n}\sum_{i>m}\psi(x_{(i)}-\theta)+\frac{m}{n}\psi(\infty).
\end{align*}
Our goal is to find the smallest $m$ such that
\begin{align*}
     \sup_{y^{(n)}\in B_\textsf{H}(x^{(n)},m) }\hat\theta(y^{(n)}) \ge \hat \theta(x^{(n)}) + \eta.
\end{align*}
Notice that any $\hat \theta(y^{(n)})$ must satisfy
\begin{align*}
    \frac{1}{n}\sum_{i=1}^n\psi(y_i-\hat \theta(y^{(n)})) = 0,
\end{align*}
which gives
\begin{align*}
    0 \le \frac{1}{n}\sum_{i>m}\psi(x_{(i)}-\sup_{y^{(n)}\in B_\textsf{H}(x^{(n)},m)} \hat \theta(y^{(n)}))+\frac{m}{n}\psi(\infty).
\end{align*}
Because $\psi(t-\theta)$ is non-increasing in $\theta$ and $\sup_{y^{(n)}\in B_\textsf{H}(x^{(n)},m) }\hat\theta(y^{(n)}) \ge \hat  \theta(x^{(n)}) + \eta$, we get
\begin{align}
    \label{eq:loc_sup}
    \frac{1}{n}\sum_{i>m}\psi(x_{(i)}-(\hat \theta(x^{(n)}) + \eta))+\frac{m}{n}\psi(\infty) \ge 0 \iff m\geq\frac{\sum_{i>m} \psi (x_{(i)} - (\hat{\theta}(x^{(n)}) + \eta))}{-\psi(\infty)}.
\end{align}
Let
$$
m^* := \min\bigg\{m: m\geq\frac{\sum_{i>m} \psi (x_{(i)} - (\hat{\theta}(x^{(n)}) + \eta))}{-\psi(\infty)} \bigg\}.
$$
and note that  for $m_0<m^*$ we must have that $\sup_{y^{(n)}\in B_\textsf{H}(x^{(n)},m_0)} \hat \theta(y^{(n)}) < \hat\theta(x^{(n)})+\eta$. Indeed, suppose this were not true and $\sup_{y^{(n)}\in B_\textsf{H}(x^{(n)},m_0)} \hat \theta(y^{(n)}) \ge \hat\theta(x^{(n)})+\eta$.  Then because $\psi(t-\theta)$ is non-increasing in $\theta$, we must have
$$
\frac{1}{n}\sum_{i>m_0}\psi(x_{(i)}-(\hat \theta(x^{(n)}) + \eta))+\frac{m_0}{n}\psi(\infty) \ge 0 \iff m_0\geq\frac{\sum_{i>m_0} \psi (x_{(i)} - (\hat{\theta}(x^{(n)}) + \eta))}{-\psi(\infty)},
$$
which contradicts the definition of $m^*$.
This gives one of the two terms inside the minimum in the desired result. The second term will follow from moving $\hat\theta(x^{(n)})$ to the left, which can be controlled with very similar arguments. In particular, consider now any $m\in[n]$ and note that since $\psi(\cdot)$ is non-decreasing, for all $y^{(n)} \in B_\textsf{H}(x^{(n)},m)$, {}
\begin{align*}
\nonumber \frac{1}{n}\sum_{i=1}^n\psi(y_i-\theta)\geq \frac{1}{n}\sum_{i\leq n-m}\psi(x_{(i)}-\theta)+\frac{m}{n}\psi(-\infty).
\end{align*}
And we want to find the smallest $m$ such that
\begin{align*}
    \inf_{y^{(n)}\in B_\textsf{H}(x^{(n)},m) }\hat\theta(y^{(n)}) \le \hat \theta - \eta.
\end{align*}
Notice that any $\hat \theta(y^{(n)})$ must satisfy
\begin{align*}
    \frac{1}{n}\sum_{i=1}^n\psi(y_i-\hat \theta(y^{(n)})) = 0,
\end{align*}
which gives
\begin{align*}
    \frac{1}{n}\sum_{i>m}\psi(x_{(i)}-\inf_{y^{(n)}\in B_\textsf{H}(x^{(n)},m)} \hat \theta(y^{(n)}))+\frac{m}{n}\psi(\infty) \le 0.
\end{align*}
Because $\psi(t-\theta)$ is non-increasing in $\theta$ and $\inf_{y^{(n)}\in B_\textsf{H}(x^{(n)},m) }\hat\theta(y^{(n)}) \le \hat  \theta(x^{(n)}) - \eta$, we get
\begin{align}
\label{eq:loc_inf}
    \frac{1}{n}\sum_{i\le n-m}\psi(x_{(i)}-(\hat \theta(x^{(n)}) -\eta))+\frac{m}{n}\psi(-\infty) \leq 0 \iff m\geq\frac{\sum_{i>m} \psi (x_{(i)} - (\hat{\theta}(x^{(n)}) - \eta))}{-\psi(-\infty)}.
\end{align}
This gives the second term inside the minimum of the claimed result and completes the proof.
\end{proof}
 The next corollary  follows directly from \eqref{eq:loc_sup} and \eqref{eq:loc_inf} in the proof of Theorem~\ref{thm:loc}.
\begin{corollary}
\label{cor:opt_attack_m_est}
     Assume that $\psi$  is non-decreasing and passes through $0$. Then,
    \begin{align*}
        \eta_{m/n}(\hat \theta,x^{(n)}) = \max\{\eta_{m/n+}(\hat \theta,x^{(n)}), \eta_{m/n-}(\hat \theta,x^{(n)})\},
    \end{align*}
    where
    \begin{align*}
        \eta_{m/n+}(\hat \theta,x^{(n)}) = & \max \left \{\eta: \sum_{i >m}\psi(x_{(i)}-(\hat \theta + \eta))+m \psi(\infty) \ge 0 \right \}, \\
        \eta_{m/n-}(\hat \theta,x^{(n)}) = &\max \left \{\eta:\sum_{i \le n-m}\psi(x_{(i)}-(\hat \theta-\eta))+m\psi(-\infty) \le 0\right \}.
    \end{align*}
\end{corollary}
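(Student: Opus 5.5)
The statement to prove is Corollary~\ref{cor:opt_attack_m_est}. The plan is to read it off from the characterizations \eqref{eq:loc_sup} and \eqref{eq:loc_inf} in the proof of Theorem~\ref{thm:loc}, after first splitting the two-sided sensitivity into its one-sided pieces.

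\textbf{Step 1: reduction to one-sided sensitivities.} Since $|\hat\theta(y^{(n)})-\hat\theta(x^{(n)})|\ge\eta$ holds iff one of the two one-sided deviations is at least $\eta$, the definitions give
\[
\BP_\eta=\min\{\BP_{\eta+},\BP_{\eta-}\}.
\]
As functions of $\eta$, both $\BP_{\eta\pm}$ are non-decreasing; the set of $\eta$ reachable with $m$ changes shrinks as $\eta$ grows. Hence $\{\eta:\BP_\eta\le m/n\}$ is the union of $\{\eta:\BP_{\eta+}\le m/n\}$ and $\{\eta:\BP_{\eta-}\le m/n\}$. Taking suprema yields $\eta_{m/n}=\max\{\eta_{m/n+},\eta_{m/n-}\}$. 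One small point: Definition~\ref{def:eta} uses the equality $\BP_\eta=m/n$ rather than $\le$. For $m\ge1$ the supremum over the $\{\BP\le m/n\}$ set is attained on the top level set, so the two versions agree.

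\textbf{Step 2: the one-sided formula for $\eta_{m/n+}$.} The argument around \eqref{eq:loc_sup} shows that $\sup_{y^{(n)}\in B_{\textsf H}(x^{(n)},m)}\hat\theta(y^{(n)})\ge\hat\theta+\eta$ holds iff
\[
G_m(\eta):=\sum_{i>m}\psi\bigl(x_{(i)}-(\hat\theta+\eta)\bigr)+m\psi(\infty)\ge0 .
\]
The necessity direction is exactly the monotonicity argument in that proof. For sufficiency, use the explicit attack: replace $x_{(1)},\dots,x_{(m)}$ by a point $c\to\infty$. The score at $\hat\theta+\eta$ then tends to $G_m(\eta)\ge0$, so by monotonicity of $\psi$ the root lies at or above $\hat\theta+\eta$. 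If $G_m(\eta)=0$ exactly, one takes the largest root or passes to a limit.

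Since $G_m$ is non-increasing in $\eta$, the set $\{\eta:G_m(\eta)\ge0\}$ is an interval $[0,\eta^*]$. Its right endpoint is closed when $\psi$ is continuous, which is the case for a derivative of a convex differentiable $\rho$. Therefore
\[
\eta_{m/n+}=\max\{\eta:G_m(\eta)\ge0\}.
\]

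\textbf{Step 3: the formula for $\eta_{m/n-}$.} This is symmetric, using \eqref{eq:loc_inf}. Replace the $m$ largest order statistics by $-\infty$. Note that the displayed equivalence in \eqref{eq:loc_inf} carries an index typo ($i>m$ should read $i\le n-m$); the correct inequality is the sum over $i\le n-m$.

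\textbf{Main obstacle.} The delicate point is the sufficiency and attainment in Step 2. The replacement value $\infty$ is not in $\RR$, so one needs a limiting argument with $c\to\infty$. One must also make sure that a supremum of $\hat\theta(y^{(n)})$ approached but not attained still qualifies under the ``$\ge\eta$'' in the definition. I would handle both by using continuity of $\psi$ and choosing the largest root of the estimating equation. If attainment fails, the statement is read with $\sup$ in place of $\max$.
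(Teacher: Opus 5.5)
Your proposal is correct and is essentially the paper's argument. The paper only remarks that the corollary follows directly from \eqref{eq:loc_sup} and \eqref{eq:loc_inf}; you carry this out while supplying what it leaves implicit: the explicit attack giving sufficiency, the matching of the ``$=m/n$'' level sets, and the $i>m$ versus $i\le n-m$ typo in \eqref{eq:loc_inf}.

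Two small refinements. First, with $\eta^*_m:=\max\{\eta:G_m(\eta)\ge 0\}$, your level-set matching needs $\{\eta:\BP_{\eta+}=m/n\}\neq\emptyset$, i.e.\ $\eta^*_{m-1}<\eta^*_m$, and similarly on the $-$ side. This holds for finite $\eta^*_{m-1}$ because $G_m(\eta^*_{m-1})=G_{m-1}(\eta^*_{m-1})+\psi(\infty)-\psi(x_{(m)}-\hat\theta-\eta^*_{m-1})>0$: the increment can vanish only if $G_{m-1}(\eta^*_{m-1})=n\psi(\infty)>0$. Second, your attainment worry is moot, since $\eta_{m/n\pm}$ is itself a supremum over $\eta$. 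Non-attainment at $\eta^*_m$ only shifts $\BP_{\eta\pm}$ at that single $\eta$, so no reinterpretation of the $\max$ is needed.
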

\subsection{Proof of Scale and Two-stage M-estimators:  Propositions \ref{prop:two_stage_lb_main} and \ref{prop:two_stage}}
We first state a convenient variant of Corollary \ref{cor:scale_main}.
\begin{corollary}
\label{cor:scale}
    Assume that $\chi(x)$ is non-decreasing in $|x|$, even, and passes through 0. Then, for $\hat \sigma > 0$,
    \begin{equation*}
        \BP_\eta (\hat \sigma, x^{(n)}) = \min \left \{ \BP_{\eta+} (\hat \sigma, x^{(n)}), \BP_{\eta-} (\hat \sigma, x^{(n)})\right \},
    \end{equation*}
    where
    \begin{align*}
        &\BP_{\eta+} (\hat \sigma, x^{(n)}) = \frac{1}{n} \min \left\{m \in [n]: m \ge  \frac{\sum_{i>m} \chi (|x^{(n)}|_{(i)} / (\hat \sigma + \eta))}{-\chi(\infty)} \right \}, \\
        & \BP_{\eta-} (\hat \sigma, x^{(n)}) = \frac{1}{n} \min \left\{m \in [n]: m \ge   \frac{\sum_{i\le n-m} \chi(|x^{(n)}|_{(i)} / (\hat \sigma - \eta))}{-\chi(0)} \right \}.
    \end{align*}
    The $m$-sensitivity can be obtained by
    \begin{align*}
        &\eta_{m/n+}(\hat \sigma, x^{(n)}) = \max \Big \{\eta:\sum_{i>m} \chi (|x^{(n)}|_{(i)}/(\hat \sigma + \eta)) + m \chi(\infty) \ge 0 \Big\}, \\
        &\eta_{m/n-}(\hat \sigma, x^{(n)}) = \max \Big \{\eta:\sum_{i\le n-m} \chi (|x^{(n)}|_{(i)}/(\hat \sigma - \eta)) + m \chi(0) \le 0 \Big\}.
    \end{align*}
\end{corollary}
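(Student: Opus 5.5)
The plan is to reduce Corollary~\ref{cor:scale} to Theorem~\ref{thm:loc} and Corollary~\ref{cor:opt_attack_m_est} through the log-scale reparametrization already introduced before Corollary~\ref{cor:scale_main}. Set $\tilde x_i=\log|x_i|$ (with $\tilde x_i=-\infty$ when $x_i=0$), $\tilde\sigma=\log\sigma$ and $\tilde\chi(u)=\chi(e^u)$. Then $\chi(x_i/\sigma)=\tilde\chi(\tilde x_i-\tilde\sigma)$, because $\chi$ is even.

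First I will check that $\tilde\chi$ meets the hypotheses of Theorem~\ref{thm:loc}. Since $\chi$ is non-decreasing in $|x|$, the function $\tilde\chi(t-\tilde\sigma)$ is non-increasing in $\tilde\sigma$, and $\tilde\chi$ passes through $0$ because $\chi$ does. So $\hat\sigma>0$ solves \eqref{eq:scale} exactly when $\log\hat\sigma$ solves $\sum_i\tilde\chi(\tilde x_i-\tilde\sigma)=0$. The endpoint values are $\tilde\chi(\infty)=\chi(\infty)$ and $\tilde\chi(-\infty)=\chi(0)$.

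Next I will verify that contamination transfers across the reparametrization. The map $x\mapsto\log|x|$ is onto $[-\infty,\infty)$, and every value in this range is attainable by some real point. So replacing $m$ of the $x_i$ is the same as replacing $m$ of the $\tilde x_i$ by arbitrary values, and the Hamming balls correspond exactly. The order statistics of $\tilde x$ are the logs of $|x^{(n)}|_{(i)}$.

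The key observation is that, since $\sigma\mapsto\log\sigma$ is strictly increasing, the event $\hat\sigma(y)\ge\hat\sigma(x)+\eta$ is the same as $\tilde\sigma(y)\ge\log(\hat\sigma+\eta)$. Likewise, $\hat\sigma(y)\le\hat\sigma-\eta$ is the same as $\tilde\sigma(y)\le\log(\hat\sigma-\eta)$, which is meaningful only for $\eta<\hat\sigma$. The thresholds on the log scale are therefore not translations by a fixed $\eta$. However, the proof of Theorem~\ref{thm:loc} never uses the form $\hat\theta\pm\eta$. It only uses monotonicity in $\theta$ together with the target point: the maximal attainable value is governed by $\sum_{i>m}\tilde\chi(\tilde x_{(i)}-\theta)+m\tilde\chi(\infty)\ge0$ evaluated at the target.

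Rerunning that argument with targets $\log(\hat\sigma+\eta)$ and $\log(\hat\sigma-\eta)$, and using $\tilde\chi(\tilde x_{(i)}-\log s)=\chi(|x|_{(i)}/s)$, gives the two one-sided breakdown formulas. The numerators and denominators come out with $\chi(\infty)$ and $\chi(0)$, respectively. The overall breakdown point is the minimum of the two one-sided values, as in Theorem~\ref{thm:loc}. The $m$-sensitivity formulas then follow from \eqref{eq:loc_sup} and \eqref{eq:loc_inf}, exactly as Corollary~\ref{cor:opt_attack_m_est} does, by taking the largest $\eta$ that satisfies the respective inequality.

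The main obstacle is bookkeeping at the boundary. I need to handle the zeros that are sent to $-\infty$, and the downward case in which $\hat\sigma-\eta\le0$ (explaining the restriction $\eta\in[0,\hat\sigma]$ in Corollary~\ref{cor:scale_main}). I also need to confirm that attainability, meaning placing the contaminated points at $0$ or at $\infty$, carries over. I will handle these by taking limits of contaminating values $|y_i|\to0$ or $|y_i|\to\infty$.
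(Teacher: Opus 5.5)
Your proposal is correct and follows the paper's proof, which likewise derives Corollary~\ref{cor:scale} from Theorem~\ref{thm:loc} and \eqref{eq:loc_sup}--\eqref{eq:loc_inf} via the log-scale reparametrization $\tilde x_i=\log|x_i|$, $\tilde\sigma=\log\sigma$, $\tilde\chi(u)=\chi(e^u)$. Your remark that the targets $\log(\hat\sigma\pm\eta)$ are not a common translate of $\log\hat\sigma$ makes explicit what the paper's phrase ``written back on the original scale'' leaves implicit: each one-sided statement is applied with its own log-threshold, namely $\log(1+\eta/\hat\sigma)$ upward and $-\log(1-\eta/\hat\sigma)$ downward for $\eta<\hat\sigma$.
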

\begin{proof}[Proof of Corollary~\ref{cor:scale}]
    The corollary follows immediately from Theorem~\ref{thm:loc} by the log-scale reparametrization
    $$
    \tilde x_i^{(n)}=\log |x_i^{(n)}|,\qquad \tilde \sigma=\log \sigma,\qquad \tilde\chi(u)=\chi(e^u),
    $$
    with $\tilde x_i^{(n)}=-\infty$ when $x_i^{(n)}=0$. Then
    $
\chi({x_i^{(n)}}/{\sigma})=\tilde\chi(\tilde x_i^{(n)}-\tilde \sigma),
    $
    and $\tilde\chi$ satisfies the assumptions of Theorem~\ref{thm:loc}, so the formulas are exactly \eqref{eq:loc_sup} and \eqref{eq:loc_inf} written back on the original scale.
\end{proof}
\begin{proposition}
\label{prop:two_stage}
Assume that $\psi$  is non-decreasing, bounded, and passes through $0$. Assume also that $\hat\sigma(x^{(n)})$ is translationally invariant in $x$.
For $\tilde x^{(n)} \in \mathsf C^L_{m,\{\infty,\,x_{(n)}\}}(x^{(n)})$, define
$$
\mathsf T_\eta^+(\tilde x^{(n)})
:=
-\frac{\sum_{i\le n-m}\psi\big((\tilde x_{(i)}-(\hat\theta (x^{(n)})+\eta))/\hat\sigma(\tilde x^{(n)})\big)}
{\max\!\left\{\psi\big((\tilde x_{(n)}-(\hat\theta (x^{(n)})+\eta))/\hat\sigma(\tilde x^{(n)})\big),\,0\right\}}.
$$
For $\tilde x^{(n)} \in \mathsf C^R_{m,\{-\infty,\,x_{(1)}\}}(x^{(n)})$, define
$$
\mathsf T_\eta^-(\tilde x^{(n)})
:=
-\frac{\sum_{i>m}\psi\big((\tilde x_{(i)}-(\hat\theta (x^{(n)})-\eta))/\hat\sigma(\tilde x^{(n)})\big)}
{\min\!\left\{\psi\big((\tilde x_{(1)}-(\hat\theta (x^{(n)})-\eta))/\hat\sigma(\tilde x^{(n)})\big),\,0\right\}}.
$$
Then
\begin{align*}
&\BP_{\eta+}(\hat\theta,x^{(n)})
\le
\frac1n \min\Bigl\{m \in [n] : m \ge \min_{\tilde x^{(n)} \in \mathsf C^L_{m,\{\infty,\,x_{(n)}\}}} \mathsf T_\eta^+(\tilde x^{(n)})\Bigr\}, \\
&\BP_{\eta-}(\hat\theta,x^{(n)})
\le
\frac1n \min\Bigl\{m \in [n] : m \ge \min_{\tilde x^{(n)} \in \mathsf C^R_{m,\{-\infty,\,x_{(1)}\}}} \mathsf T_\eta^-(\tilde x^{(n)})\Bigr\}.
\end{align*}
Moreover, let $\tilde\theta(\tilde x^{(n)})$ denote the largest solution to \eqref{M-est-two} when $\tilde x^{(n)} \in \mathsf C^L_{m,\{\infty,\,x_{(n)}\}}(x^{(n)})$, and the smallest solution when $\tilde x^{(n)} \in \mathsf C^R_{m,\{-\infty,\,x_{(1)}\}}(x^{(n)})$. Then
\begin{align*}
&\eta_{m/n,+}(\hat\theta,x^{(n)})
\ge
\max_{\tilde x^{(n)} \in \mathsf C^L_{m,\{\infty,\,x_{(n)}\}}}
\bigl(\tilde\theta(\tilde x^{(n)})-\hat\theta (x^{(n)})\bigr),\\
&\eta_{m/n,-}(\hat\theta,x^{(n)})
\ge
\max_{\tilde x^{(n)} \in \mathsf C^R_{m,\{-\infty,\,x_{(1)}\}}}
\bigl(\hat\theta (x^{(n)})-\tilde\theta(\tilde x^{(n)})\bigr).
\end{align*}
\end{proposition}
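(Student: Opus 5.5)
The plan is to prove both parts of Proposition~\ref{prop:two_stage} by exhibiting explicit contaminated samples. Upper bounds on $\BP_{\eta\pm}$ and lower bounds on $\eta_{m/n,\pm}$ only require showing that some $y^{(n)}\in B_\textsf{H}(x^{(n)},m)$ moves the estimator far enough. By symmetry it suffices to treat the right shift $(+)$. The left case follows by the same argument with $\mathsf C^R$, $\min\{\cdot,0\}$ and reversed inequalities, or by applying the $(+)$ case to $-x^{(n)}$, using oddness-free monotonicity and translation invariance of $\hat\sigma$.

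For the $(+)$ direction, fix $m$ and $\tilde x^{(n)}\in\mathsf C^L_{m,\{\infty,x_{(n)}\}}(x^{(n)})$. This sample is obtained from $x^{(n)}$ by replacing the $m$ smallest observations by a common value $c\in\{\infty,x_{(n)}\}$, so $\tilde x^{(n)}\in B_\textsf{H}(x^{(n)},m)$. Its scale $\tilde\sigma:=\hat\sigma(\tilde x^{(n)})$ is then fixed, and the estimating function
\[
g(\theta):=\sum_{i=1}^n\psi\bigl((\tilde x_i-\theta)/\tilde\sigma\bigr)
\]
is non-increasing in $\theta$. It is also bounded, positive for $\theta\to-\infty$ and negative for $\theta\to\infty$ whenever finite points remain, so the solution set of \eqref{M-est-two} is a nonempty interval. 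The key observation is that
\[
\hat\theta(\tilde x^{(n)})\ge \hat\theta(x^{(n)})+\eta \quad\text{if}\quad g\bigl(\hat\theta(x^{(n)})+\eta\bigr)\ge 0,
\]
taking $\hat\theta(\tilde x^{(n)})$ to be the largest root, consistent with the convention for $\tilde\theta$. Writing the sum as $\sum_{i\le n-m}$ over the retained order statistics plus $m$ copies of $\psi((c-\hat\theta-\eta)/\tilde\sigma)$, the condition $g\ge0$ reads
\[
m\,\psi\bigl((\tilde x_{(n)}-\hat\theta-\eta)/\tilde\sigma\bigr)\ge -\sum_{i\le n-m}\psi(\cdots),
\]
because the contaminated points sit at the top, so $c=\tilde x_{(n)}$.

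This needs a case split on the sign of the coefficient. If $\psi((\tilde x_{(n)}-\hat\theta-\eta)/\tilde\sigma)>0$, dividing gives exactly $m\ge \mathsf T^+_\eta(\tilde x^{(n)})$. If it is $\le0$, the denominator $\max\{\cdot,0\}=0$ makes $\mathsf T^+_\eta=+\infty$ when the numerator is positive, and the condition $m\ge\mathsf T^+_\eta$ fails, which is consistent. I will need to fix a convention such as $0/0$ or $-a/0=-\infty$ for the degenerate case and check that it still implies $g\ge0$. I expect this bookkeeping to be the main point of care. Consequently, if $m\ge\min_{\tilde x}\mathsf T^+_\eta(\tilde x^{(n)})$, the minimizing $\tilde x^{(n)}$ is a witness in $B_\textsf{H}(x^{(n)},m)$ with a shift of at least $\eta$, hence $n\BP_{\eta+}\le m$. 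Taking the minimal such $m$ yields the stated bound. For $c=\infty$, $\psi(\infty)$ is finite by boundedness, and I would interpret $\hat\sigma$ on extended data as in the proposition.

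For the sensitivity bound, each $\tilde x^{(n)}\in\mathsf C^L_{m,\cdot}$ lies in $B_\textsf{H}(x^{(n)},m)$ and attains the shift $\tilde\theta(\tilde x^{(n)})-\hat\theta(x^{(n)})$. So for every $\eta$ below this value, $n\BP_{\eta+}\le m$. Since $\BP_{\eta+}$ is non-decreasing in $\eta$, the supremum defining $\eta_{m/n,+}$ (the largest $\eta$ reachable with $m$ points) is at least this shift, and maximizing over the finite collection gives the claim. One subtlety is that the definition uses the equality $\BP=m/n$, not $\le$. I would handle it by noting that the monotone step function $\eta\mapsto n\BP_{\eta+}$ lets $\eta_{m/n,+}$ be read as $\sup\{\eta:\BP_{\eta+}\le m/n\}$. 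If level $m$ is skipped, I would adopt that reading as in Corollary~\ref{cor:opt_attack_m_est}.
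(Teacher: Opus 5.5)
Your proposal is correct and follows the paper's proof. Both arguments use witnesses from $\mathsf C^L$ and $\mathsf C^R$, the monotonicity of the estimating function at the fixed contaminated scale together with the largest/smallest-root convention, division by the coefficient of the $m$ tied extreme copies, and the inclusion of these classes in $B_\textsf{H}(x^{(n)},m)$ for the sensitivity bound. The degenerate case you flag (and which the paper passes over) is vacuous: $c=\tilde x_{(n)}$ is the sample maximum, so every retained summand is at most $\psi\bigl((\tilde x_{(n)}-\hat\theta-\eta)/\hat\sigma(\tilde x^{(n)})\bigr)$, and when this is $\le 0$ the sum is $\le 0$, which gives either $\mathsf T^+_\eta=+\infty$ or all terms zero with your $g(\hat\theta+\eta)=0$.
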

\begin{proof}
Write $\hat\theta=\hat\theta(x^{(n)})$. For any sample $z^{(n)}$, define
$$
G_{z^{(n)}}(\theta)
:=
\sum_{i=1}^n
\psi\!\left(\frac{z_i-\theta}{\hat\sigma(z^{(n)})}\right).
$$
Since $\theta \mapsto \psi(t-\theta)$ is non-increasing for each fixed $t$, the map
$$
\theta \longmapsto G_{z^{(n)}}(\theta)
$$
is non-increasing for every fixed $z^{(n)}$.
We first prove the upper bound for $\BP_{\eta+}(\hat\theta,x^{(n)})$. Fix $m\in[n]$ and let $\tilde x^{(n)} \in \mathsf C^L_{m,\{\infty,\,x_{(n)}\}}(x^{(n)})$. By construction, the largest $m$ order statistics of $\tilde x^{(n)}$ all equal $\tilde x_{(n)}$, so
$$
G_{\tilde x^{(n)}}(\hat\theta+\eta)
=
\sum_{i\le n-m}
\psi\!\left(
\frac{\tilde x_{(i)}-(\hat\theta+\eta)}
{\hat\sigma(\tilde x^{(n)})}
\right)
+
m\,
\psi\!\left(
\frac{\tilde x_{(n)}-(\hat\theta+\eta)}
{\hat\sigma(\tilde x^{(n)})}
\right).
$$
Therefore, if
$$
m \ge \mathsf T_\eta^+(\tilde x^{(n)}),
$$
then
$$
G_{\tilde x^{(n)}}(\hat\theta+\eta)\ge 0.
$$
Since $G_{\tilde x^{(n)}}$ is non-increasing, the largest solution of the estimating equation for $\tilde x^{(n)}$ is then at least $\hat\theta+\eta$. Hence an $\eta$-breakdown to the right occurs with at most $m$ contaminated observations. Taking the smallest such $m$ over all admissible $\tilde x^{(n)}$ yields
$$
\BP_{\eta+}(\hat\theta,x^{(n)})
\le
\frac1n \min\Bigl\{m \in [n] : m \ge \min_{\tilde x^{(n)} \in \mathsf C^L_{m,\{\infty,\,x_{(n)}\}}} \mathsf T_\eta^+(\tilde x^{(n)})\Bigr\}.
$$
The proof for $\BP_{\eta-}(\hat\theta,x^{(n)})$ is analogous. Fix $m\in[n]$ and let $\tilde x^{(n)} \in \mathsf C^R_{m,\{-\infty,\,x_{(1)}\}}(x^{(n)})$. Then the smallest $m$ order statistics of $\tilde x^{(n)}$ all equal $\tilde x_{(1)}$, and
$$
G_{\tilde x^{(n)}}(\hat\theta-\eta)
=
m\,
\psi\!\left(
\frac{\tilde x_{(1)}-(\hat\theta-\eta)}
{\hat\sigma(\tilde x^{(n)})}
\right)
+
\sum_{i>m}
\psi\!\left(
\frac{\tilde x_{(i)}-(\hat\theta-\eta)}
{\hat\sigma(\tilde x^{(n)})}
\right).
$$
Thus, if
$$
m \ge \mathsf T_\eta^-(\tilde x^{(n)}),
$$
then
$$
G_{\tilde x^{(n)}}(\hat\theta-\eta)\le 0.
$$
Since $G_{\tilde x^{(n)}}$ is non-increasing, the smallest solution of the estimating equation for $\tilde x^{(n)}$ is then at most $\hat\theta-\eta$. Hence an $\eta$-breakdown to the left occurs with at most $m$ contaminated observations. Minimizing over $m$ and over admissible $\tilde x^{(n)}$ gives
$$
\BP_{\eta-}(\hat\theta,x^{(n)})
\le
\frac1n \min\Bigl\{m \in [n] : m \ge \min_{\tilde x^{(n)} \in \mathsf C^R_{m,\{-\infty,\,x_{(1)}\}}} \mathsf T_\eta^-(\tilde x^{(n)})\Bigr\}.
$$
For the sensitivity bounds, note that
$$
\mathsf C^L_{m,\{\infty,\,x_{(n)}\}}(x^{(n)})
\subseteq
B_{\mathrm H}(x^{(n)},m),
\qquad
\mathsf C^R_{m,\{-\infty,\,x_{(1)}\}}(x^{(n)})
\subseteq
B_{\mathrm H}(x^{(n)},m).
$$
Therefore,
$$
\eta_{m/n,+}(\hat\theta,x^{(n)})
\ge
\max_{\tilde x^{(n)} \in \mathsf C^L_{m,\{\infty,\,x_{(n)}\}}}
\bigl(\tilde\theta(\tilde x^{(n)})-\hat\theta\bigr)
$$
and
$$
\eta_{m/n,-}(\hat\theta,x^{(n)})
\ge
\max_{\tilde x^{(n)} \in \mathsf C^R_{m,\{-\infty,\,x_{(1)}\}}}
\bigl(\hat\theta-\tilde\theta(\tilde x^{(n)})\bigr).
$$
This completes the proof.
\end{proof}
\begin{proposition}
\label{prop:two_stage_lb}
Assume that  $\psi$ is non-decreasing, odd, and passes through $0$. Let $\hat\theta(x^{(n)})$ denote the two stage estimator defined in  \eqref{M-est-two}. For $m\in[n]$, set $\underline{\sigma}_{m/n} := \max\{\hat \sigma(x^{(n)}) - \eta_{m/n-}(\hat \sigma, x^{(n)}), 0 \}$ and $\overline{\sigma}_{m/n} := \hat \sigma(x^{(n)}) + \eta_{m/n+}(\hat \sigma, x^{(n)})$. Define the sign-split rescalings
$$
r_i
=
{\frac{x_i}{\underline{\sigma}_{m/n}}}
\II\{x_i \ge 0\}
+
\frac{x_i}{\overline{\sigma}_{m/n}}
\II\{x_i < 0\},
\quad
r'_i =
{\frac{x_i}{\underline{\sigma}_{m/n}}}
\II\{x_i < 0\}
+
\frac{x_i}{\overline{\sigma}_{m/n}}
\II\{x_i \ge 0\}
$$
Let $\hat\theta_{\mathrm{loc}}(x^{(n)})$ denote the solution to \eqref{M-est} (i.e., the vanilla location $M$-estimator evaluated at $x^{(n)}$ or the two-stage $M$-estimator with $\hat \sigma (x^{(n)}) = 1$).
Denote
\begin{align*}
    &\sigma_{m/n+} := \underline{\sigma}_{m/n}\II\{\hat\theta_{\rm loc}(r^{(n)}) + \eta_{m/n+}(\hat\theta_{\rm loc}, r^{(n)}) \le 0\} + \overline{\sigma}_{m/n} \II\{\hat\theta_{\rm loc}(r^{(n)}) + \eta_{m/n+}(\hat\theta_{\rm loc}, r^{(n)}) > 0\}, \\
    &\sigma_{m/n-} := \underline{\sigma}_{m/n}\II\{\hat\theta_{\rm loc}(r^{(n)'}) - \eta_{m/n-}(\hat\theta_{\rm loc}, r^{(n)'}) \ge 0\} + \overline{\sigma}_{m/n} \II\{\hat\theta_{\rm loc}(r^{(n)'}) - \eta_{m/n-}(\hat\theta_{\rm loc}, r^{(n)'}) < 0\}.
\end{align*}
Further define
\begin{align*}
    & \overline{\eta}_{m/n+}(\hat\theta,x^{(n)}) := \sigma_{m/n+} \cdot (\hat\theta_{\rm loc}(r^{(n)}) + \eta_{m/n+}(\hat\theta_{\rm loc}, r^{(n)}))-\hat\theta(x^{(n)}), \\
     & \overline \eta_{m/n-}(\hat\theta,x^{(n)}) :=  \hat\theta(x^{(n)}) - \sigma_{m/n-} \cdot (\hat\theta_{\rm loc}(r^{(n)'}) - \eta_{m/n-}(\hat\theta_{\rm loc}, r^{(n)'})).
\end{align*}
Then the one-sided $m$-sensitivity satisfies
\begin{align*}
  \eta_{m/n+}(\hat \theta,x^{(n)})
  \le
  \overline{\eta}_{m/n+}(\hat\theta,x^{(n)}),
  \quad
  \eta_{m/n-}(\hat \theta,x^{(n)})
  \le
  \overline{\eta}_{m/n-}(\hat\theta,x^{(n)}).
\end{align*}
Furthermore
\begin{align*}
        \BP_{\eta+} (\hat \theta, x^{(n)}) & \ge \frac{1}{n} \min \left \{m \in [n]: \overline{\eta}_{m/n+}(\hat\theta,x^{(n)}) \ge \eta \right \}, \\
        \BP_{\eta-} (\hat \theta, x^{(n)}) & \ge \frac{1}{n} \min \left \{m \in [n]: \overline{\eta}_{m/n-}(\hat\theta,x^{(n)}) \ge \eta \right \}.
\end{align*}
\end{proposition}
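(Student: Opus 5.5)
The plan is to reduce the two-stage estimator, for any fixed contaminated sample, to a plain location M-estimator applied to rescaled data. Then I sandwich that rescaled data by the sign-split vectors $r^{(n)}$ and $r^{(n)'}$, and invoke Corollary~\ref{cor:opt_attack_m_est} and Corollary~\ref{cor:scale}. I treat the ``$+$'' direction in detail; the ``$-$'' direction is the mirror image with $r^{(n)'}$, obtained by replacing $x$ with $-x$ and using that $\psi$ is odd.

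\emph{Step 1 (scale control).} Fix $y^{(n)}\in B_{\textsf H}(x^{(n)},m)$ and write $\sigma:=\hat\sigma(y^{(n)})$. By the definition of the one-sided $m$-sensitivities of $\hat\sigma$ (computed in Corollary~\ref{cor:scale}), $\sigma\in[\underline\sigma_{m/n},\overline\sigma_{m/n}]$. If $\underline\sigma_{m/n}=0$ the bound is trivial under the stated convention $r_i=\infty$, so I assume $\underline\sigma_{m/n}>0$.

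\emph{Step 2 (reduction to location).} Since $\sum_i\psi((y_i-\theta)/\sigma)=0$ if and only if $\sum_i\psi(y_i/\sigma-\theta/\sigma)=0$, we have $\hat\theta(y^{(n)})=\sigma\,\hat\theta_{\rm loc}(y^{(n)}/\sigma)$, taking the largest root consistently throughout.

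\emph{Step 3 (comparison with $r^{(n)}$).} Coordinatewise, $x_i/\sigma\le r_i$. Indeed, for $x_i\ge0$ we have $x_i/\sigma\le x_i/\underline\sigma_{m/n}$, and for $x_i<0$ we have $x_i/\sigma\le x_i/\overline\sigma_{m/n}$. The vector $y^{(n)}/\sigma$ agrees with $x^{(n)}/\sigma$ outside a set $J$ of at most $m$ indices. Define $z^{(n)}$ by $z_i=r_i$ for $i\notin J$ and $z_i=y_i/\sigma$ for $i\in J$. Then $y^{(n)}/\sigma\le z^{(n)}$ componentwise and $z^{(n)}\in B_{\textsf H}(r^{(n)},m)$.

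The key monotonicity fact is that, because $\psi$ is non-decreasing, $\theta\mapsto\sum_i\psi(w_i-\theta)$ is non-increasing and pointwise increasing in $w$. Hence the largest root satisfies $\hat\theta_{\rm loc}(w)\le\hat\theta_{\rm loc}(w')$ whenever $w\le w'$. Applying this and then Corollary~\ref{cor:opt_attack_m_est} gives
$$u:=\hat\theta_{\rm loc}(y^{(n)}/\sigma)\le \hat\theta_{\rm loc}(z^{(n)})\le U:=\hat\theta_{\rm loc}(r^{(n)})+\eta_{m/n+}(\hat\theta_{\rm loc},r^{(n)}).$$

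\emph{Step 4 (optimize over $\sigma$).} Since $\sigma>0$, we get $\hat\theta(y^{(n)})=\sigma u\le\sigma U$. The map $\sigma\mapsto\sigma U$ is linear, so its maximum over $[\underline\sigma_{m/n},\overline\sigma_{m/n}]$ is attained at $\overline\sigma_{m/n}$ if $U>0$ and at $\underline\sigma_{m/n}$ otherwise; this is exactly $\sigma_{m/n+}U$. Subtracting $\hat\theta(x^{(n)})$ and taking the supremum over $y^{(n)}$ yields $\eta_{m/n+}(\hat\theta,x^{(n)})\le\overline\eta_{m/n+}(\hat\theta,x^{(n)})$.

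\emph{Step 5 (breakdown bound).} If $m$ contaminations achieve a shift of at least $\eta$, then $\eta\le\eta_{m/n+}(\hat\theta,x^{(n)})\le\overline\eta_{m/n+}$. Hence $m$ belongs to the set on the right-hand side, and the minimum over that set lower-bounds $n\,\BP_{\eta+}$. The two-sided statement of Proposition~\ref{prop:two_stage_lb_main} then follows by taking the maximum and minimum over the two directions.

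The main obstacle I expect is Step 3. It requires a careful statement of monotonicity of the location M-estimator in the data when roots are non-unique, together with a check that the sensitivity of Corollary~\ref{cor:opt_attack_m_est} is a supremum over the whole Hamming ball, so that it covers $z^{(n)}$. I would handle both points by working with the largest root for the ``$+$'' direction and the smallest root for the ``$-$'' direction, and by checking the sign of the non-increasing score function at the candidate value.
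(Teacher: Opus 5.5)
Your proposal is correct and follows the paper's proof: you use scale equivariance $\hat\theta(y^{(n)})=\hat\sigma(y^{(n)})\,\hat\theta_{\rm loc}(y^{(n)}/\hat\sigma(y^{(n)}))$, dominate the rescaled uncontaminated points coordinatewise by the sign-split vector $r^{(n)}$, invoke the location sensitivity of Corollary~\ref{cor:opt_attack_m_est}, and then pick $\underline\sigma_{m/n}$ or $\overline\sigma_{m/n}$ according to the sign of $U$. Your explicit comparison vector $z^{(n)}\in B_{\textsf H}(r^{(n)},m)$, combined with monotonicity of the largest root, is a cleaner and more rigorous version of the paper's ``relaxed contamination class'' step.
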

\begin{proof}[Proof of Proposition \ref{prop:two_stage_lb}]
Note that by scale invariance,
$$
\hat\theta(x^{(n)})
=
\hat\sigma(x^{(n)})\,
\hat\theta_{\mathrm{loc}}\!\bigl(x^{(n)}/\hat\sigma(x^{(n)})\bigr).
$$
We start with $\eta_{m/n+}$. Fix $m \in [n]$, and assume $\hat \sigma(x^{(n)}) - \eta_{m/n-}(\hat \sigma, x^{(n)}) \ge 0$ without loss of generality. By the definition of the one-sided scale sensitivities, any $m$-contaminated sample $\tilde x^{(n)}$ satisfies
$$
\hat\sigma(\tilde x^{(n)})\in
\Bigl[
\hat\sigma(x^{(n)})-\eta_{m/n-}(\hat\sigma,x^{(n)}),\,
\hat\sigma(x^{(n)})+\eta_{m/n+}(\hat\sigma,x^{(n)})
\Bigr].
$$
Consider the relaxed contamination  class $\widetilde{\mathsf C}_{m}(x^{(n)} / \hat \sigma (x^{(n)})$ obtained by allowing, after changing $m$ observations arbitrarily, each uncontaminated observation to be rescaled by an arbitrary factor in
$$
\left[
\frac{1}{\hat\sigma(x^{(n)})+\eta_{m/n+}(\hat\sigma,x^{(n)})},\,
\frac{1}{\hat\sigma(x^{(n)})-\eta_{m/n-}(\hat\sigma,x^{(n)})}
\right]
$$
instead of a fixed value for all observations.
This contains the original $m$-contamination class after standardization, hence
\begin{align*}
    \eta_{m/n+}(\hat\theta,x^{(n)})
    = &~
    \sup_{\tilde x^{(n)}\in B_{\sf H}(x^{(n)}, m)}
    \bigl(\hat\theta(\tilde x^{(n)})-\hat\theta(x^{(n)})\bigr) \\
    = & ~ \sup_{\tilde x^{(n)}\in B_{\sf H}(x^{(n)}, m)}
    \bigl(\hat \sigma(\tilde x^{(n)}) \hat\theta_{\rm loc}(\tilde x^{(n)}/ \hat \sigma(\tilde x^{(n)})-\hat\theta(x^{(n)})\bigr)
    \\
    \le & ~ \sup_{\tilde z^{(n)}\in \widetilde{\mathsf C}_{m}(x^{(n)}/\hat \sigma(x^{(n)}))} \sup_{\tilde x^{(n)}\in B_{\sf H}(x^{(n)}, m)} \bigl(\hat \sigma(\tilde x^{(n)}) \hat\theta_{\rm loc}(\tilde z^{(n)})-\hat\theta(x^{(n)})\bigr).
\end{align*}
By the monotonicity of $\psi$, the first supremum on the right is attained by replacing the $m$ smallest observations by arbitrarily large positive values and applying the sign-split rescaling, where the dataset is denoted as $\tilde z_+^{(n)} := \{r^{(n)}_{(i)}\}_{i=m+1}^n \cup \{+\infty\}_{j=1}^m$.
Hence,
$$
\hat\theta_{\rm loc}(\tilde z_+^{(n)}) = \hat\theta_{\rm loc}(r^{(n)}) + \eta_{m/n+}(\hat\theta_{\rm loc}, r^{(n)}).
$$
Therefore, we have
\begin{align*}
    \eta_{m/n+}(\hat\theta,x^{(n)}) \le \sup_{\tilde x^{(n)}\in B_{\sf H}(x^{(n)}, m)} \bigl(\hat \sigma(\tilde x^{(n)}) (\hat\theta_{\rm loc}(r^{(n)}) + \eta_{m/n+}(\hat\theta_{\rm loc}, r^{(n)}))-\hat\theta(x^{(n)})\bigr).
\end{align*}
If $\hat\theta_{\rm loc}(r^{(n)}) + \eta_{m/n+}(\hat\theta_{\rm loc}, r^{(n)}) < 0$, we should take the smallest possible $\underline{\sigma}_{m/n}$ for an upper bound. Conversely, if $\hat\theta_{\rm loc}(r^{(n)}) + \eta_{m/n+}(\hat\theta_{\rm loc}, r^{(n)}) > 0$, we should take the largest value $\overline{\sigma}_{m/n}$. Denote
$$
\sigma_{m/n+} := \underline{\sigma}_{m/n}\II\{\hat\theta_{\rm loc}(r^{(n)}) + \eta_{m/n+}(\hat\theta_{\rm loc}, r^{(n)}) \le 0\} + \overline{\sigma}_{m/n} \II\{\hat\theta_{\rm loc}(r^{(n)}) + \eta_{m/n+}(\hat\theta_{\rm loc}, r^{(n)}) > 0\}.
$$
This yields
\begin{align*}
    \eta_{m/n+}(\hat\theta,x^{(n)}) \le  \sigma_{m/n+} \cdot (\hat\theta_{\rm loc}(r^{(n)}) + \eta_{m/n+}(\hat\theta_{\rm loc}, r^{(n)}))-\hat\theta(x^{(n)}).
\end{align*}
The argument for $\eta_{m/n-}$ is symmetric. We have
\begin{align*}
    \eta_{m/n-}(\hat\theta,x^{(n)})
    = &~
    \sup_{\tilde x^{(n)}\in B_{\sf H}(x^{(n)}, m)}
    \bigl(\hat\theta(x^{(n)}) - \hat\theta(\tilde x^{(n)})\bigr) \\
    = & ~ \sup_{\tilde x^{(n)}\in B_{\sf H}(x^{(n)}, m)}
    \bigl(\hat\theta(x^{(n)})-\hat \sigma(\tilde x^{(n)}) \hat\theta_{\rm loc}(\tilde x^{(n)}/ \hat \sigma(\tilde x^{(n)})\bigr)
    \\
    \le & ~ \sup_{\tilde z^{(n)}\in \widetilde{\mathsf C}_{m}(x^{(n)}/\hat \sigma(x^{(n)}))} \sup_{\tilde x^{(n)}\in B_{\sf H}(x^{(n)}, m)} \bigl(\hat\theta(x^{(n)})-\hat \sigma(\tilde x^{(n)}) \hat\theta_{\rm loc}(\tilde z^{(n)})\bigr).
\end{align*}
By the monotonicity of $\psi$, the first supremum on the right is attained by replacing the $m$ largest observations by arbitrarily small values and applying the sign-split rescaling, where the dataset is denoted as $\tilde z_-^{(n)} := \{r^{(n)'}_{(i)}\}_{i=1}^{n-m} \cup \{-\infty\}_{j=1}^m$.
Similarly, we have
$$
\hat\theta_{\rm loc}(\tilde z_-^{(n)}) = \hat\theta_{\rm loc}(r^{(n)'}) - \eta_{m/n-}(\hat\theta_{\rm loc}, r^{(n)'}),
$$
which gives
$$
\eta_{m/n-}(\hat\theta,x^{(n)}) \le \sup_{\tilde x^{(n)}\in B_{\sf H}(x^{(n)}, m)} \bigl(\hat\theta(x^{(n)}) - \hat \sigma(\tilde x^{(n)}) (\hat\theta_{\rm loc}(r^{(n)'}) - \eta_{m/n-}(\hat\theta_{\rm loc}, r^{(n)'}))\bigr).
$$
If $\hat\theta_{\rm loc}(r^{(n)'}) - \eta_{m/n-}(\hat\theta_{\rm loc}, r^{(n)'}) < 0$, we should take the largest possible $\overline{\sigma}_{m/n}$ for an upper bound. Conversely, if $\hat\theta_{\rm loc}(r^{(n)'}) - \eta_{m/n+}(\hat\theta_{\rm loc}, r^{(n)'}) > 0$, we should take the smallest value $\underline{\sigma}_{m/n}$.  Denote
$$
\sigma_{m/n-} := \underline{\sigma}_{m/n}\II\{\hat\theta_{\rm loc}(r^{(n)'}) + \eta_{m/n+}(\hat\theta_{\rm loc}, r^{(n)'}) \ge 0\} + \overline{\sigma}_{m/n} \II\{\hat\theta_{\rm loc}(r^{(n)'}) + \eta_{m/n+}(\hat\theta_{\rm loc}, r^{(n)'}) < 0\}.
$$
This yields
\begin{align*}
    \eta_{m/n-}(\hat\theta,x^{(n)}) \le  \hat\theta(x^{(n)}) - \sigma_{m/n-} \cdot (\hat\theta_{\rm loc}(r^{(n)'}) - \eta_{m/n-}(\hat\theta_{\rm loc}, r^{(n)'})).
\end{align*}
\end{proof}
\begin{proof}[Proof of Proposition \ref{prop:two_stage_lb_main}]
    This is a direct (and looser) result of Proposition \ref{prop:two_stage_lb}, where the interval formed by $\underline{\sigma}_{m/n}$ and $\overline{\sigma}_{m/n}$ is wider than the one in Proposition \ref{prop:two_stage_lb} since $\eta_{m/n\pm}(\hat \sigma, x^{(n)}) \le \eta_{m/n}(\hat \sigma, x^{(n)})$ by definition.
\end{proof}
\subsection{Proof of Variance Estimation Threshold BP}
\begin{lemma}
\label{lem:se_at_theta_0}
     Assume that $\psi$  is differentiable a.e., bounded,  non-decreasing and passes through $0$. Let the data be centered at $\theta_0$.
     Then, the one-sided threshold BP for the {restricted} plug-in estimate of the standard error evaluated at $\theta_0$ is
    \begin{equation*}
        \BP_{\eta+} (\hatse (\theta_0), x^{(n)}) =  \frac{1}{n}\min \left\{m \in [n]: \max_{\substack{I \subseteq [n] \\ |I| = n - m}}
        \sqrt{\frac{\sum_{i\in I} \psi(x_i)^2 + m \psi^2_{\max}}{(\sum_{i \in I} \psi'(x_i))^2}} >\hatse (\theta_0) + \eta
\right \},
    \end{equation*}
    where $\psi_{\max} = \max\{|\psi(\infty)|, |\psi(-\infty)|\}$.
    If we further assume that $\psi$ is odd, $\psi'(x)$ is non-increasing in $|x|$, we have
    \begin{equation}
    \label{eq:bp+se}
        \BP_{\eta+} (\hatse (\theta_0), x^{(n)}) = \frac{1}{n}  \min \left\{m \in [n]: m > \frac{(\hatse (\theta_0) + \eta)^2 \left ( \sum_{i>m} \psi'(x_{\pi_i}) \right )^2 - \sum_{i > m} \psi(x_{\pi_i})^2}{\psi^2_{\max}}
\right \},
    \end{equation}
    where $\pi$ is a permutation such that $|x_{\pi_1}| \le |x_{\pi_2}| \le \dots \le |x_{\pi_n}|$.
    The one-sided $m$-sensitivity for the plug-in estimate of the standard error satisfying \eqref{eq:bp+se} is
    \begin{gather*}
        \eta_{m/n+}(\hatse(\theta_0), x^{(n)}) = \sqrt{\frac{m \psi_{\max}^2 + \sum_{i>m} \psi(x_{\pi_i})^2}{(\sum_{i>m} \psi'(x_{\pi_i}))^2}} - \hatse(\theta_0).
    \end{gather*}
    Moreover, we have
    \begin{gather*}
        \BP_{\eta-} (\hatse (\theta_0), x^{(n)}) = \frac{1}{n}  \min \left\{m \in [n]: m > \frac{\sqrt{\sum_{i \le n-m} \psi(x_{\pi_i})^2} / (\hatse(\theta_0) - \eta) - \sum_{i \le n-m} \psi'(x_{\pi_i})}{\psi'(0)}
\right \},
    \end{gather*}
    and that
    \begin{gather*}
        \eta_{m/n-}(\hatse(\theta_0), x^{(n)}) = \hatse(\theta_0) - \sqrt{\frac{\sum_{i\le n-m} \psi(x_{\pi_i})^2}{(\sum_{i\le n-m} \psi'(x_{\pi_i})+m\psi'(0))^2}} .
    \end{gather*}
\end{lemma}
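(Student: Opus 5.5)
Throughout I take $\theta_0=0$, which is allowed since the data are centered at $\theta_0$. I write $\hatse(0)^2=N/D^2$ with $N=\sum_i\psi(x_i)^2$ and $D=\sum_i\psi'(x_i)$.

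The plan is to compute, for each $m$, the exact supremum and infimum of $\hatse(0)$ over $B_{\textsf H}(x^{(n)},m)$. The threshold breakdown points then follow by checking when these extremes cross $\hatse(0)\pm\eta$, and the $m$-sensitivities are these extremes minus $\hatse(0)$.

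\textbf{Upward direction, general case.} Fix $m$ and a contaminated sample $y^{(n)}$. Let $I$ be the set of $n-m$ untouched indices and $J$ the set of (at most $m$) replaced ones. Two facts are immediate:
\begin{itemize}
\item $\psi(y_j)^2\le\psi_{\max}^2$, since $\psi$ is bounded and monotone;
\item $\psi'(y_j)\ge 0$, since $\psi$ is non-decreasing.
\end{itemize}
Hence
$$
\hatse(0)^2(y^{(n)})\le\frac{\sum_{i\in I}\psi(x_i)^2+m\psi_{\max}^2}{\bigl(\sum_{i\in I}\psi'(x_i)\bigr)^2}.
$$
This bound is approached by sending each $y_j$ to the tail where $|\psi|\to\psi_{\max}$. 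There boundedness and monotonicity force $\psi'$ to be a.e.\ small along a sequence, so the bound is a supremum, possibly not attained. This is why the breakdown condition is written with a strict inequality ``$>\hatse(0)+\eta$''. Maximizing over $I$ gives the first display.

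\textbf{Upward direction, odd $\psi$.} Now I add the assumptions that $\psi$ is odd and $\psi'$ is non-increasing in $|x|$. Then $\psi(x)^2$ is non-decreasing in $|x|$ and $\psi'(x)$ is non-increasing in $|x|$. I use an exchange argument. If $I$ contains an index $a$ while some index $b\notin I$ has $|x_b|>|x_a|$, swapping $a$ for $b$ has two effects:
\begin{itemize}
\item the numerator weakly increases;
\item the denominator weakly decreases (it stays nonnegative).
\end{itemize}
So the ratio weakly increases, and the maximizer is $I=\{\pi_{m+1},\dots,\pi_n\}$. This gives the closed form for $\eta_{m/n+}$. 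Squaring the inequality
$$
\sqrt{\bigl(m\psi_{\max}^2+\textstyle\sum_{i>m}\psi(x_{\pi_i})^2\bigr)/\bigl(\sum_{i>m}\psi'(x_{\pi_i})\bigr)^2}>\hatse(0)+\eta
$$
and solving for $m$ yields \eqref{eq:bp+se}.

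\textbf{Consistency of the two characterizations.} I also check that the extremal value is non-decreasing in $m$. Passing from $m$ to $m+1$ moves the term $\pi_{m+1}$ from the kept sum into the contaminated block. This replaces $\psi(x_{\pi_{m+1}})^2$ by $\psi_{\max}^2$ in the numerator and removes $\psi'(x_{\pi_{m+1}})\ge0$ from the denominator. Hence the set of $m$ satisfying the condition is upward closed. So $\BP_{\eta+}$ is the first crossing, and the supremum in Definition~\ref{def:eta_directional} is exactly the $m$-th extremal value minus $\hatse(0)$.

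\textbf{Downward direction.} This is symmetric. Now $\psi(y_j)^2\ge 0$ and $\psi'(y_j)\le\psi'(0)$, because $\psi'$ is maximal at $0$. Both bounds are attained simultaneously by setting $y_j=0$. This gives the lower bound
$$
\hatse(0)^2\ge\frac{\sum_I\psi(x_i)^2}{\bigl(\sum_I\psi'(x_i)+m\psi'(0)\bigr)^2}.
$$
The same exchange argument, now run in reverse, shows the minimizing $I$ keeps the $n-m$ smallest $|x|$, that is $I=\{\pi_1,\dots,\pi_{n-m}\}$. This gives $\eta_{m/n-}$. Rearranging the inequality for $m$ gives $\BP_{\eta-}$.

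\textbf{Main obstacle.} I expect the main difficulty to be the attainment and limiting issues in the upward direction. The supremum requires $\psi(y)^2\to\psi_{\max}^2$ together with $\psi'(y)\to0$ (or, failing that, the bound is simply non-sharp). If this is delicate, I would state the result as a supremum over $B_{\textsf H}$ and rely on the strict inequality in the breakdown condition. The exchange step, by contrast, is clean, because the two monotonicity properties push the numerator and the denominator in the same favorable direction.
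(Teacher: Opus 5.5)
Your proposal is correct and follows the paper's proof essentially step for step. Both bound the replaced points' contributions (sent to the tails to inflate, to the origin to deflate), optimize over the retained set $I$, and use monotonicity of $\psi^2$ and $\psi'$ in $|x|$ to select $I=\{\pi_{m+1},\dots,\pi_n\}$ or $I=\{\pi_1,\dots,\pi_{n-m}\}$; your exchange argument and monotonicity-in-$m$ check make explicit what the paper only asserts.

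One small caveat, shared with the paper. In the downward direction, and for Huber's $\psi$ also in the upward one, the extremum is attained, so the exact breakdown condition is non-strict and the stated strict ``$>$'' is off at the boundary value of $\eta$. Your explanation of the strict inequality via non-attainment therefore covers only scores like logcosh, where $\psi'>0$ everywhere.
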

\begin{proof}[Proof of Lemma \ref{lem:se_at_theta_0}]
After centering at $\theta_0$, the restricted plug-in estimate of the standard error is
\begin{equation*}
    \hatse(\theta_0)
    =
    \sqrt{\frac{\sum_{i=1}^n \psi(x_i)^2}{\left(\sum_{i = 1}^n \psi'(x_i)\right)^2}}.
\end{equation*}
\emph{Step 1: Worst contamination for a single point.}
Consider first the effect of changing a single observation $x_j$. We can write
\begin{equation*}
    \hatse(\theta_0)
    =
    \sqrt{\frac{\sum_{i\ne j} \psi(x_i)^2 + \psi(x_j)^2}{\left(\sum_{i \ne j} \psi'(x_i) + \psi'(x_j)\right)^2}}.
\end{equation*}
To increase this ratio, one wants to (i) make the numerator contribution $\psi(x_j)^2$ as large as possible, and (ii) make the denominator contribution $\psi'(x_j)$ as small as possible.
By monotonicity, $|\psi(x)|$ is maximized as $x \to \pm\infty$, and we denote the corresponding bound is $
  \psi_{\max} = \max\{|\psi(\infty)|, |\psi(-\infty)|\}.
$ Thus, by moving $x_j$ to $\pm\infty$, one can replace $\psi(x_j)^2$ by $\psi_{\max}^2$.
Moreover, since $\psi$ is non-decreasing and bounded, $\psi'(x)\ge 0$ for all $x$ and $\psi'(x)$ can be made arbitrarily small by moving $x$ sufficiently far into the tails. In the limiting contamination, we may effectively set $\psi'(x_j)$ to $0$. Hence, for a single-point contamination, the worst case for enlarging $\hatse(\theta_0)$ is to send that point to $\pm\infty$, replacing its contribution by
$$
\psi(x_j)^2 \to \psi_{\max}^2, \qquad \psi'(x_j) \to 0.
$$
\emph{Step 2: Worst $m$-point contamination.}
Now let $m \ge 1$ and consider an $m$-point contamination. Let $J \subset [n]$ be the set of $m$ contaminated indices and $I = [n]\setminus J$ the set of uncontaminated indices, with $|I| = n-m$. By repeating the one-point argument for each $j\in J$, one can (in the limit) achieve
\begin{equation*}
  \sum_{i=1}^n \psi(x_i)^2
  \to
  \sum_{i\in I} \psi(x_i)^2 + m \psi_{\max}^2,
  \qquad
  \sum_{i=1}^n \psi'(x_i)
  \to
  \sum_{i\in I} \psi'(x_i).
\end{equation*}
For a fixed set $I$, this yields the contaminated restricted standard error
\begin{equation*}
    \se_I^{(+)}
    =
    \sqrt{\frac{\sum_{i\in I} \psi(x_i)^2 + m \psi_{\max}^2}{\left(\sum_{i\in I} \psi'(x_i)\right)^2}}.
\end{equation*}
Maximizing over the choice of which $m$ points to contaminate is equivalent to maximizing over the choice of $I\subset[n]$ with $|I|=n-m$. This gives
\begin{equation*}
    \sup_{\tilde x \in B_{\mathrm{H}}(x^{(n)},m)} \hatse(\theta_0;\tilde x^{(n)})
    =
    \max_{\substack{I \subseteq [n] \\ |I| = n - m}}
    \sqrt{\frac{\sum_{i\in I} \psi(x_i)^2 + m \psi_{\max}^2}{\left(\sum_{i\in I} \psi'(x_i)\right)^2}}.
\end{equation*}
Consequently,
\begin{equation*}
    \BP_{\eta+}(\hatse(\theta_0),x^{(n)})
    =
    \min\left\{
      m : \max_{\substack{I \subseteq [n] \\ |I| = n - m}}
      \sqrt{\frac{\sum_{i\in I} \psi(x_i)^2 + m \psi_{\max}^2}{\left(\sum_{i\in I} \psi'(x_i)\right)^2}}
      > \hatse(\theta_0) + \eta
    \right\},
\end{equation*}
which is the first claimed expression.
\emph{Step 3: Additional Symmetry.}
If, in addition, $\psi$ is odd and $\psi'(x)$ is non-increasing in $|x|$, we can also derive equality for the downward direction. In this case, one wants to \emph{decrease} the ratio. We consider the same partition $[n]=I\cup J$. To lower the numerator, it is optimal to drive the contributions of contaminated points to zero. Using that $\psi(0)=0$ for odd $\psi$, moving $x_j$ to $0$ eliminates its contribution to the numerator. At the same time, since $\psi'(0)\ge \psi'(x)$ for all $x$ under the monotonicity assumption on $|x|$, the denominator contribution of each moved point can be increased up to $\psi'(0)$ by moving it to $0$. Hence, for a fixed $I$ and $J=[n]\setminus I$, there exists a contaminated sample $\tilde{x}^{(n)}$
 such that
 \begin{equation*}
  \sum_{i=1}^n \psi(\tilde x_i)^2
  =
  \sum_{i\in I} \psi(x_i)^2 \quad
  \mbox{ and } \quad
  \sum_{i=1}^n \psi'(\tilde x_i)
  =
  \sum_{i\in I} \psi'(x_i) + m \psi'_{\max},
\end{equation*}
where $\psi'_{\max} = \max_{x\in\RR}\psi'(x)$ and in particular $\psi'(0) = \psi'_{\max}$. This yields
\begin{equation*}
    \se_I^{(-)}
    =
    \sqrt{\frac{\sum_{i\in I} \psi(x_i)^2}{\left(\sum_{i\in I} \psi'(x_i) + m \psi'_{\max} \right)^2}},
\end{equation*}
and by minimizing over $I$ we obtain
\begin{equation*}
    \BP_{\eta-}(\hatse(\theta_0),x^{(n)})
    =
    \min\left\{
      m : \min_{\substack{I \subseteq [n] \\ |I| = n - m}}
      \sqrt{\frac{\sum_{i\in I} \psi(x_i)^2}{\left(\sum_{i\in I} \psi'(x_i) + m \psi'_{\max} \right)^2}}
      < \hatse(\theta_0) - \eta
    \right\},
\end{equation*}
However, notice that under the additional assumptions, $|\psi(x)|$ and $\psi'(x)$ can be ordered according to $|x|$. In particular, to enlarge the ratio, one should replace the $m$ points closest to $0$ by $\pm\infty$ (these contribute least to the numerator and most to the denominator), and to decrease the ratio, one should move the $m$ points farthest from $0$ to the origin. This yields the ordered expressions in terms of $(x_{\pi_1},\dots,x_{\pi_n})$ in the statement of the lemma, and the corresponding one-sided sensitivities follow by inverting the inequalities in $m$.
This completes the proof.
\end{proof}
\begin{proof}[Proof of Lemma \ref{lem:se_at_theta_0_main}]
    This is a restatement of Lemma \ref{lem:se_at_theta_0}.
\end{proof}
\begin{proof}[Proof of Lemma \ref{lem:opt_attack_m_est_se}]
We start with $\eta_{m/n+}(\hatse(\hat \theta), x^{(n)})$. The idea is to bound the numerator and denominator of $\hatse(\tilde\theta)$ separately, where $\tilde x \in B_\textsf{H}(x^{(n)},m)$ is an contaminated sample and $\tilde \theta =\hat\theta(\tilde x^{(n)})$ is the corresponding M-estimate of location.
Write
\begin{align*}
    \hatse(\tilde{\theta})
    &= \sqrt{
      \frac{
        \sum_{i=1}^n \psi(\tilde x_i - \hat{\theta})^2
        + \Bigl[\sum_{i=1}^n \psi(\tilde x_i - \tilde{\theta})^2 - \sum_{i=1}^n \psi(\tilde x_i - \hat{\theta})^2\Bigr]
      }{
        \Bigl(
          \sum_{x_i \ne \tilde x_i} \Bigl[\psi'(\tilde x_i - \tilde{\theta})
          + \psi'(x_i - \hat{\theta})\Bigr]
          + \sum_{x_i = \tilde x_i}\Bigl[ \psi'(x_i - \tilde{\theta}) - \psi'(x_i - \hat{\theta})\Bigr]
        \Bigr)^2
      }
    }.
\end{align*}
\emph{Numerator: upper bound for $\eta_{m/n+}$.}
Let
$$
a_i := \psi(\tilde x_i - \tilde{\theta}),
\qquad
b_i := \psi(\tilde x_i - \hat{\theta}),
\qquad i=1,\dots,n.
$$
Then
\begin{align*}
\sum_{i=1}^n \psi(\tilde x_i - \tilde{\theta})^2
-
\sum_{i=1}^n \psi(\tilde x_i - \hat{\theta})^2
&=
\sum_{i=1}^n (a_i+b_i)(a_i-b_i).
\end{align*}
Since $|\psi(u)| \le \psi_{\max}$ for all $u$,
$$
|a_i+b_i| \le 2\psi_{\max}.
$$
Moreover, in the ``$+$'' case we have $\tilde\theta \ge \hat\theta$. Since $\psi$ is nondecreasing,
$$
\tilde x_i-\tilde\theta \le \tilde x_i-\hat\theta
\qquad\Longrightarrow\qquad
a_i \le b_i,
$$
so the differences $a_i-b_i$ all have the same sign, namely
$$
a_i-b_i \le 0
\qquad\text{for all } i.
$$
Therefore,
\begin{align*}
\sum_{i=1}^n \psi(\tilde x_i - \tilde{\theta})^2
-
\sum_{i=1}^n \psi(\tilde x_i - \hat{\theta})^2
&=
\sum_{i=1}^n (a_i+b_i)(a_i-b_i) \\
&\le
\sum_{i=1}^n |a_i+b_i|\,|a_i-b_i| \\
&\le
2\psi_{\max}\sum_{i=1}^n |a_i-b_i|.
\end{align*}
Because all $a_i-b_i$ have the same sign,
\begin{align*}
\sum_{i=1}^n |a_i-b_i|
&=
\left|
\sum_{i=1}^n (a_i-b_i)
\right| \\
&=
\left|
\sum_{i=1}^n \psi(\tilde x_i-\tilde\theta)
-
\sum_{i=1}^n \psi(\tilde x_i-\hat\theta)
\right|.
\end{align*}
Now $\tilde\theta$ is the $M$-estimator based on the contaminated sample $\tilde x^{(n)}$, so
$$
\sum_{i=1}^n \psi(\tilde x_i-\tilde\theta)=0.
$$
Hence
\begin{align*}
\sum_{i=1}^n |a_i-b_i|
&=
\left|
\sum_{i=1}^n \psi(\tilde x_i-\hat\theta)
\right|.
\end{align*}
On the other hand, $\hat\theta$ is the $M$-estimator based on the original sample $x^{(n)}$, so
$$
\sum_{i=1}^n \psi(x_i-\hat\theta)=0.
$$
Let $\mathcal I:=\{i:x_i\neq \tilde x_i\}$ denote the set of contaminated indices. Since $|\mathcal I|\le m$,
\begin{align*}
\left|
\sum_{i=1}^n \psi(\tilde x_i-\hat\theta)
\right|
&=
\left|
\sum_{i=1}^n \bigl[\psi(\tilde x_i-\hat\theta)-\psi(x_i-\hat\theta)\bigr]
\right| \\
&=
\left|
\sum_{i\in\mathcal I} \bigl[\psi(\tilde x_i-\hat\theta)-\psi(x_i-\hat\theta)\bigr]
\right| \\
&\le
\sum_{i\in\mathcal I}
\left|
\psi(\tilde x_i-\hat\theta)-\psi(x_i-\hat\theta)
\right| \\
&\le
2m\psi_{\max}.
\end{align*}
Combining the previous displays yields
\begin{align*}
\sum_{i=1}^n \psi(\tilde x_i - \tilde{\theta})^2
-
\sum_{i=1}^n \psi(\tilde x_i - \hat{\theta})^2
\le
4m\psi_{\max}^2.
\end{align*}
Combining this with Lemma~\ref{lem:se_at_theta_0}, applied at $\theta_0=\hat\theta$, gives
\begin{align*}
\sum_{i=1}^n \psi(\tilde x_i-\tilde\theta)^2
\le
m\psi_{\max}^2
+
\sum_{i>m}\psi(x_{\pi_i}-\hat\theta)^2
+
4m\psi_{\max}^2,
\end{align*}
that is,
\begin{align*}
\sum_{i=1}^n \psi(\tilde x_i-\tilde\theta)^2
\le
5m\psi_{\max}^2
+
\sum_{i>m}\psi(x_{\pi_i}-\hat\theta)^2,
\end{align*}
where $\pi$ is a permutation such that
$$
|x_{\pi_1}-\hat\theta|
\le
|x_{\pi_2}-\hat\theta|
\le
\cdots
\le
|x_{\pi_n}-\hat\theta|.
$$
\emph{Denominator: lower bound for $\eta_{m/n+}$.}
For the denominator, note first that $\sum_{x_i \ne \tilde x_i} \psi'(\tilde x_i - \tilde{\theta}) \ge 0$. We also control $\sum_{x_i = \tilde x_i} \psi'(x_i - \hat{\theta})$ by picking the $n-m$ smallest $\psi'$ using the same idea in Lemma~\ref{lem:se_at_theta_0}. Thus we only need a lower bound on
\begin{equation*}
  \sum_{x_i = \tilde x_i} \psi'(x_i - \tilde{\theta}) -
  \sum_{x_i = \tilde x_i} \psi'(x_i - \hat{\theta}).
\end{equation*}
We only care about indices where the derivative decreases, i.e., those with
\begin{equation*}
  \psi'(x_i - \tilde{\theta}) < \psi'(x_i - \hat{\theta})
  \quad\Longleftrightarrow\quad
  |x_i - \hat \theta| < |x_i - \tilde \theta|.
\end{equation*}
For such $i$, since $|\tilde\theta - \hat\theta| \le \eta_{m/n}(\hat \theta, x^{(n)})$ and by definition of $\Delta(\cdot)$,
\begin{equation*}
  \psi'(x_i - \tilde{\theta}) - \psi'(x_i - \hat{\theta})
  \ge - \Delta(\eta_{m/n}(\hat \theta, x^{(n)})).
\end{equation*}
By construction there are at most $S_{m+}$ such indices and therefore
\begin{equation*}
  \sum_{x_i = \tilde x_i} \psi'(x_i - \tilde{\theta}) -
  \sum_{x_i = \tilde x_i} \psi'(x_i - \hat{\theta})
  \ge - S_{m+} \Delta(\eta_{m/n}(\hat \theta, x^{(n)})).
\end{equation*}
Indeed, by definition of $S_{m+}$, we are taking the worst-case (largest possible) number of such indices over all $m$-point contaminations and all $\tilde\theta$ in the interval
$$
[\hat \theta - \eta_{m/n-}(\hat \theta, x^{(n)}, \hat \theta + \eta_{m/n+}(\hat \theta, x^{(n)})].
$$
It is straightforward to see that the condition $|x_i - \hat \theta| < |x_i - \tilde \theta|$ is equivalent to
\begin{align*}
    \tilde \theta \in
\begin{cases}
(-\infty, \hat \theta) \cup (2x_i - \hat \theta, \infty), & \text{if } x_i \ge \hat \theta, \\
(-\infty, 2x_i - \hat \theta) \cup (\hat \theta, \infty), & \text{if } x_i < \hat \theta,
\end{cases}
\end{align*}
so $S_{m+}$ can be obtained by sorting these endpoints and counting how many intervals intersect the admissible range for $\tilde\theta$, which yields the explicit expression given in the lemma. Combining the numerator and denominator bounds then gives the stated upper bound for $\eta_{m/n+}(\hatse(\hat\theta),x^{(n)})$.
We omit the argument for the lower-side $m$-sensitivity as it is analogous, but with the inequalities reversed.
For Huber’s loss, we defined  $\psi'(x)=\delta \cdot\II\{|x|\leq \delta\}$ and can sharpen the denominator as follows
\begin{align*}
    \sum_{x_i = \tilde x_i} \psi'(x_i - \tilde{\theta}) -
  \sum_{x_i = \tilde x_i} \psi'(x_i - \hat{\theta}) = \biggl(\sum_{i = 1}^n - \sum_{x_i \ne \tilde x_i} \biggr) (\psi'(x_i - \tilde{\theta}) - \psi'(x_i - \hat{\theta})),
\end{align*}
where the first summation is bounded by $\delta \cdot q_{m\pm}$ by definition, and the second summation is bounded by $\pm \delta \cdot m$. Hence, we have
\begin{equation*}
  \delta (q_{m-} + m)\ge \sum_{x_i = \tilde x_i} \psi'(x_i - \tilde{\theta}) -
  \sum_{x_i = \tilde x_i} \psi'(x_i - \hat{\theta}) \ge \delta (q_{m+}-m).
\end{equation*}
\end{proof}
\begin{proof}[Proof of Corrolary \ref{lem:opt_attack_m_est_se_envelope}]
We only need to give an upper and lower bound for
$$
\sum_{i=1}^n \psi(\tilde x_i - \tilde \theta)^2.
$$
Write
$$
t:=\tilde\theta-\hat\theta
\in
\Bigl[
-\eta_{m/n-}(\hat\theta,x^{(n)}),\,
\eta_{m/n+}(\hat\theta,x^{(n)})
\Bigr],
$$
and define
$$
h_i(t):=\psi(x_i-\hat\theta-t)^2,\qquad i=1,\dots,n.
$$
Let
$
h_{(1)}(t)\le \cdots \le h_{(n)}(t)
$
be the order statistics of $\{h_i(t)\}_{i=1}^n$. By the same order-statistics argument as in the proof of Theorem~\ref{thm:loc}, if at most $m$ observations are contaminated, then for fixed $t$ the unchanged $n-m$ observations contribute some subset of size $n-m$ from $\{h_i(t)\}_{i=1}^n$, while the $m$ contaminated observations can contribute anywhere between $0$ and $\psi_{\max}^2$, since $\psi(0)=0$ and $|\psi|\le \psi_{\max}$. Therefore, for each admissible $t$,
$$
\sum_{i=1}^{n-m} h_{(i)}(t)
\le
\sum_{i=1}^n \psi(\tilde x_i-\tilde\theta)^2
\le
m\psi_{\max}^2+\sum_{i=m+1}^n h_{(i)}(t).
$$
Taking the infimum and supremum over all admissible shifts $t$ yields
$$
\underline N_m
:=
\inf_t \sum_{i=1}^{n-m} h_{(i)}(t)
\le
\sum_{i=1}^n \psi(\tilde x_i-\tilde\theta)^2
\le
\sup_t\left\{
m\psi_{\max}^2+\sum_{i=m+1}^n h_{(i)}(t)
\right\}
=: \overline N_m.
$$
Substituting these bounds into the same denominator bounds as in Lemma~\ref{lem:opt_attack_m_est_se} gives the claimed inequalities.
\end{proof}
\subsection{Proof of Connection to Power and Level Breakdown Function}
\begin{proof}[Proof of Theorem~\ref{thm:conv}]
We prove the statement for the rejection breakdown point and the power breakdown functional. The acceptance/level case follows by the same argument, with the roles of $\Theta_0$ and $\Theta_1$ interchanged.
Fix $\theta\in\Theta_1$, let
$$
\varepsilon^* := \varepsilon^*_\theta(\phi),
\qquad
\varepsilon^*_{\theta,n}:=\BP_{\mathrm{reject}}\bigl(\phi_n,X^{(n)}\bigr),
$$
and let $X^{(n)}=(X_1,\dots,X_n)$ be i.i.d.\ from $F_\theta$.
Letting $\Rightarrow$ denote weak convergence, by assumption, whenever a sequence $x^{(n)}$ has empirical measure $\hat F_{x^{(n)}} \Rightarrow F$, we have
\begin{equation}
\II\{\theta_0 \notin C_n(x^{(n)})\}\to \II\{\theta_0 \notin C(F)\}=\phi(F).
\label{eq:decision_stability_acceptance_set}
\end{equation}
\medskip
\noindent\textbf{Upper bound.}
Fix $\eta>0$ and set
$$
p:=\varepsilon^*+\eta/2.
$$
By the definition of $\varepsilon^*=\varepsilon^*_\theta(\phi)$, there exist $H\in\mathcal Q_p(F_\theta)$ and some $\theta_0'\in\Theta_0$ such that
$$
\phi(H)=\phi(F_{\theta_0'})=0,
\qquad
\phi(F_\theta)=1.
$$
Write
$$
H=(1-p)F_\theta+pG
$$
for some probability measure $G$.
On the same probability space, let $B_1,\dots,B_n$ be i.i.d.\ Bernoulli$(p)$ and let $Z_1,\dots,Z_n$ be i.i.d.\ from $G$, independent of $X^{(n)}$ and of the $B_i$'s. Define the contaminated sample $Y^{(n)}=(Y_1,\dots,Y_n)$ by
$$
Y_i :=
\begin{cases}
X_i, & B_i=0,\\
Z_i, & B_i=1.
\end{cases}
$$
Then $Y^{(n)}$ is an i.i.d. sample \ from $H$. Moreover, $Y^{(n)}$ differs from $X^{(n)}$ in exactly
$
M_n:=\sum_{i=1}^n B_i
$
coordinates, so
$
Y^{(n)}\in B_{\mathrm H}(X^{(n)},M_n).
$
By the strong law of large numbers,
$$
\frac{M_n}{n}\to p
\qquad\text{a.s.}
$$
and hence, since $p<\varepsilon^*+\eta$, we have almost surely for all sufficiently large $n$ that
$$
\frac{M_n}{n}\le \varepsilon^*+\eta.
$$
Let $\hat F_{X^{(n)}}$ and $\hat F_{Y^{(n)}}$ denote the empirical measures of $X^{(n)}$ and $Y^{(n)}$, respectively. By the Glivenko-Cantelli theorem,
$$
\hat F_{X^{(n)}}\Rightarrow F_\theta,
\qquad
\hat F_{Y^{(n)}}\Rightarrow H
\qquad\text{a.s.}
$$
Applying \eqref{eq:decision_stability_acceptance_set} to the sequences $X^{(n)}$ and $Y^{(n)}$ gives
$$
\phi_n(X^{(n)})=\II\{\theta_0\notin C_n(X^{(n)})\}\to \phi(F_\theta)=1
\quad\mbox{and}\quad
\phi_n(Y^{(n)})=\II\{\theta_0\notin C_n(Y^{(n)})\}\to \phi(H)=0,
$$
almost surely.
Therefore, almost surely for all sufficiently large $n$,
$$
\phi_n(X^{(n)})=1,
\qquad
\phi_n(Y^{(n)})=0.
$$
Since $Y^{(n)}\in B_{\textsf H}(X^{(n)},M_n)$, it follows that
$$
\varepsilon^*_{\theta,n}
=
\BP_{\mathrm{reject}}(\phi_n,X^{(n)})
\le \frac{M_n}{n}
\le \varepsilon^*+\eta
$$
for all large $n$, almost surely.
Hence
$$
\limsup_{n\to\infty}\varepsilon^*_{\theta,n}\le \varepsilon^*
\qquad\text{a.s.}
$$
after letting $\eta\downarrow0$.
\medskip
\noindent\textbf{Lower bound.}
Fix $\delta\in(0,\varepsilon^*)$. We show that almost surely for all sufficiently large $n$, no contamination changing at most $(\varepsilon^*-\delta)n$ observations can flip the decision from reject to accept.
Suppose not. Then with positive probability there exist infinitely many $n$ for which one can find a contaminated sample
$$
\tilde X^{(n)}\in B_{\mathrm H}(X^{(n)},k_n)
$$
such that
$$
\frac{k_n}{n}\le \varepsilon^*-\delta,
\qquad
\phi_n(X^{(n)})=1,
\qquad
\phi_n(\tilde X^{(n)})=0.
$$
Let $\hat F_{X^{(n)}}$ and $\hat F_{\tilde X^{(n)}}$ be the empirical measures of $X^{(n)}$ and $\tilde X^{(n)}$, respectively. Then for each such $n$,
$$
\hat F_{\tilde X^{(n)}}
=
\Bigl(1-\frac{k_n}{n}\Bigr)\hat F_{X^{(n)}}+\frac{k_n}{n}G_n
$$
for some probability measure $G_n$. Work on the one-point compactification $\overline{\mathcal X}=\mathcal X\cup\{\infty\}$ so that $\mathcal P(\overline{\mathcal X})$ is compact under weak convergence. Since
$$
\hat F_{X^{(n)}}\Rightarrow F_\theta
\qquad\text{a.s.},
$$
we may pass to a subsequence $n_j$ such that
$$
\frac{k_{n_j}}{n_j}\to p\le \varepsilon^*-\delta,
\qquad
G_{n_j}\Rightarrow G,
\qquad
\hat F_{X^{(n_j)}}\Rightarrow F_\theta,
$$
and therefore
$$
\hat F_{\tilde X^{(n_j)}}\Rightarrow H:=(1-p)F_\theta+pG\in \mathcal Q_{\varepsilon^*-\delta}(F_\theta).
$$
By the definition of $\varepsilon^*=\varepsilon^*_\theta(\phi)$, every
$$
H'\in \mathcal Q_{\varepsilon^*-\delta}(F_\theta)
$$
must satisfy
$$
\phi(H')=\phi(F_\theta)=1,
$$
and in particular
$$
\phi(H)=1.
$$
Applying \eqref{eq:decision_stability_acceptance_set} along the subsequence to the deterministic sequence $\hat X^{(n_j)}$ yields
$$
\phi_{n_j}(\tilde X^{(n_j)})
=
\II\{\theta_0\notin C_{n_j}(\tilde X^{(n_j)})\}
\to
\phi(H)=1,
$$
which contradicts
$$
\phi_{n_j}(\hat X^{(n_j)})=0
$$
for all $j$ on that event.
Therefore, almost surely for all sufficiently large $n$, no contamination with
$$
\frac{k_n}{n}\le \varepsilon^*-\delta
$$
can flip $\phi_n$ from $1$ to $0$. Equivalently,
$$
\varepsilon^*_{\theta,n}> \varepsilon^*-\delta
$$
for all large $n$, almost surely. Letting $\delta\downarrow0$ gives
$$
\liminf_{n\to\infty}\varepsilon^*_{\theta,n}\ge \varepsilon^*
\qquad\text{a.s.}
$$
Combining the upper and lower bounds, we conclude that
$$
\BP_{\mathrm{reject}}(\phi_n,X^{(n)})\xrightarrow{a.s.}\varepsilon^*_\theta(\phi).
$$
The proof for $\BP_{\mathrm{accept}}(\phi_n,X^{(n)})\xrightarrow{a.s.}\varepsilon^{**}_\theta(\phi)$ is identical, starting from data generated under the null and using the definition of the level breakdown functional.
\end{proof}
\subsection{Proof of Population Sensitivity and the Maxbias Curve}
We will first give a useful lemma illustrating the nice properties of the solution map $\varepsilon \mapsto \eta_{\varepsilon\pm}$ for \eqref{eq:population_eta+} and \eqref{eq:population_eta-}.
\begin{lemma}
    \label{lem:sens-bias_property}
    Suppose (A2)-(A4) is true for some $\varepsilon^* \in (0, 0.5)$.
    Further assume that
    $$
    F\!\left(\left\{x\in\mathbb R:\psi' \text{ is discontinuous at } x-(\theta_0 \pm \eta_{\varepsilon^*\pm})\right\}\right)=0.
    $$
    Then,
    \begin{align}
    \label{eq:derivative}
        &\frac{\dd\eta_{\varepsilon+}}{\dd\varepsilon} \Big |_{\varepsilon = \varepsilon^*} = \frac{ -\psi(q_{\varepsilon^*} - (\theta_0 + \eta_{\varepsilon^*+})) + \|\psi\|_\infty}{\int_{q_\varepsilon^*}^{\infty} \psi'(x - (\theta_0 + \eta_{\varepsilon^*+})) \dd F(x)} > 0, \\ \nonumber
        &\frac{\dd\eta_{\varepsilon-}}{\dd\varepsilon} \Big |_{\varepsilon = \varepsilon^*} =\frac{ -\psi(q_{1-\varepsilon^*} - (\theta_0 - \eta_{\varepsilon^*-})) - \|\psi\|_\infty}{-\int_{-\infty}^{q_{1-\varepsilon^*}} \psi'\big(x-(\theta_0-\eta_{\varepsilon^*-})\big) \dd F(x)} > 0.
    \end{align}
If in addition, (A3) is true for $\varepsilon = 0.5$, $\varepsilon \mapsto \eta_{\varepsilon\pm}$ admits a continuous extension to $[0, 0.5]$ with $\eta_{0\pm} = 0$ and $\eta_{0.5 \pm} = \infty$, and hence invertible on $[0, 0.5]$ onto $[0, \infty]$.
\end{lemma}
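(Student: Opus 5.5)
The plan is to differentiate the population identity $H_{+}(\eta_{\varepsilon+};\varepsilon)=0$ in $\varepsilon$ by the implicit function theorem, writing everything as an unconditional integral. Since $(1-\varepsilon)\EE_F[\,\cdot \mid X>q_\varepsilon]=\int_{q_\varepsilon}^\infty \cdot\,\dd F$, define
$$
G_+(\eta,\varepsilon):=\int_{q_\varepsilon}^{\infty}\psi\bigl(x-\theta_0-\eta\bigr)\,\dd F(x)+\varepsilon\|\psi\|_\infty ,
$$
so that $\eta_{\varepsilon+}$ is the zero of $G_+(\cdot,\varepsilon)$.

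\textbf{Partial derivatives.} By (A3), $q_\varepsilon=F^{-1}(\varepsilon)$ is $C^1$ near $\varepsilon^*$ with $\dd q_\varepsilon/\dd\varepsilon=1/f(q_\varepsilon)$. The Leibniz rule then gives
$$
\partial_\varepsilon G_+=-\psi(q_\varepsilon-\theta_0-\eta)+\|\psi\|_\infty .
$$
For the $\eta$-derivative, $\psi$ is Lipschitz by (A2), so differentiation under the integral is justified by dominated convergence with bound $L$. This yields
$$
\partial_\eta G_+=-\int_{q_\varepsilon}^\infty\psi'(x-\theta_0-\eta)\,\dd F(x),
$$
which is strictly negative at $(\eta_{\varepsilon^*+},\varepsilon^*)$ by (A4).

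\textbf{Continuity of the derivatives and conclusion.} The implicit function theorem requires $\partial_\eta G_+$ to be continuous in $(\eta,\varepsilon)$ near the root. Here the hypothesis that $\psi'$ is discontinuous only on an $F$-null set at $x-(\theta_0+\eta_{\varepsilon^*+})$ enters: combined with boundedness of $\psi'$ and dominated convergence, it gives continuity of $\eta\mapsto\int\psi'(x-\theta_0-\eta)\,\dd F$ at $\eta_{\varepsilon^*+}$. Continuity in the lower limit $q_\varepsilon$ follows because $F$ has a density there. I expect this continuity step to be the main technical point. The implicit function theorem then gives $\dd\eta_{\varepsilon+}/\dd\varepsilon=-\partial_\varepsilon G_+/\partial_\eta G_+$, which is exactly the displayed formula.

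\textbf{Sign.} Positivity follows because $\|\psi\|_\infty\ge\psi(q_{\varepsilon^*}-\theta_0-\eta)$. The inequality is strict: $\psi$ odd and nondecreasing with $\psi'$ integrating positively forces $\psi$ not to be at its supremum at the finite point $q_{\varepsilon^*}-\theta_0-\eta$, since otherwise $\psi'$ would vanish on $[q_{\varepsilon^*},\infty)$, contradicting (A4). The minus case is symmetric, using $q_{1-\varepsilon}$, whose derivative is $-1/f$, with $\dd F$ on $(-\infty,q_{1-\varepsilon}]$ and the sign flips.

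\textbf{Extension to $[0,0.5]$.} By the first part, $\varepsilon\mapsto\eta_{\varepsilon\pm}$ is increasing, so the limits at the endpoints exist. At $\varepsilon=0$ we have $q_0=-\infty$, and $G_+(\eta,0)=\EE\psi(X-\theta_0-\eta)$, whose unique zero is $\eta=0$ by (A4) and the definition of $\theta_0$. Continuity then follows from monotone convergence together with uniqueness of the root.

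As $\varepsilon\uparrow0.5$, suppose $\eta_{\varepsilon+}$ stayed bounded by some $M$. Then $\int_{q_{1/2}}^\infty\psi(x-\theta_0-M)\,\dd F+\tfrac12\|\psi\|_\infty\le0$ would be needed. The left side is strictly positive, since $\psi>-\|\psi\|_\infty$ pointwise and the mass is $1/2$, with (A3) at $0.5$ ensuring continuity of $q_\varepsilon$ there. Hence $\eta_{0.5\pm}=\infty$. Strict monotonicity and continuity then yield the claimed bijection onto $[0,\infty]$.
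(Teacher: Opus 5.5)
Your derivation of the derivative formula follows the paper's proof. You apply the implicit function theorem to $\int_{q_\varepsilon}^\infty\psi(x-\theta_0-\eta)\,\dd F(x)+\varepsilon\|\psi\|_\infty$, and dominated convergence together with the null-discontinuity hypothesis gives continuity of $\partial_\eta$. Your strictness argument for the numerator is a valid alternative to the paper's: if $\psi$ attained $\|\psi\|_\infty$ at $q_{\varepsilon^*}-\theta_0-\eta_{\varepsilon^*+}$, then $\psi'$ would vanish on the integration range, contradicting (A4). The paper instead observes that this would make $\int_{q_{\varepsilon^*}}^\infty\psi\,\dd F=\|\psi\|_\infty(1-\varepsilon^*)>0$, contradicting $H_+(\eta_{\varepsilon^*+})=0$. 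Its version has the small advantage of not depending on which version of the a.e.\ derivative is used.

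Two minor points. First, (A3) gives $F(q_{\varepsilon+h})-F(q_\varepsilon)=h$, which is all the $\varepsilon$-derivative needs; a $C^1$ quantile map with derivative $1/f$ would require $f$ continuous. Second, the passage $\varepsilon\downarrow 0$ is a dominated, not monotone, convergence argument.

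\textbf{The genuine gap is at $\varepsilon\uparrow 1/2$.} You rule out equality in $\int_{q_{1/2}}^\infty\psi(x-\theta_0-M)\,\dd F(x)\ge-\tfrac12\|\psi\|_\infty$ by asserting $\psi>-\|\psi\|_\infty$ pointwise. (A2) does not give this. Huber's $\psi_\delta$ equals $-\delta$ on $(-\infty,-\delta]$, so equality holds whenever $F$ puts no mass above $\theta_0+M-\delta$.

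In that case the conclusion itself fails. Take $F=\mathrm{Unif}(0,1)$, so $\theta_0=1/2$, with Huber's $\psi_\delta$. All hypotheses hold, including (A3) at $1/2$ and the null-discontinuity condition. Write $s=\tfrac12-\eta$. For $\varepsilon$ near $1/2$ and $\eta$ near the root one finds $H_+(\eta)=\tfrac12(s+\delta)^2-\delta(1-2\varepsilon)$. Hence $\eta_{\varepsilon+}=\tfrac12+\delta-\sqrt{2\delta(1-2\varepsilon)}\to\tfrac12+\delta<\infty$, and $\eta_{\varepsilon-}$ is the same by symmetry.

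The paper's proof contains the same unjustified step: it claims that $\psi(x-\theta_0-c)=-B$ for $F$-a.e.\ $x>q_{1/2}$ cannot hold for finite $c$. To obtain $\eta_{0.5\pm}=\infty$, and hence the bijection onto $[0,\infty]$, you need an extra hypothesis. One option is $|\psi(u)|<\|\psi\|_\infty$ for all $u$, which holds for the logcosh and self-concordant scores. Another is that $F$ has unbounded support on both sides. In that case oddness gives $\psi(x-\theta_0-M)\ge 0$ on $\{x>\max(q_{1/2},\theta_0+M)\}$, a set of positive $F$-mass, which supplies the strict inequality you need; the minus side is symmetric.
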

\begin{proof}
    We only show for the ''+`` direction, as the other direction is similar.
    Recall the implicit function $\eta_{\varepsilon+}$ is defined by:
    \begin{align*}   0= H_{+}(\eta, \varepsilon) &= (1-\varepsilon) \cdot \mathbb{E}\left[ \psi\left(X - (\theta_0 + \eta)\right) \mid X > q_\varepsilon \right] + \varepsilon \|\psi\|_\infty
    \\
    &= \int_{q_\varepsilon}^{\infty} \psi(x - (\theta_0 + \eta)) \, \dd F(x) + \varepsilon \|\psi\|_\infty.
    \end{align*}
    The partial derivative exists near $\eta_{\varepsilon^*+}$ by (A4) and the fact that $\psi$ is $L$-Lipschitz. Now fix any $\eta$, for small $h>0$,
    \begin{align}
     \nonumber  &~ H_{+}(\eta, \varepsilon^* + h) - H_{+}(\eta, \varepsilon^*) \\
    \nonumber    = &~-\int^{q_{\varepsilon^*+h}}_{q_{\varepsilon^*}} \psi(x - (\theta_0+\eta)) \dd F(x) + h\|\psi\|_\infty\\
        = &~ - \psi(q_{\varepsilon^*} - (\theta_0 + \eta)) (F(q_{\varepsilon^*+h}) - F(q_{\varepsilon^*})) + o(h) + h\|\psi\|_\infty,
        \label{eq:diff_H+_wrt_epsilon}
    \end{align}
    where the last equality is due to the continuity of $\psi$. When $F$ is continuous around $q_{\varepsilon^*}$, $F(q_{\varepsilon^*+h}) - F(q_{\varepsilon^*}) = h$. Now, we can safely conclude the second partial derivative exists near $\varepsilon^*$ by (A3).
    Therefore, by dominated convergence, we can derive
    \begin{align*}
        \frac{\partial H_{+}}{\partial \eta} = &~ -\int_{q_\varepsilon}^{\infty} \psi'(x - (\theta_0 + \eta)) \dd F(x), \\
        \frac{\partial H_{+}}{\partial \varepsilon} = &~ -\psi(q_\varepsilon - (\theta_0 + \eta)) + \|\psi\|_\infty.
    \end{align*}
    {
        We will show that both partial derivatives are continuous at $(\eta_{\varepsilon^*\pm},\varepsilon^*)$. We will only consider the $+$ side as the other side is analogous.
        Take any sequence $(\eta_n,\varepsilon_n)\to(\eta^*,\varepsilon^*)$.
        By (A3), $F$ has a positive density in a neighborhood of $q_{\varepsilon^*}$, so in particular $F$ is continuous at $q_{\varepsilon^*}$. Hence the quantile map is continuous at $\varepsilon^*$, and therefore
        $
        q_{\varepsilon_n}\to q_{\varepsilon^*}.
        $
        We first prove continuity of ${\partial H_{+}}/{\partial \eta}$. Define
        $$
        g_n(x):=\psi'(x-(\theta_0+\eta_n))\II\{x>q_{\varepsilon_n}\},
        \qquad
        g(x):=\psi'(x-(\theta_0+\eta^*))\II\{x>q_{\varepsilon^*}\}.
        $$
        Then
        $$
        \frac{\partial H_{+}}{\partial \eta}(\eta_n,\varepsilon_n)=-\int g_n(x)\dd F(x),
        \qquad
        \frac{\partial H_{+}}{\partial \eta}(\eta^*,\varepsilon^*)=-\int g(x)\dd F(x).
        $$
        Fix $x$ such that $\psi'$ is continuous at $x-(\theta_0+\eta^*)$ and $x\neq q_{\varepsilon^*}$. Since $\eta_n\to\eta^*$,
        $$
        x-(\theta_0+\eta_n)\to x-(\theta_0+\eta^*),
        $$
        and therefore
        $$
        \psi'(x-(\theta_0+\eta_n))
        \to
        \psi'(x-(\theta_0+\eta^*)).
        $$
        Also, if $x\neq q_{\varepsilon^*}$, since $q_{\varepsilon_n}\to q_{\varepsilon^*}$,  we have that
        $
        \II\{x>q_{\varepsilon_n}\}\to \II\{x>q_{\varepsilon^*}\}.
        $
        Hence
        $
        g_n(x)\to g(x).
        $
        By the extra assumption,
        $
        F\left(\left\{x:\psi' \text{ is discontinuous at } x-(\theta_0+\eta_{\varepsilon^*+})\right\}\right)=0,
        $
        and by (A3),
        $
        F(\{q_{\varepsilon^*}\})=0.
        $
        Therefore
        $$
        g_n(x)\to g(x)
        \qquad
        \text{for }F\text{-a.e. }x.
        $$
        Next, by (A2), $\psi$ is $L$-Lipschitz. Hence, $\psi'$ satisfies
        $
        |\psi'|\le L~
        \text{a.e.}
        $
        By dominated convergence,
        $$
        \int g_n(x)\dd F(x) \to \int g(x)\dd F(x).
        $$
        Equivalently,
        $$
         \frac{\partial H_{+}}{\partial \eta}(\eta_n,\varepsilon_n)\to  \frac{\partial H_{+}}{\partial \eta}(\eta_{\varepsilon^*+},\varepsilon^*).
        $$
        So
        $$
        \frac{\partial H_+}{\partial \eta}
        =
        -\int_{q_\varepsilon}^{\infty}\psi'(x-(\theta_0+\eta))\,\dd F(x)
        $$
        is continuous at $(\eta^*,\varepsilon^*)$.
        Now consider $ {\partial H_{+}}/{\partial \varepsilon}$. Since $q_{\varepsilon_n}\to q_{\varepsilon^*}$ and $\eta_n\to\eta_{\varepsilon^*+}$,
        $$
        q_{\varepsilon_n}-(\theta_0+\eta_n)\to q_{\varepsilon^*}-(\theta_0+\eta_{\varepsilon^*+}).
        $$
        By (A2), $\psi$ is $L$-Lipschitz, hence continuous. Therefore
        $$
        \psi\bigl(q_{\varepsilon_n}-(\theta_0+\eta_n)\bigr)
        \to
        \psi\bigl(q_{\varepsilon^*}-(\theta_0+\eta^*)\bigr).
        $$
        It follows that
        $$
        \frac{\partial H_{+}}{\partial \varepsilon}(\eta_n,\varepsilon_n)
        =
        -\psi\bigl(q_{\varepsilon_n}-(\theta_0+\eta_n)\bigr)+\|\psi\|_\infty
        \to
        -\psi\bigl(q_{\varepsilon^*}-(\theta_0+\eta^*)\bigr)+\|\psi\|_\infty
        =
        \frac{\partial H_{+}}{\partial \varepsilon}(\eta^*,\varepsilon^*).
        $$
        Therefore,
        $$
        \frac{\partial H_+}{\partial \varepsilon}
        =
        -\psi\bigl(q_\varepsilon-(\theta_0+\eta)\bigr)+\|\psi\|_\infty
        $$
        is continuous at $(\eta^*,\varepsilon^*)$.
    }
    Hence, $H_+(\eta, \varepsilon)$ is continuously differentiable in $(\eta, \varepsilon)$ near $(\eta_{\varepsilon^*+}, \varepsilon^*)$. Furthermore, we have $\frac{\partial H_{+}}{\partial \eta} \big |_{\eta = \eta_{\varepsilon^*+}, \varepsilon =\varepsilon^*} \ne 0$ by (A4). Thus, we can apply implicit function theorem and derive
    $$
    \frac{\dd\eta}{\dd\varepsilon} = -\frac{\partial H_{+}/\partial \varepsilon}{\partial H_{+}/\partial \eta} = \frac{ -\psi(q_\varepsilon - (\theta_0 + \eta)) + \|\psi\|_\infty}{\int_{q_\varepsilon}^{\infty} \psi'(x - (\theta_0 + \eta)) \dd F(x)}.
    $$
    We need to show that $-\psi(q_{\varepsilon^*} - (\theta_0 + \eta_{\varepsilon^*+})) + \|\psi\|_\infty > 0$. To do so we will argue by contradiction. Notice that because $H_{+}(\eta_{\varepsilon^*+}, \varepsilon^*) = 0$, we have
    $$
    \int_{q_{\varepsilon^*}}^{\infty} \psi(x - (\theta_0 + \eta_{\varepsilon^*+})) \, \dd F(x) = - \varepsilon^* \|\psi\|_\infty < 0.
    $$
    Suppose $\psi(q_{\varepsilon^*} - (\theta_0 + \eta_{\varepsilon^*+})) = \|\psi\|_\infty$, then by monotonicity of $\psi$ we must have
    $$
\int_{q_{\varepsilon^*}}^{\infty} \psi(x - (\theta_0 + \eta_{\varepsilon^*+})) \, \dd F(x) = \|\psi\|_\infty (1 - \varepsilon^*) > 0,
    $$
    which is a contradiction. This shows that the map is $C^1((0, 0.5), \mathbb{R}_{\geq 0})$ and strictly increasing.
    Next, we will show the continuous extension part for $\eta_{\varepsilon+}$ (argument for $\eta_{\varepsilon-}$ is analogous).
    Let $\varepsilon_k \downarrow 0$ and set $c$ to be any limit point of $\eta_{\varepsilon_k+}$. Since $q_{\varepsilon_k}\to -\infty$ and $|\psi|\le B$, dominated convergence gives
   $$
  0=\lim_{k\to\infty} H_{+}(\eta_{\varepsilon_k+},\varepsilon_k)
    =\lim_{k\to\infty} \Big\{\int_{q_{\varepsilon_k}}^{\infty}\psi\big(x-(\theta_0+\eta_{\varepsilon_k,+})\big)\dd F(x)+\varepsilon_kB\Big\}
    =\EE_F\big[\psi\big(X-(\theta_0+c)\big)\big].
    $$
    By (A4) and monotonicity of $\psi$, the map $\eta\mapsto \EE[\psi(X-(\theta_0+\eta))]$ is strictly decreasing in a neighborhood of $0$ with unique zero at $\eta=0$. Thus $c=0$ and hence $\eta_{\varepsilon+}\to 0$ as $\varepsilon\downarrow 0$. For the other endpoint, let $\varepsilon_k\uparrow 0.5$. We will argue again by  contradiction. Suppose $\{\eta_{\varepsilon_k+}\}$ is bounded, along a subsequence $\eta_{\varepsilon_k+}\to c<\infty$. Then, by dominated convergence and continuity at the median $m$,
    $$
    0=\lim_{k\to\infty}H_{+}(\eta_{\varepsilon_k+},\varepsilon_k)
    =\int_{m}^{\infty}\psi\big(x-(\theta_0+c)\big)\dd F(x)+\tfrac12B.
    $$
    Since $\psi\ge -B$, the integral is $\ge -0.5 B$, with equality only if $\psi(x-(\theta_0+c))=-B$ for $F$–a.e.\ $x>m$, which cannot hold for finite $c$ (as it would force $x-(\theta_0+c)\to -\infty$ uniformly on a set of positive $F$–mass). This gives a contradiction. Hence $\eta_{\varepsilon_k+}\to +\infty$ as $\varepsilon_k\uparrow 0.5$.
    Using the fact that $\eta_{\varepsilon\pm}$ is strictly increasing on $(0,0.5)$ with well defined endpoint limits, the solution map extends to a continuous, strictly increasing bijection
    $$
    \varepsilon\in[0,0.5]\ \longmapsto\ \eta_{\varepsilon\pm}\in[0,\infty],
    $$
    so it is invertible on $[0,\tfrac12]$ (onto $[0,\infty]$).
\end{proof}
{
\begin{proof}[Proof of Proposition \ref{prop:sensitivity_map}]
    This is an immediate result of Lemma \ref{lem:sens-bias_property}. However, since we now have that (A3) is true for $\varepsilon \in (0, 0.5)$, we can derive continuity of ${\partial H_{+}}/{\partial \eta}$ without assuming $
    F\!\left(\left\{x\in\mathbb R:\psi' \text{ is discontinuous at } x-(\theta_0 \pm \eta_{\varepsilon^*\pm})\right\}\right)=0.
    $
    Let $(\eta_n,\varepsilon_n)\to(\eta_{\varepsilon^*+},\varepsilon^*)$. We show
    $$
    \int_{q_{\varepsilon_n}}^{\infty}\psi'\bigl(x-(\theta_0+\eta_n)\bigr)\,dF(x)
    \to
    \int_{q_{\varepsilon^*}}^{\infty}\psi'\bigl(x-(\theta_0+\eta_{\varepsilon^*+})\bigr)\,dF(x).
    $$
    Choose $\delta>0$ such that
    $$
    0<\varepsilon^*-\delta<\varepsilon^*+\delta<0.5.
    $$
    For all large $n$, $\varepsilon_n\in(\varepsilon^*-\delta,\varepsilon^*+\delta)$, hence
    $$
    q_{\varepsilon_n}\in [q_{\varepsilon^*-\delta},\,q_{\varepsilon^*+\delta}].
    $$
    Fix $c>q_{\varepsilon^*+\delta}$. Since (A3) holds for every $\varepsilon\in(0,0.5)$, so on
    $$
    [q_{\varepsilon^*-\delta},c]
    $$
    we may write
    $$
    \dd F(x)=f_c(x)\,\dd x
    $$
    for some $f_c\in L^1([q_{\varepsilon^*-\delta},c])$.
    Now decompose
    \begin{align*}
    &\int_{q_{\varepsilon_n}}^{\infty}\psi'\bigl(x-(\theta_0+\eta_n)\bigr)\,dF(x)
    -
    \int_{q_{\varepsilon^*}}^{\infty}\psi'\bigl(x-(\theta_0+\eta_{\varepsilon^*+})\bigr)\,dF(x) \\
    &=
    \Bigg[
    \int_{q_{\varepsilon_n}}^{c}\psi'\bigl(x-(\theta_0+\eta_n)\bigr)f_c(x)\,dx
    -
    \int_{q_{\varepsilon^*}}^{c}\psi'\bigl(x-(\theta_0+\eta_{\varepsilon^*+})\bigr)f_c(x)\,dx
    \Bigg] \\
    &\qquad+
    \Bigg[
    \int_c^\infty \psi'\bigl(x-(\theta_0+\eta_n)\bigr)\,dF(x)
    -
    \int_c^\infty \psi'\bigl(x-(\theta_0+\eta_{\varepsilon^*+})\bigr)\,dF(x)
    \Bigg] \\
    &=: A_{n,c}+R_{n,c}.
    \end{align*}
    Since $\psi$ is $L$-Lipschitz, $|\psi'|\le L$ a.e., so
    $$
    |R_{n,c}|\le 2L\,F((c,\infty)).
    $$
    For $A_{n,c}$, write
    \begin{align*}
    A_{n,c}
    &=
    \int_{q_{\varepsilon^*}}^{c}
    \Big[
    \psi'\bigl(x-(\theta_0+\eta_n)\bigr)
    -
    \psi'\bigl(x-(\theta_0+\eta_{\varepsilon^*+})\bigr)
    \Big]f_c(x)\,dx \\
    &\qquad+
    \Bigg[
    \int_{q_{\varepsilon_n}}^{c}\psi'\bigl(x-(\theta_0+\eta_n)\bigr)f_c(x)\,dx
    -
    \int_{q_{\varepsilon^*}}^{c}\psi'\bigl(x-(\theta_0+\eta_n)\bigr)f_c(x)\,dx
    \Bigg] \\
    &=: B_{n,c}+C_{n,c}.
    \end{align*}
    For the second term,
    $$
    |C_{n,c}|
    \le
    L\int_{q_{\varepsilon_n}\wedge q_{\varepsilon^*}}^{q_{\varepsilon_n}\vee q_{\varepsilon^*}} f_c(x)\,dx
    \to 0,
    $$
    because $q_{\varepsilon_n}\to q_{\varepsilon^*}$ by continuity of the quantile map at $\varepsilon^*$.
    For the first term, let
    $$
    h_c(x):=f_c(x)\mathbf 1_{[q_{\varepsilon^*},c]}(x)\in L^1(\mathbb R).
    $$
    Changing variables in the two integrals gives
    $$
    B_{n,c}
    =
    \int_{\mathbb R}\psi'(u)\Big[h_c(u+\theta_0+\eta_n)-h_c(u+\theta_0+\eta_{\varepsilon^*+})\Big]\,du,
    $$
    hence
    $$
    |B_{n,c}|
    \le
    L\int_{\mathbb R}\Big|h_c(u+\theta_0+\eta_n)-h_c(u+\theta_0+\eta_{\varepsilon^*+})\Big|\,du
    \to 0,
    $$
    since translations are continuous in $L^1$ and $\eta_n\to\eta_{\varepsilon^*+}$.
    Thus $A_{n,c}\to 0$ for each fixed $c$, and therefore
    $$
    \limsup_{n\to\infty}
    \left|
    \int_{q_{\varepsilon_n}}^{\infty}\psi'\bigl(x-(\theta_0+\eta_n)\bigr)\,dF(x)
    -
    \int_{q_{\varepsilon^*}}^{\infty}\psi'\bigl(x-(\theta_0+\eta_{\varepsilon^*+})\bigr)\,dF(x)
    \right|
    \le 2L\,F((c,\infty)).
    $$
    Letting $c\to\infty$ yields the claim.
\end{proof}
}
\subsection{Proof of the Coupled M-Estimation Equivalence}
\label{sec:identification_proof}
\begin{proof}[Proof of Proposition~\ref{prop:finite_identification_sensitivity}]
We prove only the $+$ case, since the $-$ case is entirely analogous after reversing the order of the sample. Fix $m \in [n]$, by construction,
$$
\hat q_{m+} \in [x_{(m)},x_{(m+1)}),
$$
so
$$
\EE_{F_n}\bigl[\II\{X\le \hat q_{m+}\}\bigr]
=
\frac{1}{n}\sum_{i=1}^n \II\{x_i\le \hat q_{m+}\}
=
\frac{m}{n}.
$$
Hence the third coordinate of
$\EE_{F_n}[\Psi_+(X;\hat{\vartheta}_+^{\mathrm{sen}})]$
is equal to $0$.
Next, by definition, $\hat\theta$ is the unique selection of solution to
$$
\EE_{F_n}[\psi(X-\theta)] = 0,
$$
so the first coordinate is also equal to $0$.
It remains to verify the second coordinate. For the fixed pair $(\hat\theta,\hat q_{m+})$, define
$$
H_{n+}(\eta)
:=
\EE_{F_n}\bigl[\psi(X-\hat\theta-\eta)\II\{X>\hat q_{m+}\} + (m/n)B\bigr].
$$
Since $\hat q_{m+}\in [x_{(m)},x_{(m+1)})$, exactly the largest $n-m$ observations exceed $\hat q_{m+}$, and therefore
$$
H_{n+}(\eta)
=
\frac{1}{n}\sum_{i>m}\psi(x_{(i)}-\hat\theta-\eta) + \frac{m}{n}B.
$$
By continuity and monotonicity of $\psi$, the map $\eta \mapsto H_{n+}(\eta)$ is continuous and nonincreasing on $\RR_{>0}$. Moreover,
\begin{align*}
H_{n+}(0)
&=
\frac{1}{n}\sum_{i>m}\psi(x_{(i)}-\hat\theta)+\frac{m}{n}B \\
&>
\frac{1}{n}\sum_{i>m}\psi(x_{(i)}-\hat\theta)
+
\frac{1}{n}\sum_{i\le m}\psi(x_{(i)}-\hat\theta) \\
&=
\EE_{F_n}[\psi(X-\hat\theta)]
=
0,
\end{align*}
where we used $\psi\le B$ and for a solution $\hat \theta$ to exist, one must have at least $\psi(x_{(1)} - \hat \theta) \le 0$.
On the other hand,
$$
\lim_{\eta\to\infty} H_{n+}(\eta)
=
-\frac{n-m}{n}B+\frac{m}{n}B
=
\frac{2m-n}{n}B.
$$
For the $+$ direction, the admissible range is $m/n\le 1/2$, so the right-hand side is nonpositive. Hence, by the intermediate value theorem, there exists at least one $\eta\in\RR_{>0}$ such that
$$
H_{n+}(\eta)=0.
$$
Since there exist at least one solution, and $\eta_{m/n+}(\hat\theta,x^{(n)})$ is the largest solution to $H_{n+}(\eta) = 0$ by Corollary \ref{cor:opt_attack_m_est}, we have
$$
\EE_{F_n}\bigl[\psi(X-\hat\theta-\eta_{m/n+}(\hat\theta,X^{(n)}))\II\{X>\hat q_{m+}\}+(m/n)B\bigr]=0,
$$
so the second coordinate is also equal to $0$.
Combining the three coordinates yields
$$
\EE_{F_n}\bigl[\Psi_+(X;\hat{\vartheta}_+^{\mathrm{sen}})\bigr]=0.
$$
This completes the proof.
\end{proof}
\begin{proof}[Proof of Proposition~\ref{prop:finite_identification_BP}]
Again, we prove only the $+$ case. By construction,
$$
\hat q_{k_{\eta+}+}\in [x_{(k_{\eta+})},x_{(k_{\eta+}+1)}),
$$
and therefore
$$
\EE_{F_n}\bigl[\II\{X\le \hat q_{k_{\eta+}+}\}\bigr]
=
\frac{k_{\eta+}}{n}
=
\BP_{\eta+}(\hat\theta,X^{(n)}).
$$
Hence the third coordinate of
$\EE_{F_n}[\Psi_+(X;\hat{\vartheta}_+^{\mathrm{BP}})]$
is equal to $0$.
The first coordinate is also equal to $0$, since $\hat\theta$ uniquely solves
$$
\EE_{F_n}[\psi(X-\theta)] = 0.
$$
It remains to study the second coordinate. Define, for $k\in [n]$,
$$
G_{n+}(k;\eta)
:=
\frac{1}{n}\sum_{i>k}\psi(x_{(i)}-\hat\theta-\eta)+\frac{k}{n}B.
$$
Since $\hat q_{k_{\eta+}+}\in [x_{(k_{\eta+})},x_{(k_{\eta+}+1)})$, exactly the largest $n-k_{\eta+}$ observations exceed $\hat q_{k_{\eta+}+}$, and thus
$$
\EE_{F_n}\bigl[\psi(X-\hat\theta-\eta)\II\{X>\hat q_{k_{\eta+}+}\}+\BP_{\eta+}(\hat\theta,X^{(n)})B\bigr]
=
G_{n+}(k_{\eta+};\eta).
$$
Now, by the characterization of $\BP_{\eta+}(\hat\theta,x^{(n)})$ in Theorem~\ref{thm:loc}, $k_{\eta+}$ is the smallest index such that
$$
G_{n+}(k_{\eta+};\eta)\ge 0.
$$
Hence
$$
G_{n+}(k_{\eta+}-1;\eta)<0\le G_{n+}(k_{\eta+};\eta).
$$
Then, $|\psi|\le B$ gives
\begin{align*}
G_{n+}(k_{\eta+};\eta)-G_{n+}(k_{\eta+}-1;\eta)
&=
\frac{1}{n}\Bigl(B-\psi(X_{(k_{\eta+})}-\hat\theta-\eta)\Bigr)\leq \frac{2B}{n}.
\end{align*}
Since
$$
G_{n+}(k_{\eta+}-1;\eta)<0\le G_{n+}(k_{\eta+};\eta),
$$
it follows that
\begin{align*}
0
\le
G_{n+}(k_{\eta+};\eta)
&<
G_{n+}(k_{\eta+};\eta)-G_{n+}(k_{\eta+}-1;\eta) \le
\frac{2B}{n},
\end{align*}
 Hence, we conclude that there exists
$$
r_{n+}:=G_{n+}(k_{\eta+};\eta)\in[0,2B/n]
$$
such that
$$
\EE_{F_n}\bigl[\psi(X-\hat\theta-\eta)\II\{X>\hat q_{k_{\eta+}+}\}+\BP_{\eta+}(\hat\theta,X^{(n)})B\bigr]
=
r_{n+}.
$$
Therefore,
$$
\EE_{F_n}\bigl[\Psi_+(X;\hat{\vartheta}_+^{\mathrm{BP}})\bigr]
=
(0,r_{n+},0).
$$
This proves the $+$ case, and the $-$ case follows analogously.
\end{proof}
Next, we will give a lemma characterizing that the bootstrap threshold BP and $m$-sensitivity derived from Theorem \ref{thm:loc_bootstrap} are also approximate solutions to the Z-system.
\begin{lemma}
    \label{lem:finite_identification_bootstrap}
    Fix some $m \in [n]$. Let $\eta_{m/n\pm}(\hat \theta_b, \tilde F_n)$
    be defined as in Definition \ref{def:eta_extended}, where $\hat \theta_ b$ solves \eqref{eq:bootstrap}. Let $X_i$ be sorted in a non-decreasing order. Define $\tilde w_i:=W_i/\sum_{j=1}^n W_j$. Let $i_m^+$ and $i_m^-$ be the random indices with $$\sum_{i=1}^{i^+_m-1} \tilde w_i<m/n\le \sum_{i=1}^{i^+_m} \tilde w_i \quad \text{and} \quad \sum_{i=1}^{i^-_m-1} \tilde w_i \le 1- m/n < \sum_{i=1}^{i^-_m} \tilde w_i.$$
    Then,
    $\tilde {\vartheta}_{\pm}^{\mathrm{sen}} = (\hat \theta_b, \eta_{m/n\pm}(\hat \theta_b, \tilde F_n), X_{i_m^\pm}, m/n)$ satisfies
 $\EE_{\tilde F_n} [\Psi_{\pm}(X; \tilde {\vartheta}_{\pm}^{\mathrm{sen}})] = o_p(n^{-1/2})$.
    Similarly, fix some $\eta \in \RR_{>0}$. Let $\BP_{\eta \pm}(\hat \theta_b, \tilde F_n)$ be defined as in Definition \ref{def:BP_extended}, where $\hat \theta_b$ solves \eqref{eq:bootstrap}. Let $X_i$ be sorted in a non-decreasing order. Denote $$
    \sum_{i=1}^{\tilde k_{\eta+}-1} \tilde w_i < \BP_{\eta\pm}(\hat\theta_b, \tilde F_n) \le \sum_{i=1}^{\tilde k_{\eta+}} \tilde w_i \quad \text{and} \quad \sum_{i=1}^{\tilde k_{\eta-}-1} \tilde w_i \le 1- \BP_{\eta\pm}(\hat\theta_b, \tilde F_n) < \sum_{i=1}^{\tilde k_{\eta-}} \tilde w_i
    $$
    Then, $\tilde  {\vartheta}_{\pm}^{\mathrm{BP}} = (\hat \theta_b, \eta, X_{\tilde k_{\eta\pm}}, \BP_{\eta \pm}(\hat \theta_b, \tilde F_n))$ satisfies
     $\EE_{\tilde F_n} [\Psi_{\pm}(X; \tilde {\vartheta}_{\pm}^{\mathrm{BP}})] = o_p(n^{-1/2})$.
\end{lemma}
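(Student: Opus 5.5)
Following the proof of Proposition~\ref{prop:finite_identification_BP}, we check the three coordinates of $\EE_{\tilde F_n}[\Psi_\pm]$ one at a time, treating only the $+$ case since the $-$ case is symmetric. Set $\tilde w_i = W_i/\sum_j W_j$.

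\textbf{First coordinate.} It vanishes exactly, because $\hat\theta_b$ solves \eqref{eq:bootstrap} and $\EE_{\tilde F_n}[\psi(X-\hat\theta_b)]$ is a positive multiple of $\sum_i W_i\psi(X_i-\hat\theta_b)$.

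\textbf{Third coordinate.} It equals $\sum_{i\le i_m^+}\tilde w_i - m/n$ in the sensitivity case, and $\sum_{i\le \tilde k_{\eta+}}\tilde w_i - \BP_{\eta+}(\hat\theta_b,\tilde F_n)$ in the breakdown case. By the defining bracket inequalities, each lies in $[0,\tilde w_{i_m^+}]$ (respectively $[0,\tilde w_{\tilde k_{\eta+}}]$). It therefore suffices to show
\[
\max_i \tilde w_i = o_p(n^{-1/2}).
\]
Since $\bar W\to 1$ almost surely, this reduces to $\max_i W_i = o_p(n^{1/2})$. That follows from a union bound and Markov's inequality with the moment condition $\EE[W_1^{2+\kappa}]<\infty$ in \eqref{eq:boostrap_condition}:
\[
\PP\bigl(\max_i W_i>\epsilon\sqrt n\bigr)\le n\,\EE[W_1^{2+\kappa}]\,(\epsilon\sqrt n)^{-(2+\kappa)}\to 0 .
\]
One has to decide whether the indicator in $\Psi_+$ is $\II\{x\le q\}$ with $q=X_{i_m^+}$ included. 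With this convention the split atom contributes at most one weight, so the conclusion holds either way.

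\textbf{Second coordinate (sensitivity case).} This is the main step. By Theorem~\ref{thm:loc_bootstrap}, $\eta_{m/n+}(\hat\theta_b,\tilde F_n)$ is the largest root of
\[
\sum_{i>i_m}\tilde w_i\psi(X_i-\hat\theta_b-\eta)+\Bigl(W_{i_m}-\tfrac mn\Bigr)\psi(X_{i_m}-\hat\theta_b-\eta)+\tfrac mn B .
\]
This expression is continuous and nonincreasing in $\eta$, so the root is exact, as in the proof of Proposition~\ref{prop:finite_identification_sensitivity}. The second coordinate of $\Psi_+$ evaluated at $\tilde\vartheta_+^{\rm sen}$ with $q=X_{i_m^+}$ differs from this expression only through the mass assigned to the atom $X_{i_m}$. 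That discrepancy is at most $B\,\tilde w_{i_m} + B\,|W_{i_m}-m/n|$, and both terms are bounded by $2B\max_i\tilde w_i=o_p(n^{-1/2})$ by the third-coordinate argument.

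\textbf{Second coordinate (breakdown case).} Take $k=\tilde k_{\eta+}$. Minimality of $k$ in Theorem~\ref{thm:loc_bootstrap}, together with the lattice step $1/n$, gives a sandwich exactly as in Proposition~\ref{prop:finite_identification_BP}: the criterion is nonnegative at $\BP$, negative at $\BP-1/n$, and the jump between the two is at most $2B/n$ plus $O(B\max_i\tilde w_i)$ coming from the atom-splitting terms. The residual is therefore $O(1/n)+o_p(n^{-1/2})=o_p(n^{-1/2})$.

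\textbf{Main obstacle.} The delicate point is the bookkeeping of the split atom at $X_{i_m}$. In Theorem~\ref{thm:loc_bootstrap} that atom is partly moved; in $\Psi_\pm$ it is entirely kept or entirely dropped. The gap must be controlled uniformly in $\eta$ by $\max_i\tilde w_i$. Ties in $X$ are handled the same way, since they have probability zero near the quantiles under (A3).
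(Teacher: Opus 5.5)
Your proposal is correct and follows the paper's proof essentially step for step. The first coordinate is exact. The second coordinate of $\Psi_+$ is compared with the Theorem~\ref{thm:loc_bootstrap} criterion, and they differ only by the split-atom term, which is bounded by $B\max_i\tilde w_i=o_p(n^{-1/2})$. The breakdown case uses the minimality sandwich $G(k-1)<0\le G(k)$. Your one improvement is the explicit treatment of the third coordinate: it lies in $[0,\max_i\tilde w_i]$ rather than being exactly zero, as the paper's proof asserts when it calls $\tilde q$ a solution of that equation.
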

\begin{proof}[Proof of Lemma \ref{lem:finite_identification_bootstrap}]
    By Assumption (A3), the sample has no ties almost surely.
    Recall that $\hat \theta_b$ solves \eqref{eq:bootstrap}, which is the same as solving $\EE_{\tilde F_n}\big[(\Psi_{\pm}(X; \vartheta))_1\big] = 0$, so $\eta_{m/n\pm}(\hat \theta_b, \tilde F_n)$ in Definition \ref{def:eta_extended} is using the $\hat \theta_b$ that solves the coupled system of estimating equations, i.e., $\EE_{\tilde F_n}\big[(\Psi_{\pm}(X; \vartheta))_2\big] = 0$. We will only show for $\eta_{m/n+}(\hat \theta_b, \tilde F_n)$, as the argument for $\eta_{m/n-}(\hat \theta_b, \tilde F_n)$ is the very similar.
    Set $\tilde q \in [X_{i_m}, X_{i_m+1})$, which is the solution to $\EE_{\tilde F_n}\big[(\Psi_{+}(X; \vartheta))_3\big] = 0$.
    Define
    $$
    \tilde H_{n+}(\eta) := \sum_{i> i_m^+}\tilde w_i\psi\big(X_i-(\hat\theta_b+\eta)\big) + \Big(\sum_{i=1}^{i_m^+} \tilde w_i-\frac{m}{n}\Big)\psi\big(X_{i_m^+}-(\hat\theta_b+\eta)\big) + \frac{m}{n}\psi(\infty).
    $$
    By Theorem \ref{thm:loc_bootstrap},
    \begin{align*}
     \eta_{m/n+}(\hat \theta_b, \tilde F_n)
    = \max\left\{\eta: \tilde H_{n+}(\eta) = 0 \right\}
    \end{align*}
    because $\tilde H_{n+}(\eta)$ is nonincreasing.
    Using the definition of   $\hat \theta_b$ and $\tilde q$ , we have define    \begin{align*}
        H_{n+}(\eta) := \EE_{\tilde F_n}[ (\Psi_{+}(X;(\hat \theta_b, \eta, \tilde q, m/n)))_2 ]= &~\sum_{i=1}^n \tilde w_i \psi\big(X_i-(\hat \theta_b+\eta)\big)\II\{X_i > \tilde q\} + \frac mn B \\
        = &~ \sum_{i>i_m^+}\tilde w_i\psi\big(X_i-(\hat\theta_b+\eta)\big)\ + \frac{m}{n}B.
    \end{align*}
Using $\|\psi\|_\infty = B$ and $\sum_{i=1}^{i^+_m-1} \tilde w_i<m/n\le \sum_{i=1}^{i^+_m} \tilde w_i$, we can show that
    $$
    \sup_{\eta}\big|\tilde H_{n+}(\eta)-H_{n+}(\eta)\big|
     = \sup_{\eta}\Big|\Big(\sum_{i=1}^{i_m^+} \tilde w_i-\frac{m}{n}\Big)\psi\big(X_{(i_m^+)}-(\hat\theta+\eta)\big)\Big|
     \le \|\psi\|_\infty \max_{i \in [n]}\tilde w_i.
    $$
    Because $\EE[W_1^{2+\epsilon}]<\infty$, Markov's inequality gives $\max_{i\in [n]} W_i=O_p(n^{1/(2+\epsilon)})$. Furthermore, CLT implies $\sum_{j=1}^n W_j = n + O_p(n^{1/2})$. Thus,
    $\max_{i\in [n]} \tilde w_i = o_p(n^{-1/2})$ and
    \begin{align}
    \label{eq:proof_normality_step1}
        \sup_{\eta}\big|\tilde H_{n+}(\eta)-H_{n+}(\eta)\big|
     = o_p(n^{-1/2}).
    \end{align}
    Therefore, using \eqref{eq:proof_normality_step1} and the fact that $\tilde H_{n+}(\eta_{m/n+}(\hat \theta_b, \tilde F_n)) = 0$, we have that
    $$
    H_{n+}(\eta_{m/n+}(\hat \theta_b, \tilde F_n)) = o_p(n^{-1/2}).
    $$
    Now we check $\tilde \vartheta^{\mathrm{BP}}_\pm$ are approximate solutions to the Z-system \eqref{eq:Z-system}, with expectation taken over $\tilde F_n$. We will only show for $\BP_{\eta+}$, as the argument for $\BP_{\eta-}$ is essentially the same. Again, set $\tilde q \in [X_{i_m^+}, X_{i_m^++1})$, which is a solution to $\EE_{\tilde F_n}\big[(\Psi_{+}(X; \vartheta))_3\big] = 0$.
    By definition, we have
    $$
    \EE_{\tilde F_n} [\Psi_{+}(X;(\hat \theta_b, \eta, \tilde q, \BP_{\eta+}(\hat \theta_b, \tilde F_n)))_2] = \frac{1}{n}\sum_{i=1}^n W_i \psi(X_i-\hat \theta_b-\eta)\II\{X_i > \tilde q\} + \frac{1}{n} \sum_{i=1}^n W_i \cdot \BP_{\eta+}(\hat \theta_b, \tilde F_n) \cdot B.
    $$
    Because $\sum_{i=1}^n W_i = n + O_p(n^{1/2})$, we only need to show
    $$
    \sum_{i>i_m^+} \tilde w_i \psi(X_{i} -(\hat \theta_b + \eta)) + \BP_{\eta+}(\hat \theta_b, \tilde F_n) \cdot B = o_p(n^{-1/2}).
    $$
    With the same $\hat \theta_b$, $\tilde q$,  by \eqref{eq:loc_sup_bootstrap}, $\BP_{\eta+}(\hat \theta_b, \tilde F_n)$ is the smallest $m/n$ for $m \in [n]$ such that
    $$
    G_{n+}(m;\eta) := \sum_{i>i_{m}^+} \tilde w_i\psi(X_{i}-(\hat \theta_b + \eta))+ \Big (\sum_{i=1}^{i_{m}^+} \tilde w_i - \frac {m}n \Big) \psi(X_{i_{m}^+} - (\hat \theta_b + \eta))
     + \frac{m}{n} \psi(\infty) \ge 0,
    $$
    which implies
    $$
    G_{n+}(m-1;\eta) = \sum_{i>i_{m-1}^+} \tilde w_i\psi(X_{i}-(\hat \theta_b + \eta))+ \Big (\sum_{i=1}^{i_{m - 1}^+} \tilde w_i - \frac {m-1}n \Big) \psi(X_{i_{m - 1}^+} - (\hat \theta_b + \eta))
     + \frac{m - 1}{n} \psi(\infty) < 0.
    $$
    Notice that
    \begin{align*}
        &~G_{n+}(m;\eta) - G_{n+}(m-1;\eta) \\
        = &~\tilde w_{i_m^+} (\psi (X_{i_m^+}-(\hat \theta_b + \eta)) ) + \Big (\sum_{i=1}^{i_{m}^+} \tilde w_i - \frac {m}n \Big) \psi(X_{i_{m}^+}
        - (\hat \theta_b + \eta)) \\
        &~- \Big (\sum_{i=1}^{i_{m - 1}^+} \tilde w_i - \frac {m-1}n \Big) \psi(X_{i_{m - 1}^+} - (\hat \theta_b + \eta)) + \frac{1}{n}B.
    \end{align*}
    By definition,
    $$
    \sum_{i=1}^{i^+_m-1} \tilde w_i<m/n\le \sum_{i=1}^{i^+_m} \tilde w_i,
    $$
    which implies
    $$
    \sum_{i=1}^{i_{m}^+} \tilde w_i - \frac mn \le \tilde w_{i_m^+} \le \max_{i \in [n]}\tilde w_i, \qquad \sum_{i=1}^{i_{m-1}^+} \tilde w_i - \frac {m-1}n \ge 0.
    $$
    Because $\max_{i\in [n]} \tilde w_i = o_p(n^{-1/2})$, we get
    $$
    G_{n+}(m;\eta) - G_{n+}(m-1;\eta) \le \Bigl( 2\max_{i \in [n]} \tilde w_i + \frac{1}{n}\Bigr) \|\psi\|_\infty = o_p(n^{-1/2}).
    $$
    Hence,
    $$
    0 \le G_{n+}(m;\eta) \le G_{n+}(m;\eta) - G_{n+}(m-1;\eta) = o_p(n^{-1/2}),
    $$
    which gives
    \begin{align*}
       o_p(n^{-1/2}) = &\sum_{i>i_m^+} \tilde w_i\psi(X_{(i)}-(\hat \theta_b + \eta))+ \Big (\sum_{i=1}^{i_m^+} \tilde w_i - \BP_{\eta+}(\hat \theta_b, \tilde F_n) \Big) \psi(X_{(i_m^+)} - (\hat \theta_b + \eta))\\ \nonumber
       & +\BP_{\eta+}(\hat \theta_b, \tilde F_n) \psi(\infty).
    \end{align*}
    Because $\sum_{i=1}^{i^+_m-1} \tilde w_i<m/n = \BP_{\eta+}(\hat \theta_b, \tilde F_n) \le \sum_{i=1}^{i^+_m} \tilde w_i$ and $|\psi(X_{(i_m^+)} - (\hat \theta_b + \eta))| \le B$,
    $$
    \Big |\Big(\sum_{i=1}^{i_m^+}\tilde w_i - \BP_{\eta+}(\hat \theta_b, \tilde F_n) \Big) \psi(X_{(i_m^+)} - (\hat \theta_b + \eta)) \Big | \le \max_{i \in [n]} \tilde w_i B = o_p(n^{-1/2}),
    $$
    we have shown the desired result using $k_{\eta+} = i_m^+$.
\end{proof}
\begin{proof}[Proof of Proposition \ref{prop:sensitivity_map_finite_main}]
    This is a reduced version of Proposition \ref{prop:sensitivity_map_finite}.
\end{proof}
\subsection{Proof of Consistency}
\label{sec:consistency_proof}
\begin{proof}[Proof of Theorem \ref{thm:eta_consistency_inP}]
    By Assumption (A3), the sample has no ties almost surely.
    We will only give a proof for $\eta_{m/n+}(\hat \theta, X^{(n)})$ as the one for  $\eta_{m/n-}(\hat \theta, X^{(n)})$ follows from a similar argument.
    Bounded and Lipschitz $\psi$ implies $\EE_F|\psi(X-\theta)|<\infty$ and continuity of $\EE_F[\psi(X-\theta)]$. By the weak law of large numbers, $n^{-1}\sum_{i=1}^n \psi(X_i-\theta) \to \EE_F[\psi(X-\theta)]$ in probability for each fixed $\theta$. Since $\EE_F[\psi(X-\theta)]$ is strictly decreasing at its unique zero $\theta_0$, we have $\hat\theta \overset{p}{\to} \theta_0$ by a standard consistency result for M-estimators (see for example, Lemma 5.10 in \citet{van2000asymptotic}).
    Next, remind that $\eta_{m/n+}(\hat \theta, X^{(n)}) = \max  \{\eta: \sum_{i >m}\psi(X_{(i)}-(\hat \theta + \eta))+m \psi(\infty) = 0 \}$ in Corollary \ref{cor:opt_attack_m_est}. To show the convergence of our $m$-sensitivity, we will show a uniform convergence result for the quantity
    $$
    T_{n+}(\eta):=\frac1n\sum_{i>m}\psi \big(X_{(i)}-(\hat\theta+\eta)\big)
    =\frac1n\sum_{i=1}^n \psi \big(X_i-(\hat\theta+\eta)\big)\II \{X_i>X_{(m)}\}.
    $$
    Specifically, define the population version as
    $$
    T_{+}(\eta):=(1-\varepsilon)\EE_F[\psi\big(X-(\theta_0+\eta)\big) \big|X>q_\varepsilon]
    =\EE_F\big[\psi\big(X-(\theta_0+\eta)\big)\II\{X>q_\varepsilon\}\big].
    $$
    We will show, for any compact $K \subset \RR$
    $$
    \sup_{\eta\in K}|T_{n+}(\eta)-T_+(\eta)| \overset{p}{\to} 0.
    $$
    Notice that for all $\eta \in K$, we have
    \begin{align*}
        |T_{n+}(\eta)-T_{+}(\eta)| \le A_n(\eta) + B_n(\eta) +C_n(\eta),
    \end{align*}
    where
    \begin{align*}
    A_n(\eta)&:=\Big|\frac1n\sum_{i=1}^n\big[\psi(X_i-(\hat\theta+\eta))-\psi(X_i-(\theta_0+\eta))\big]\II\{X_i>X_{(m)}\}\Big|,\\
    B_n(\eta)&:=\Big|\frac1n\sum_{i=1}^n\psi(X_i-(\theta_0+\eta))\big(\II\{X_i>X_{(m)}\}-\II\{X_i>q_\varepsilon\}\big)\Big|,\\
    C_n(\eta)&:=\Big|\frac1n\sum_{i=1}^n\psi(X_i-(\theta_0+\eta))\II\{X_i>q_\varepsilon\}
    - \mathbb E[\psi(X-(\theta_0+\eta))\II\{X>q_\varepsilon\}]\Big|.
    \end{align*}
    $L$-Lipschitzness of $\psi$ and consistency of $\hat\theta$ gives
    $$
    A_n(\eta) \le L \cdot |\hat \theta - \theta_0| \overset{p}{\to} 0
    $$
    uniformly over all $\eta$.
    The  boundedness of $\psi$ gives
    \begin{align*}
    B_n(\eta) &\le B \cdot \Big|\frac1n\sum_{i=1}^n\big(\II\{X_i>X_{(m)}\}-\II\{X_i>q_\varepsilon\}\big)\Big| = B \cdot |F_n(q_\varepsilon) - F_n(X_{(m)})|\\
    &
     \le |F_n(q_\varepsilon) - F(q_\varepsilon)| + |F(q_\varepsilon) - F(X_{(m)})| + |F(q_\varepsilon) - F(X_{(m)})| \overset{p}{\to} 0,
    \end{align*}
    where the first and third term go to zero by Glivenko–Cantelli Theorem and the second term goes to zero by Continuous Mapping Theorem and the consistency of sample order statistics.
    For $C_n$, Lemma \ref{lem:donsker} implies that the function class $\{\psi(\cdot-(\theta_0+\eta))\II\{\cdot>q_\varepsilon\}:\eta\in K\}$ is a Glivenko-Cantelli class, so $\sup_{\eta\in K}C_n(\eta) \overset{p}{\to} 0$.
    Combining the above, we showed the uniform convergence for $T_n(\eta)$. Now, for any compact $K \subset \RR$, we also have
    \begin{align}
    \label{eq:consistency_of_H}
        \sup_{\eta\in K}|H_{n+}(\eta)-H_{+}(\eta)| \overset{p}{\to} 0
    \end{align}
    for $H_{n+} (\eta) := T_{n+}(\eta) + m/n \cdot B$ and $H_{+} = T_{+} + \varepsilon \cdot B$.
    By continuity and strict decrease of $H_+$ at its unique zero $\eta_{\varepsilon+}$, for every $\delta>0$ there exists $c_\delta>0$ with
    $$
    \inf_{|\eta-\eta_{\varepsilon+}|\ge \delta}|H_+(\eta)|\ \ge\ c_\delta.
    $$
    Pick a compact $K$ containing $\eta_{\varepsilon+}\pm\delta$. Then
    $$
    \PP \left(\sup_{\eta\in K}|H_{n+}(\eta)-H_+(\eta)|<\frac{c_\delta}{2}\right)\ \to\ 1,
    $$
    and on the event $\{\sup_{\eta\in K}|H_{n+}(\eta)-H_+(\eta)|<{c_\delta}/{2}\}$,
    $$
    H_{n+}(\eta_{\varepsilon+}-\delta)> \frac{c_\delta}{2}>0,
    \qquad
    H_{n+}(\eta_{\varepsilon+}+\delta)< -\frac{c_\delta}{2}<0.
    $$
    Since $H_{n+}$ is nonincreasing, the only possible zeroes are inside $(\eta_{\varepsilon+}-\delta,\eta_{\varepsilon+}+\delta)$. Notice that the (maximal) zero of $H_{n+}(\eta)$ is $\eta_{m/n+}(\hat\theta,X^{(n)})$ by definition, so
    \begin{align*}
        \biggl \{\sup_{\eta \in K}|H_{n+}(\eta)-H_+(\eta)| < \frac{c_\delta}{2} \biggr\} \subseteq
        \{|\eta_{m/n+}(\hat\theta,X^{(n)})-\eta_{\varepsilon+}|\le\delta\}.
    \end{align*}
    Therefore,
    $$
    \PP\big(|\eta_{m/n+}(\hat\theta,X^{(n)})-\eta_{\varepsilon+}|>\delta\big)
    \le
    \PP \left(\sup_{\eta\in K}|H_n(\eta)-H(\eta)|\ge\frac{c_\delta}{2}\right)\ \to 0.
    $$
    This concludes the proof.
\end{proof}
\begin{proof}[Proof of Corollary \ref{cor:BP_consistency}]
    We will give a proof for $\BP_{\eta+}(\hat \theta, X^{(n)})$, where the proof for $\BP_{\eta-}(\hat \theta, X^{(n)})$ is very similar.
    Let $\rho > 0$ be arbitrary. We will show that for large $n$, with high probability:
    $$
    \varepsilon^* - \rho < \BP_{\eta+}(\hat \theta, X^{(n)}) < \varepsilon^* + \rho.
    $$
    Lemma \ref{lem:sens-bias_property} implies that the function $\varepsilon \mapsto \eta_{\varepsilon+}$ is strictly increasing and continuous at $\varepsilon^*$. Therefore, there exists $\delta > 0$ such that $\varepsilon \leq \varepsilon^* - \rho$ gives $\eta_{\varepsilon +} \leq \eta - \delta$ and that $\varepsilon \geq \varepsilon^* + \rho$ gives $\eta_{\varepsilon +} \geq \eta + \delta$. Now define two sequences of indices:
    $$
    m_1(n) = \lfloor n(\varepsilon^* - \rho) \rfloor, \quad m_2(n) = \lceil n(\varepsilon^* + \rho) \rceil.
    $$
    Note that
    $
    \frac{m_1(n)}{n} \to \varepsilon^* - \rho \text{ and } \frac{m_2(n)}{n} \to \varepsilon^* + \rho.
    $
    By Theorem \ref{thm:eta_consistency_inP}, we have pointwise convergence at these specific sequences:
    $$
    \eta_{m_1(n)/n+} \overset{p}{\to} \eta_{(\varepsilon^* - \rho)+} \leq \eta - \delta, \quad \eta_{m_2(n)/n+} \overset{p}{\to} \eta_{(\varepsilon^* + \rho)+} \geq \eta + \delta
    $$
    Therefore, with probability converging to 1, $\eta_{m_1(n)/n+} < \eta$ and $\eta_{m_2(n)/n+} > \eta$. By the definition of $\BP_{\eta+}$, if $\eta_{m_1(n)/n+} < \eta$ and $\eta_{m_2(n)/n+} > \eta$, then necessarily:
    $$
    \varepsilon^* - \rho < \BP_{\eta+}(\hat \theta, X^{(n)}) < \varepsilon^* + \rho.
    $$
    Hence, we get weak consistency for $\BP_{\eta+}(\hat \theta, X^{(n)})$.
\end{proof}
\subsection{Proof of Asymptotic Normality and Bootstrap Validity}
\label{sec:normality_proof}
    \begin{theorem}
    \label{thm:bootstrap_normality}
    Suppose (A1)-(A4) are true.
     Let $\eta_{m/n\pm}(\hat \theta, F_n)$ and $\eta_{m/n\pm}(\hat \theta_b, \tilde F_n)$ be defined as in Definition \ref{def:eta_extended}, where $\hat \theta$ solves \eqref{M-est} and $\hat \theta_b$ solves \eqref{eq:bootstrap}. Then,
    $$
    \sqrt{n}\bigl(\eta_{m/n\pm}(\hat \theta, F_n) - \eta_{\varepsilon \pm}) \bigr)
    {\ \rightsquigarrow \ }
    Z \sim
    \mathcal{N}\left(0,
    V_\pm
    \right)
    \text{ and }
    \sqrt{n}\bigl(\eta_{m/n\pm}(\hat \theta_b, \tilde F_n) - \eta_{m/n\pm}(\hat \theta,  F_n)\bigr)
    \overunderset{p}{W}{\rightsquigarrow}
    Z,
    $$
    where $V_\pm$ is given in \eqref{eq:var_eta+}-\eqref{eq:var_eta-}, and ${\rightsquigarrow}$ and $\overunderset{p}{W}{\rightsquigarrow}$ denote weak and  conditional weak convergence.
\end{theorem}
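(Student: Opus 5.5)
We treat $\eta_{m/n+}(\hat\theta,F_n)$ as the second coordinate of an approximate root of the $Z$-system and run the standard Z-estimator argument; the $-$ case is symmetric. By Proposition~\ref{prop:finite_identification_sensitivity}, $\hat\vartheta^{\mathrm{sen}}_+=(\hat\theta,\eta_{m/n+},\hat q_{m+},m/n)$ solves $\EE_{F_n}[\Psi_+(X;\vartheta)]=0$ exactly. Lemma~\ref{lem:finite_identification_bootstrap} shows that the bootstrap analogue $\tilde\vartheta^{\mathrm{sen}}_+$ solves the $\tilde F_n$-system up to $o_p(n^{-1/2})$. Since the fourth coordinate is fixed at $m/n$, the unknowns reduce to $(\theta,\eta,q)\in\RR^3$ against three equations. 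Replacing $m/n$ by $\varepsilon$ costs $O(1/n)$ by (A1), so we work with $\Psi_+(\cdot;\theta,\eta,q,\varepsilon)$.

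\textbf{Step 1 (consistency).} Theorem~\ref{thm:eta_consistency_inP} gives $\hat\theta\to\theta_0$ and $\eta_{m/n+}\to\eta_{\varepsilon+}$. Order-statistic consistency under (A3) gives $\hat q_{m+}\to q_\varepsilon$. The bootstrap versions follow in the same way, using the conditional Glivenko--Cantelli theorem for exchangeably weighted measures.

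\textbf{Step 2 (empirical process).} The class $\{\Psi_+(\cdot;\vartheta)\}$ near $\vartheta_{0+}$ is Donsker. It is built from Lipschitz translates of the bounded monotone $\psi$, multiplied by the VC indicators $\II\{x>q\}$ and $\II\{x\le q\}$ (this is the content of Lemma~\ref{lem:donsker}). It is also $L_2(F)$-continuous in $\vartheta$, because $\psi$ is Lipschitz and $F$ has a density near $q_\varepsilon$. Hence the stochastic equicontinuity condition
\[
\sqrt n(\EE_{F_n}-\EE_F)[\Psi_+(\hat\vartheta)-\Psi_+(\vartheta_{0+})]=o_p(1)
\]
holds. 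The bootstrap analogue with $\tilde F_n$ holds conditionally by the multiplier Donsker theorem, since $\EE W^{2+\kappa}<\infty$ and $W$ has unit mean and variance.

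\textbf{Step 3 (Fréchet derivative of the population map).} Let $M(\theta,\eta,q)=\EE_F\Psi_+$. Its Jacobian at $\vartheta_{0+}$ is lower block triangular:
\begin{align*}
\partial_\theta M_1&=-\EE\psi'(X-\theta_0),\\
\partial_\theta M_2=\partial_\eta M_2&=-\int_{q_\varepsilon}^\infty\psi'(x-\theta_0-\eta_{\varepsilon+})\,\dd F(x)=H_+'(\eta_{\varepsilon+}),\\
\partial_q M_2&=-\psi(q_\varepsilon-\theta_0-\eta_{\varepsilon+})f(q_\varepsilon),\\
\partial_q M_3&=f(q_\varepsilon).
\end{align*}
The diagonal entries are nonzero by (A3)--(A4), so the Jacobian $\dot M$ is invertible. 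Differentiability is justified by dominated convergence, exactly as in the proof of Proposition~\ref{prop:sensitivity_map}.

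\textbf{Step 4 (linearization).} Van der Vaart's Z-estimator theorem (Theorem 3.3.1 in van der Vaart--Wellner) gives
\[
\sqrt n(\hat\vartheta-\vartheta_{0+})=-\dot M^{-1}\sqrt n\,\EE_{F_n}\Psi_+(\cdot;\vartheta_{0+})+o_p(1).
\]
The $(\theta,\eta,q)$ components are normal, and $V_+$ is the $(2,2)$ entry of $\dot M^{-1}\Sigma\dot M^{-\top}$, where $\Sigma=\Var\Psi_+(X;\vartheta_{0+})$. This matches \eqref{eq:var_eta+}. For the bootstrap, the same expansion with $\tilde F_n$ yields
\[
\sqrt n(\tilde\vartheta-\hat\vartheta)=-\dot M^{-1}\sqrt n(\EE_{\tilde F_n}-\EE_{F_n})\Psi_+(\cdot;\vartheta_{0+})+o_p^W(1).
\]
Conditional multiplier CLT arguments, as in \citet{cheng2010bootstrap}, then give the same limit $Z$ in conditional probability.

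\textbf{Main obstacle.} The main difficulty is the non-smoothness in $q$ of $\Psi_+$. The indicator means the sample criterion is not differentiable, so the Z-estimator theorem must be applied in its form requiring only differentiability of the population map $M$ plus equicontinuity. Along with this, one must verify that the $o_p(n^{-1/2})$ and $O(1/n)$ residuals (from $m/n$ versus $\varepsilon$, ties, and the $\max_i\tilde w_i$ terms) are absorbed. The bound $\max_i\tilde w_i=o_p(n^{-1/2})$ from Lemma~\ref{lem:finite_identification_bootstrap} handles this on the bootstrap side.
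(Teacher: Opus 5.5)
Your proposal is correct and is essentially the paper's proof. It uses the same exact and approximate roots (Proposition~\ref{prop:finite_identification_sensitivity} and Lemma~\ref{lem:finite_identification_bootstrap}), Donsker plus $L_2(F)$-continuity, the Jacobian \eqref{eq:jac}, and the $(2,2)$ sandwich entry; the only difference is that the paper feeds these into Kosorok's Theorem~10.16 (Theorem~\ref{prop:master}), whose identifiability and strong Glivenko--Cantelli conditions replace your separate consistency step and give the bootstrap statement at the same time. One caution about your claim that this ``matches'' \eqref{eq:var_eta+}: in the notation of \eqref{eq:jac}, the second row of $\dot M^{-1}$ is $\bigl(1/a,\,-1/b_+,\,-c_+/(b_+d_+)\bigr)$, whereas \eqref{eq:jac_inv} prints $+c_+/(b_+d_+)$ (the paper's own proof of Theorem~\ref{thm:BP_normality} uses the correct sign), so if you expand the sandwich, the two terms of $V_+$ that are linear in $\psi\bigl(q_\varepsilon-(\theta_0+\eta_{\varepsilon+})\bigr)$ come out with signs opposite to the printed display, and only the defining $(2,2)$-entry expression agrees with yours.
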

We also obtain the asymptotic normality and bootstrap validity for the threshold breakdown point itself.
\begin{theorem}
\label{thm:BP_normality}
    Let $X_1,\dots,X_n \overset{i.i.d.}{\sim} F$ for some distribution $F$. Suppose $\eta$ is a solution to \eqref{eq:population_eta+} or \eqref{eq:population_eta-} for some $\varepsilon^* \in (0, 0.5)$, which we denote as $H_{\pm}(\eta) := H_{\pm}(\eta_{\varepsilon^*\pm}) = 0$. Suppose (A2)-(A4) are true for $\varepsilon^*$. Let $\BP_{\eta \pm}(\hat \theta_b, \tilde F_n)$ and $\BP_{\eta \pm}(\hat \theta, F_n)$ be defined as in Definition \ref{def:BP_extended}, where $\hat \theta$ solves \eqref{M-est} and $\hat \theta_b$ solves \eqref{eq:bootstrap}. Then,
    $$
    \sqrt{n}\bigl(\BP_{\eta \pm}(\hat \theta, F_n) - \varepsilon^* \bigr)
    {\ \rightsquigarrow \ }
    Z \sim
    \mathcal{N}\left(0,
    \sigma^2_{\pm}
    \right)
    \text{ and }
    \sqrt{n}\bigl(\BP_{\eta \pm}(\hat \theta_b, \tilde F_n) - \BP_{\eta \pm}(\hat \theta, F_n)\bigr)
    \overunderset{p}{W}{\rightsquigarrow}
    Z,
    $$
    where $\theta_0$ is the unique zero to $\EE[\psi(X - \theta)] = 0$ and
    \begin{align*}
        \sigma^2_+ := &~ \left (\frac{\int_{q_{\varepsilon^*}}^{\infty} \psi'(x - (\theta_0 + \eta_{\varepsilon^*+})) \dd F(x)}{ -\psi(q_{\varepsilon^*} - (\theta_0 + \eta_{\varepsilon^*+})) + \|\psi\|_\infty} \right)^2 V_+, \\
        \sigma^2_- := &~ \left (\frac{-\int_{-\infty}^{q_{1-\varepsilon^*}} \psi'\big(x-(\theta_0-\eta_{\varepsilon^*-})\big) \dd F(x)}{ -\psi(q_{1-\varepsilon^*} - (\theta_0 - \eta_{\varepsilon^*-})) - \|\psi\|_\infty} \right)^2 V_-
    \end{align*}
    where $V_\pm$ is given in \eqref{eq:var_eta+}-\eqref{eq:var_eta-}, and ${\rightsquigarrow}$ and $\overunderset{p}{W}{\rightsquigarrow}$ denote weak and  conditional weak convergence.
\end{theorem}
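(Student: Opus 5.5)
The plan is to deduce Theorem~\ref{thm:BP_normality} from Theorem~\ref{thm:bootstrap_normality} together with the linearization in Proposition~\ref{prop:sensitivity_map_finite}. Nothing new is needed at the level of the empirical process; the result is a delta-method argument along the (inverse) maxbias curve. Throughout, $m=\lceil \varepsilon^* n\rceil$ (or $\lfloor \varepsilon^* n\rfloor$), so that (A1) holds with $\varepsilon=\varepsilon^*$, and we write $\eta^*=\eta=\eta_{\varepsilon^*\pm}$. The argument is identical in the two directions, so we describe only the ``$+$'' case.

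\emph{Step 1 (sampling distribution).} The first display of Proposition~\ref{prop:sensitivity_map_finite}, which does not require the extra continuity assumption on $\psi'$, gives
\[
\BP_{\eta^*+}(\hat\theta,F_n)-\varepsilon^*=-c_+\bigl(\eta_{m/n+}(\hat\theta,F_n)-\eta^*\bigr)+o_p(n^{-1/2}),
\]
where
\[
c_+=\frac{\int_{q_{\varepsilon^*}}^\infty\psi'(x-(\theta_0+\eta^*))\,\dd F(x)}{-\psi(q_{\varepsilon^*}-(\theta_0+\eta^*))+\|\psi\|_\infty}.
\]
This constant is finite and positive. The denominator is positive by the argument in the proof of Lemma~\ref{lem:sens-bias_property}, and the numerator equals $-H_+'(\eta^*)$, which is positive by (A4). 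Multiplying by $\sqrt n$ and applying Theorem~\ref{thm:bootstrap_normality} together with Slutsky's lemma gives
\[
\sqrt n\bigl(\BP_{\eta^*+}(\hat\theta,F_n)-\varepsilon^*\bigr)\rightsquigarrow \mathcal N(0,c_+^2V_+)=\mathcal N(0,\sigma_+^2),
\]
which matches the stated $\sigma_+^2$. For the ``$-$'' side the constant has the displayed signs, and the square removes the sign.

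\emph{Step 2 (bootstrap).} Start from the second display of Proposition~\ref{prop:sensitivity_map_finite}:
\[
\BP_{\eta^*+}(\hat\theta_b,\tilde F_n)-\BP_{\eta^*+}(\hat\theta,F_n)=-c_+\bigl(\eta_{m/n+}(\hat\theta_b,\tilde F_n)-\eta_{m/n+}(\hat\theta,F_n)\bigr)+o_p^W(n^{-1/2}).
\]
By Theorem~\ref{thm:bootstrap_normality}, $\sqrt n$ times the bracketed difference converges conditionally in distribution, in probability, to $\mathcal N(0,V_+)$. Since $c_+$ is deterministic, multiplying by it preserves conditional weak convergence. An $o_p^W(1)$ remainder does not affect the conditional limit: a conditional Slutsky lemma applies, using the bounded-Lipschitz characterization of $\overunderset{p}{W}{\rightsquigarrow}$ in \citet{cheng2010bootstrap}, since the bounded-Lipschitz distance changes by at most $\EE_{W\mid X}\min(|\Delta_n|,1)$, which is $o_p(1)$ in $\PP_X$. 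The conditional limit is $\mathcal N(0,\sigma_+^2)$, which is the law of the same $Z$.

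\emph{Main obstacle.} All the work sits in Proposition~\ref{prop:sensitivity_map_finite}; if it has to be proved here, that is where the difficulty lies. My route would be to exploit the coupled $Z$-system \eqref{eq:Z-system}:
\begin{itemize}
\item Proposition~\ref{prop:finite_identification_BP} and Lemma~\ref{lem:finite_identification_bootstrap} show that $\hat\vartheta^{\rm BP}_+$ and $\tilde\vartheta^{\rm BP}_+$ are $o_p(n^{-1/2})$-approximate zeros of the empirical and bootstrap equations, with $\eta$ held fixed and $(\theta,q,\varepsilon)$ free.
\item Consistency of these zeros follows from Corollary~\ref{cor:BP_consistency}.
\item The score class $\{\Psi_+(\cdot;\vartheta)\}$ is Donsker, being built from a Lipschitz bounded $\psi$ times half-line indicators (Lemma~\ref{lem:donsker}).
\item The population map $\vartheta\mapsto\EE_F\Psi_+(X;\vartheta)$ is differentiable at $\vartheta_{0+}$ by (A3) and (A4), with a Jacobian in $(\theta,q,\varepsilon)$ that is block-triangular.
\item The standard $Z$-estimator theorem, together with its multiplier-bootstrap analogue (van der Vaart--Wellner Theorem 3.3.1 and the Cheng--Huang bootstrap version), then yields a joint linear representation.
\end{itemize}
Comparing the $\varepsilon$-coordinate of this representation with the $\eta$-coordinate of the corresponding representation for $\hat\vartheta^{\rm sen}_+$ produces the factor $c_+$. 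The delicate point is the stochastic equicontinuity of the indicator term $\psi(x-\theta-\eta)\II\{x>q\}$ near $q_{\varepsilon^*}$. I would handle it through the positive density in (A3) and an $L^2(F)$-continuity bound of order $|q-q'|+|\theta-\theta'|$.
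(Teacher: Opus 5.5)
Your argument is correct, but its main line runs in the opposite direction from the paper's.

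\textbf{The paper's route.} The paper proves this theorem directly. It applies the master $Z$-estimator theorem (Theorem~\ref{prop:master}) to the subsystem of \eqref{eq:Z-system} with $\eta=\eta_{\varepsilon^*+}$ fixed and $(\theta,\varepsilon,q)$ free. It then:
\begin{itemize}
\item verifies identifiability by showing that $G_+(\varepsilon)=\int_{q_\varepsilon}^\infty\psi(x-(\theta_0+\eta_{\varepsilon^*+}))\,\dd F(x)+\varepsilon\|\psi\|_\infty$ is nondecreasing with unique zero $\varepsilon^*$;
\item computes the Jacobian \eqref{eq:jac_BP} and shows $\det=-ad_+e_+\neq 0$;
\item reads off $\sigma_+^2=(b_+/e_+)^2V_+$ from the fact that the second row of $(V^{\mathrm{BP}}_{\vartheta_{0+}+})^{-1}$ is $-b_+/e_+$ times the second row of $V_{\vartheta_{0+}+}^{-1}$, while both sandwich the same $\Sigma_+$.
\end{itemize}

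\textbf{Your route.} You take the linearization of Proposition~\ref{prop:sensitivity_map_finite} as a black box and compose it with Theorem~\ref{thm:bootstrap_normality}. This gives the variance as $c_+^2V_+$ without any sandwich computation. It also makes the bootstrap claim a conditional Slutsky argument, and your bounded-Lipschitz justification for that step is fine. The reduction is legitimate: the proposition's hypotheses hold once you fix $m\approx\varepsilon^*n$, and its proof does not use the conclusion of this theorem.

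\textbf{What the reduction does not save.} The paper's proof of Proposition~\ref{prop:sensitivity_map_finite} invokes the master theorem for exactly the same $(\theta,\varepsilon,q)$ subsystem. It also cites \eqref{eq:jac_BP}--\eqref{eq:jac_BP_inv} and the nonsingularity established in the proof of this theorem. So the $\varepsilon$-coordinate representation it relies on already yields the result by the CLT, and your ``fallback'' sketch is essentially the paper's proof.

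\textbf{Two small points in that sketch.}
\begin{itemize}
\item Corollary~\ref{cor:BP_consistency} gives consistency of $\BP_{\eta+}(\hat\theta,F_n)$ only, not of the bootstrap quantity $\BP_{\eta+}(\hat\theta_b,\tilde F_n)$. The paper obtains both from the identifiability condition together with the weighted Glivenko--Cantelli statement of Lemma~\ref{lem:donsker}.
\item The $L^2(F)$ modulus of the indicator term is of order $|q-q'|^{1/2}$, not $|q-q'|$. This does not matter, since only $L^2(F)$-continuity is needed.
\end{itemize}
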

\begin{proof}[Proof of Theorem \ref{thm:bootstrap_normality}]
    By Proposition \ref{prop:finite_identification_sensitivity}, $\eta_{m/n\pm}(\hat \theta, F_n)$ is a solution to $\EE_{F}[\Psi_{\pm}(X; \vartheta^{\mathrm{sen}}_\pm)] = 0$.
    Note also that  Assumption (A3) guarantees there are no ties almost surely.
    By Lemma \ref{lem:finite_identification_bootstrap}, $\eta_{m/n\pm}(\hat \theta_b, \tilde F_n)$ is a root-n solution to the estimation equations \eqref{eq:Z-system}.
    Hence, we can derive the asymptotic normality results using M-estimation theory. The idea is to apply Theorem 10.16 in \citet{kosorok2008introduction}. Thus, we only need to check the conditions. For convenience, we restate this result as Theorem \ref{prop:master} in Appendix \ref{sec:technical}.
    Recall that assumptions (A2) and (A4) ensure that for each $\varepsilon\in(0,0.5]$ there exist unique solutions to $\EE[\Psi_{\pm}(X; \vartheta)]=0$ given by
    $$
    \vartheta_{0+} = (\theta_0,\eta_{\varepsilon+},q_\varepsilon,\varepsilon),
    \qquad
    \vartheta_{0-} = (\theta_0,\eta_{\varepsilon-},q_{1-\varepsilon},\varepsilon),
    $$
    where $\theta_0$ and $\eta_{\varepsilon\pm}$ are the same as in Section~\ref{sec:assumptions}.
    Following the argument in Theorem \ref{thm:eta_consistency_inP}, $\theta_0$ and $\eta_{\varepsilon\pm}$ are unique. Because $F$ has a positive density at $q_\varepsilon$ and $q_{1-\varepsilon}$, the population quantiles are also unique. Hence, our setting satisfies the identifiability condition (A).
    Let $\Psi_\pm$ be defined as in  \eqref{eq:Z-system}, and let
    \begin{align}
    \label{eq:B}
        \mathcal F=\{\Psi_\pm(X;\vartheta):\vartheta\in\Theta\},
    \qquad
    \mathcal F_j=\{(\Psi_\pm(X;\vartheta))_j:\vartheta\in\Theta\},
    \quad j=1,2,3.
    \end{align}
    Lemma \ref{lem:donsker} implies that $\{\psi\{\cdot - \theta\} \II \{\cdot \le q\}: \theta, q \in \RR\} $ is a Donsker class, so each $\mathcal F_j$ is Donsker. Theorem 7.17 in \citet{kosorok2008introduction} implies that $\mathcal F$ is Donsker, so (B) is satisfied as Donsker classes are strong Glivenko-Cantelli classes by Lemma 8.17 in \citet{kosorok2008introduction}.
    For (C), we already have the Donsker result, we only need to check $L_2$ convergence. Since $F$ is continuous, it is enough to control the three coordinates separately. Write, for either $\Psi=\Psi_+$ or $\Psi=\Psi_-$,
    $$
    \Psi(X;\vartheta)-\Psi(X;\vartheta_0)
    =
    \begin{pmatrix}
    \Delta_1(X)\\
    \Delta_2(X)\\
    \Delta_3(X)
    \end{pmatrix},
    \qquad
    \vartheta=(\theta,\eta,q,\varepsilon),\quad
    \vartheta_0=(\theta_0,\eta_0,q_0,\varepsilon_0).
    $$
    Then
    $$
    \EE_F\bigl[\|\Psi(X;\vartheta)-\Psi(X;\vartheta_0)\|^2\bigr]
    =
    \sum_{j=1}^3 \EE_F[\Delta_j(X)^2].
    $$
    For the first coordinate, since $\psi$ is $L$-Lipschitz,
    $$
    |\Delta_1(X)|
    =
    |\psi(X-\theta)-\psi(X-\theta_0)|
    \le L|\theta-\theta_0|,
    $$
    so when $\|\vartheta-\vartheta_0\|\to 0$ we get
    $$
    \EE_F[\Delta_1(X)^2]\le L^2|\theta-\theta_0|^2 \to 0.
    $$
    For the second coordinate, using that $\|\psi\|_\infty\le B$, in either case we have
    $$
    |\Delta_2(X)|
    \le
    L\bigl(|\theta-\theta_0|+|\eta-\eta_0|\bigr)
    +
    B\,\bigl|\II\{X>q\}-\II\{X>q_0\}\bigr|
    +
    B|\varepsilon-\varepsilon_0|,
    $$
    or with $\II\{X<q\}$ in place of $\II\{X>q\}$ for $\Psi_-$. Hence
    $$
    \EE_F[\Delta_2(X)^2]
    \lesssim
    \bigl(|\theta-\theta_0|+|\eta-\eta_0|\bigr)^2
    +
    \EE_F\bigl|\II\{X>q\}-\II\{X>q_0\}\bigr|
    +
    |\varepsilon-\varepsilon_0|^2,
    $$
    or the analogous bound with $\II\{X<q\}$. Since
    $$
    \bigl|\II\{X>q\}-\II\{X>q_0\}\bigr|
    \le
    \II\{q\wedge q_0 < X \le q\vee q_0\},
    $$
    and likewise for $\II\{X<q\}$, continuity of $F$ at $q_0$ gives
    $$
    \EE_F[\Delta_2(X)^2]\to 0.
    $$
    For the third coordinate,
    $$
    \Delta_3(X)=\II\{X\le q\}-\II\{X\le q_0\}-(\varepsilon-\varepsilon_0),
    $$
    up to replacing $\varepsilon$ by $1-\varepsilon$ in the $\Psi_-$ case, which makes no difference. Thus
    $$
    |\Delta_3(X)|
    \le
    |\II\{X\le q\}-\II\{X\le q_0\}|+|\varepsilon-\varepsilon_0|,
    $$
    so
    $$
    \EE_F[\Delta_3(X)^2]
    \lesssim
    \EE_F|\II\{X\le q\}-\II\{X\le q_0\}|+|\varepsilon-\varepsilon_0|^2.
    $$
    Again,
    $$
    |\II\{X\le q\}-\II\{X\le q_0\}|
    =
    \II\{q\wedge q_0 < X \le q\vee q_0\},
    $$
    hence continuity of $F$ implies
    $$
    \EE_F[\Delta_3(X)^2]\to 0.
    $$
    Therefore,
    \begin{align}
    \label{eq:C}
        \EE_F[\|\Psi(X;\vartheta)-\Psi(X;\vartheta_0)\|^2]\to 0
    \qquad\text{as}\qquad
    \|\vartheta-\vartheta_0\|\to 0.
    \end{align}
    For (D), we check that the derivative matrices $V_{\vartheta_{0\pm}\pm}$ in \eqref{eq:jac} are non-singular.
     In addition, the second moment of $\Psi$ is finite since $\psi$ is bounded. Hence, (D) is also satisfied.
    In Proposition \ref{prop:finite_identification_sensitivity}, we have shown that $(\hat \theta, \eta_{m/n\pm}(\hat \theta, F_n), \hat q, m/n)$ is a $o_p(n^{-1/2})$ solution to the estimating equations defined by \eqref{eq:Z-system}. Notice that because $m/n = \varepsilon + O(1/n)$, we also have that
    $(\hat \theta, \eta_{m/n\pm}(\hat \theta, F_n), \hat q, \varepsilon)$ is a $o_p(n^{-1/2})$ solution to the same estimating equations. This is also true for the bootstrap, for which we have shown it is also a $o_p(n^{-1/2})$ solution in Lemma \ref{lem:finite_identification_bootstrap}. Thus, we verified (E).
    Hence, we can safely apply Theorem \ref{prop:master}. Notice that we still need to compute the covariance matrix $V_{\vartheta_{0\pm}\pm}^{-1} \EE_F\big[\Psi_\pm(X;\vartheta_{0\pm}) \Psi_\pm(X;\vartheta_{0\pm})^\top\big] V_{\vartheta_{0\pm}\pm}^{-\top}$. The derivative matrix $V_{\vartheta_{0\pm}\pm}$ of $\EE_F[\Psi_\pm(X;\vartheta)]$ at $\vartheta_{0\pm}$ can be computed as follows. By the chain rule, we have $\frac{\partial}{\partial q}\EE_F\big[g(X)\II\{X>q\}\big] = -g(q)f(q)$, $\frac{\partial}{\partial q}\EE_F\big[g(X)\II\{X<q\}\big] = g(q)f(q)$. Denote
    \begin{align*}
        & a = \EE_F[\psi'(X-\theta_0)], \\
        & b_+ = \EE_F\big[\psi'(X-(\theta_0+\eta_{\varepsilon+}))\II\{X>q_\varepsilon\}\big], \quad b_- = \EE_F\big[\psi'(X-(\theta_0-\eta_{\varepsilon-}))\II\{X<q_{1-\varepsilon}\}\big], \\
        & c_+ =  \psi\big(q_\varepsilon-(\theta_0+\eta_{\varepsilon+})\big)f(q_\varepsilon), \quad c_- = \psi\big(q_{1-\varepsilon}-(\theta_0 - \eta_{\varepsilon-})\big)f(q_{1-\varepsilon}), \\
        & d_+ = f(q_\varepsilon), \quad d_- = f(q_{1-\varepsilon}),
    \end{align*}
    we have
    \begin{align}
    \label{eq:jac}
        V_{\vartheta_{0+}+}
    =
    \begin{pmatrix}
        -a & 0 & 0\\
        -b_+ & -b_+ & -c_+ \\
        0 & 0 & d_+
    \end{pmatrix}
    \text{ and }
        V_{\vartheta_{0-}-}
    =
    \begin{pmatrix}
        -a & 0 & 0\\
        -b_- & b_- & c_- \\
        0 & 0 & d_-
    \end{pmatrix}.
    \end{align}
    By (A3) and (A4), $a>0$, $b_+>0$, $b_->0$, $d_+ > 0$, and $d_- > 0$. Then,
    $$
    \det(V_{\vartheta_{0+}+}) = a b_+ d_+ \neq 0,
    \qquad
    \det(V_{\vartheta_{0-}-}) = - a b_- d_- \neq 0.
    $$
    Hence both derivative matrices $V_{\vartheta_{0+}+}$ and $V_{\vartheta_{0-}-}$ are nonsingular.
    The inverse of the matrices is derived as
    \begin{align}
    \label{eq:jac_inv}
        V_{\vartheta_{0+}+}^{-1}
    =
    \begin{pmatrix}
        -1/a & 0 & 0\\
        1/a & -1/b_+ & c_+ / (b_+d_+) \\
        0 & 0 & 1/d_+
    \end{pmatrix}
    \text{ and }
        V_{\vartheta_{0-}-}^{-1}
    =
    \begin{pmatrix}
        -1/a & 0 & 0\\
        -1/a & 1/b_- & -c_-/(b_-d_-) \\
        0 & 0 & 1/d_-
    \end{pmatrix}.
    \end{align}
    Now we derive the second moment matrix $\EE_F\big[\Psi_\pm(X;\vartheta_{0\pm}) \Psi_\pm(X;\vartheta_{0\pm})^\top\big]$. Denote $\Psi_\pm(X;\vartheta_{0\pm}) := (\psi_1(X;\vartheta_{0\pm}), \psi_2(X;\vartheta_{0\pm}), \psi_3(X;\vartheta_{0\pm}))^\top$, then we have
    \begin{align*}
        &~\EE_F\big[\Psi_\pm(X;\vartheta_{0\pm}) \Psi_\pm(X;\vartheta_{0\pm})^\top\big] \\
        = &~
        \begin{pmatrix}
        \EE_F[\psi_1(X;\vartheta_{0\pm})^2] & \EE_F[\psi_1(X;\vartheta_{0\pm})\psi_2(X;\vartheta_{0\pm})] & \EE_F[\psi_1(X;\vartheta_{0\pm})\psi_3(X;\vartheta_{0\pm})]\\
        \EE_F[\psi_1(X;\vartheta_{0\pm})\psi_2(X;\vartheta_{0\pm})] & \EE_F[\psi_2(X;\vartheta_{0\pm})^2] & \EE_F[\psi_2(X;\vartheta_{0\pm})\psi_3(X;\vartheta_{0\pm})]\\
        \EE_F[\psi_1(X;\vartheta_{0\pm})\psi_3(X;\vartheta_{0\pm})] & \EE_F[\psi_2(X;\vartheta_{0\pm})\psi_3(X;\vartheta_{0\pm})] & \EE_F[\psi_3(X;\vartheta_{0\pm})^2]
        \end{pmatrix},
    \end{align*}
    where
    \begin{align*}
    & \EE_F[\psi_1(X;\vartheta_{0\pm})^2] = \EE_F[\psi(X-\theta_0)^2], \\
    &\EE_F[\psi_1(X;\vartheta_{0+})\psi_2(X;\vartheta_{0+})]
    = \EE_F\big[\psi(X-\theta_0)\psi\big(X-(\theta_0+\eta_{\varepsilon+})\big)\II\{X>q_\varepsilon\}\big],\\
    & \EE_F[\psi_1(X;\vartheta_{0-})\psi_2(X;\vartheta_{0-})]
    = \EE_F \big[\psi(X-\theta_0)\psi\big(X-(\theta_0-\eta_{\varepsilon-})\big)\II\{X<q_{1-\varepsilon}\}\big],\\
    &\EE_F[\psi_1(X;\vartheta_{0+})\psi_3(X;\vartheta_{0+})] = \EE_F\big[\psi(X-\theta_0)(\II\{X\le q_\varepsilon\}-\varepsilon)\big], \\
    &\EE_F[\psi_1(X;\vartheta_{0-})\psi_3(X;\vartheta_{0-})] = \EE_F\big[\psi(X-\theta_0)(\II\{X\le q_{1-\varepsilon}\}-(1-\varepsilon))\big], \\
    &\EE_F[\psi_2(X;\vartheta_{0+})^2] = \EE_F\big[\psi\big(X-(\theta_0+\eta_{\varepsilon+})\big)^2\II\{X>q_\varepsilon\}\big] - \varepsilon^2B^2, \\
    & \EE_F[\psi_2(X;\vartheta_{0-})^2] = \EE_F\big[\psi\big(X-(\theta_0-\eta_{\varepsilon-})\big)^2\II\{X<q_{1-\varepsilon}\}\big] - \varepsilon^2B^2, \\
    &\EE_F[\psi_2(X;\vartheta_{0+})\psi_3(X;\vartheta_{0+})] =\EE_F[\psi_2(X;\vartheta_{0-})\psi_3(X;\vartheta_{0-})] = \varepsilon^2B. \\
    &\EE_F[\psi_3(X;\vartheta_{0+})^2] = \EE_F[\psi_3(X;\vartheta_{0-})^2] = \varepsilon(1-\varepsilon).
    \end{align*}
    We only need the variance for $\eta_{m/n\pm}$, which is
    \begin{align} \nonumber
    V_+ :=&~\Big( V_{\vartheta_{0+}+}^{-1}  \EE_F\big[\Psi_+(X;\vartheta_{0+})\Psi_+(X;\vartheta_{0+})^\top \big]  V_{\vartheta_{0+}+}^{-\top} \Big)_{2,2}\\ \nonumber
    =&~ \frac{\EE_F\big[\psi(X-\theta_0)^2\big]}{\EE_F\big[\psi'(X-\theta_0)\big]^2}
    + \frac{\EE_F\big[\psi\big(X-(\theta_0+\eta_{\varepsilon+})\big)^2\II\{X>q_\varepsilon\}\big]-\varepsilon^2B^2}{\EE_F\big[\psi'(X-(\theta_0+\eta_{\varepsilon+}))\II\{X>q_\varepsilon\}\big]^2} \\ \nonumber
    &\quad + \frac{\varepsilon(1-\varepsilon)\psi\big(q_\varepsilon-(\theta_0+\eta_{\varepsilon+})\big)^2}{\EE_F\big[\psi'(X-(\theta_0+\eta_{\varepsilon+}))\II\{X>q_\varepsilon\}\big]^2} \\ \nonumber
    &\quad - \frac{2\EE_F\Big[\psi(X-\theta_0)\psi\big(X-(\theta_0+\eta_{\varepsilon+})\big)\II\{X>q_\varepsilon\}\Big]}
    {\EE_F\big[\psi'(X-\theta_0)\big]\;\EE_F\big[\psi'(X-(\theta_0+\eta_{\varepsilon+}))\II\{X>q_\varepsilon\}\big]} \\ \nonumber
    &\quad + \frac{2\psi\big(q_\varepsilon-(\theta_0+\eta_{\varepsilon+})\big)\EE_F\Big[\psi(X-\theta_0)(\II\{X\le q_\varepsilon\}-\varepsilon)\Big]}
    {\EE_F\big[\psi'(X-\theta_0)\big]\;\EE_F\big[\psi'(X-(\theta_0+\eta_{\varepsilon+}))\II\{X>q_\varepsilon\}\big]} \\ \label{eq:var_eta+}
    &\quad - \frac{2\psi\big(q_\varepsilon-(\theta_0+\eta_{\varepsilon+})\big)\varepsilon^2B}{\EE_F\big[\psi'(X-(\theta_0+\eta_{\varepsilon+}))\II\{X>q_\varepsilon\}\big]^2}
    \end{align}
    and
    \begin{align} \nonumber
    V_-:= &~\Big( V_{\vartheta_{0-}-}^{-1}  \EE_F\big[\Psi_-(X;\vartheta_{0-})\Psi_-(X;\vartheta_{0-})^\top \big]  V_{\vartheta_{0-}-}^{-\top} \Big)_{2,2} \\ \nonumber
    =&~ \frac{\EE_F\big[\psi(X-\theta_0)^2\big]}{\EE_F\big[\psi'(X-\theta_0)\big]^2}
    + \frac{\EE_F\big[\psi\big(X-(\theta_0-\eta_{\varepsilon-})\big)^2\II\{X<q_{1-\varepsilon}\}\big]-\varepsilon^2B^2}{\EE_F\big[\psi'(X-(\theta_0-\eta_{\varepsilon-}))\II\{X<q_{1-\varepsilon}\}\big]^2} \\ \nonumber
    &\quad + \frac{\varepsilon(1-\varepsilon)\psi\big(q_{1-\varepsilon}-(\theta_0-\eta_{\varepsilon-})\big)^2}{\EE_F\big[\psi'(X-(\theta_0-\eta_{\varepsilon-}))\II\{X<q_{1-\varepsilon}\}\big]^2} \\ \nonumber
    &\quad - \frac{2\EE_F\Big[\psi(X-\theta_0)\psi\big(X-(\theta_0-\eta_{\varepsilon-})\big)\II\{X<q_{1-\varepsilon}\}\Big]}
    {\EE_F\big[\psi'(X-\theta_0)\big]\;\EE_F\big[\psi'(X-(\theta_0-\eta_{\varepsilon-}))\II\{X<q_{1-\varepsilon}\}\big]} \\ \nonumber
    &\quad + \frac{2\psi\big(q_{1-\varepsilon}-(\theta_0-\eta_{\varepsilon-})\big)\EE_F\Big[\psi(X-\theta_0)(\II\{X\le q_{1-\varepsilon}\}-(1-\varepsilon))\Big]}
    {\EE_F\big[\psi'(X-\theta_0)\big]\;\EE_F\big[\psi'(X-(\theta_0-\eta_{\varepsilon-}))\II\{X<q_{1-\varepsilon}\}\big]} \\ \label{eq:var_eta-}
    &\quad - \frac{2\psi\big(q_{1-\varepsilon}-(\theta_0-\eta_{\varepsilon-})\big)\varepsilon^2B}{\EE_F\big[\psi'(X-(\theta_0-\eta_{\varepsilon-}))\II\{X<q_{1-\varepsilon}\}\big]^2}.
    \end{align}
\end{proof}
\begin{proof}[Proof of Theorem \ref{thm:bootstrap_normality_main}]
    This is an immediate consequence of Theorem \ref{thm:bootstrap_normality}.
\end{proof}
{
\begin{proof}[Proof of Theorem \ref{thm:BP_normality}]
We only prove the result for $\BP_{\eta+}(\hat \theta, X^{(n)})$, where the proof for $\BP_{\eta-}(\hat \theta, X^{(n)})$ is very similar. Nonetheless, we will derive the derivative matrix for the ``-'' case for completeness.
Recall that
$$
\vartheta_{0+} = (\theta_0,\eta_{\varepsilon+},q_\varepsilon,\varepsilon),
\qquad
\vartheta_{0-} = (\theta_0,\eta_{\varepsilon-},q_{1-\varepsilon},\varepsilon),
$$
and throughout this proof all quantities are evaluated at $\varepsilon=\varepsilon^*$.
By Proposition~\ref{prop:finite_identification_BP}, conditional on $X^{(n)}$,  $\BP_{\eta+}(\hat \theta, X^{(n)})$ is a solution to $\EE_{F_n}\bigl[\Psi_{\pm}(X; \hat {\vartheta}_{\pm}^{\mathrm{BP}})\bigr]
=
(0, r_{n\pm}, 0)$, where $r_{n\pm} = O(1/n)$. Note also that Assumption (A3) guarantees there are no ties almost surely. By Lemma \ref{lem:finite_identification_bootstrap}, $\BP_{\eta +}(\hat \theta_b, \tilde F_n)$ is a root-$n$ solution to the estimation equations \eqref{eq:Z-system}. Hence, we can derive the asymptotic normality results using M-estimation theory by applying. In particular, we only need to invoke the conditions (A)--(E) required by Theorem \ref{prop:master}.
The identifiability condition (A) follows from the same argument as in Theorem~\ref{thm:bootstrap_normality}. Indeed, $\theta_0$ and $\eta_{\varepsilon^*+}$ are unique by (A2) and (A4), and because $F$ has a positive density at $q_{\varepsilon^*}$, the population quantile is also unique.
Set
$$
G_+(\varepsilon)
:=
\int_{q_\varepsilon}^{\infty}
\psi\bigl(x-(\theta_0+\eta_{\varepsilon^*+})\bigr)\,\dd F(x)
+
\varepsilon \|\psi\|_\infty.
$$
Since $H_+$ defined in \eqref{eq:population_eta+} is such that $H_+(\eta_{\varepsilon^*+})=0$, we have $G_+(\varepsilon^*)=0$. We claim that $\varepsilon^*$ is the unique zero of $G_+$. Indeed, for $\varepsilon_2>\varepsilon_1$,
\begin{align*}
G_+(\varepsilon_2)-G_+(\varepsilon_1)
&=
\int_{(q_{\varepsilon_1},q_{\varepsilon_2}]}
\Bigl(
\|\psi\|_\infty-\psi\bigl(x-(\theta_0+\eta_{\varepsilon^*+})\bigr)
\Bigr)\,\dd F(x)
\ge 0,
\end{align*}
so $G_+$ is nondecreasing. If there were two zeros $\varepsilon_1<\varepsilon_2$
then the display above would be zero, which implies
$$
\psi\bigl(x-(\theta_0+\eta_{\varepsilon^*+})\bigr)=\|\psi\|_\infty
\qquad
\text{for }F\text{-a.e. }x\in(q_{\varepsilon_1},q_{\varepsilon_2}].
$$
By monotonicity of $\psi$, this yields
$$
\psi\bigl(x-(\theta_0+\eta_{\varepsilon^*+})\bigr)=\|\psi\|_\infty
\qquad
\text{for all }x>q_{\varepsilon_1},
$$
and hence
$$
G_+(\varepsilon_1)=\|\psi\|_\infty>0,
$$
a contradiction. Therefore, the population parameter is identified, satisfying condition (A).
The verification of (B) and (C) is exactly the same as in the proof of Theorem \ref{thm:bootstrap_normality}, as we also need \eqref{eq:B} to be strong Glivenko-Cantelli, and \eqref{eq:C} to hold.
It remains to compute the asymptotic variance. Treating $\eta=\eta_{\varepsilon^*+}$ as fixed, we view $\Psi_+$ as a function of the free coordinates $(\theta,\varepsilon,q)$.
Let us introduce some additional notation
\begin{align*}
a
&=
\EE_F[\psi'(X-\theta_0)], \\
b_+
&=
\EE_F\big[\psi'(X-(\theta_0+\eta_{\varepsilon^*+}))\II\{X>q_{\varepsilon^*}\}\big]
=
\int_{q_{\varepsilon^*}}^{\infty}
\psi'(x-(\theta_0+\eta_{\varepsilon^*+}))\,\dd F(x), \\
c_+
&=
\psi\bigl(q_{\varepsilon^*}-(\theta_0+\eta_{\varepsilon^*+})\bigr) f(q_{\varepsilon^*}), \\
d_+
&=
f(q_{\varepsilon^*}), \\
e_+
&=
\|\psi\|_\infty - \psi\bigl(q_{\varepsilon^*}-(\theta_0+\eta_{\varepsilon^*+})\bigr).
\end{align*}
Note that the derivative matrix of $\EE_F[\Psi_+(X;\vartheta)]$ with respect to $(\theta,\varepsilon,q)$ at $\vartheta_{0+}=
(\theta_0,\eta_{\varepsilon^*+},q_{\varepsilon^*},\varepsilon^*)$ is
\begin{align}
\label{eq:jac_BP}
    V_{\vartheta_{0+}+}^{\mathrm{BP}}
=
\begin{pmatrix}
-a & 0 & 0\\
-b_+ & \|\psi\|_\infty & -c_+ \\
0 & -1 & d_+
\end{pmatrix}
\end{align}
and its determinant is
\begin{align*}
\det\bigl(V_{\vartheta_{0+}+}^{\mathrm{BP}}\bigr)
&=
-a
\begin{vmatrix}
\|\psi\|_\infty & -c_+ \\
-1 & d_+
\end{vmatrix} \\
&=
-a\bigl(\|\psi\|_\infty d_+ - c_+\bigr) \\
&=
-a d_+
\Bigl(
\|\psi\|_\infty - \psi\bigl(q_{\varepsilon^*}-(\theta_0+\eta_{\varepsilon^*+})\bigr)
\Bigr) \\
&=
-a d_+ e_+ \neq 0,
\end{align*}
where  $a, d_+ > 0$ by (A3) and (A4), and $e_+ > 0$ as we will argue below.
Hence $V_{\vartheta_{0+}+}^{\mathrm{BP}}$ is nonsingular.
Its inverse is
\begin{align}
    \label{eq:jac_BP_inv}
    \bigl(V_{\vartheta_{0+}+}^{\mathrm{BP}}\bigr)^{-1}
=
\begin{pmatrix}
-1/a & 0 & 0 \\
-b_+/(a e_+) & 1/e_+ & c_+/(d_+e_+) \\
-b_+/(a d_+ e_+) & 1/(d_+ e_+) & \|\psi\|_\infty/(d_+ e_+)
\end{pmatrix}.
\end{align}
It remains to show that $e_+>0$. Suppose for the sake of establishing a contradiction that
$$
\psi\bigl(q_{\varepsilon^*}-(\theta_0+\eta_{\varepsilon^*+})\bigr)=\|\psi\|_\infty.
$$
Because $\psi$ is nondecreasing, for every $x>q_{\varepsilon^*}$ we must then have
$$
\psi\bigl(x-(\theta_0+\eta_{\varepsilon^*+})\bigr)=\|\psi\|_\infty.
$$
Hence
$$
\int_{q_{\varepsilon^*}}^{\infty}
\psi\bigl(x-(\theta_0+\eta_{\varepsilon^*+})\bigr)\,\dd F(x)
=
\|\psi\|_\infty (1-\varepsilon^*)
>
0.
$$
On the other hand, since $H_+(\eta_{\varepsilon^*+})=0$, we have
$$
\int_{q_{\varepsilon^*}}^{\infty}
\psi\bigl(x-(\theta_0+\eta_{\varepsilon^*+})\bigr)\,\dd F(x)
=
-\varepsilon^* \|\psi\|_\infty
<
0,
$$
which is a contradiction. Therefore $e_+>0$.
Now let
$$
\Sigma_+
:=
\EE_F\big[\Psi_+(X;\vartheta_{0+})\Psi_+(X;\vartheta_{0+})^\top\big].
$$
Then by Theorem~\ref{prop:master},
$$
\sqrt{n}\bigl(\BP_{\eta +}(\hat \theta, X^{(n)}) - \varepsilon^* \bigr)
{\ \rightsquigarrow \ }
\mathcal{N}\left(
0,
\Bigl(
\bigl(V_{\vartheta_{0+}+}^{\mathrm{BP}}\bigr)^{-1}
\Sigma_+
\bigl(V_{\vartheta_{0+}+}^{\mathrm{BP}}\bigr)^{-\top}
\Bigr)_{2,2}
\right),
$$
and similarly for the conditional bootstrap.
We now compare this variance with $V_+$. In the proof of Theorem~\ref{thm:bootstrap_normality}, the derivative matrices given by \eqref{eq:jac} and \eqref{eq:jac_inv} for $\eta_{m/n+}$ are
$$
V_{\vartheta_{0+}+}
=
\begin{pmatrix}
-a & 0 & 0\\
-b_+ & -b_+ & -c_+ \\
0 & 0 & d_+
\end{pmatrix},
\qquad
V_{\vartheta_{0+}+}^{-1}
    =
    \begin{pmatrix}
        -1/a & 0 & 0\\
        1/a & -1/b_+ & c_+ / (b_+d_+) \\
        0 & 0 & 1/d_+
    \end{pmatrix}.
$$
so that
$$
V_+
=
\Bigl(
V_{\vartheta_{0+}+}^{-1}
\Sigma_+
V_{\vartheta_{0+}+}^{-\top}
\Bigr)_{2,2}.
$$
Hence, the second row of $V_{\vartheta_{0+}+}^{-1}$ is
$$
\left(
\frac{1}{a},
-\frac{1}{b_+},
-\frac{\psi(q_{\varepsilon^*}-(\theta_0+\eta_{\varepsilon^*+}))}{b_+}
\right).
$$
On the other hand, from \eqref{eq:jac_BP_inv} we see that the second row of $\bigl(V_{\vartheta_{0+}+}^{\mathrm{BP}}\bigr)^{-1}$  is
$$
\left(
-\frac{b_+}{a e_+},
\frac{1}{e_+},
\frac{\psi(q_{\varepsilon^*}-(\theta_0+\eta_{\varepsilon^*+}))}{e_+}
\right).
$$
Therefore,
$$
\left(
-\frac{b_+}{a e_+},
\frac{1}{e_+},
\frac{\psi(q_{\varepsilon^*}-(\theta_0+\eta_{\varepsilon^*+}))}{e_+}
\right)
=
-\frac{b_+}{e_+}
\left(
\frac{1}{a},
-\frac{1}{b_+},
-\frac{\psi(q_{\varepsilon^*}-(\theta_0+\eta_{\varepsilon^*+}))}{b_+}
\right).
$$
Hence the second row of $\bigl(V_{\vartheta_{0+}+}^{\mathrm{BP}}\bigr)^{-1}$ is exactly
$
-\frac{b_+}{e_+}
$
times the second row of $V_{\vartheta_{0+}+}^{-1}$. Since both variances are obtained by sandwiching the same second-moment matrix $\Sigma_+$, we obtain
\begin{align*}
\Bigl(
\bigl(V_{\vartheta_{0+}+}^{\mathrm{BP}}\bigr)^{-1}
\Sigma_+
\bigl(V_{\vartheta_{0+}+}^{\mathrm{BP}}\bigr)^{-\top}
\Bigr)_{2,2}
&=
\left(\frac{b_+}{e_+}\right)^2
\Bigl(
V_{\vartheta_{0+}+}^{-1}
\Sigma_+
V_{\vartheta_{0+}+}^{-\top}
\Bigr)_{2,2} \\
&=
\left(\frac{b_+}{e_+}\right)^2 V_+.
\end{align*}
Substituting the definitions of $b_+$ and $e_+$ gives
$$
\sigma^2_+
=
\left (
\frac{\int_{q_{\varepsilon^*}}^{\infty} \psi'(x - (\theta_0 + \eta_{\varepsilon^*+})) \dd F(x)}
{ -\psi(q_{\varepsilon^*} - (\theta_0 + \eta_{\varepsilon^*+})) + \|\psi\|_\infty}
\right)^2 V_+.
$$
For the ``-'' case, define
\begin{align*}
b_-
&=
\EE_F\big[\psi'(X-(\theta_0-\eta_{\varepsilon^*-}))\II\{X<q_{1-\varepsilon^*}\}\big]
=
\int_{-\infty}^{q_{1-\varepsilon^*}}
\psi'\bigl(x-(\theta_0-\eta_{\varepsilon^*-})\bigr)\,\dd F(x), \\
c_-
&=
\psi\bigl(q_{1-\varepsilon^*}-(\theta_0-\eta_{\varepsilon^*-})\bigr) f(q_{1-\varepsilon^*}), \\
d_-
&=
f(q_{1-\varepsilon^*}), \\
e_-
&=
\|\psi\|_\infty + \psi\bigl(q_{1-\varepsilon^*}-(\theta_0-\eta_{\varepsilon^*-})\bigr).
\end{align*}
Then one checks exactly as above that $e_->0$, that
$$
V_{\vartheta_{0-}-}^{\mathrm{BP}}
=
\begin{pmatrix}
-a & 0 & 0\\
-b_- & -\|\psi\|_\infty & c_- \\
0 & 1 & d_-
\end{pmatrix}
$$
is nonsingular, and that the second row of $\bigl(V_{\vartheta_{0-}-}^{\mathrm{BP}}\bigr)^{-1}$ is
$
-\frac{b_-}{e_-}
$
times the second row of $V_{\vartheta_{0-}-}^{-1}$. Therefore,
\begin{align*}
\sigma^2_-
&=
\Bigl(
\bigl( V_{\vartheta_{0-}-}^{\mathrm{BP}}\bigr)^{-1}
\Sigma_-
\bigl( V_{\vartheta_{0-}-}^{\mathrm{BP}}\bigr)^{-\top}
\Bigr)_{2,2} \\
&=
\left(\frac{b_-}{e_-}\right)^2 V_- \\
&=
\left (
\frac{-\int_{-\infty}^{q_{1-\varepsilon^*}} \psi'\bigl(x-(\theta_0-\eta_{\varepsilon^*-})\bigr) \dd F(x)}
{-\psi(q_{1-\varepsilon^*} - (\theta_0 - \eta_{\varepsilon^*-})) - \|\psi\|_\infty}
\right)^2 V_-.
\end{align*}
This concludes the proof.
\end{proof}
}
\begin{proof}[Proof of Theorem  \ref{thm:BP_normality_main}]
    This is an immediate consequence of Theorem \ref{thm:BP_normality}.
\end{proof}
{
\begin{proof}[Proof of Proposition \ref{prop:sensitivity_map_finite}]
By Assumption (A3), the sample has no ties almost surely.
We prove only the $+$ case; the $-$ case is analogous. Write
$$
\hat\eta:=\eta_{m/n+}(\hat\theta,F_n),
\qquad
\hat\varepsilon:=\BP_{\eta^*+}(\hat\theta,F_n),
$$
and
$$
\hat\eta_b:=\eta_{m/n+}(\hat\theta_b,\tilde F_n),
\qquad
\hat\varepsilon_b:=\BP_{\eta^*+}(\hat\theta_b,\tilde F_n).
$$
Also let
$$
\vartheta_{0+}:=(\theta_0,\eta^*,q_{\varepsilon^*},\varepsilon^*),
\qquad
\Psi_+^*(x):=\Psi_+(x;\vartheta_{0+}).
$$
By Proposition \ref{prop:finite_identification_sensitivity},
$$
\EE_{F_n}\bigl[\Psi_+(X;\hat{\vartheta}_{+}^{\mathrm{sen}})\bigr]=0.
$$
By Proposition \ref{prop:finite_identification_BP}, there exists $r_{n+}\in[-2B/n,2B/n]$ such that
$$
\EE_{F_n}\bigl[\Psi_+(X;\hat{\vartheta}_{+}^{\mathrm{BP}})\bigr]
=
(0,r_{n+},0),
$$
hence in particular $r_{n+}=o(n^{-1/2})$.
By Lemma \ref{lem:finite_identification_bootstrap},
$$
\EE_{\tilde F_n}\bigl[\Psi_+(X;\tilde{\vartheta}_{+}^{\mathrm{sen}})\bigr]
=
o_p(n^{-1/2}),
\qquad
\EE_{\tilde F_n}\bigl[\Psi_+(X;\tilde{\vartheta}_{+}^{\mathrm{BP}})\bigr]
=
o_p(n^{-1/2}).
$$
We remind that
$$
\hat{\vartheta}_{+}^{\mathrm{sen}}
:=
(\hat\theta,\hat\eta,\hat q_{m+},m/n),
\qquad
\hat{\vartheta}_{+}^{\mathrm{BP}}
:=
(\hat\theta,\eta^*,\hat q_{k_{\eta^*+}+},\hat\varepsilon),
$$
and
$$
\tilde{\vartheta}_{+}^{\mathrm{sen}}
:=
(\hat\theta_b,\hat\eta_b,X_{(i_m^+)},m/n),
\qquad
\tilde{\vartheta}_{+}^{\mathrm{BP}}
:=
(\hat\theta_b,\eta^*,X_{(\tilde k_{\eta^*+})},\hat\varepsilon_b)
$$
as in Proposition \ref{prop:finite_identification_sensitivity}, Proposition \ref{prop:finite_identification_BP}, and Lemma \ref{lem:finite_identification_bootstrap}.
Now consider the two square subsystems induced by $\Psi_+$. First, fix $\varepsilon=\varepsilon^*$ and regard
$$
(\theta,\eta,q)\longmapsto \Psi_+(x;\theta,\eta,q,\varepsilon^*).
$$
Its derivative matrix at $(\theta_0,\eta^*,q_{\varepsilon^*})$ is exactly
$V_{\vartheta_{0+}+}$ in \eqref{eq:jac}. Therefore, Theorem \ref{prop:master}
and its bootstrap analogue with the delta method yield
$$
\sqrt n
\begin{pmatrix}
\hat\theta-\theta_0\\
\hat\eta-\eta^*\\
\hat q_{m+}-q_{\varepsilon^*}
\end{pmatrix}
=
-
V_{\vartheta_{0+}+}^{-1}\,
\cdot
\frac{1}{\sqrt{n}}
    \sum_{i=1}^n
    \bigl(
    \Psi_+^*(X_i)-\EE_F[\Psi_+^*(X_i)]
    \bigr)
+
o_p(1),
$$
and
$$
\sqrt n
\begin{pmatrix}
\hat\theta_b-\hat\theta\\
\hat\eta_b-\hat\eta\\
X_{(i_m^+)}-\hat q_{m+}
\end{pmatrix}
=
-
V_{\vartheta_{0+}+}^{-1}\,
\cdot
\frac{1}{\sqrt{n}}
    \sum_{i=1}^n
    \left(
    \frac{W_i}{\bar W}-1
    \right)
    \Psi_+^*(X_i)
+
o_p^W(1).
$$
Second, fix $\eta=\eta^*$ and regard
$$
(\theta,\varepsilon,q)\longmapsto \Psi_+(x;\theta,\eta^*,q,\varepsilon).
$$
Its derivative matrix at $(\theta_0,\varepsilon^*,q_{\varepsilon^*})$ is exactly
$V_{\vartheta_{0+}+}^{\mathrm{BP}}$ in \eqref{eq:jac_BP}. Hence again by
Theorem \ref{prop:master} and its bootstrap analogue with delta method,
$$
\sqrt n
\begin{pmatrix}
\hat\theta-\theta_0\\
\hat\varepsilon-\varepsilon^*\\
\hat q_{k_{\eta^*+}+}-q_{\varepsilon^*}
\end{pmatrix}
=
-
\bigl(V_{\vartheta_{0+}+}^{\mathrm{BP}}\bigr)^{-1}\,
\cdot
\frac{1}{\sqrt{n}}
    \sum_{i=1}^n
    \bigl(
    \Psi_+^*(X_i)-\EE_F[\Psi_+^*(X_i)]
    \bigr)
+
o_p(1),
$$
and
$$
\sqrt n
\begin{pmatrix}
\hat\theta_b-\hat\theta\\
\hat\varepsilon_b-\hat\varepsilon\\
X_{(\tilde k_{\eta^*+})}-\hat q_{k_{\eta^*+}+}
\end{pmatrix}
=
-
\bigl(V_{\vartheta_{0+}+}^{\mathrm{BP}}\bigr)^{-1}\,
\cdot
\frac{1}{\sqrt{n}}
    \sum_{i=1}^n
    \left(
    \frac{W_i}{\bar W}-1
    \right)
    \Psi_+^*(X_i)
+
o_p^W(1).
$$
Write
\begin{align*}
    \frac{1}{\sqrt{n}}
    \sum_{i=1}^n
    \bigl(
    \Psi_+^*(X_i)-\EE_F[\Psi_+^*(X_i)]
    \bigr)=(G_{1n},G_{2n},G_{3n})^\top,
\\
\frac{1}{\sqrt{n}}
    \sum_{i=1}^n
    \left(
    \frac{W_i}{\bar W}-1
    \right)
    \Psi_+^*(X_i)=(\tilde G_{1n},\tilde G_{2n},\tilde G_{3n})^\top.
\end{align*}
Also recall
$$
e_+
:=
\partial_\varepsilon H_+(\eta^*,\varepsilon^*)
=
B-\frac{c_+}{d_+}.
$$
By \eqref{eq:jac_inv}, the second row of $V_{\vartheta_{0+}+}^{-1}$ is
$$
\left(
\frac{1}{a},
-\frac{1}{b_+},
-\frac{c_+}{b_+d_+}
\right).
$$
By \eqref{eq:jac_BP_inv}, the second row of
$\bigl(V_{\vartheta_{0+}+}^{\mathrm{BP}}\bigr)^{-1}$ is
$$
\left(
-\frac{b_+}{ae_+},
\frac{1}{e_+},
\frac{c_+}{d_+e_+}
\right).
$$
Therefore, if we define
$$
\Xi_n
:=
G_{2n}
-
\frac{b_+}{a}G_{1n}
+
\frac{c_+}{d_+}G_{3n},
\qquad
\tilde\Xi_n
:=
\tilde G_{2n}
-
\frac{b_+}{a}\tilde G_{1n}
+
\frac{c_+}{d_+}\tilde G_{3n},
$$
then the second coordinates of the above asymptotic linearizations give
$$
\sqrt n(\hat\eta-\eta^*)
=
\frac{\Xi_n}{b_+}
+
o_p(1),
\qquad
\sqrt n(\hat\eta_b-\hat\eta)
=
\frac{\tilde\Xi_n}{b_+}
+
o_p^W(1),
$$
and
$$
\sqrt n(\hat\varepsilon-\varepsilon^*)
=
-\frac{\Xi_n}{e_+}
+
o_p(1),
\qquad
\sqrt n(\hat\varepsilon_b-\hat\varepsilon)
=
-\frac{\tilde\Xi_n}{e_+}
+
o_p^W(1).
$$
Eliminating the common fluctuations $\Xi_n$ and $\tilde\Xi_n$ yields
$$
\sqrt n(\hat\varepsilon-\varepsilon^*)
=
-\frac{b_+}{e_+}\sqrt n(\hat\eta-\eta^*)
+
o_p(1),
$$
and
$$
\sqrt n(\hat\varepsilon_b-\hat\varepsilon)
=
-\frac{b_+}{e_+}\sqrt n(\hat\eta_b-\hat\eta)
+
o_p^W(1).
$$
Finally, along the population sensitivity--breakdown curve,
$$
H_+(\eta,\varepsilon)
=
\int_{q_\varepsilon}^{\infty}
\psi\bigl(x-(\theta_0+\eta)\bigr)\,\dd F(x)
+
\varepsilon B
=
0.
$$
Since
$$
\partial_\eta H_+(\eta^*,\varepsilon^*)=-b_+,
\qquad
\partial_\varepsilon H_+(\eta^*,\varepsilon^*)=e_+,
$$
the implicit function theorem gives
$$
\frac{\dd\varepsilon}{\dd\eta}\Big|_{\eta=\eta^*}
=
-
\frac{\partial_\eta H_+(\eta^*,\varepsilon^*)}
{\partial_\varepsilon H_+(\eta^*,\varepsilon^*)}
=
\frac{b_+}{e_+}.
$$
Substituting this into the previous two displays, we obtain
$$
\BP_{\eta^*+}(\hat\theta,F_n)-\varepsilon^*
=
-\frac{\dd\varepsilon}{\dd\eta}\Big|_{\eta=\eta^*}
\bigl(\eta_{m/n+}(\hat\theta,F_n)-\eta^*\bigr)
+
o_p\!\left(\frac{1}{\sqrt n}\right),
$$
and
$$
\BP_{\eta^*+}(\hat\theta_b,\tilde F_n)
-
\BP_{\eta^*+}(\hat\theta,F_n)
=
-\frac{\dd\varepsilon}{\dd\eta}\Big|_{\eta=\eta^*}
\bigl(
\eta_{m/n+}(\hat\theta_b,\tilde F_n)
-
\eta_{m/n+}(\hat\theta,F_n)
\bigr)
+
o_p^W\!\left(\frac{1}{\sqrt n}\right).
$$
This proves the proposition.
\end{proof}
}
\subsection{Proof of Multiplier Bootstrap}
\begin{proof}[Proof of Theorem \ref{thm:loc_bootstrap}]
We will adapt the proof of Theorem \ref{thm:loc}.
We will establish the desired result by obtaining bounds on the quantities
$$ \sup_{F_{y^{(n)}}^{w'}\in B_\textsf{TV}(F_{x^{(n)}}^w,m/n) }\hat\theta(F_{y^{(n)}}^{w'})~\mbox{ and }~\inf_{F_{y^{(n)}}^{w'}\in B_\textsf{TV}(F_{x^{(n)}}^w,m/n) }\hat\theta(F_{y^{(n)}}^{w'}).$$
Let's start with the upper bound. Without loss of generality, suppose $x_i$ is sorted in a non-decreasing order. Denote $W_{k} := \sum_{i=1}^k w_{i}$ and let
$$
W_{i_m-1} < \frac{m}{n} \le W_{i_m},
$$
for some $i_m \in [n]$ and $m \in [n]$, where we let $W_0 = 0$. Consider any $m\in[n]$ and note that since $\psi(\cdot)$ is non-decreasing, for all $F_{y^{(n)}}^{w'} = \sum_{i=1}^n w'_i \delta_{y_i} \in B_\textsf{TV}(F_{x^{(n)}}^w,m/n)$,
\begin{align*}
\nonumber \sum_{i=1}^n w'_i\psi(y_i-\theta)&\leq \sum_{i>i_m} w_{i} \psi(x_{i}-\theta)+ \Big (W_{i_m} - \frac{m}{n} \Big) \psi(x_{i_m}-\theta) + \frac{m}{n}\psi(\infty).
\end{align*}
Our goal is to find the smallest $m$ such that
\begin{align*}
     \sup_{F_{y^{(n)}}^{w'}\in B_\textsf{TV}(F_{x^{(n)}}^w,m/n) }\hat\theta(F_{y^{(n)}}^{w'}) \ge \hat \theta(F_{x^{(n)}}^w) + \eta.
\end{align*}
Notice that any $\hat\theta(F_{y^{(n)}}^{w'})$ must satisfy
\begin{align*}
    \sum_{i=1}^n w_i'\psi(y_i- \hat\theta(F_{y^{(n)}}^{w'})) = 0,
\end{align*}
which gives
\begin{align*}
    0 \le &~\sum_{i>i_m} w_{i} \psi(x_{i}-\sup_{F_{y^{(n)}}^{w'}\in B_\textsf{TV}(F_{x^{(n)}}^w,m/n) }\hat\theta(F_{y^{(n)}}^{w'})) \\
    &~ + \Big (W_{i_m} - \frac{m}{n} \Big) \psi(x_{i_m}-\sup_{F_{y^{(n)}}^{w'}\in B_\textsf{TV}(F_{x^{(n)}}^w,m/n) }\hat\theta(F_{y^{(n)}}^{w'})) + \frac{m}{n}\psi(\infty).
\end{align*}
Because $\psi(t-\theta)$ is non-increasing in $\theta$ and $\sup_{F_{y^{(n)}}^{w'}\in B_\textsf{TV}(F_{x^{(n)}}^w,m/n) }\hat\theta(F_{y^{(n)}}^{w'}) \ge \hat \theta(F_{x^{(n)}}^w) + \eta$, we get
\begin{align}
    \label{eq:loc_sup_bootstrap}
    &~\sum_{i>i_m}w_i\psi(x_{i}-(\hat \theta + \eta))+ \Big (W_{i_m} - \frac{m}{n} \Big) \psi(x_{i_m} - (\hat \theta + \eta))+\frac{m}{n}\psi(\infty) \ge 0 \\ \nonumber
    \iff &~ m \geq \frac{n \Big (\sum_{i>i_m} w_i \psi (x_{i} - (\hat{\theta} + \eta)) + W_{i_m} \psi(x_{i_m} - (\hat \theta + \eta)) \Big )}{\psi(x_{i_m} - (\hat \theta + \eta))-\psi(\infty)}.
\end{align}
This gives one of the two terms inside the minimum in the desired result. The other term can be derived in a similar way. Specifically, we let
$$
W_{i'_m - 1} \le 1 - \frac{m}{n} < W_{i'_m},
$$
for some $i'_m \in [n]$ for each $m$ where $W_{n+1} = 2$. Hence, we also have for all $F_{y^{(n)}}^{w'} \in B_{\textsf{TV}}(F_{x^{(n)}}^w, m/n)$,
\begin{align*}
\nonumber \sum_{i=1}^n w'_i\psi(y_i-\theta)&\geq \sum_{i\le i'_m} w_{i} \psi(x_{i}-\theta)+ \Big (1 - \frac{m}{n} - W_{i'_m-1} \Big) \psi(x_{i'_m}-\theta) + \frac{m}{n}\psi(-\infty).
\end{align*}
Similarly, we have
\begin{align*}
    0 \ge &~\sum_{i\le i'_m} w_{i} \psi(x_{i}-\inf_{F_{y^{(n)}}^{w'}\in B_\textsf{TV}(F_{x^{(n)}}^w,m/n) }\hat\theta(F_{y^{(n)}}^{w'}))\\
    &~+ \Big (1 - \frac{m}{n} - W_{i'_m-1} \Big) \psi(x_{i'_m}-\inf_{F_{y^{(n)}}^{w'}\in B_\textsf{TV}(F_{x^{(n)}}^w,m/n) }\hat\theta(F_{y^{(n)}}^{w'})) + \frac{m}{n}\psi(-\infty).
\end{align*}
which gives
\begin{align}
    \label{eq:loc_inf_bootstrap}
    &~\sum_{i\le i'_m} w_{i} \psi(x_{i}-(\hat \theta - \eta))+ \Big (1 - \frac{m}{n} - W_{i'_m-1} \Big) \psi(x_{i'_m}-(\hat \theta - \eta)) + \frac{m}{n}\psi(-\infty) \le 0 \\ \nonumber
    \iff &~ m \geq \frac{n \Big (\sum_{i\le i'_m} w_i \psi (x_{i} - (\hat{\theta} -\eta)) + (1 - W_{i'_m-1}) \psi(x_{i'_m} - (\hat \theta + \eta)) \Big )}{\psi(x_{i'_m} - (\hat \theta - \eta))-\psi(-\infty)}.
\end{align}
This completes the proof for the finite threshold BP. For $m$-sensitivity, one only needs to solve for the largest $\eta$ for \eqref{eq:loc_sup_bootstrap} and \eqref{eq:loc_inf_bootstrap} to obtain $\eta_{m/n+} (\hat \theta, F_{x^{(n)}}^w)$ and     $\eta_{m/n-} (\hat \theta, F_{x^{(n)}}^w)$ respectively.
\end{proof}
\subsection{Technical Auxiliary Results}
\label{sec:technical}
This section collects some technical results used throughout the proof. Recall for a measurable function class $\mathcal{F}$, define the empirical process by
    $$
    \mathbb{G}_n (f)
    :=
    \frac{1}{\sqrt{n}}
    \sum_{i=1}^n
    \bigl(
    f(X_i)-\EE_F[f(X_i)]
    \bigr),
    \qquad f \in \mathcal{F}.
    $$
    For the multiplier bootstrap, let $W_1,\dots,W_n$ be bootstrap weights and $\bar W := n^{-1}\sum_{i=1}^n W_i$. Define the bootstrap empirical process by
    $$
    \tilde{\mathbb{G}}_n (f)
    :=
    \frac{1}{\sqrt{n}}
    \sum_{i=1}^n
    \left(
    \frac{W_i}{\bar W}-1
    \right)
    f(X_i),
    \qquad f \in \mathcal{F}.
    $$
We start with a Donsker result.
\begin{lemma}
\label{lem:donsker}
Let
$$
\mathcal G
:=
\left\{
x \mapsto \psi(x-t)\II\{x>q\}
:\ t,q\in\mathbb R
\right\}.
$$
Suppose $\psi:\mathbb R\to\mathbb R$ is $L$-Lipschitz and
$
\|\psi\|_\infty \le B < \infty.
$
Then $\mathcal G$ is an $F$-Donsker class, and hence an $F$-Glivenko--Cantelli class. Therefore,
$$
\sup_{g\in\mathcal G}
\left|
\frac{1}{n}\sum_{i=1}^n g(X_i) - \EE_F[g(X)]
\right|
=
o_p(1).
$$
Moreover, under the assumptions for the weighted empirical distribution $\tilde F_n$ in \eqref{eq:boostrap_condition}, we have
$$
\sup_{g\in\mathcal G}
\left|
\frac{1}{n}\sum_{i=1}^n \frac{W_i}{\bar W}\, g(X_i) - \EE_F[g(X)]
\right|
=
o_p(1).
$$
\end{lemma}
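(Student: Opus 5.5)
The plan is to write each function in $\mathcal G$ as a product of two simple pieces and control each piece by standard entropy bounds. Set $\mathcal G_1:=\{x\mapsto\psi(x-t):t\in\RR\}$ and $\mathcal G_2:=\{x\mapsto \II\{x>q\}:q\in\RR\}$, so that $\mathcal G=\{g_1g_2: g_1\in\mathcal G_1,\ g_2\in\mathcal G_2\}$.

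\emph{Step 1: the two factor classes.} $\mathcal G_2$ is a VC class of index 2, uniformly bounded by $1$, with constant envelope. For $\mathcal G_1$, I would argue as follows. Since $\psi$ is monotone under (A2), the class of translates $\{\psi(\cdot-t)\}$ is a VC-subgraph class: the subgraphs $\{(x,s): s<\psi(x-t)\}$ are linearly ordered by inclusion in $t$. Its envelope is the constant $B$. If one wants to avoid using monotonicity, an alternative is available because $\psi$ is bounded and Lipschitz: bracket by parameter, using $|\psi(x-t)-\psi(x-t')|\le L|t-t'|$ on the bulk together with a tail truncation via boundedness. The monotone/VC route is cleaner, so I would use that, noting that (A2) supplies monotonicity wherever the lemma is applied.

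\emph{Step 2: products and the Donsker conclusion.} Products of two uniformly bounded VC-type classes have uniform covering numbers bounded polynomially, via $N(\epsilon,\mathcal G_1\mathcal G_2)\le N(\epsilon/2B,\mathcal G_1)\,N(\epsilon/2,\mathcal G_2)$ in $L_2(Q)$ for every $Q$. The uniform entropy integral is therefore finite. Pointwise measurability holds by restricting $t$ and $q$ to the rationals, using right-continuity in $q$ and continuity in $t$. Kosorok's Theorem 8.19 (or van der Vaart--Wellner 2.5.2) then gives that $\mathcal G$ is $F$-Donsker. Alternatively, one can cite the preservation result for Lipschitz combinations of bounded Donsker classes, Kosorok Theorem 9.15 / Corollary 9.32. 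Glivenko--Cantelli, and hence the first display, follows immediately, since a Donsker class with an integrable envelope is GC.

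\emph{Step 3: the weighted statement.} Decompose
\[
\frac1n\sum_i \frac{W_i}{\bar W}\,g(X_i)-\EE_F g=\frac{1}{\bar W}\Bigl(\frac1n\sum_i W_i g(X_i)-\EE_F g\Bigr)+\Bigl(\frac{1}{\bar W}-1\Bigr)\EE_F g .
\]
By the law of large numbers $\bar W\to 1$ almost surely, and $|\EE_F g|\le B$, so the second term is $o_p(1)$ uniformly in $g$. For the first term, note that $\{(w,x)\mapsto w\,g(x)\}$ indexed by $\mathcal G$ is GC for the product law of $(W,X)$. This holds because multiplying a GC class by a fixed integrable function preserves the GC property (envelope $B|W|$, and the covering numbers scale), and $\EE[Wg(X)]=\EE_F g$ because $\EE W=1$ and $W$ is independent of $X$. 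Alternatively, invoke the multiplier GC theorem (van der Vaart--Wellner 3.6 / Kosorok Cor.\ 10.14).

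I expect the only delicate point to be the measurability and entropy bookkeeping in Step 2; the rest is routine citation.
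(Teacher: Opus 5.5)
Your proof is correct, given the monotonicity of $\psi$. Like the paper, you take that from (A2), although the lemma itself only assumes $\psi$ Lipschitz and bounded. Your route differs from the paper's in two places.

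For the Donsker part, the paper uses bracketing. Bounded monotone functions have bracketing entropy of order $1/\epsilon$, so $\{\psi(\cdot-t)\}$ is Donsker by the bracketing CLT. Half-line indicators are Donsker, and the product class is Donsker by the Lipschitz-preservation theorem (van der Vaart--Wellner 2.10.6); this is essentially the alternative you mention at the end of Step~2. Your nested-subgraph observation instead gives VC-type covering numbers that are uniform in $Q$. The cost is the pointwise-measurability check, which you handle correctly; bracketing needs no measurability argument at all.

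For the weighted display, the paper simply cites Kosorok's Theorem~10.1 to get $\sup_{g}|n^{-1}\sum_i (W_i/\bar W-1)g(X_i)|=o_p^W(1)$. Your Step~3 is more elementary. It uses only $\EE W=1$ and independence rather than the $2+\kappa$ moment, and it proves exactly the unconditional statement in the lemma. That statement is equivalent to the conditional $o_p^W(1)$ form by Markov's inequality. What the paper's citation buys is the much stronger $\sqrt n$-scale conditional result it needs later for bootstrap validity.

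Two minor points. First, in your product covering bound the constants are swapped. It should be $N(\epsilon/2,\mathcal G_1)\,N(\epsilon/(2B),\mathcal G_2)$, since $|g_2|\le 1$ and $|g_1|\le B$; this is harmless. Second, the aside about avoiding monotonicity by bracketing in $t$ does not work as sketched. The parameter $t$ ranges over all of $\RR$, and without limits of $\psi$ at $\pm\infty$ (which monotonicity supplies), finitely many parameter brackets cannot cover the far translates. It is best dropped.
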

\begin{proof}
Write
$$
\mathcal H
:=
\{x\mapsto \psi(x-t): t\in\mathbb R\},
\qquad
\mathcal I
:=
\{x\mapsto \II\{x>q\}: q\in\mathbb R\}.
$$
Then
$$
\mathcal G = \{hi:h\in\mathcal H,\ i\in\mathcal I\}.
$$
Because $\psi$ is bounded and monotone,
hence $\mathcal H$ is a $F$-Donsker class; see Theorem 2.5.6 and Theorem 2.7.5 of \citet{van1996weak}.
 Also, $\mathcal I$ is a $F$-Donsker class; see Example 2.5.4 of \citet{van1996weak}. Since the multiplication map $(u,v)\mapsto uv$ is bounded Lipschitz on $[-B,B]\times[0,1]$, Theorem 2.10.6 of \citet{van1996weak} implies that $\mathcal G$ is $F$-Donsker. Hence $\mathcal G$ is $F$-Glivenko-Cantelli, and so
$$
\sup_{g\in\mathcal G}
\left|
\frac{1}{n}\sum_{i=1}^n g(X_i) - \EE_F[g(X)]
\right|
=
o_p(1).
$$
The weighted conclusion
$$
\sup_{g\in\mathcal G}
\left|
\frac{1}{n}\sum_{i=1}^n \left (\frac{W_i}{\bar W} - 1 \right)\, g(X_i)
\right|
=
o_p^W(1)
$$
then follows from Theorem 10.1 of \citet{kosorok2008introduction}.
\end{proof}
The next lemma establishes stochastic equicontinuity.
\begin{lemma}
\label{lem:equicon}
Let
$$
\mathcal G
:=
\left\{
g_{t,q}(x) = \psi(x-t)\II\{x>q\}
:\ t,q\in\mathbb R
\right\}.
$$
Suppose $\psi:\mathbb R\to\mathbb R$ is bounded and Lipschitz with
$$
\|\psi\|_\infty \le B < \infty,
\qquad
\mathrm{Lip}(\psi)\le L < \infty.
$$
If $F$ is continuous at $q_0$, then the map $(t,q)\mapsto g_{t,q}$ is $L_2(F)$-continuous at every point $(t_0,q_0)$, that is,
$$
\|g_{t,q}-g_{t_0,q_0}\|_{L_2(F)} \to 0
\qquad\text{whenever}\qquad
(t,q)\to (t_0,q_0).
$$
Let $\tilde F_n$ satisfy the assumptions for the weighted empirical measure in Section \ref{sec:multiplier_bootstrap}.
Then the empirical processes $\GG_n$ and $\tilde \GG_n$
are asymptotically uniformly $\rho$-equicontinuous in probability with respect to
$$
\rho(g_1,g_2):=\|g_1-g_2\|_{L_2(F)}.
$$
Hence, if
$$
\hat t \overset{p}{\to} t_0,
\qquad
\hat q \overset{p}{\to} q_0,
$$
then
$$
\GG_n\bigl(g_{\hat t,\hat q}-g_{t_0,q_0}\bigr)=o_p(1), \qquad \tilde \GG_n(g_{\hat t,\hat q}-g_{t_0,q_0}\bigr) =o_p^W(1).
$$
\end{lemma}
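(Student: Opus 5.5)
The plan is to prove the three claims of Lemma~\ref{lem:equicon} in sequence: first the $L_2(F)$-continuity of $(t,q)\mapsto g_{t,q}$, then asymptotic equicontinuity of $\GG_n$ and $\tilde\GG_n$, and finally the plug-in statements for estimated $(\hat t,\hat q)$.

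\emph{Step 1: $L_2(F)$-continuity.} Split the difference as
\[
g_{t,q}-g_{t_0,q_0}=\bigl(\psi(x-t)-\psi(x-t_0)\bigr)\II\{x>q\}+\psi(x-t_0)\bigl(\II\{x>q\}-\II\{x>q_0\}\bigr).
\]
The first term is bounded pointwise by $L|t-t_0|$ because $\psi$ is Lipschitz. The second term is bounded by $B\,\II\{q\wedge q_0<x\le q\vee q_0\}$. Hence
\[
\|g_{t,q}-g_{t_0,q_0}\|_{L_2(F)}^2\le 2L^2|t-t_0|^2+2B^2\bigl|F(q)-F(q_0)\bigr|,
\]
and the right-hand side tends to $0$ by continuity of $F$ at $q_0$.

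\emph{Step 2: asymptotic equicontinuity.} By Lemma~\ref{lem:donsker}, $\mathcal G$ is $F$-Donsker. It is uniformly bounded by $B$, so it has a square-integrable envelope. A Donsker class is totally bounded in $\rho$, and $\GG_n$ is asymptotically uniformly $\rho$-equicontinuous in probability; this is the standard characterization, e.g.\ \citet[Theorem 2.1]{kosorok2008introduction} or \citet[Section 2.1]{van1996weak}. For the multiplier process, I would first write
\[
\frac{W_i}{\bar W}-1=\frac{W_i-1}{\bar W}+\Bigl(1-\frac{1}{\bar W}\Bigr),
\]
so that
\[
\tilde\GG_n f=\bar W^{-1}\,\frac{1}{\sqrt n}\sum_{i=1}^n (W_i-1)f(X_i)+\Bigl(1-\frac{1}{\bar W}\Bigr)\sqrt n\,\frac{1}{n}\sum_{i=1}^n f(X_i).
\]
Centering the second piece by $\EE_F f$ shows it equals $-(\bar W-1)\sqrt n\,\EE_F f/\bar W$ plus an $o_p(1)$ term. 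Its increments over $f-g$ are then bounded by $\sqrt n|\bar W-1|\,|\EE_F(f-g)|+o_p(1)$, and $|\EE_F(f-g)|\le\rho(f,g)$. The first piece is the unconditional multiplier process. Since the weights have $\EE W=1$, $\Var W=1$ and $\|W-1\|_{2,1}<\infty$ (which follows from the $2+\kappa$ moment), the multiplier CLT \citet[Theorem 10.1 / Theorem 2.9.2 in][]{van1996weak} gives that it converges weakly to the same tight Gaussian limit. It is therefore asymptotically $\rho$-equicontinuous, both unconditionally and in the conditional $o_p^W$ sense. Finally, $\bar W^{-1}\to1$ in probability.

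\emph{Step 3: plug-in.} Given $\hat t\to t_0$ and $\hat q\to q_0$ in probability, Step~1 yields $\rho(g_{\hat t,\hat q},g_{t_0,q_0})\to0$ in probability. For any $\varepsilon,\eta>0$, choose $\delta$ from equicontinuity, and bound
\[
\PP\bigl(|\GG_n(g_{\hat t,\hat q}-g_{t_0,q_0})|>\varepsilon\bigr)\le\PP\Bigl(\sup_{\rho(f,g)<\delta}|\GG_n(f-g)|>\varepsilon\Bigr)+\PP\bigl(\rho(g_{\hat t,\hat q},g_{t_0,q_0})\ge\delta\bigr).
\]
Both terms are eventually small, which gives the $o_p(1)$ claim. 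The bootstrap claim follows the same way with $\PP_{W\mid X}$ inside, using the conditional equicontinuity together with the fact that $(\hat t,\hat q)$ is either independent of $W$ or converges in the $o_p^W$ sense.

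The main obstacle I expect is the conditional statement for $\tilde\GG_n$. It requires passing from unconditional multiplier equicontinuity to the $o_p^W$ notion of \citet{cheng2010bootstrap}. I plan to handle this via Markov's inequality: for each $n$,
\[
\PP_X\bigl(\PP_{W\mid X}(A_n)>\delta\bigr)\le\PP(A_n)/\delta,
\]
so unconditional smallness of $\PP(A_n)$ suffices.
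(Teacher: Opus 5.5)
Your proposal is correct and follows the paper's proof. It uses the same pointwise bound $L|t-t_0|+B\,\II\{q\wedge q_0<x\le q\vee q_0\}$ for $L_2(F)$-continuity, asymptotic equicontinuity of $\GG_n$ from the Donsker property in Lemma~\ref{lem:donsker}, and the same event decomposition for the plug-in step. Your explicit handling of $\tilde{\GG}_n$ (reduction to the unconditional multiplier CLT plus the drift $\sqrt n(\bar W-1)\EE_F f$, bounded by $O_p(1)\,\rho(f,g)$, then Markov's inequality to pass to $o_p^W$) fills in what the paper only asserts as ``a similar analysis.''

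There are two harmless slips:
\begin{itemize}
\item The identity should read $W_i/\bar W-1=(W_i-1)/\bar W+(1/\bar W-1)$. Your subsequent expression $-(\bar W-1)\sqrt n\,\EE_F f/\bar W$ already has the correct sign.
\item The uncentered process $n^{-1/2}\sum_i (W_i-1)f(X_i)$ converges to a $P$-Brownian motion (covariance $\EE_F[fg]$), not to the Brownian-bridge limit of $\GG_n$. This does not affect the argument, because its intrinsic semimetric is exactly $\rho$.
\end{itemize}
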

\begin{proof}
For any $(t,q)$ and $(t_0,q_0)$, we have
\begin{align*}
|g_{t,q}(x)-g_{t_0,q_0}(x)|
&\le
\big|\psi(x-t)-\psi(x-t_0)\big|
+
\big|\psi(x-t_0)\big|\cdot\big|\II\{x>q\}-\II\{x>q_0\}\big| \\
&\le
L|t-t_0|
+
B\,\II\bigl\{\min\{q,q_0\}<x\le \max\{q,q_0\}\bigr\}.
\end{align*}
Therefore,
$$
\|g_{t,q}-g_{t_0,q_0}\|_{L_2(F)}
\le
L|t-t_0|
+
B\,\PP\bigl(\min\{q,q_0\}<X\le \max\{q,q_0\}\bigr)^{1/2}.
$$
Because $F$ is continuous at $q_0$, the probability term converges to $0$ as $q\to q_0$. Hence
$$
\|g_{t,q}-g_{t_0,q_0}\|_{L_2(F)} \to 0
\qquad\text{whenever}\qquad
(t,q)\to (t_0,q_0).
$$
In particular, if $\hat t \overset{p}{\to} t_0$ and $\hat q \overset{p}{\to} q_0$, then
$$
\rho(g_{\hat t,\hat q},g_{t_0,q_0})
=
\|g_{\hat t,\hat q}-g_{t_0,q_0}\|_{L_2(F)}
\overset{p}{\to} 0.
$$
Since $\mathcal G$ is $F$-Donsker  by Lemma \ref{lem:donsker}, $\GG_n$ is asymptotically uniformly $\rho$-equicontinuous in probability, see, for example, Theorem 7.19 of \citet{kosorok2008introduction}. Now fix $\epsilon,\eta>0$. By asymptotic uniform $\rho$-equicontinuity in probability, there exist $\delta>0$ and $N$ such that for all $n\ge N$,
$$
\PP\left(
\sup_{\rho(g_1,g_2)<\delta}
|\mathbb G_n(g_1)-\mathbb G_n(g_2)|
>\eta
\right)
<\epsilon.
$$
We have the following decomposition of events:
\begin{align*}
    &~\{|\mathbb G_n(g_{\hat t,\hat q}-g_{t_0,q_0})|>\eta\} \\
    = &~\{|\mathbb G_n(g_{\hat t,\hat q}-g_{t_0,q_0})|>\eta,\rho(g_{\hat t,\hat q},g_{t_0,q_0})\ge\delta\} \cup  \{|\mathbb G_n(g_{\hat t,\hat q}-g_{t_0,q_0})|>\eta,\rho(g_{\hat t,\hat q},g_{t_0,q_0})<\delta\} \\
    \subset & \{\rho(g_{\hat t,\hat q},g_{t_0,q_0})\ge\delta\} \cup  \{|\mathbb G_n(g_{\hat t,\hat q}-g_{t_0,q_0})|>\eta,\rho(g_{\hat t,\hat q},g_{t_0,q_0})<\delta\}.
\end{align*}
Hence, for all $n\ge N$,
\begin{align*}
\PP\bigl(|\mathbb G_n(g_{\hat t,\hat q}-g_{t_0,q_0})|>\eta\bigr)
&\le
\PP\left(
\sup_{\rho(g_1,g_2)<\delta}
|\mathbb G_n(g_1)-\mathbb G_n(g_2)|
>\eta
\right)
+
\PP\bigl(\rho(g_{\hat t,\hat q},g_{t_0,q_0})\ge\delta\bigr).
\end{align*}
The first term is smaller than $\epsilon$ for all large $n$, and the second term converges to $0$. Therefore,
$$
\mathbb G_n\bigl(g_{\hat t,\hat q}-g_{t_0,q_0}\bigr)=o_p(1).
$$
A similar analysis holds for $\tilde \GG_n$.
\end{proof}
Below is a reformulation of Theorem 10.16 in \citet{kosorok2008introduction} in our notation.
\begin{theorem}
\label{prop:master}
Let $\Theta \subset \mathbb{R}^p$ be open, and assume $\vartheta_0 \in \Theta$ satisfies
$$
\EE_F[\Psi(X;\vartheta_0)]=0.
$$
Also assume the following:
\begin{enumerate}
    \item[(A)] For any sequence $\{\vartheta_n\}\in \Theta$, $\EE_F[\Psi(X;\vartheta_n)]\to 0$ implies
    $$
    \|\vartheta_n-\vartheta_0\|\to 0;
    $$
    \item[(B)] The class $\{\Psi(x;\vartheta):\vartheta\in\Theta\}$ is strong Glivenko--Cantelli;
    \item[(C)] For some $\eta>0$, the class
    $$
    \mathcal{F}\equiv \{\Psi(x;\vartheta):\vartheta\in\Theta,\ \|\vartheta-\vartheta_0\|\le \eta\}
    $$
    is Donsker and
    $$
    \EE_F[\|\Psi(X;\vartheta)-\Psi(X;{\vartheta_0})\|^2] \to 0
    \quad\text{as}\quad
    \|\vartheta-\vartheta_0\|\to 0;
    $$
    \item[(D)] $\EE_F\|\Psi(X;{\vartheta_0})\|^2<\infty$ and $\vartheta \mapsto \EE_F[\Psi(X;\vartheta)]$ is differentiable at $\vartheta_0$ with nonsingular derivative matrix $V_{\vartheta_0}$;
    \item[(E)]
    $$
    \EE_{F_n}[\Psi(X;\hat{\vartheta})]=o_p(n^{-1/2})
    \quad\text{and}\quad
    \EE_{\tilde F_n}[\Psi(X;\tilde {\vartheta})]=o_p(n^{-1/2}).
    $$
\end{enumerate}
Then
$$
\sqrt{n}(\hat{\vartheta}-\vartheta_0) \rightsquigarrow Z
\sim N\!\left(0,\,
V_{\vartheta_0}^{-1}\,
\EE_F[\Psi(X;{\vartheta_0})\Psi(X;{\vartheta_0})^\top]\,
(V_{\vartheta_0}^{-1})^\top
\right),
$$
and
$$
\sqrt{n}(\tilde{\vartheta}-\hat{\vartheta})
\overunderset{P}{W}{\rightsquigarrow} Z.
$$
\end{theorem}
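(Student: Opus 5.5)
The plan is the standard Z-estimator argument, run twice: once for $\hat\vartheta$ under $F_n$, once for $\tilde\vartheta$ under $\tilde F_n$. In each case: consistency, then a linearization via stochastic equicontinuity, then inversion of the derivative $V_{\vartheta_0}$, then a (multiplier) central limit theorem. All inputs are (A)--(E) together with the empirical-process facts already used in Lemmas~\ref{lem:donsker} and \ref{lem:equicon}.

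\emph{Step 1 (consistency).} Write
$$\EE_F[\Psi(X;\hat\vartheta)] = \bigl(\EE_F-\EE_{F_n}\bigr)[\Psi(X;\hat\vartheta)] + \EE_{F_n}[\Psi(X;\hat\vartheta)].$$
\begin{itemize}
\item The first term is bounded by $\sup_{\vartheta\in\Theta}\|(\EE_{F_n}-\EE_F)\Psi(\cdot;\vartheta)\|$, which is $o(1)$ almost surely by (B).
\item The second term is $o_p(n^{-1/2})$ by (E).
\end{itemize}
Hence $\EE_F[\Psi(X;\hat\vartheta)]\to 0$ in probability, and the identifiability condition (A), applied along subsequences, yields $\hat\vartheta\overset{p}{\to}\vartheta_0$. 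For the bootstrap I argue the same way, with $\EE_{\tilde F_n}-\EE_F$ split as $(\EE_{\tilde F_n}-\EE_{F_n})+(\EE_{F_n}-\EE_F)$. The first piece is controlled by the multiplier Glivenko--Cantelli theorem (Theorem 10.1 of \citet{kosorok2008introduction}, as in Lemma~\ref{lem:donsker}). Normalizing by $\bar W$ costs only a factor $\bar W^{-1}=1+O_p(n^{-1/2})$. This gives $\tilde\vartheta-\vartheta_0=o_p^W(1)$.

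\emph{Step 2 (linearization).} Consistency puts $\hat\vartheta$, with probability tending to one, inside the ball of radius $\eta$ on which (C) makes $\mathcal F$ Donsker. The class $\mathcal F$ is Donsker and $\vartheta\mapsto\Psi(\cdot;\vartheta)$ is $L_2(F)$-continuous at $\vartheta_0$, so the argument of Lemma~\ref{lem:equicon} gives
$$\GG_n\bigl(\Psi(\cdot;\hat\vartheta)-\Psi(\cdot;\vartheta_0)\bigr)=o_p(1).$$
Combining this with (E) and $\EE_F\Psi(X;\vartheta_0)=0$ gives
$$\sqrt n\,\EE_F[\Psi(X;\hat\vartheta)] = -\GG_n\Psi(\cdot;\vartheta_0)+o_p(1).$$
By (D), $\EE_F[\Psi(X;\hat\vartheta)]=V_{\vartheta_0}(\hat\vartheta-\vartheta_0)+o_p(\|\hat\vartheta-\vartheta_0\|)$. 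Since $V_{\vartheta_0}$ is nonsingular, $\|V_{\vartheta_0}u\|\ge c\|u\|$ for some $c>0$. Therefore $\sqrt n\|\hat\vartheta-\vartheta_0\|(c-o_p(1))\le O_p(1)$, which is the $\sqrt n$-rate. The rate upgrades the remainder to $o_p(n^{-1/2})$, and we obtain
$$\sqrt n(\hat\vartheta-\vartheta_0)=-V_{\vartheta_0}^{-1}\GG_n\Psi(\cdot;\vartheta_0)+o_p(1).$$
The classical CLT, using the finite second moment in (D), then gives the stated normal limit with sandwich covariance.

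\emph{Step 3 (bootstrap).} I repeat Step 2 with $\tilde\GG_n$ in place of $\GG_n$. This gives
$$\sqrt n(\tilde\vartheta-\vartheta_0)=-V_{\vartheta_0}^{-1}\bigl(\GG_n+\tilde\GG_n\bigr)\Psi(\cdot;\vartheta_0)+o_p^W(1).$$
Subtracting the expansion of Step 2 yields
$$\sqrt n(\tilde\vartheta-\hat\vartheta)=-V_{\vartheta_0}^{-1}\tilde\GG_n\Psi(\cdot;\vartheta_0)+o_p^W(1).$$
The conditional multiplier CLT then applies: the weights are i.i.d.\ with mean one and variance one, satisfy $\EE W^{2+\kappa}<\infty$, and are independent of the data (Theorem 10.4 / 2.6 of \citet{kosorok2008introduction}). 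It gives $\tilde\GG_n\Psi(\cdot;\vartheta_0)\overunderset{P}{W}{\rightsquigarrow} N(0,\EE_F[\Psi\Psi^\top])$, in the bounded-Lipschitz sense. The continuous mapping theorem finishes the argument.

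\emph{Main obstacle.} I expect the hardest step to be the conditional version of Step 2. That step needs the bootstrap process $\tilde\GG_n$ to be asymptotically equicontinuous conditionally on the data, evaluated at the random point $\tilde\vartheta$, together with the $\sqrt n$-rate argument in the $o_p^W$ calculus. I would first invoke the multiplier Donsker theorem to get unconditional asymptotic equicontinuity of $\tilde\GG_n$ over $\mathcal F$. I would then convert it to the $o_p^W$ statement with the $\PP_X\{\PP_{W\mid X}(\cdot)>\delta\}$ argument used in Lemma~\ref{lem:equicon}, handling the $\bar W$ normalization by Slutsky.
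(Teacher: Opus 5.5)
Your argument is correct and is essentially the standard proof of Kosorok's Theorem 10.16, which the paper does not reprove but simply restates and cites. That proof runs as yours does: consistency from (A), (B), (E), then linearization via Donsker equicontinuity plus the $\sqrt n$-rate from nonsingularity of $V_{\vartheta_0}$, then the ordinary and conditional multiplier CLTs. One small fix: (B) only gives a strong Glivenko--Cantelli property over all of $\Theta$, so bootstrap consistency should come from a bootstrap (multiplier) Glivenko--Cantelli theorem, not from the multiplier CLT invoked in Lemma~\ref{lem:donsker}, which needs the Donsker property that (C) supplies only on the $\eta$-ball.
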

\section{Results for Different Types of Tests}
\label{sec:tests}
\subsection{Contamination Schemes and General Theorem}
\label{sec:con}
To study the most destabilizing directions of one-sided contamination in a simple and computable way, we introduce a contamination map that replaces either the smallest or the largest $m$ observations by a common target value. This scheme is especially natural for robustness analysis because many breakdown phenomena are driven by extreme-tail perturbations, as shown in Theorem \ref{thm:loc}. Let $x^{(n)}=(x_1,\dots,x_n)$ be a sample with order statistics $x_{(1)}\le \cdots \le x_{(n)}$, $m\in[n]$, and $c\in\overline{\mathbb R}:=\mathbb R\cup\{-\infty,+\infty\}$. We define the left- and right-contamination maps $\mathsf{C}_{m,c}^L,\mathsf{C}_{m,c}^R:\mathbb R^n\to \overline{\mathbb R}^n$  as
$$
\bigl(\mathsf{C}_{m,c}^L(x^{(n)})\bigr)_{(i)}
:=
\begin{cases}
c, & 1\le i\le m,\\
x_{(i)}, & m<i\le n,
\end{cases}
\qquad
\bigl(\mathsf{C}_{m,c}^R(x^{(n)})\bigr)_{(i)}
:=
\begin{cases}
x_{(i)}, & 1\le i\le n-m,\\
c, & n-m<i\le n.
\end{cases}
$$
Thus $\mathsf{C}_{m,c}^L$ sends the leftmost $m$ order statistics to $c$, whereas $\mathsf{C}_{m,c}^R$ sends the rightmost $m$ order statistics to $c$. In practice, one can curate a collection of contamination schemes, e.g.,
$$
\tilde x^{(n)} \in \Big\{\mathsf{C}_{m,c}^L(x^{(n)}): c\in \mathcal C_L \Big\} =: \mathsf C^L_{m, \mathcal C_L}.
$$
One simple choice for the target location is $\mathcal C_L = \big\{\infty, x_{(n)}\big\}$. When $m$ is large, this choice can be quite loose. We can therefore tighten the upper bounds computed in Theorem \ref{thm:meta} by choosing
\begin{equation}
\label{eq:contamination_left_loc}
    \mathcal C_L := \{\hat\theta + \eta_{m/n+}(\hat\theta,x^{(n)}) + a\,\delta: a \in \mathcal A\},
\end{equation}
where $\mathcal A \subset \mathbb R$ is an arbitrary finite set. Similarly, define
\begin{equation}
\label{eq:contamination_right_loc}
    \mathcal C_R:= \{\hat\theta - \eta_{m/n+}(\hat\theta,x^{(n)}) - a\,\delta: a \in \mathcal A\}.
\end{equation}
The heuristic idea is to move the points to the approximate location around the location estimate evaluated at contaminated samples (i.e., $\hat \theta + \eta_{m/n+}(\hat \theta, x^{(n)})$), and perturb them such that they contribute to the score equation by a value in $\{a \delta: a \in \mathcal A\}$, which might lead to a more severe standard error inflation than simply moving them to $\infty$ or $x^{(n)}$.
In all the following results, one may choose the specific $\mathcal C_L$ and $\mathcal C_R$ as the collection of potential target locations, where the simplest choices are $\mathcal C_L = \{x_{(n)}, \infty\}$ and $\mathcal C_R = \{x_{(1)}, -\infty\}$. With this notation, we state our general theorem for testing threshold breakdown points below. For the sake of simplicity we use the notation
$$\hat\theta \pm z_{1 - \frac{\alpha}{2}} \cdot \hatse(\hat\theta)$$
to denote the interval
$$
\bigl[\hat\theta - z_{1 - \frac{\alpha}{2}} \cdot \hatse(\hat\theta),\
  \hat\theta + z_{1 - \frac{\alpha}{2}} \cdot \hatse(\hat\theta)\bigr].
$$
\begin{theorem}
\label{thm:meta}
    Consider a two-sided test of the form
    $$
      \phi(x^{(n)}) = \II\Bigl\{0 \notin \hat \theta \pm z_{1-\frac{\alpha}{2}} \cdot \hat \sigma(\hat \theta)\Bigr\}
    $$
    based on a data set $x$. Let $\eta_{m/n+}$ and $\eta_{m/n-}$ denote the one-sided sensitivities in Definition~\ref{def:eta} for both $\hat\theta$ and $\hatse(\hat\theta)$.
    \medskip
    \noindent\textbf{Rejection breakdown.} Suppose $\phi(x^{(n)})=1$, so that $0\notin \hat\theta \pm z_{1-\frac{\alpha}{2}} \cdot \hatse(\hat\theta)$.
    \smallskip
    \noindent (i) If
    $
      \hat\theta - z_{1 - \frac{\alpha}{2}} \cdot \hatse(\hat\theta) > 0,
    $
    define
    \begin{align*}
        m_{\mathrm{rej},>0}^{\mathrm{up}}(x^{(n)})
        &:= \min \left \{ m \ge 0:\  \tilde x^{(n)} \in \mathsf C^R_{m, \mathcal C_R},\ \hat \theta (\tilde x^{(n)}) - z_{1 - \frac{\alpha}{2}} \cdot \hatse(\hat \theta (\tilde x^{(n)})) \le 0 \right \}, \\
        m_{\mathrm{rej},>0}^{\mathrm{low}}(x^{(n)})
        &:= \min \left \{ m \ge 0:\  \eta_{m/n-}(\hat\theta, x^{(n)}) + z_{1 - \frac{\alpha}{2}} \cdot \eta_{m/n+}(\hat \sigma(\hat \theta), x^{(n)}) \ge \hat\theta - z_{1 - \frac{\alpha}{2}} \cdot \hatse(\hat\theta) \right \}.
    \end{align*}
    Then,
    \begin{align*}
        \frac{1}{n}\,m_{\mathrm{rej},>0}^{\mathrm{low}}(x^{(n)})
        \;\le\;
        \BP_{\text{reject}}(\phi,x^{(n)})
        \;\le\;
        \frac{1}{n}\,m_{\mathrm{rej},>0}^{\mathrm{up}}(x^{(n)}).
    \end{align*}
    \noindent (ii) If
    $$
      \hat\theta + z_{1 - \frac{\alpha}{2}} \cdot \hatse(\hat\theta) < 0,
    $$
    define
    \begin{align*}
        m_{\mathrm{rej},<0}^{\mathrm{up}}(x^{(n)})
        &:= \min \left \{ m \ge 0:\ \tilde x^{(n)} \in \mathsf C^L_{m, \mathcal C_L},\ \hat \theta (\tilde x^{(n)}) + z_{1 - \frac{\alpha}{2}} \cdot \hatse(\hat \theta (\tilde x^{(n)})) \ge 0 \right \}, \\
        m_{\mathrm{rej},<0}^{\mathrm{low}}(x^{(n)})
        &:= \min \left \{ m \ge 0:\  \eta_{m/n+}(\hat\theta, x^{(n)}) + z_{1 - \frac{\alpha}{2}} \cdot \eta_{m/n+}(\hat \sigma(\hat \theta), x^{(n)}) \ge - \hat\theta - z_{1 - \frac{\alpha}{2}} \cdot \hatse(\hat\theta) \right \}.
    \end{align*}
    Then
    \begin{align*}
        \frac{1}{n}\,m_{\mathrm{rej},<0}^{\mathrm{low}}(x^{(n)})
        \;\le\;
        \BP_{\text{reject}}(\phi,x^{(n)})
        \;\le\;
        \frac{1}{n}\,m_{\mathrm{rej},<0}^{\mathrm{up}}(x^{(n)}).
    \end{align*}
    \medskip
    \noindent\textbf{Acceptance breakdown.} Suppose now $\phi(x^{(n)})=0$, so that $0\in \hat\theta \pm z_{1-\frac{\alpha}{2}} \cdot \hatse(\hat\theta)$. Define
    \begin{align*}
        m_{\mathrm{acc},-}^{\mathrm{up}}(x^{(n)})
        &:= \min \left \{ m \ge 0:\ \tilde x^{(n)} \in \mathsf C^R_{m, \mathcal C_R},\ \hat \theta (\tilde x^{(n)}) + z_{1 - \frac{\alpha}{2}} \cdot \hatse(\hat \theta (\tilde x^{(n)})) \le 0 \right \}, \\
        m_{\mathrm{acc},+}^{\mathrm{up}}(x^{(n)})
        &:= \min \left \{ m \ge 0:\ \tilde x^{(n)} \in \mathsf C^L_{m, \mathcal C_L},\ \hat \theta (\tilde x^{(n)}) - z_{1 - \frac{\alpha}{2}} \cdot \hatse(\hat \theta (\tilde x^{(n)})) \ge 0 \right \},
    \end{align*}
    and
    \begin{align*}
        m_{\mathrm{acc},-}^{\mathrm{low}}(x^{(n)})
        &:= \min \left \{ m \ge 0:\  \eta_{m/n-}(\hat\theta, x^{(n)}) + z_{1 - \frac{\alpha}{2}} \cdot \eta_{m/n-}(\hatse(\hat \theta), x^{(n)}) \ge \hat\theta + z_{1 - \frac{\alpha}{2}} \cdot \hatse(\hat\theta) \right \}, \\
        m_{\mathrm{acc},+}^{\mathrm{low}}(x^{(n)})
        &:= \min \left \{ m \ge 0:\  \eta_{m/n+}(\hat\theta, x^{(n)}) + z_{1 - \frac{\alpha}{2}} \cdot \eta_{m/n-}(\hatse(\hat \theta), x^{(n)}) \ge -\hat\theta + z_{1 - \frac{\alpha}{2}} \cdot \hatse(\hat\theta) \right \}.
    \end{align*}
    Then
    \begin{align*}
        \frac{1}{n}\,\min\bigl\{m_{\mathrm{acc},-}^{\mathrm{low}}(x^{(n)}),\,m_{\mathrm{acc},+}^{\mathrm{low}}(x^{(n)})\bigr\}
        \;\le\;
        \BP_{\text{accept}}(\phi,x^{(n)})
        \;\le\;
        \frac{1}{n}\,\min\bigl\{m_{\mathrm{acc},-}^{\mathrm{up}}(x^{(n)}),\,m_{\mathrm{acc},+}^{\mathrm{up}}(x^{(n)})\bigr\}.
    \end{align*}
\end{theorem}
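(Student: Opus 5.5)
Both bounds come from sandwiching the contaminated interval endpoints. The upper bounds are constructive: each element of $\mathsf C^R_{m,\mathcal C_R}$ or $\mathsf C^L_{m,\mathcal C_L}$ is obtained from $x^{(n)}$ by changing at most $m$ coordinates. It therefore lies in $B_{\textsf H}(x^{(n)},m)$.

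In case (i), take $m=m^{\mathrm{up}}_{\mathrm{rej},>0}$. By definition some $\tilde x^{(n)}\in\mathsf C^R_{m,\mathcal C_R}$ satisfies $\hat\theta(\tilde x^{(n)})-z_{1-\alpha/2}\hatse(\hat\theta(\tilde x^{(n)}))\le 0$. One checks that $0$ also does not exceed the upper endpoint, so $0$ lies in the contaminated interval and $\phi(\tilde x^{(n)})=0$. Hence $\BP_{\mathrm{reject}}\le m/n$. Strictly, the upper endpoint is at least the lower one, and if it were below $0$ the test would still reject. To guard against this I would either argue that right-replacement cannot push the whole interval below $0$ before crossing, or simply read the definition of $m^{\mathrm{up}}$ as requiring $0\in$ the interval; I would note this as a harmless reading of the statement. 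Case (ii) and the two acceptance upper bounds $m^{\mathrm{up}}_{\mathrm{acc},\pm}$ follow the same pattern: a witness in the contamination class flips the decision. For acceptance, the flip happens once either endpoint crosses $0$, which gives the minimum of the two.

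The lower bounds use the definitions of the one-sided sensitivities (Definition~\ref{def:eta_directional}). For every $y^{(n)}\in B_{\textsf H}(x^{(n)},m)$ they give
\[
\hat\theta(y^{(n)})\ge\hat\theta-\eta_{m/n-}(\hat\theta,x^{(n)}),\qquad \hatse(y^{(n)})\le \hatse(\hat\theta)+\eta_{m/n+}(\hatse(\hat\theta),x^{(n)}),
\]
and symmetrically with the signs swapped. There is a subtlety: $\eta_{m/n\pm}$ is defined as a supremum over $\eta$ with $\BP_{\eta\pm}=m/n$. I would first record that this equals the worst-case shift $\sup_{y\in B_{\textsf H}(x,m)}\pm(\hat\theta(y)-\hat\theta(x))$. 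This uses monotonicity of $m\mapsto\sup$ over nested Hamming balls and is the main technical point to verify cleanly, including the cases where the supremum is not attained.

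Granting this, in case (i) every $y$ with $m<m^{\mathrm{low}}_{\mathrm{rej},>0}$ contaminated points satisfies
\[
\hat\theta(y)-z\,\hatse(y)\ge \hat\theta-z\,\hatse(\hat\theta)-\bigl(\eta_{m/n-}(\hat\theta,x^{(n)})+z\,\eta_{m/n+}(\hatse(\hat\theta),x^{(n)})\bigr)>0,
\]
so the test still rejects. This requires monotonicity of the sensitivity sums in $m$, which holds because the Hamming balls are nested, so "$m<m^{\mathrm{low}}$" implies that the defining inequality fails. Case (ii) is the mirror image. For acceptance, keeping $0$ in the interval requires both $\hat\theta(y)+z\,\hatse(y)>0$ and $\hat\theta(y)-z\,\hatse(y)<0$. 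These are guaranteed respectively when $m<m^{\mathrm{low}}_{\mathrm{acc},-}$ and $m<m^{\mathrm{low}}_{\mathrm{acc},+}$, using the downward standard-error sensitivity $\eta_{m/n-}(\hatse(\hat\theta),x^{(n)})$ since shrinking the interval is what moves the relevant endpoint toward $0$. Below the minimum of the two, no flip is possible, which yields the stated lower bound.

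I expect the main obstacle to be the bookkeeping identity $\eta_{m/n\pm}=\sup_{B_{\textsf H}(x,m)}$-deviation together with the choice of which one-sided standard-error sensitivity enters each bound. The latter must be checked carefully against the displayed formulas, for instance the $\hat\sigma(\hat\theta)$ versus $\hatse(\hat\theta)$ notation, and the $\eta_{m/n+}$ of the standard error in the rejection case, where inflating the standard error widens the interval.
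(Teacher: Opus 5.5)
Your proposal follows the paper's proof: the upper bounds come from explicit witnesses in $\mathsf C^R_{m,\mathcal C_R}$ and $\mathsf C^L_{m,\mathcal C_L}$, and the lower bounds come from bounding each endpoint's shift by the relevant one-sided sensitivities of $\hat\theta$ and $\hatse(\hat\theta)$, which the paper simply reads as worst-case shifts over $B_{\textsf H}(x^{(n)},m)$ (take that reading directly, since the identity with Definition~\ref{def:eta_directional} can fail when the worst-case shift does not grow from $m-1$ to $m$, because the defining set may then be empty; and no monotonicity in $m$ is needed, because $m<m^{\mathrm{low}}$ already falsifies the defining inequality). The overshoot you flag is genuine, the paper's reduction to a one-sided breakdown point of $\hat\theta-z_{1-\alpha/2}\hatse(\hat\theta)$ glosses over it, and your first suggested fix is false since the witness's whole interval can land below $0$; instead of reinterpreting $m^{\mathrm{up}}$, move the $m$ replaced points continuously toward their targets inside $B_{\textsf H}(x^{(n)},m)$, so that if $\hat\theta(\tilde x^{(n)})\ge 0$ the witness already accepts, and otherwise continuity of $\hat\theta$ gives an intermediate sample with $\hat\theta=0$, whose interval contains $0$.
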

\begin{remark}
\label{rem:test}
  Theorem \ref{thm:meta} also covers null-restricted tests of the form
    $$
      \phi(x^{(n)}) = \II\Bigl\{0 \notin \hat \theta \pm z_{1-\frac{\alpha}{2}} \cdot \sigma_0\Bigr\},
    $$
    where $\sigma_0$ is a fixed known standard deviation determined under the null. In this case, the test does not depend on the data through the standard error, so
    $$
      \eta_{m/n\pm}(\sigma_0, x^{(n)}) = 0
    $$
    for all $m$ and $x^{(n)}$, and the lower bounds in Theorem~\ref{thm:meta} simplify. If, in addition, the $m$-sensitivity of the location estimator admits a closed-form expression (e.g.\ Theorem~\ref{thm:loc}), then the corresponding upper and lower bounds coincide. In particular,
    $$
        \BP_{\text{reject}}(\phi,x^{(n)}) = \frac{1}{n} \min \left \{ m:\  \eta_{m/n \ \scriptsize \sgn(\hat \theta)}(\hat\theta, x^{(n)}) \ge \sgn(\hat \theta) \cdot \hat \theta - z_{1 - \frac{\alpha}{2}} \cdot \sigma_0 \right \},
    $$
    and
    $$
        \BP_{\text{accept}}(\phi,x^{(n)}) = \frac{1}{n} \min \left \{ m:\  \eta_{m/n-}(\hat\theta, x^{(n)}) \ge -\hat\theta + z_{1 - \frac{\alpha}{2}} \cdot \sigma_0 \ \text{or}\ \eta_{m/n+}(\hat\theta, x^{(n)}) \ge \hat\theta + z_{1 - \frac{\alpha}{2}} \cdot \sigma_0 \right \}.
    $$
    For plug-in null-restricted standard errors $\hatse(\theta_0)$, the sensitivities $\eta_{m/n\pm}(\hatse(\theta_0),x^{(n)})$ are generally non-zero and must be treated as in Theorem~\ref{thm:meta}.
\end{remark}
\begin{proof}[Proof of Theorem \ref{thm:meta}]
We first handle the rejection breakdown point. Recall
$$
\phi(x^{(n)}) = \II\Bigl\{0 \notin \hat\theta \pm z_{1-\frac{\alpha}{2}} \hatse(\hat\theta)\Bigr\}.
$$
\medskip
\noindent\textbf{Case 1: rejection to the right.}
Suppose
$$
\hat\theta - z_{1-\frac{\alpha}{2}} \hatse(\hat\theta) > 0,
$$
so the lower end of the confidence interval lies strictly to the right of zero and $\phi(x^{(n)})=1$. Define the functional
$$
f(x^{(n)}) := \hat\theta(x^{(n)}) - z_{1-\frac{\alpha}{2}} \hatse(\hat\theta(x^{(n)})).
$$
Then $\phi(x^{(n)})=1$ is equivalent to $f(x^{(n)})>0$, and the rejection breakdown point is exactly the one-sided breakdown point of $f$ at level $f(x^{(n)})$:
$$
\BP_{\text{reject}}(\phi,x^{(n)})
  = \BP_{f(x^{(n)})-}(f,x^{(n)})
  = \BP_{(\hat\theta - z_{1-\frac{\alpha}{2}} \hatse(\hat\theta))-}
     \bigl(\hat\theta - z_{1-\frac{\alpha}{2}} \hatse(\hat\theta), x^{(n)}\bigr).
$$
By definition of the one-sided $m$-sensitivity, for any $m\in [n]$ and any $\tilde x^{(n)} \in B_\textsf{H}(x^{(n)},m)$ with corresponding estimates $\tilde\theta=\hat\theta(\tilde x^{(n)})$ and $\hatse(\tilde\theta)=\hatse(\tilde\theta(\tilde x^{(n)}))$, we have
\begin{equation*}
    f(x^{(n)}) - f(\tilde x^{(n)})
    \;\le\;
    \eta_{m/n-}(f,x^{(n)}),
\end{equation*}
where equality is attained when we take the supremum on the left. Moreover, by linearity and the one–sided sensitivities of the components,
\begin{align*}
    f(x^{(n)}) - f(\tilde x^{(n)})
    &= \bigl(\hat\theta(x^{(n)}) - \hat\theta(\tilde x^{(n)})\bigr)
       - z_{1-\frac{\alpha}{2}}
         \bigl(\hatse(\hat\theta(x^{(n)})) - \hatse(\tilde\theta(\tilde x^{(n)}))\bigr) \\
    &\le \eta_{m/n-}(\hat\theta,x^{(n)})
       + z_{1-\frac{\alpha}{2}} \,\eta_{m/n+}(\hatse(\hat\theta),x^{(n)}),
\end{align*}
so that taking the supremum yields
\begin{equation*}
    \eta_{m/n-}(f,x^{(n)})
    \;\le\;
    \eta_{m/n-}(\hat\theta,x^{(n)})
    + z_{1-\frac{\alpha}{2}} \,\eta_{m/n+}(\hatse(\hat\theta),x^{(n)}).
\end{equation*}
If for a given $m$ we have
\begin{equation*}
    \eta_{m/n-}(\hat\theta,x^{(n)})
    + z_{1-\frac{\alpha}{2}} \,\eta_{m/n+}(\hatse(\hat\theta),x^{(n)})
    \;<\;
    f(x^{(n)})
    = \hat\theta - z_{1-\frac{\alpha}{2}} \hatse(\hat\theta),
\end{equation*}
then $f(\tilde x^{(n)})$ cannot become nonpositive for any $\tilde x^{(n)} \in B_\textsf{H}(x^{(n)},m)$, and hence $\phi(\tilde x^{(n)})$ cannot flip to $0$. This gives the lower bound in the theorem:
\begin{align*}
    \BP_{\text{reject}}(\phi,x^{(n)})
    \;\ge\;
    \frac{1}{n}
    \min\Bigl\{
      m:\ \eta_{m/n-}(\hat\theta,x^{(n)})
          + z_{1-\frac{\alpha}{2}}\eta_{m/n+}(\hatse(\hat\theta),x^{(n)})
          \;\ge\;
          \hat\theta - z_{1-\frac{\alpha}{2}}\hatse(\hat\theta)
    \Bigr\}.
\end{align*}
For the upper bound, it suffices to exhibit, for each $m$, at least one contaminated sample $\tilde x^{(n)} \in B_\textsf{H}(x^{(n)},m)$ for which $f(\tilde x^{(n)})\le 0$. The corresponding minimal $m$ is the upper bound on $\BP_{\text{reject}}(\phi,x^{(n)})$, and this is exactly what the contamination schemes in the theorem encode.
\medskip
\noindent\textbf{Case 2: rejection to the left.}
Suppose
$$
\hat\theta + z_{1-\frac{\alpha}{2}} \hatse(\hat\theta) < 0,
$$
so the upper end of the interval lies strictly below zero and $\phi(x^{(n)})=1$. Define
$$
g(x^{(n)}) := \hat\theta(x^{(n)}) + z_{1-\frac{\alpha}{2}}\hatse(\hat\theta(x^{(n)})),
$$
so that $\phi(x^{(n)})=1$ is equivalent to $g(x^{(n)})<0$. Flipping the decision amounts to making $g(\tilde x^{(n)})\ge 0$, i.e.\ increasing $g$ by at least $-g(x^{(n)})$. Thus
$$
\BP_{\text{reject}}(\phi,x^{(n)})
  = \BP_{(-g(x^{(n)}))+}(g,x^{(n)})
  = \BP_{(-\hat\theta - z_{1-\frac{\alpha}{2}}\hatse(\hat\theta))+}
     \bigl(\hat\theta + z_{1-\frac{\alpha}{2}}\hatse(\hat\theta),x^{(n)}\bigr).
$$
As before, for any $m$ and any $\tilde x^{(n)} \in B_\textsf{H}(x^{(n)},m)$,
\begin{equation*}
    g(\tilde x^{(n)}) - g(x^{(n)})
    \;\le\;
    \eta_{m/n+}(g,x^{(n)}),
\end{equation*}
where equality is attained when we take the supremum. We also have
\begin{align*}
    g(\tilde x^{(n)}) - g(x^{(n)})
    &= \bigl(\hat\theta(\tilde x^{(n)}) - \hat\theta(x^{(n)})\bigr)
       + z_{1-\frac{\alpha}{2}}
         \bigl(\hatse(\tilde\theta(\tilde x^{(n)})) - \hatse(\hat\theta(x^{(n)}))\bigr) \\
    &\le \eta_{m/n+}(\hat\theta,x^{(n)})
       + z_{1-\frac{\alpha}{2}} \,\eta_{m/n+}(\hatse(\hat\theta),x^{(n)}),
\end{align*}
so
\begin{equation*}
    \eta_{m/n+}(g,x^{(n)})
    \;\le\;
    \eta_{m/n+}(\hat\theta,x^{(n)})
    + z_{1-\frac{\alpha}{2}} \,\eta_{m/n+}(\hatse(\hat\theta),x^{(n)}).
\end{equation*}
If
\begin{equation*}
    \eta_{m/n+}(\hat\theta,x^{(n)})
    + z_{1-\frac{\alpha}{2}} \,\eta_{m/n+}(\hatse(\hat\theta),x^{(n)})
    \;<\;
    -\hat\theta - z_{1-\frac{\alpha}{2}}\hatse(\hat\theta),
\end{equation*}
then $g(\tilde x^{(n)})$ cannot reach zero for any $\tilde x^{(n)} \in B_\textsf{H}(x^{(n)},m)$, and the test cannot be made to accept with $m$ contaminations. This yields the lower bound in the “left–rejection” case; the corresponding upper bound again comes from constructing explicit contaminations and taking the minimal $m$ such that $\exists \tilde x^{(n)} \in \mathsf C^L_{m, \mathcal C_L}$ with $g(\tilde x^{(n)})\ge 0$.
\medskip
\noindent\textbf{Acceptance breakdown.}
Now assume $\phi(x^{(n)})=0$, i.e.\ $0\in \hat\theta \pm z_{1-\frac{\alpha}{2}}\hatse(\hat\theta)$. In this case one can try to make the interval entirely lie to the right of zero or entirely to the left. This corresponds to:  pushing the lower endpoint above zero, i.e.\ making
  $$
    \hat\theta(\tilde x^{(n)}) - z_{1-\frac{\alpha}{2}}\hatse(\tilde\theta(\tilde x^{(n)})) \ge 0,
  $$
  or
  pushing the upper endpoint below zero, i.e.\ making
  $$
    \hat\theta(\tilde x^{(n)}) + z_{1-\frac{\alpha}{2}}\hatse(\tilde\theta(\tilde x^{(n)})) \le 0.
  $$
Thus the acceptance breakdown point is the minimum of the one-sided breakdown points for these two scenarios, which gives the “$\min\{\Delta_{\mathrm{upper}}^-,\Delta_{\mathrm{upper}}^+\}$” expression for the upper bound. The lower bounds follow exactly as above by decomposing the change in the endpoints into the changes in $\hat\theta$ and $\hatse$ and bounding those by $\eta_{m/n\pm}(\hat\theta,x^{(n)})$ and $\eta_{m/n\pm}(\hatse(\hat\theta),x^{(n)})$.
Putting these pieces together yields the bounds stated in Theorem~\ref{thm:meta}.
\end{proof}
\subsection{Wald-type Test}
We first specialize Theorem~\ref{thm:meta} to the usual two–sided Wald test. Recall that
$\hat\theta$ solves the estimating equation \eqref{M-est} and $\hatse(\hat\theta)$ is the
plug–in standard error. The test
$$
  \phi(x^{(n)})
  \;=\;
  \II\Bigl\{0 \notin \hat\theta \pm z_{1-\frac{\alpha}{2}} \cdot \hatse(\hat\theta)\Bigr\}
$$
rejects $H_0:\theta=0$ whenever the $(1-\alpha)$ Wald interval excludes~$0$.
\begin{corollary}[Wald-type test]
\label{cor:wald}
Assume that $\psi(t-\theta)$ in \eqref{M-est} is differentiable a.e., bounded, non-increasing in
$\theta$, and passes through $0$. Let $\hat\theta$ be the $M$-estimator defined by \eqref{M-est}, and let
$\hatse(\hat\theta)$ be the plug–in estimate of its asymptotic standard error. Define the lower
and upper endpoints
$$
  L(x^{(n)}) := \hat\theta - z_{1-\frac{\alpha}{2}}\hatse(\hat\theta),
  \qquad
  U(x^{(n)}) := \hat\theta + z_{1-\frac{\alpha}{2}}\hatse(\hat\theta).
$$
\medskip
\noindent\textbf{Rejection breakdown.}
Suppose $\phi(x^{(n)})=1$, so $0\notin [L(x^{(n)},U(x^{(n)})]$.
\smallskip
\noindent(i) If $L(x^{(n)})>0$ (equivalently $\hat\theta>0$), define
\begin{align*}
  m_{\mathrm{rej},>0}^{\mathrm{up}}(x^{(n)})
  &:= \min\Bigl\{m\ge0:\
        \hat{\theta}(\mathsf{C}^R_{m, -\infty}(x^{(n)})) - z_{1-\frac{\alpha}{2}} \hatse\bigl(\hat{\theta}(\mathsf{C}^R_{m, -\infty}(x^{(n)}))\bigr) \le 0
      \Bigr\},\\
  m_{\mathrm{rej},>0}^{\mathrm{low}}(x^{(n)})
  &:= \min\Bigl\{m\ge0:\
        \eta_{m/n-}(\hat\theta,x^{(n)})
        + z_{1-\frac{\alpha}{2}}\,\eta_{m/n+}\bigl(\hatse(\hat\theta),x^{(n)}\bigr)
        \;\ge\; L(x^{(n)})
      \Bigr\},
\end{align*}
where we remind that the contamination scheme is
$$
  \mathsf{C}^R_{m, -\infty}(x^{(n)})
  \;=\;
  \{x_{(i)}\}_{i=1}^{\,n-m} \,\cup\, \{-\infty\}_{i=1}^{\,m}.
$$
Then
$$
  \frac{1}{n}\,m_{\mathrm{rej},>0}^{\mathrm{low}}(x^{(n)})
  \;\le\;
  \BP_{\mathrm{reject}}(\phi,x^{(n)})
  \;\le\;
  \frac{1}{n}\,m_{\mathrm{rej},>0}^{\mathrm{up}}(x^{(n)}).
$$
\smallskip
\noindent(ii) If $U(x^{(n)})<0$ (equivalently $\hat\theta<0$), define
\begin{align*}
  m_{\mathrm{rej},<0}^{\mathrm{up}}(x^{(n)})
  &:= \min\Bigl\{m\ge0:\
        \hat \theta(\mathsf{C}^L_{m, \infty}(x^{(n)})) + z_{1-\frac{\alpha}{2}} \hatse\bigl(\hat \theta(\mathsf{C}^L_{m, \infty}(x^{(n)}))\bigr) \ge 0
      \Bigr\},\\
  m_{\mathrm{rej},<0}^{\mathrm{low}}(x^{(n)})
  &:= \min\Bigl\{m\ge0:\
        \eta_{m/n+}(\hat\theta,x^{(n)})
        + z_{1-\frac{\alpha}{2}}\,\eta_{m/n+}\bigl(\hatse(\hat\theta),x^{(n)}\bigr)
        \;\ge\; -U(x^{(n)})
      \Bigr\},
\end{align*}
where now
$$
  \mathsf{C}^L_{m, \infty}(x^{(n)})
  \;=\;
  \{x_{(i)}\}_{i=m+1}^{\,n} \,\cup\, \{\infty\}_{i=1}^{\,m}.
$$
Then
$$
  \frac{1}{n}\,m_{\mathrm{rej},<0}^{\mathrm{low}}(x^{(n)})
  \;\le\;
  \BP_{\mathrm{reject}}(\phi,x^{(n)})
  \;\le\;
  \frac{1}{n}\,m_{\mathrm{rej},<0}^{\mathrm{up}}(x^{(n)}).
$$
\medskip
\noindent\textbf{Acceptance breakdown.}
Suppose now $\phi(x^{(n)})=0$, so $L(x^{(n)})\le0\le U(x^{(n)})$. Remind $\mathcal C^R_{m, \mathcal C_R}$ and $\mathsf C^L_{m, \mathcal C_L}$ are the collections of contamination schemes defined in Section \ref{sec:con}. Define
\begin{align*}
  m_{\mathrm{acc},-}^{\mathrm{up}}(x^{(n)})
  &:= \min\Bigl\{m\ge0:\ \tilde x^{(n)} \in \mathsf C^R_{m, \mathcal C_R},
        \hat \theta(\tilde x^{(n)}) + z_{1-\frac{\alpha}{2}} \hatse\bigl(\tilde x^{(n)})\bigr) \le 0
      \Bigr\},\\
  m_{\mathrm{acc},+}^{\mathrm{up}}(x^{(n)})
  &:= \min\Bigl\{m\ge0:\ \tilde x^{(n)} \in \mathsf C^L_{m, \mathcal C_L},
        \hat \theta(\tilde x^{(n)}) - z_{1-\frac{\alpha}{2}} \hatse\bigl(\hat \theta(\tilde x^{(n)})\bigr) \ge 0
      \Bigr\}.
\end{align*}
Furthermore, define
\begin{align*}
  m_{\mathrm{acc},-}^{\mathrm{low}}(x^{(n)})
  &:= \min\Bigl\{m\ge0:\
        \eta_{m/n-}(\hat\theta,x^{(n)})
        + z_{1-\frac{\alpha}{2}}\,\eta_{m/n-}\bigl(\hatse(\hat\theta),x^{(n)}\bigr)
        \;\ge\; \hat\theta + z_{1-\frac{\alpha}{2}}\hatse(\hat\theta)
      \Bigr\},\\
  m_{\mathrm{acc},+}^{\mathrm{low}}(x^{(n)})
  &:= \min\Bigl\{m\ge0:\
        \eta_{m/n+}(\hat\theta,x^{(n)})
        + z_{1-\frac{\alpha}{2}}\,\eta_{m/n-}\bigl(\hatse(\hat\theta),x^{(n)}\bigr)
        \;\ge\; -\hat\theta + z_{1-\frac{\alpha}{2}}\hatse(\hat\theta)
      \Bigr\}.
\end{align*}
Then
$$
  \frac{1}{n}\,\min\bigl\{
       m_{\mathrm{acc},-}^{\mathrm{low}}(x^{(n)}),
       m_{\mathrm{acc},+}^{\mathrm{low}}(x^{(n)})
     \bigr\}
  \;\le\;
  \BP_{\mathrm{accept}}(\phi,x^{(n)})
  \;\le\;
  \frac{1}{n}\,\min\bigl\{
       m_{\mathrm{acc},-}^{\mathrm{up}}(x^{(n)}),
       m_{\mathrm{acc},+}^{\mathrm{up}}(x^{(n)})
     \bigr\}.
$$
In all of the above, the one–sided sensitivities $\eta_{m/n\pm}(\hat\theta,x^{(n)})$ are given by
Corollary~\ref{cor:opt_attack_m_est}, and any computable upper bounds on
$\eta_{m/n\pm}(\hatse(\hat\theta),x^{(n)})$ (for example those from
Lemma~\ref{lem:opt_attack_m_est_se}) can be used inside the definitions of
$m_{\mathrm{rej},\cdot}^{\mathrm{low}}(x^{(n)})$ and $m_{\mathrm{acc},\cdot}^{\mathrm{low}}(x^{(n)})$ for lower bounds of them. For the
restricted Wald test with variance evaluated under the null, we instead use the bounds from
Lemma~\ref{lem:se_at_theta_0} for $\eta_{m/n\pm}(\hatse(\theta_0),x^{(n)})$.
\end{corollary}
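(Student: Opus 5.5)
The plan is to obtain Corollary~\ref{cor:wald} as a direct specialization of Theorem~\ref{thm:meta}. First, the hypotheses put us in the setting of Theorem~\ref{thm:loc}/Corollary~\ref{cor:opt_attack_m_est}. Since $\psi(t-\theta)$ is non-increasing in $\theta$, $\psi$ is non-decreasing. Boundedness then makes the one-sided sensitivities $\eta_{m/n\pm}(\hat\theta,x^{(n)})$ finite for $m<n/2$, with the explicit characterization given there. Next I note that the Wald test is exactly the two-sided interval test of Theorem~\ref{thm:meta} with $\hat\sigma(\hat\theta)=\hatse(\hat\theta)$. I also record the equivalence $L(x^{(n)})>0\Rightarrow\hat\theta>0$ (since $\hatse\ge0$), and symmetrically $U(x^{(n)})<0\Rightarrow\hat\theta<0$. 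This identifies the two rejection cases with cases (i) and (ii) of Theorem~\ref{thm:meta}.

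\textbf{Lower bounds.} These transfer verbatim. The definitions of $m^{\mathrm{low}}_{\mathrm{rej},>0}$, $m^{\mathrm{low}}_{\mathrm{rej},<0}$, $m^{\mathrm{low}}_{\mathrm{acc},\pm}$ in the corollary coincide with those in Theorem~\ref{thm:meta} after substituting $L$ and $U$. I would also remark why replacing $\eta_{m/n\pm}(\hatse(\hat\theta),x^{(n)})$ by any upper bound (e.g.\ Lemma~\ref{lem:opt_attack_m_est_se}) still yields a valid lower bound. Enlarging the left-hand side of the defining inequality can only make the minimal $m$ smaller, so $m^{\mathrm{low}}$ computed with the bound is at most the true one.

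\textbf{Upper bounds.} For the rejection upper bounds, the corollary uses the single scheme $\mathsf C^R_{m,-\infty}$ (resp.\ $\mathsf C^L_{m,\infty}$) rather than a family $\mathsf C^R_{m,\mathcal C_R}$. This is the instance $\mathcal C_R=\{-\infty\}$ (resp.\ $\mathcal C_L=\{\infty\}$) of Theorem~\ref{thm:meta}. The upper-bound argument there only requires exhibiting, for each $m$, one sample in $B_{\textsf H}(x^{(n)},m)$ that flips the decision. Since $\mathsf C^R_{m,-\infty}(x^{(n)})\in B_{\textsf H}(x^{(n)},m)$, the minimal such $m$ bounds $n\,\BP_{\mathrm{reject}}$ from above. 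The acceptance part is literally Theorem~\ref{thm:meta}'s acceptance statement with the same contamination collections.

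\textbf{Main obstacle.} The delicate point is that the contaminated samples contain $\pm\infty$, which must be interpreted as a limit. I would handle this with boundedness and monotonicity of $\psi$: the estimating equation with points at $-\infty$ contributes $m\psi(-\infty)$, so $\hat\theta$ of the contaminated sample is the limit of $\hat\theta$ along sequences of finite replacement values. Likewise $\hatse$ converges, with the contaminated points contributing $\psi(\pm\infty)^2$ to the numerator and $0$ to the denominator. A strict inequality at the limit then transfers to some finite contamination, and a boundary case $\le 0$ can be handled by taking the minimal $m$ with closedness. I expect this continuity/limit step to need the most care. Everything else is bookkeeping from Theorem~\ref{thm:meta}.
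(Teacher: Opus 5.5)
Your route matches the paper's: its proof of Corollary~\ref{cor:wald} is Theorem~\ref{thm:meta} with $\hat\sigma=\hatse$. The lower bounds come from $L(x^{(n)})-L(\tilde x^{(n)})\le\eta_{m/n-}(\hat\theta,x^{(n)})+z_{1-\frac{\alpha}{2}}\eta_{m/n+}(\hatse(\hat\theta),x^{(n)})$ and its analogues, and the upper bounds from one explicit contamination. Your lower bounds are fine. So is your remark that inserting an upper bound for $\eta_{m/n\pm}(\hatse(\hat\theta),x^{(n)})$ keeps them valid. The acceptance upper bounds are also fine, since reaching rejection only needs $U$ pushed below $0$ \emph{or} $L$ pushed above $0$.

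The gap is in the rejection upper bounds. At $m=m^{\mathrm{up}}_{\mathrm{rej},>0}$ you only know $L(\tilde x^{(n)})\le 0$ for $\tilde x^{(n)}=\mathsf C^R_{m,-\infty}(x^{(n)})$. But $\phi(\tilde x^{(n)})=0$ requires $L(\tilde x^{(n)})\le 0\le U(\tilde x^{(n)})$, and a single step can move $\hat\theta$ across the whole interval. Take Huber's $\psi$ with $\delta=1$, $z_{1-\alpha/2}=1.96$, ten observations at $-100$ and eleven at $100$. Then $\hat\theta=100-10/11$ and $\hatse\approx 0.40$, so $L>0$. Replacing one $100$ by $-\infty$ (or by $-1000$) gives $\hat\theta=-99.1$ and $\hatse\approx 0.44$, so $U<0$ and the test still rejects, even though $m^{\mathrm{up}}_{\mathrm{rej},>0}=1$. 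The bound itself survives with a different witness. Slide the same $m$ replaced points continuously from their original values toward $-\infty$. If $\hat\theta\ge 0$ at the endpoint, then $U\ge 0$ there and the endpoint accepts. Otherwise, continuity of $\hat\theta$ in the replaced values (continuous $\psi$, unique root) gives an intermediate sample with $\hat\theta=0$, which accepts because $\hatse\ge 0$; in the example, move the point to $0$. Case (ii) is symmetric. The paper's proof makes the same leap (``If \dots\ $L(\tilde x^{(n)})\le 0$, the decision flips''), so this intermediate-value step is what you need to add.

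The $\pm\infty$ issue you call the main obstacle is settled in the paper by convention: its contamination maps take values in $\overline{\mathbb R}^n$, as in Theorem~\ref{thm:loc}. Your limit version of it would also need $\psi'(u)\to 0$ as $|u|\to\infty$, which is not assumed. Your ``closedness'' argument for a limiting value of exactly $0$ is also not justified.
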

\begin{proof}
For rejection breakdown with $L(x^{(n)})>0$, any contaminated sample
$\tilde x \in B_{\textsf H}(x^{(n)},m)$ yields a lower endpoint
$$
  L(\tilde x^{(n)}) = \tilde\theta - z_{1-\frac{\alpha}{2}}\hatse(\tilde\theta).
$$
If for some $m$ there exists an contamination with $L(\tilde x^{(n)})\le 0$, the decision flips from rejection
to acceptance. The quantity $m_{\mathrm{rej},>0}^{\mathrm{up}}(x^{(n)})$ is defined using a specific
contamination scheme, hence it provides a valid, computable upper
bound on the true rejection breakdown point. On the other hand, Theorem~\ref{thm:meta} implies that
for any $\tilde x\in B_{\textsf H}(x^{(n)},m)$,
$$
  L(x^{(n)}) - L(\tilde x^{(n)})
  \;\le\;
  \eta_{m/n-}(\hat\theta,x^{(n)})
  + z_{1-\frac{\alpha}{2}}\,\eta_{m/n+}\bigl(\hatse(\hat\theta),x^{(n)}\bigr).
$$
Thus, as long as
$$
  \eta_{m/n-}(\hat\theta,x^{(n)})
  + z_{1-\frac{\alpha}{2}}\,\eta_{m/n+}\bigl(\hatse(\hat\theta),x^{(n)}\bigr)
  \;<\; L(x^{(n)}),
$$
no $m$-point contamination can force $L(\tilde x^{(n)})\le 0$, and the decision cannot change. This yields the
lower bound $m_{\mathrm{rej},>0}^{\mathrm{low}}(x^{(n)})$ in the case $L(x^{(n)})>0$. The case $U(x^{(n)})<0$ is
handled symmetrically by working with the endpoint $U(x^{(n)})$ instead of $L(x^{(n)})$, exactly as in
Theorem~\ref{thm:meta}.
For acceptance breakdown, $\phi(x^{(n)})=0$ implies $L(x^{(n)})\le 0 \le U(x^{(n)})$. To flip to rejection, one must either make $U(\tilde x^{(n)})<0$ (interval entirely negative) or make $L(\tilde x^{(n)})>0$
(interval entirely positive). The definitions of $m_{\mathrm{acc},-}^{\mathrm{up}}(x^{(n)})$ and
$m_{\mathrm{acc},+}^{\mathrm{up}}(x^{(n)})$ correspond to two explicit strategies: replacing $m$
extreme points by $c_{\text{lower}}$ or $c_{\text{upper}}$, respectively. These clearly provide
valid upper bounds. The lower bounds follow again from Theorem~\ref{thm:meta} by applying the
one–sided sensitivities to $L(x^{(n)})$ and $U(x^{(n)})$ and using the same type of argument as above.
\end{proof}
\subsection{Two-sample Test}
\label{sec:two_sample}
We consider testing equality of location parameters in two populations based on robust $M$-estimation. Let $n_x$ and $n_y$ denote the sample sizes of $x^{(n_x)}$ and $y^{(n_y)}$.
Given independent samples $\{x_1,\dots,x_{n_x}\}$ and $\{y_1,\dots,y_{n_y}\}$, we test
$$
  H_0:\ \theta_{x^{(n_x)}} = \theta_{y^{(n_y)}}
  \quad\text{vs.}\quad
  H_1:\ \theta_{x^{(n_x)}} \neq \theta_{y^{(n_y)}}.
$$
Let $\hat\theta_{x^{(n_x)}}$ and $\hat\theta_{y^{(n_y)}}$ be the $M$-estimators of location in the two samples, defined as
solutions to
$$
  \sum_{i=1}^{n_x} \psi(x^{(n_x)}_i-\theta) = 0,
  \qquad
  \sum_{j=1}^{n_y} \psi(y^{(n_y)}_j-\theta) = 0,
$$
respectively. Under standard regularity conditions, $\hat\theta_{x^{(n_x)}} - \hat\theta_{y^{(n_y)}}$ is asymptotically
normal with variance
$$
  \hatse(\hat\theta_{x^{(n_x)}},\hat\theta_{y^{(n_y)}})^2
  := \frac{\hatse_{x^{(n_x)}}^2}{n_x} + \frac{\hatse_{y^{(n_y)}}^2}{n_y},
$$
where the marginal asymptotic variances can be estimated by the usual sandwich formula
$$
  \hatse_{x^{(n_x)}}^2
  := \frac{ \frac{1}{n_x} \sum_{i=1}^{n_x} \psi(x^{(n_x)}_i - \hat{\theta}_{x^{(n_x)}})^2 }
            { \Bigl( \frac{1}{n_x} \sum_{i=1}^{n_x} \psi'({x^{(n_x)}_i} - \hat{\theta}_{x^{(n_x)}}) \Bigr)^2 },
  \qquad
  \hatse_{y^{(n_y)}}^2
  := \frac{ \frac{1}{n_y} \sum_{j=1}^{n_y} \psi({y^{(n_y)}_j} - \hat{\theta}_{y^{(n_y)}})^2 }
            { \Bigl( \frac{1}{n_y} \sum_{j=1}^{n_y} \psi'({y^{(n_y)}_j} - \hat{\theta}_{y^{(n_y)}}) \Bigr)^2 }.
$$
The test statistic is
$$
  T_{n_x,n_y}
  := \frac{\hat{\theta}_{x^{(n_x)}} - \hat{\theta}_{y^{(n_y)}}}
           {\sqrt{\hatse_{x^{(n_x)}}^2/n_x + \hatse_{y^{(n_y)}}^2/n_y}},
$$
where $\psi$ only needs to be differentiable almost everywhere; for example, for Huber's loss we take
$\psi_\delta'(x) = \II\{|x|\le\delta\}$. We reject $H_0$ at level $\alpha$ whenever
$|T_{n_x,n_y}| > z_{1-\alpha/2}$. Equivalently, the test rejects if
\begin{align}
\label{eq:test_two_sample}
  0 \notin (\hat{\theta}_{x^{(n_x)}} - \hat{\theta}_{y^{(n_y)}})
  \pm z_{1-\frac{\alpha}{2}}
  \cdot \sqrt{
    \frac{ \sum_{i=1}^{n_x} \psi(x^{(n_x)}_i - \hat{\theta}_{x^{(n_x)}})^2 }
         { \Bigl( \sum_{i=1}^{n_x} \psi'(x^{(n_x)}_i - \hat{\theta}_{x^{(n_x)}}) \Bigr)^2 }
    +
    \frac{ \sum_{j=1}^{n_y} \psi({y^{(n_y)}_j} - \hat{\theta}_{y^{(n_y)}})^2 }
         { \Bigl( \sum_{j=1}^{n_y} \psi'({y^{(n_y)}_j} - \hat{\theta}_{y^{(n_y)}}) \Bigr)^2 }
  }.
\end{align}
Set
$$n_\star := \min\{n_x,n_y\}.$$
We use the following (symmetric) definition of the test breakdown point. For a test function
$\phi({x^{(n_x)}},y^{(n_y)})$ (indicator of \eqref{eq:test_two_sample}), define
\begin{align*}
  \BP(\phi,{x^{(n_x)}},y^{(n_y)})
  := \frac{1}{n_\star}\,
     \min\Bigl\{
       m\ge 0:\ &\exists \tilde x^{(n_x)}\in B_\textsf{H}({x^{(n_x)}},m),\ \tilde y^{(n_y)}\in B_\textsf{H}(y^{(n_y)},m)\ \\
       &\text{with}\
       |\phi({x^{(n_x)}},y^{(n_y)})-\phi(\tilde x^{(n_x)},\tilde y^{(n_y)})| = 1
     \Bigr\},
\end{align*}
and analogously $\BP_{\mathrm{reject}}(\phi,{x^{(n_x)}},y^{(n_y)})$ and $\BP_{\mathrm{accept}}(\phi,{x^{(n_x)}},y^{(n_y)})$ by
restricting to flips from $1\to0$ and $0\to1$, respectively.
Furthermore, define one-sided sensitivities for the two-sample contrast as
\begin{align*}
    & \eta_{(k_1,k_2)+}(\hat\theta_{x^{(n_x)}} - \hat\theta_{y^{(n_y)}},({x^{(n_x)}},y^{(n_y)}))\\
    =&  \max\Bigl\{\eta:
       \exists \tilde x^{(n_x)}\in B_\textsf{H}({x^{(n_x)}},k_1), \tilde y^{(n_y)}\in B_\textsf{H}(y^{(n_y)},k_2),
       (\hat\theta_{\tilde x^{(n_x)}} - \hat\theta_{\tilde y^{(n_y)}}) - (\hat\theta_{x^{(n_x)}} - \hat\theta_{y^{(n_y)}}) = \eta
     \Bigr\}, \\
     & \eta_{(k_1,k_2)-}(\hat\theta_{x^{(n_x)}} - \hat\theta_{y^{(n_y)}},(x^{(n_x)},y^{(n_y)})) \\ = & \max\Bigl\{\eta:
       \exists \tilde x^{(n_x)}\in B_\textsf{H}({x^{(n_x)}},k_1), \tilde y^{(n_y)}\in B_\textsf{H}(y^{(n_y)},k_2),
       (\hat\theta_{{x^{(n_x)}}} - \hat\theta_{y^{(n_y)}}) - (\hat\theta_{\tilde x^{(n_x)}} - \hat\theta_{\tilde y^{(n_y)}}) = \eta
     \Bigr\}.
\end{align*}
Define the combined one-sided sensitivities as
\begin{align*}
  \eta_{m/n_\star+}(\hat\theta_{x^{(n_x)}} - \hat\theta_{y^{(n_y)}},(x^{(n_x)},y^{(n_y)}))
  &= \max\Bigl\{
       \eta_{(k_1,k_2)+}(\hat\theta_{x^{(n_x)}} - \hat\theta_{y^{(n_y)}},(x^{(n_x)},y^{(n_y)})):\ k_1+k_2=m
     \Bigr\},\\
  \eta_{m/n_\star-}(\hat\theta_{x^{(n_x)}} - \hat\theta_{y^{(n_y)}},(x^{(n_x)},y^{(n_y)}))
  &= \max\Bigl\{
       \eta_{(k_1,k_2)-}(\hat\theta_{x^{(n_x)}} - \hat\theta_{y^{(n_y)}},(x^{(n_x)},y^{(n_y)})):\ k_1+k_2=m
     \Bigr\},
\end{align*}
and similarly for $\hatse(\hat\theta_{x^{(n_x)}},\hat\theta_{y^{(n_y)}})$, where we abbreviate the latter as
$\se_{x^{(n_x)},y^{(n_y)}}$ in the formulas below.
\begin{corollary}[Two-sample sensitivities]
\label{cor:two_sample}
The one-sided sensitivities of the contrast $\hat\theta_{x^{(n_x)}} - \hat\theta_{y^{(n_y)}}$ satisfy
\begin{align*}
  \eta_{(k_1,k_2)+}(\hat\theta_{x^{(n_x)}} - \hat\theta_{y^{(n_y)}},(x^{(n_x)},y^{(n_y)}))
  &= \eta_{k_1/n_x+}(\hat\theta_{x^{(n_x)}},x^{(n_x)})
     + \eta_{k_2/n_y-}(\hat\theta_{y^{(n_y)}},y^{(n_y)}),\\
  \eta_{(k_1,k_2)-}(\hat\theta_{x^{(n_x)}} - \hat\theta_{y^{(n_y)}},(x^{(n_x)},y^{(n_y)}))
  &= \eta_{k_1/n_x-}(\hat\theta_{x^{(n_x)}},x^{(n_x)})
     + \eta_{k_2/n_y+}(\hat\theta_{y^{(n_y)}},y^{(n_y)}),
\end{align*}
where $\eta_{k_1/n_x\pm}(\hat\theta_{x^{(n_x)}},x^{(n_x)})$ and $\eta_{k_2/n_y\pm}(\hat\theta_{y^{(n_y)}},y^{(n_y)})$ are the
one-sample sensitivities.
For the plug-in standard error
$$
  \se_{x^{(n_x)},y^{(n_y)}}
  := \hatse(\hat\theta_{x^{(n_x)}},\hat\theta_{y^{(n_y)}})
  = \sqrt{\frac{\hatse_{x^{(n_x)}}^2}{n_x} + \frac{\hatse_{y^{(n_y)}}^2}{n_y}},
$$
we have
\begin{align*}
  &\eta_{(k_1, k_2)+}(\se_{x^{(n_x)},y^{(n_y)}},(x^{(n_x)},y^{(n_y)}))\\
  = & \sqrt{\bigl(\eta_{k_1/n_x+}(\hatse(\hat\theta_{x^{(n_x)}}),x^{(n_x)})
                 + \hatse(\hat\theta_{x^{(n_x)}})\bigr)^2
          + \bigl(\eta_{k_2/n_y+}(\hatse(\hat\theta_{y^{(n_y)}}),y^{(n_y)})
                 + \hatse(\hat\theta_{y^{(n_y)}})\bigr)^2}
     \;-\; \se_{x^{(n_x)},y^{(n_y)}},\\
  &\eta_{(k_1, k_2)-}(\se_{x^{(n_x)},y^{(n_y)}},(x^{(n_x)},y^{(n_y)}))\\
  =& \se_{x^{(n_x)},y^{(n_y)}}
     \;-\;
     \sqrt{\bigl(-\eta_{k_1/n_x-}(\hatse(\hat\theta_{x^{(n_x)}}),x^{(n_x)})
                  + \hatse(\hat\theta_{x^{(n_x)}})\bigr)^2
          + \bigl(-\eta_{k_2/n_y-}(\hatse(\hat\theta_{y^{(n_y)}}),y^{(n_y)})
                  + \hatse(\hat\theta_{y^{(n_y)}})\bigr)^2},
\end{align*}
\end{corollary}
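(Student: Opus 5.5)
The plan is to exploit that the two samples are contaminated independently and that the contrast and the pooled standard error are separable functions of per-sample quantities. Throughout, fix $(k_1,k_2)$. Any pair $(\tilde x^{(n_x)},\tilde y^{(n_y)})\in B_\textsf{H}(x^{(n_x)},k_1)\times B_\textsf{H}(y^{(n_y)},k_2)$ is admissible, and the two coordinates range over a product set.

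For the contrast, write the increment as
\[
(\hat\theta_{\tilde x^{(n_x)}}-\hat\theta_{\tilde y^{(n_y)}})-(\hat\theta_{x^{(n_x)}}-\hat\theta_{y^{(n_y)}})=(\hat\theta_{\tilde x^{(n_x)}}-\hat\theta_{x^{(n_x)}})+(\hat\theta_{y^{(n_y)}}-\hat\theta_{\tilde y^{(n_y)}}).
\]
The first summand depends only on $\tilde x^{(n_x)}$ and the second only on $\tilde y^{(n_y)}$. The supremum over the product set is therefore the sum of the separate suprema. By the definition of one-sided sensitivity, which in the convex location case is characterized explicitly in Corollary~\ref{cor:opt_attack_m_est}, these suprema are $\eta_{k_1/n_x+}(\hat\theta_{x^{(n_x)}},x^{(n_x)})$ and $\eta_{k_2/n_y-}(\hat\theta_{y^{(n_y)}},y^{(n_y)})$. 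Attainment follows from the extremal contaminations in the proof of Theorem~\ref{thm:loc}: send $k_1$ points of $x^{(n_x)}$ to $+\infty$ and $k_2$ points of $y^{(n_y)}$ to $-\infty$. This justifies the "max" in the definition. The $-$ case is symmetric.

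For the standard error, the pooled quantity is $g(\hatse_{x},\hatse_{y})=\sqrt{\hatse_{x}^2+\hatse_{y}^2}$, after absorbing the $1/n_x$, $1/n_y$ factors as in \eqref{eq:test_two_sample}. This $g$ is coordinatewise increasing on $\RR_{\ge0}^2$. Hence its maximum over the product set is attained by maximizing each coordinate separately. The per-sample maximal values are $\hatse(\hat\theta_{x^{(n_x)}})+\eta_{k_1/n_x+}(\hatse(\hat\theta_{x^{(n_x)}}),x^{(n_x)})$ and the analogous quantity for $y^{(n_y)}$. Substituting them and subtracting $\se_{x^{(n_x)},y^{(n_y)}}$ gives the upward formula.

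For the downward formula, minimize each coordinate, getting $\hatse-\eta_{k/n-}(\hatse,\cdot)$. One then uses monotonicity of $g$ on nonnegative arguments. The coordinate minima are nonnegative because a standard error cannot be negative, so $\eta_{k/n-}\le\hatse$.

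The main subtlety I expect is attainment versus supremum. The sensitivities may be limits, for instance points sent to $\pm\infty$ so that $\psi'\to0$ in Lemma~\ref{lem:se_at_theta_0}. If the supremum is not attained, I would reinterpret "$\max$" as "$\sup$" and argue by $\epsilon$-approximating sequences in each sample separately. Continuity of $g$ and of addition then transfers the approximation to the combined quantity.

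A second caveat is normalization. I would check that the $1/n_x$, $1/n_y$ scalings are absorbed consistently into $\hatse_x$ and $\hatse_y$ as written in the corollary. Otherwise the identity holds with the per-sample quantities rescaled, and the argument is unchanged.
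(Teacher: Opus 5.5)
Your proposal is correct and takes essentially the same approach as the paper. The contrast increment splits into an $x$-only term and a $y$-only term over the product set $B_\textsf{H}(x^{(n_x)},k_1)\times B_\textsf{H}(y^{(n_y)},k_2)$, so the supremum is the sum of the one-sample sensitivities, and the pooled standard error is handled by substituting the extreme per-sample values into $\sqrt{a^2+b^2}$. Your explicit appeal to coordinatewise monotonicity on the nonnegative orthant, together with your remarks on supremum versus maximum and on nonnegativity in the downward case, justifies the equality that the paper asserts directly after deriving only the upper bound.
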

\begin{proof}
For integers $k_1,k_2\ge 0$, write
$$
\Delta(\tilde x^{(n_x)},\tilde y^{(n_y)})
:= (\hat\theta_{\tilde x^{(n_x)}} - \hat\theta_{\tilde y^{(n_y)}})
   - (\hat\theta_{x^{(n_x)}} - \hat\theta_{y^{(n_y)}})
= (\hat\theta_{\tilde x^{(n_x)}} - \hat\theta_{x^{(n_x)}})
  - (\hat\theta_{\tilde y^{(n_y)}} - \hat\theta_{y^{(n_y)}}),
$$
for $\tilde x^{(n_x)}\in B_\textsf{H}(x^{(n_x)},k_1)$ and $\tilde y^{(n_y)}\in B_\textsf{H}(y^{(n_y)},k_2)$.
By definition of the one-sided two-sample $m$-sensitivity,
\begin{align*}
\eta_{(k_1,k_2)+}(\hat\theta_{x^{(n_x)}} - \hat\theta_{y^{(n_y)}},(x^{(n_x)},y^{(n_y)}))
&= \sup_{\substack{\tilde x^{(n_x)}\in B_\textsf{H}(x^{(n_x)},k_1)\\
                    \tilde y^{(n_y)}\in B_\textsf{H}(y^{(n_y)},k_2)}}
   \Delta(\tilde x^{(n_x)},\tilde y^{(n_y)})\\
&= \sup_{\tilde x^{(n_x)}\in B_\textsf{H}(x^{(n_x)},k_1)}
     (\hat\theta_{\tilde x^{(n_x)}} - \hat\theta_{x^{(n_x)}})
   + \sup_{\tilde y^{(n_y)}\in B_\textsf{H}(y^{(n_y)},k_2)}
     (\hat\theta_{y^{(n_y)}} - \hat\theta_{\tilde y^{(n_y)}})\\
&= \eta_{k_1/n_x+}(\hat\theta_{x^{(n_x)}},x^{(n_x)}) + \eta_{k_2/n_y-}(\hat\theta_{y^{(n_y)}},y^{(n_y)}),
\end{align*}
where we used the one-sample formulas in Corollary~\ref{cor:opt_attack_m_est} for
$\hat\theta_{x^{(n_x)}}$ and $\hat\theta_{y^{(n_y)}}$.
Similarly,
\begin{align*}
\eta_{(k_1,k_2)-}(\hat\theta_{x^{(n_x)}} - \hat\theta_{y^{(n_y)}},(x^{(n_x)},y^{(n_y)}))
&= \sup_{\substack{\tilde x^{(n_x)}\in B_\textsf{H}(x^{(n_x)},k_1)\\
                    \tilde y^{(n_y)}\in B_\textsf{H}(y^{(n_y)},k_2)}}
   \bigl[(\hat\theta_{x^{(n_x)}} - \hat\theta_{y^{(n_y)}})
         - (\hat\theta_{\tilde x^{(n_x)}} - \hat\theta_{\tilde y^{(n_y)}})\bigr]\\
&= \sup_{\tilde x^{(n_x)}\in B_\textsf{H}(x^{(n_x)},k_1)}
     (\hat\theta_{x^{(n_x)}} - \hat\theta_{\tilde x^{(n_x)}})
   + \sup_{\tilde y^{(n_y)}\in B_\textsf{H}(y^{(n_y)},k_2)}
     (\hat\theta_{\tilde y^{(n_y)}} - \hat\theta_{y^{(n_y)}})\\
&= \eta_{k_1/n_x-}(\hat\theta_{x^{(n_x)}},x^{(n_x)}) + \eta_{k_2/n_y+}(\hat\theta_{y^{(n_y)}},y^{(n_y)}).
\end{align*}
For the standard error, recall that
$$
\hatse(\hat\theta_{x^{(n_x)}},\hat\theta_{y^{(n_y)}})
= \sqrt{\hatse(\hat\theta_{x^{(n_x)}})^2 + \hatse(\hat\theta_{y^{(n_y)}})^2},
$$
where $\hatse(\hat\theta_{x^{(n_x)}})$ and $\hatse(\hat\theta_{y^{(n_y)}})$ are the
(one-sample) standard error estimates for $\hat\theta_{x^{(n_x)}}$ and $\hat\theta_{y^{(n_y)}}$.
Under an contamination $(\tilde x^{(n_x)},\tilde y^{(n_y)})$, set
$$
\hatse_{x^{(n_x)}}' := \hatse(\hat\theta_{\tilde x^{(n_x)}}),
\qquad
\hatse_{y^{(n_y)}}' := \hatse(\hat\theta_{\tilde y^{(n_y)}}).
$$
By defintion of $m$-sensitivity,
\begin{align*}
\hatse_{x^{(n_x)}}' &\in
\bigl[\hatse(\hat\theta_{x^{(n_x)}}) - \eta_{k_1-}(\hatse(\hat\theta_{x^{(n_x)}}),x^{(n_x)}),\,
      \hatse(\hat\theta_{x^{(n_x)}}) + \eta_{k_1+}(\hatse(\hat\theta_{x^{(n_x)}}),x^{(n_x)})\bigr],\\
\hatse_{y^{(n_y)}}' &\in
\bigl[\hatse(\hat\theta_{y^{(n_y)}}) - \eta_{k_2-}(\hatse(\hat\theta_{y^{(n_y)}}),y^{(n_y)}),\,
      \hatse(\hat\theta_{y^{(n_y)}}) + \eta_{k_2+}(\hatse(\hat\theta_{y^{(n_y)}}),y^{(n_y)})\bigr].
\end{align*}
Thus
\begin{align*}
&\hatse(\hat\theta_{\tilde x^{(n_x)}},\hat\theta_{\tilde y^{(n_y)}})
= \sqrt{(\hatse_{x^{(n_x)}}')^2 + (\hatse_{y^{(n_y)}}')^2}\\
\le&
\sqrt{\bigl(\hatse(\hat\theta_{x^{(n_x)}}) + \eta_{k_1/n_x+}(\hatse(\hat\theta_{x^{(n_x)}}),x^{(n_x)})\bigr)^2
      +\bigl(\hatse(\hat\theta_{y^{(n_y)}}) + \eta_{k_2/n_y+}(\hatse(\hat\theta_{y^{(n_y)}}),y^{(n_y)})\bigr)^2},
\end{align*}
and hence
\begin{align*}
&\eta_{(k_1,k_2)+}(\hatse(\hat\theta_{x^{(n_x)}},\hat\theta_{y^{(n_y)}}),(x^{(n_x)},y^{(n_y)}))
= \sup_{\substack{\tilde x^{(n_x)}\in B_\textsf{H}(x^{(n_x)},k_1)\\
                    \tilde y^{(n_y)}\in B_\textsf{H}(y^{(n_y)},k_2)}}
   \bigl[\hatse(\hat\theta_{\tilde x^{(n_x)}},\hat\theta_{\tilde y^{(n_y)}})
        - \hatse(\hat\theta_{x^{(n_x)}},\hat\theta_{y^{(n_y)}})\bigr]\\
=&
\sqrt{\bigl(\hatse(\hat\theta_{x^{(n_x)}}) + \eta_{k_1/n_x+}(\hatse(\hat\theta_{x^{(n_x)}}),x^{(n_x)})\bigr)^2
      +\bigl(\hatse(\hat\theta_{y^{(n_y)}}) + \eta_{k_2/n_y+}(\hatse(\hat\theta_{y^{(n_y)}}),y^{(n_y)})\bigr)^2}
- \hatse(\hat\theta_{x^{(n_x)}},\hat\theta_{y^{(n_y)}}),
\end{align*}
as claimed. The “$-$” case is analogous: the smallest possible value of
$\hatse(\hat\theta_{\tilde x^{(n_x)}},\hat\theta_{\tilde y^{(n_y)}})$ is obtained by taking
$\hatse_{x^{(n_x)}}'$ and $\hatse_{y^{(n_y)}}'$ as small as allowed, which yields the
formula for $\eta_{(k_1,k_2)-}$ in the statement.
\end{proof}
We now specialize Theorem~\ref{thm:meta} to the two-sample Wald-type test
\eqref{eq:test_two_sample}. For brevity, write
$$
  \hat\Delta := \hat\theta_{x^{(n_x)}} - \hat\theta_{y^{(n_y)}},
  \qquad
  \se_{x^{(n_x)},y^{(n_y)}} := \hatse(\hat\theta_{x^{(n_x)}},\hat\theta_{y^{(n_y)}}),
$$
and
$$
  L(x^{(n_x)},y^{(n_y)}) := \hat\Delta - z_{1-\frac{\alpha}{2}}\,\se_{x^{(n_x)},y^{(n_y)}},
  \qquad
  U(x^{(n_x)},y^{(n_y)}) := \hat\Delta + z_{1-\frac{\alpha}{2}}\,\se_{x^{(n_x)},y^{(n_y)}}.
$$
Then $\phi(x^{(n_x)},y^{(n_y)})=1$ iff $0\notin [L(x^{(n_x)},y^{(n_y)}),U(x^{(n_x)},y^{(n_y)})]$.
\begin{corollary}[Two-sample Wald-type test]
\label{cor:two_sample_test}
Assume that $\psi(t-\theta)$ in \eqref{M-est} is differentiable, bounded, non-increasing in
$\theta$, and passes through $0$. Consider the two-sided test $\phi$ in \eqref{eq:test_two_sample}.
\medskip
\noindent\textbf{Rejection breakdown.} Suppose $\phi(x^{(n_x)},y^{(n_y)})=1$.
\smallskip
\noindent(i) If $L(x^{(n_x)},y^{(n_y)})>0$ (equivalently $\hat\Delta>0$), define
\begin{align*}
  &m_{\mathrm{rej},>0}^{\mathrm{up}}(x^{(n_x)},y^{(n_y)})
  := \min\Bigl\{
        m\ge0:\ k_1 + k_2 = m,
        L\bigl(\mathsf{C}^R_{k_1, -\infty}(x^{(n_x)}),\mathsf{C}^L_{k_2, \infty}(y^{(n_y)})\bigr) \le 0
      \Bigr\},\\
 & m_{\mathrm{rej},>0}^{\mathrm{low}}(x^{(n_x)},y^{(n_y)}) \\
  := &\min\Bigl\{
        m\ge0:\
        \eta_{m/n_\star-}(\hat\Delta,(x^{(n_x)},y^{(n_y)}))
        + z_{1-\frac{\alpha}{2}}\,\eta_{m/n_\star+}(\se_{x^{(n_x)},y^{(n_y)}},(x^{(n_x)},y^{(n_y)}))
        \;\ge\; L(x^{(n_x)},y^{(n_y)})
      \Bigr\},
\end{align*}
where, for each $m$, we take the concrete contamination
\begin{align*}
  \mathsf{C}^R_{m, -\infty}(x^{(n_x)}) &= \{x^{(n_x)}_{(i)}\}_{i=1}^{\,n_x-m} \,\cup\, \{-\infty\}_{i=1}^{\,m},\\
  \mathsf{C}^L_{m, \infty}(y^{(n_y)}) &= \{y^{(n_y)}_{(j)}\}_{j=m+1}^{\,n_y} \,\cup\, \{\infty\}_{i=1}^{\,m}.
\end{align*}
Then
$$
  \frac{1}{n_\star}\,m_{\mathrm{rej},>0}^{\mathrm{low}}(x^{(n_x)},y^{(n_y)})
  \;\le\;
  \BP_{\mathrm{reject}}(\phi,x^{(n_x)},y^{(n_y)})
  \;\le\;
  \frac{1}{n_\star}\,m_{\mathrm{rej},>0}^{\mathrm{up}}(x^{(n_x)},y^{(n_y)}).
$$
\smallskip
\noindent(ii) If $U(x^{(n_x)},y^{(n_y)})<0$ (equivalently $\hat\Delta<0$), define
\begin{align*}
  &m_{\mathrm{rej},<0}^{\mathrm{up}}(x^{(n_x)},y^{(n_y)})
  := \min\Bigl\{
        m\ge0:\ k_1 + k_2 = m,
        U\bigl(\mathsf{C}^L_{k_1, \infty}(x^{(n_x)}),\mathsf{C}^R_{k_2, -\infty}(y^{(n_y)})\bigr) \ge 0
      \Bigr\},\\
  &m_{\mathrm{rej},<0}^{\mathrm{low}}(x^{(n_x)},y^{(n_y)}) \\
  := &\min\Bigl\{
        m\ge0:\
        \eta_{m/n_\star+}(\hat\Delta,(x^{(n_x)},y^{(n_y)}))
        + z_{1-\frac{\alpha}{2}}\,\eta_{m/n_\star+}(\se_{x^{(n_x)},y^{(n_y)}},(x^{(n_x)},y^{(n_y)}))
        \;\ge\; -U(x^{(n_x)},y^{(n_y)})
      \Bigr\}.
\end{align*}
Then
$$
  \frac{1}{n_\star}\,m_{\mathrm{rej},<0}^{\mathrm{low}}(x^{(n_x)},y^{(n_y)})
  \;\le\;
  \BP_{\mathrm{reject}}(\phi,x^{(n_x)},y^{(n_y)})
  \;\le\;
  \frac{1}{n_\star}\,m_{\mathrm{rej},<0}^{\mathrm{up}}(x^{(n_x)},y^{(n_y)}).
$$
\medskip
\noindent\textbf{Acceptance breakdown.} Suppose now $\phi(x^{(n_x)},y^{(n_y)})=0$, so $L(x^{(n_x)},y^{(n_y)})\le 0 \le U(x^{(n_x)},y^{(n_y)})$.
Remind $\mathcal C^R_{m, \mathcal C_R}$ and $\mathsf C^L_{m, \mathcal C_L}$ are the collections of contamination schemes defined in Section \ref{sec:con}. Define
\begin{align*}
  m_{\mathrm{acc},-}^{\mathrm{up}}(x^{(n_x)},y^{(n_y)})
  &:= \min\Bigl\{
        m\ge0:\ \tilde x^{(n)} \in \mathcal C^R_{m, \mathcal C_R}(x^{(n)}), \tilde y^{(n)} \in \mathsf C^L_{m, \mathcal C_L}(y^{(n)}),
        U\bigl(\tilde x^{(n)}, \tilde y^{(n)} \bigr) \le 0
      \Bigr\},\\
  m_{\mathrm{acc},+}^{\mathrm{up}}(x^{(n_x)},y^{(n_y)})
  &:= \min\Bigl\{
        m\ge0:\ \tilde x^{(n)} \in \mathsf C^L_{m, \mathcal C_L}(x^{(n)}), \tilde y^{(n)} \in \mathcal C^R_{m, \mathcal C_R}(y^{(n)}),
        L\bigl(\tilde x^{(n)},\tilde y^{(n)}\bigr) \ge 0
      \Bigr\}.
\end{align*}
Furthermore, define
\begin{align*}
  &m_{\mathrm{acc},-}^{\mathrm{low}}(x^{(n_x)},y^{(n_y)}) \\
  := &\min\Bigl\{
        m\ge0:\
        \eta_{m/n_\star-}(\hat\Delta,(x^{(n_x)},y^{(n_y)}))
        + z_{1-\frac{\alpha}{2}}\,\eta_{m/n_\star-}(\se_{x^{(n_x)},y^{(n_y)}},(x^{(n_x)},y^{(n_y)}))
        \;\ge\; U(x^{(n_x)},y^{(n_y)})
      \Bigr\},\\
  &m_{\mathrm{acc},+}^{\mathrm{low}}(x^{(n_x)},y^{(n_y)}) \\
  :=& \min\Bigl\{
        m\ge0:\
        \eta_{m/n_\star+}(\hat\Delta,(x^{(n_x)},y^{(n_y)}))
        + z_{1-\frac{\alpha}{2}}\,\eta_{m/n_\star-}(\se_{x^{(n_x)},y^{(n_y)}},(x^{(n_x)},y^{(n_y)}))
        \;\ge\; -L(x^{(n_x)},y^{(n_y)})
      \Bigr\}.
\end{align*}
Then
\begin{align*}
  &\BP_{\mathrm{accept}}(\phi,x^{(n_x)},y^{(n_y)})
  \;\ge\;
\frac{1}{n_\star}\,
  \min\Bigl\{
    m_{\mathrm{acc},-}^{\mathrm{low}}(x^{(n_x)},y^{(n_y)}),\,
    m_{\mathrm{acc},+}^{\mathrm{low}}(x^{(n_x)},y^{(n_y)})
  \Bigr\}, \\
  &\BP_{\mathrm{accept}}(\phi,x^{(n_x)},y^{(n_y)})
  \;\le\;
  \frac{1}{n_\star}\,
  \min\Bigl\{
    m_{\mathrm{acc},-}^{\mathrm{up}}(x^{(n_x)},y^{(n_y)}),\,
    m_{\mathrm{acc},+}^{\mathrm{up}}(x^{(n_x)},y^{(n_y)})
  \Bigr\}.
\end{align*}
In all of the above, the one-sample sensitivities
$\eta_{m/n_\star\pm}(\hat\theta_{x^{(n_x)}},x^{(n_x)})$ and $\eta_{m/n_\star\pm}(\hat\theta_{y^{(n_y)}},y^{(n_y)})$ are given by
Corollary~\ref{cor:opt_attack_m_est}, while upper bounds for
$\eta_{m/n_\star\pm}(\hatse(\hat\theta_{x^{(n_x)}}),x^{(n_x)})$ and
$\eta_{m/n_\star\pm}(\hatse(\hat\theta_{y^{(n_y)}}),y^{(n_y)})$ can be taken from
Lemma~\ref{lem:opt_attack_m_est_se} and plugged into the formulas in
Corollary~\ref{cor:two_sample}.
\end{corollary}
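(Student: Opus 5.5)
The plan is to reduce Corollary~\ref{cor:two_sample_test} to the structure of Theorem~\ref{thm:meta}, with the one-sample sensitivities replaced by the two-sample quantities from Corollary~\ref{cor:two_sample}. The starting observation is that contaminating $m$ points in total, split as $k_1+k_2=m$ across the two samples, changes the endpoints $L$ and $U$ only through $\hat\Delta$ and $\se_{x^{(n_x)},y^{(n_y)}}$. I will treat the functionals $f=\hat\Delta-z_{1-\alpha/2}\se$ and $g=\hat\Delta+z_{1-\alpha/2}\se$ exactly as in the proof of Theorem~\ref{thm:meta}.

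\emph{Lower bounds.} Take case (i), $L>0$; the other cases follow the same pattern. For any $\tilde x^{(n_x)}\in B_\textsf{H}(x^{(n_x)},k_1)$ and $\tilde y^{(n_y)}\in B_\textsf{H}(y^{(n_y)},k_2)$ with $k_1+k_2=m$, I will write
\[
L(x^{(n_x)},y^{(n_y)})-L(\tilde x^{(n_x)},\tilde y^{(n_y)})=(\hat\Delta-\tilde\Delta)+z_{1-\alpha/2}(\tilde{\se}-\se).
\]
Each bracket is then bounded by its own supremum: the first by $\eta_{(k_1,k_2)-}(\hat\Delta,\cdot)$ and the second by $\eta_{(k_1,k_2)+}(\se,\cdot)$. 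Maximizing over splits with $k_1+k_2=m$ gives the combined sensitivities $\eta_{m/n_\star\mp}$. If their weighted sum is strictly less than $L$, then no flip to acceptance is possible with $m$ contaminations, which yields the lower bound $m^{\mathrm{low}}_{\mathrm{rej},>0}/n_\star$. Two further points need care. First, the normalization is $1/n_\star$ by the two-sample definition of $\BP$. Second, the two brackets are bounded separately, so their maximizers need not coincide; this is harmless because we only need an inequality. The acceptance lower bounds follow by the same argument applied to $U\le0$ or $L\ge0$, taking the minimum over the two directions.

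\emph{Upper bounds.} For these it is enough to exhibit explicit contaminations inside the Hamming balls. In case (i) I will use $\mathsf C^R_{k_1,-\infty}(x^{(n_x)})$ and $\mathsf C^L_{k_2,\infty}(y^{(n_y)})$, each of which differs from the original sample in at most $k_i\le m$ coordinates. If $L$ evaluated at these contaminated samples is at most $0$, the decision flips, so the minimal such $m$ upper-bounds $n_\star\BP_{\mathrm{reject}}$. The acceptance case uses members of $\mathsf C^{L/R}_{m,\mathcal C}$ in the same way. Finally, I will invoke Corollary~\ref{cor:two_sample} together with Corollary~\ref{cor:opt_attack_m_est} and Lemma~\ref{lem:opt_attack_m_est_se}. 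These show the combined sensitivities are computable, and replacing $\eta(\se)$ by an upper bound only decreases $m^{\mathrm{low}}$, so the bound remains valid.

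The main obstacle is bookkeeping the budget. The definition of $\BP$ allows up to $m$ contaminations in each sample, whereas the sensitivities are defined with $k_1+k_2=m$. I will check that both bounds are consistent with whichever convention is fixed. For the lower bound in particular, if each sample can receive $m$ contaminations, the relevant supremum must run over $k_1,k_2\le m$. I would handle this by noting that the lower-bound argument holds for any admissible pair $(k_1,k_2)$, and then aligning the statement's $\eta_{m/n_\star\pm}$ with that budget.
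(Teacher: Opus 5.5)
Your argument is essentially the paper's. You split the change of the relevant endpoint into the change of $\hat\Delta$ plus $z_{1-\alpha/2}$ times the change of $\se_{x^{(n_x)},y^{(n_y)}}$. You bound each term by its own two-sample sensitivity from Corollary~\ref{cor:two_sample}, and you get the upper bounds from explicit contaminations.

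The budget mismatch you flag is real, and the paper's proof does not resolve it. It argues the lower bounds with ``$k_1+k_2=m$ corruptions''. But it justifies the acceptance upper bounds by sending ``$m$ points in each sample'' to the prescribed locations. As a result, no single convention validates every line of the statement:
\begin{itemize}
\item Under the stated definition ($\tilde x^{(n_x)}\in B_\textsf{H}(x^{(n_x)},m)$ and $\tilde y^{(n_y)}\in B_\textsf{H}(y^{(n_y)},m)$), the adversary can push $\hat\Delta$ down by $\eta_{m/n_x-}(\hat\theta_{x^{(n_x)}},x^{(n_x)})+\eta_{m/n_y+}(\hat\theta_{y^{(n_y)}},y^{(n_y)})$. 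This can exceed $\eta_{m/n_\star-}(\hat\Delta,(x^{(n_x)},y^{(n_y)}))$, which is defined as a maximum over $k_1+k_2=m$, so the lower bounds can fail.
\item Under a total budget of $m$, the acceptance upper bounds spend up to $2m$ replacements, so they can fail.
\end{itemize}
Your repair works: keep the stated definition and take the combined sensitivities over $k_1,k_2\le m$. Because the Hamming balls are nested, this is simply $\eta_{(m,m)\pm}$, and it makes every bound valid with your argument unchanged. The other consistent option is a total budget $m$, with the acceptance contaminations restricted to splits $k_1+k_2=m$ as in the rejection case.

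One step you share with the paper needs an extra line. In the rejection case, $L(\tilde x^{(n_x)},\tilde y^{(n_y)})\le 0$ does not by itself flip $\phi$, because acceptance also requires $U(\tilde x^{(n_x)},\tilde y^{(n_y)})\ge 0$. Sending points to $\mp\infty$ can overshoot; for example, for odd $\psi$ and $k_1>n_x/2$, $\hat\theta_{\tilde x^{(n_x)}}$ breaks down to $-\infty$.

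The fix is an intermediate-value argument. Move the contaminated observations continuously from their original positions to their targets, and assume the location $M$-estimates depend continuously on each observation (true, e.g., when the roots are unique). Then either the final configuration already has $\tilde\Delta\ge 0$, so $U\ge 0$, or $\tilde\Delta$ crosses $0$ along the way. At the crossing configuration, which uses the same replacement budget, $L=-z_{1-\alpha/2}\se\le 0\le U$. The case $U<0$ is symmetric.
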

\begin{proof}
The two-sample test in \eqref{eq:test_two_sample} has the same structure as
Theorem~\ref{thm:meta}, with
$$
\hat\theta_{\mathrm{test}} := \hat\theta_{x^{(n_x)}} - \hat\theta_{y^{(n_y)}},
\qquad
\se_{\mathrm{test}} := \hatse(\hat\theta_{x^{(n_x)}},\hat\theta_{y^{(n_y)}}),
$$
and
$$
\phi(x^{(n_x)},y^{(n_y)})
= \II\Bigl\{0 \notin (\hat\theta_{x^{(n_x)}} - \hat\theta_{y^{(n_y)}})
                   \pm z_{1-\frac{\alpha}{2}}\,
                       \hatse(\hat\theta_{x^{(n_x)}},\hat\theta_{y^{(n_y)}})\Bigr\}.
$$
The only additional feature is that an $m$-point contamination is split as
$k_1$ replacements in $x$ and $k_2$ replacements in $y$, with
$k_1+k_2=m$, and the normalization is by $\min\{n_x,n_y\}$.
\medskip
\noindent\emph{Rejection breakdown.}
Suppose $\phi(x^{(n_x)},y^{(n_y)})=1$ and $\hat\theta_{x^{(n_x)}} - \hat\theta_{y^{(n_y)}} > 0$. Then the lower
endpoint
$$
L(x^{(n_x)},y^{(n_y)})
:= (\hat\theta_{x^{(n_x)}} - \hat\theta_{y^{(n_y)}})
   - z_{1-\frac{\alpha}{2}}\,\hatse(\hat\theta_{x^{(n_x)}},\hat\theta_{y^{(n_y)}})
$$
is strictly positive. If one wants to flip the test to acceptance, one must produce $(\tilde x^{(n_x)},\tilde y^{(n_y)})$ such that $L(\tilde x^{(n_x)},\tilde y^{(n_y)})\le 0$.
Let $m$ be such that $k_1+k_2=m$ corruptions are allowed. For any such
$(\tilde x^{(n_x)},\tilde y^{(n_y)})$,
\begin{align*}
&L(\tilde x^{(n_x)},\tilde y^{(n_y)}) - L(x^{(n_x)},y^{(n_y)}) \\
= &\bigl[(\hat\theta_{\tilde x^{(n_x)}} - \hat\theta_{\tilde y^{(n_y)}})
          - (\hat\theta_{x^{(n_x)}} - \hat\theta_{y^{(n_y)}})\bigr]
   - z_{1-\frac{\alpha}{2}}
     \bigl[\hatse(\hat\theta_{\tilde x^{(n_x)}},\hat\theta_{\tilde y^{(n_y)}})
           - \hatse(\hat\theta_{x^{(n_x)}},\hat\theta_{y^{(n_y)}})\bigr]\\
\ge &
- \eta_{m/n_\star-}(\hat\theta_{x^{(n_x)}} - \hat\theta_{y^{(n_y)}},(x^{(n_x)},y^{(n_y)}))
- z_{1-\frac{\alpha}{2}}\,
  \eta_{m/n_\star+}(\hatse(\hat\theta_{x^{(n_x)}},\hat\theta_{y^{(n_y)}}),(x^{(n_x)},y^{(n_y)})),
\end{align*}
where we used the two-sample sensitivities from Corollary~\ref{cor:two_sample}
and the definition of $\eta_{m/n\pm}$.
Hence, if
\begin{align*}
&\eta_{m/n_\star-}(\hat\theta_{x^{(n_x)}} - \hat\theta_{y^{(n_y)}},(x^{(n_x)},y^{(n_y)}))
+ z_{1-\frac{\alpha}{2}}\,
  \eta_{m/n_\star+}(\hatse(\hat\theta_{x^{(n_x)}},\hat\theta_{y^{(n_y)}}),(x^{(n_x)},y^{(n_y)})) \\
<
&(\hat\theta_{x^{(n_x)}} - \hat\theta_{y^{(n_y)}})
- z_{1-\frac{\alpha}{2}}\,\hatse(\hat\theta_{x^{(n_x)}},\hat\theta_{y^{(n_y)}}),
\end{align*}
then $L(\tilde x^{(n_x)},\tilde y^{(n_y)})>0$ for all $(\tilde x^{(n_x)},\tilde y^{(n_y)})$ with
$k_1+k_2=m$ and the test still rejects. The smallest $m$ for which the
inequality reverses yields the lower bound in the corollary for
$\BP_{\mathrm{reject}}(\phi,x^{(n_x)},y^{(n_y)})$ in the case $\hat\theta_{x^{(n_x)}} - \hat\theta_{y^{(n_y)}}>0$.
The case $\hat\theta_{x^{(n_x)}} - \hat\theta_{y^{(n_y)}} < 0$ is treated symmetrically, now
working with the upper endpoint
$$
U(x^{(n_x)},y^{(n_y)})
:= (\hat\theta_{x^{(n_x)}} - \hat\theta_{y^{(n_y)}})
   + z_{1-\frac{\alpha}{2}}\,\hatse(\hat\theta_{x^{(n_x)}},\hat\theta_{y^{(n_y)}}),
$$
and the $m$-sensitivity $\eta_{m/n+}(\hat\theta_{x^{(n_x)}} - \hat\theta_{y^{(n_y)}},(x^{(n_x)},y^{(n_y)}))$.
The explicit upper bounds in the statement come from constructing
concrete extremal contamination schemes, exactly in the spirit of Theorem~\ref{thm:meta}:
when $\hat\theta_{x^{(n_x)}}-\hat\theta_{y^{(n_y)}}>0$,one sends the most favorable
points in $x^{(n_x)}$ towards $-\infty$ and those in $y^{(n_y)}$ towards $+\infty$ so as to
decrease the difference and inflate the standard error. The formulas appearing
in the corollary are obtained by implementing this strategy with the order
statistics of the two samples, and counting the minimal $m$ for which
$L(\tilde x^{(n_x)},\tilde y^{(n_y)})\le 0$ (or $U(\tilde x^{(n_x)},\tilde y^{(n_y)})\ge 0$ in the other
sign case).
\medskip
\noindent\emph{Acceptance breakdown.}
Now suppose $\phi(x^{(n_x)},y^{(n_y)})=0$, so
$$
L(x^{(n_x)},y^{(n_y)})\le 0 \le U(x^{(n_x)},y^{(n_y)}).
$$
To flip the decision, one must either push the interval entirely to
the left, $U(\tilde x^{(n_x)},\tilde y^{(n_y)})<0$, or entirely to the right,
$L(\tilde x^{(n_x)},\tilde y^{(n_y)})>0$. The explicit upper bounds in the statement are
obtained by considering deterministic contamination where $m$ points in each sample
are sent to some prescribed collection of locations, and the resulting $m$ determine $\Delta_{\mathrm{upper}}^-$ and
$\Delta_{\mathrm{upper}}^+$.
For the lower bounds, one argues as in Theorem~\ref{thm:meta}. For any
$m$-point contamination,
\begin{align*}
&U(\tilde x^{(n_x)},\tilde y^{(n_y)}) - U(x^{(n_x)},y^{(n_y)})\\
=& \bigl[(\hat\theta_{\tilde x^{(n_x)}} - \hat\theta_{\tilde y^{(n_y)}})
          - (\hat\theta_{x^{(n_x)}} - \hat\theta_{y^{(n_y)}})\bigr]
   + z_{1-\frac{\alpha}{2}}
     \bigl[\hatse(\hat\theta_{\tilde x^{(n_x)}},\hat\theta_{\tilde y^{(n_y)}})
           - \hatse(\hat\theta_{x^{(n_x)}},\hat\theta_{y^{(n_y)}})\bigr]\\
\ge &
- \eta_{m/n_\star-}(\hat\theta_{x^{(n_x)}} - \hat\theta_{y^{(n_y)}},(x^{(n_x)},y^{(n_y)}))
- z_{1-\frac{\alpha}{2}}\,
  \eta_{m/n_\star-}(\hatse(\hat\theta_{x^{(n_x)}},\hat\theta_{y^{(n_y)}}),(x^{(n_x)},y^{(n_y)})),
\end{align*}
and similarly
\begin{align*}
&L(\tilde x^{(n_x)},\tilde y^{(n_y)}) - L(x^{(n_x)},y^{(n_y)}) \\
=& \bigl[(\hat\theta_{\tilde x^{(n_x)}} - \hat\theta_{\tilde y^{(n_y)}})
          - (\hat\theta_{x^{(n_x)}} - \hat\theta_{y^{(n_y)}})\bigr]
   - z_{1-\frac{\alpha}{2}}
     \bigl[\hatse(\hat\theta_{\tilde x^{(n_x)}},\hat\theta_{\tilde y^{(n_y)}})
           - \hatse(\hat\theta_{x^{(n_x)}},\hat\theta_{y^{(n_y)}})\bigr]\\
\ge &
- \eta_{m/n_\star+}(\hat\theta_{x^{(n_x)}} - \hat\theta_{y^{(n_y)}},(x^{(n_x)},y^{(n_y)}))
+ z_{1-\frac{\alpha}{2}}\,
  \eta_{m/n_\star-}(\hatse(\hat\theta_{x^{(n_x)}},\hat\theta_{y^{(n_y)}}),(x^{(n_x)},y^{(n_y)})).
\end{align*}
Rearranging these inequalities gives exactly the conditions defining
$\Delta_{\mathrm{lower}}^-$ and $\Delta_{\mathrm{lower}}^+$ in the corollary,
and hence the stated lower bounds on $\BP_{\mathrm{accept}}(\phi,x^{(n_x)},y^{(n_y)})$ after
normalization by $\min\{n_x,n_y\}$.
\end{proof}
\subsection{Score-type Test}
\label{sec:score}
The score-type test is based on the statistic
$$
  V_n(x^{(n)}) := \frac{1}{\sqrt{n}} \sum_{i=1}^n \psi(x_i),
$$
see, e.g., \citet{he:simpson:portnoy1990}. We reject $H_0: V_n = 0$ whenever $0$ lies outside a $(1-\alpha)$ Wald-type
interval:
$$
  0 \notin V_n \pm z_{1-\frac{\alpha}{2}} \cdot \hatse(V_n).
$$
In practice, one typically uses either the fully empirical plug-in variance
$$
  0 \notin
  \frac{1}{\sqrt{n}} \sum_{i=1}^n \psi(x_i)
  \;\pm\;
  z_{1-\frac{\alpha}{2}} \cdot
  \sqrt{\frac{1}{n} \sum_{i=1}^n \psi(x_i)^2},
$$
or a “restricted” version using the model-based variance
$$
  0 \notin
  \frac{1}{\sqrt{n}} \sum_{i=1}^n \psi(x_i)
  \;\pm\;
  z_{1-\frac{\alpha}{2}} \cdot \sqrt{\EE_{\theta_0}[\psi(X)^2]}.
$$
We next characterize sensitivities and threshold breakdown points of the score and its plug-in
standard error.
\begin{lemma}
\label{lem:v_n}
Assume that $\psi(t-\theta)$ in \eqref{M-est} is differentiable, bounded, non-increasing in
$\theta$, and passes through 0. Let $x_{(1)} \le \dots \le x_{(n)}$ be the order statistics of $x^{(n)}$, and define
$$
  V_n(x^{(n)}) := \frac{1}{\sqrt{n}} \sum_{i=1}^n \psi(x_i).
$$
Then the one-sided threshold breakdown points of $V_n$ are
\begin{align*}
  \BP_{\eta+}(V_n,x^{(n)})
  &= \frac{1}{n}
     \min\Bigl\{
       m:\
       \frac{1}{\sqrt{n}}
       \Bigl(\sum_{i>m} \psi(x_{(i)}) + m\,\psi(\infty)\Bigr)
       > V_n + \eta
     \Bigr\},\\
  \BP_{\eta-}(V_n,x^{(n)})
  &= \frac{1}{n}
     \min\Bigl\{
       m:\
       \frac{1}{\sqrt{n}}
       \Bigl(\sum_{i\le n-m} \psi(x_{(i)}) + m\,\psi(-\infty)\Bigr)
       < V_n - \eta
     \Bigr\}.
\end{align*}
The corresponding one-sided sensitivities are
\begin{align*}
  \eta_{m/n+}(V_n,x^{(n)})
  &= \frac{1}{\sqrt{n}}
     \Bigl(\sum_{i>m} \psi(x_{(i)}) + m\,\psi(\infty)\Bigr)
     - V_n,\\
  \eta_{m/n-}(V_n,x^{(n)})
  &= V_n
     - \frac{1}{\sqrt{n}}
       \Bigl(\sum_{i\le n-m} \psi(x_{(i)}) + m\,\psi(-\infty)\Bigr).
\end{align*}
\end{lemma}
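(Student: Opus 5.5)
The statistic $V_n$ is an additive functional of the sample, so the worst-case contamination can be found coordinate by coordinate. No estimating equation has to be inverted, which is what made Theorem~\ref{thm:loc} more delicate. The plan is to compute the extreme values
\[
\overline V_{n,m}:=\sup_{y^{(n)}\in B_{\textsf H}(x^{(n)},m)} V_n(y^{(n)}),\qquad \underline V_{n,m}:=\inf_{y^{(n)}\in B_{\textsf H}(x^{(n)},m)} V_n(y^{(n)}),
\]
and then read off the sensitivities as $\eta_{m/n+}=\overline V_{n,m}-V_n$ and $\eta_{m/n-}=V_n-\underline V_{n,m}$. The breakdown points are then the smallest $m$ at which these quantities reach $\eta$.

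\textbf{Upper value.} Fix a set $J$ of $m$ indices to be replaced and write $I=[n]\setminus J$. Then
\[
\sqrt n\,V_n(y^{(n)})=\sum_{i\in I}\psi(x_i)+\sum_{j\in J}\psi(y_j)\le \sum_{i\in I}\psi(x_i)+m\,\psi(\infty),
\]
since $\psi$ is non-decreasing and bounded, so $\psi(y)\le\psi(\infty)=\sup\psi$. Letting $y_j\to\infty$ attains the bound in the limit, and it is attained exactly if $\psi$ reaches its supremum. For a fixed $|I|=n-m$, the sum $\sum_{i\in I}\psi(x_i)$ is largest when $I$ keeps the $n-m$ largest order statistics, again by monotonicity. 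This is the same exchange argument as in the proof of Theorem~\ref{thm:loc}: swapping a kept point for a larger removed point never decreases the sum. Hence
\[
\overline V_{n,m}=n^{-1/2}\Bigl(\sum_{i>m}\psi(x_{(i)})+m\psi(\infty)\Bigr).
\]

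\textbf{Lower value.} Symmetrically, replace the $m$ largest order statistics by points sent to $-\infty$. This gives
\[
\underline V_{n,m}=n^{-1/2}\Bigl(\sum_{i\le n-m}\psi(x_{(i)})+m\psi(-\infty)\Bigr).
\]
These two displays give the stated one-sided sensitivities directly from Definitions~\ref{def:BP_directional} and~\ref{def:eta_directional}. The reason is that $\BP_{\eta+}(V_n,x^{(n)})=m/n$ holds exactly for $\eta$ in the range between successive values of $\overline V_{n,m}-V_n$, which is non-decreasing in $m$. The supremum of that range is $\overline V_{n,m}-V_n$.

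\textbf{Breakdown points and the main obstacle.} The breakdown points are the smallest $m$ with $\overline V_{n,m}\ge V_n+\eta$, respectively $\underline V_{n,m}\le V_n-\eta$. The only delicate step is the boundary: strict versus non-strict inequality, and attainment of $\psi(\pm\infty)$. When $\psi$ does not attain its supremum, the value $\overline V_{n,m}$ is a supremum that is not achieved. In that case a shift of exactly $\overline V_{n,m}-V_n$ is not realizable with $m$ points, so the breakdown condition must be written with the strict inequality $>V_n+\eta$, as in the statement. Values of $\eta$ strictly below $\overline V_{n,m}-V_n$ are still reached with finite contaminating points, which justifies the sensitivity formula as a supremum. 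I would handle this by approximating $\psi(\infty)$ with finite $y_j$, using continuity of the sum, and being careful about which side of each inequality is closed. The Hamming-ball monotonicity in $m$ then finishes the argument.
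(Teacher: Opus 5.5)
Your proposal is correct and follows essentially the same route as the paper. Both compute the supremum (resp.\ infimum) of the additive statistic $V_n$ over the Hamming ball by sending the $m$ smallest (resp.\ largest) order statistics to $+\infty$ (resp.\ $-\infty$), and then read off the one-sided sensitivities and breakdown points. Your handling of the boundary is more careful than the paper's: it shows the strict inequalities in the breakdown formulas are exact only when $\psi$ does not attain $\psi(\infty)$ (resp.\ $\psi(-\infty)$), and when it does attain them the conditions should read $\ge V_n+\eta$ (resp.\ $\le V_n-\eta$).
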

\begin{proof}
Let $x_{(1)}\le\cdots\le x_{(n)}$ denote the order statistics of $x$ and recall
$$
V_n(x^{(n)}) := \frac{1}{\sqrt{n}} \sum_{i=1}^n \psi(x_i).
$$
Because $\psi(t-\theta)$ is non-increasing in $\theta$ and bounded, the largest
value of $V_n$ attainable by at most $m$ replacements is obtained by sending
the $m$ smallest observations to $+\infty$, which yields
$$
\sup_{\tilde x^{(n)} \in B_\textsf{H}(x^{(n)},m)} V_n(\tilde x^{(n)})
= \frac{1}{\sqrt{n}}
  \Bigl(\sum_{i>m} \psi(x_{(i)}) + m\,\psi(\infty)\Bigr).
$$
Therefore the one-sided $m$-sensitivity in the “$+$” direction is
\begin{align*}
\eta_{m/n+}(V_n,x^{(n)})
&= \sup_{\tilde x^{(n)} \in B_\textsf{H}(x^{(n)},m)}
     \bigl\{V_n(\tilde x^{(n)}) - V_n(x^{(n)})\bigr\}\\
&=
\frac{1}{\sqrt{n}}
\Bigl(\sum_{i>m} \psi(x_{(i)}) + m\,\psi(\infty)\Bigr)
- V_n(x^{(n)}),
\end{align*}
and the threshold breakdown point $\BP_{\eta+}(V_n,x^{(n)})$ is the smallest
$m$ for which $V_n(x^{(n)})+\eta$ becomes attainable, giving the formula in the
lemma.
For the “$-$” direction, to \emph{decrease} $V_n$ one sends the $m$
largest observations to $-\infty$, leading to
$$
\inf_{\tilde x^{(n)} \in B_\textsf{H}(x^{(n)},m)} V_n(\tilde x^{(n)})
= \frac{1}{\sqrt{n}}
  \Bigl(\sum_{i\le n-m} \psi(x_{(i)}) + m\,\psi(-\infty)\Bigr).
$$
Thus
\begin{align*}
\eta_{m/n-}(V_n,x^{(n)})
&= V_n(x^{(n)})
   - \inf_{\tilde x^{(n)} \in B_\textsf{H}(x^{(n)},m)} V_n(\tilde x^{(n)})\\
&= V_n(x^{(n)})
   - \frac{1}{\sqrt{n}}
     \Bigl(\sum_{i\le n-m} \psi(x_{(i)}) + m\,\psi(-\infty)\Bigr),
\end{align*}
and $\BP_{\eta-}(V_n,x^{(n)})$ is again the smallest $m$ for which $V_n(x^{(n)})-\eta$
is attainable. This matches the expressions in the statement.
\end{proof}
\begin{lemma}
\label{lem:s_n}
Assume that $\psi(t-\theta)$ in \eqref{M-est} is differentiable, bounded, non-increasing in
$\theta$, and passes through 0. Let
$$
  S_n := \sqrt{\frac{1}{n}\sum_{i=1}^n \psi(x_i)^2}
$$
denote the empirical plug-in standard deviation of $V_n$, and set
$$
  \psi_{\max} := \max\{|\psi(\infty)|,\,|\psi(-\infty)|\}.
$$
Then the one-sided threshold breakdown points of $S_n$ satisfy
\begin{align*}
  \BP_{\eta+}(S_n,x^{(n)})
  &= \frac{1}{n}
     \min\Bigl\{
       m:\
       \max_{\substack{I \subseteq [n]\\|I|=n-m}}
       \sqrt{\frac{1}{n}\Bigl(
         \sum_{i\in I} \psi(x_i)^2 + m\,\psi_{\max}^2
       \Bigr)}
       > S_n + \eta
     \Bigr\},\\
  \BP_{\eta-}(S_n,x^{(n)})
  &= \frac{1}{n}
     \min\Bigl\{
       m:\
       \min_{\substack{I \subseteq [n]\\|I|=n-m}}
       \sqrt{\frac{1}{n}\sum_{i\in I} \psi(x_i)^2}
       < S_n - \eta
     \Bigr\}.
\end{align*}
If, in addition, $\psi$ is odd and $\psi'(x)$ is non-increasing in $|x|$, let $\pi$ be a permutation such that $|x_{\pi_1}| \le |x_{\pi_2}| \le \dots \le |x_{\pi_n}|$.
Then
\begin{align}
\label{eq:bp+se2}
  \BP_{\eta+}(S_n,x^{(n)})
  = \frac{1}{n}
    \min\Bigl\{
      m:\
      m >
      \frac{n\,(S_n+\eta)^2 - \sum_{i>m} \psi(x_{\pi_i})^2}{\psi_{\max}^2}
    \Bigr\},
\end{align}
and the corresponding one-sided $m$-sensitivity is
\begin{align*}
  \eta_{m/n+}(S_n,x^{(n)})
  &= \sqrt{
       \frac{1}{n}
       \Bigl(
         m\,\psi_{\max}^2
         + \sum_{i>m} \psi(x_{\pi_i})^2
       \Bigr)
     }
     - S_n.
\end{align*}
Moreover,
\begin{align*}
  \BP_{\eta-}(S_n,x^{(n)})
  &= \frac{1}{n}
     \min\Bigl\{
       m:\
       \sqrt{\frac{1}{n}\sum_{i\le n-m} \psi(x_{\pi_i})^2}
       < S_n - \eta
     \Bigr\},\\
  \eta_{m/n-}(S_n,x^{(n)})
  &= S_n
     - \sqrt{\frac{1}{n}\sum_{i\le n-m} \psi(x_{\pi_i})^2}.
\end{align*}
\end{lemma}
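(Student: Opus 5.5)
The plan is to follow the proof of Lemma~\ref{lem:se_at_theta_0} almost verbatim. The situation is simpler here because $S_n$ has no denominator: $S_n^2 = n^{-1}\sum_i \psi(x_i)^2$ is a sum of bounded, nonnegative per-observation contributions. First, fix the set $J\subseteq[n]$ of $|J|=m$ replaced indices and let $I=[n]\setminus J$. The contaminated value of $nS_n^2$ is then $\sum_{i\in I}\psi(x_i)^2+\sum_{j\in J}\psi(\tilde x_j)^2$. Each replaced term ranges in $[0,\psi_{\max}^2]$. The upper value $\psi_{\max}^2$ is attained (or approached, if $\psi$ attains its bound only at $\pm\infty$) by sending $\tilde x_j$ to the appropriate infinity; because $\psi$ is monotone, the relevant sign is the one realizing $\psi_{\max}$. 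The lower value $0$ is attained by any point with $\psi(\tilde x_j)=0$, which exists since $\psi$ passes through $0$. Therefore
\[
\sup_{\tilde x^{(n)}\in B_{\textsf H}(x^{(n)},m)} S_n(\tilde x^{(n)})=\max_{|I|=n-m}\sqrt{\tfrac1n\bigl(\textstyle\sum_{i\in I}\psi(x_i)^2+m\psi_{\max}^2\bigr)},
\]
and the infimum is the corresponding minimum of $\sqrt{n^{-1}\sum_{i\in I}\psi(x_i)^2}$. Moreover, the supremum and infimum are monotone in $m$, since $B_{\textsf H}(x^{(n)},m)$ is nested in $m$. The one-sided breakdown points are then, by definition, the smallest $m$ at which the supremum exceeds $S_n+\eta$ (respectively, the infimum falls below $S_n-\eta$). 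This gives the first two displays.

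Next come the ordered formulas. When $\psi$ is odd and nondecreasing, $\psi(x)^2$ is nondecreasing in $|x|$. Hence the maximizing $I$ drops the $m$ observations with the smallest $|x_{\pi_i}|$, i.e. it keeps $\{\pi_i:i>m\}$, and the minimizing $I$ drops the $m$ largest, i.e. it keeps $\{\pi_i:i\le n-m\}$. This is a simple exchange argument: swapping a kept index for a dropped one with larger (resp.\ smaller) $\psi^2$ does not decrease (resp.\ increase) the sum. The hypothesis on $\psi'$ is not actually needed here, only the oddness and monotonicity of $\psi$. Substituting these choices gives the closed forms for $\eta_{m/n\pm}(S_n,x^{(n)})$ as sup minus $S_n$ and $S_n$ minus inf. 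Squaring the strict inequality $\sqrt{n^{-1}(m\psi_{\max}^2+\sum_{i>m}\psi(x_{\pi_i})^2)}>S_n+\eta$, which is legitimate because both sides are nonnegative, and solving for $m$ yields \eqref{eq:bp+se2}. The lower-direction breakdown formula is stated unsolved, so it needs only the substitution.

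The only delicate point is attainment of $\psi_{\max}^2$ when $\psi$ reaches its bound only asymptotically. There I would either allow the extended-real contamination $\pm\infty$, as the paper does throughout (e.g.\ the maps $\mathsf C^L_{m,\infty}$), or observe that the strict inequality ``$>S_n+\eta$'' in the breakdown condition makes the supremum version and the attained version coincide. The sensitivity is then read as a supremum, consistent with Definition~\ref{def:eta}. I expect no other obstacles; the argument is a direct specialization of Lemma~\ref{lem:se_at_theta_0} with the denominator removed.
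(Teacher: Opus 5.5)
Your proposal is correct and follows essentially the same route as the paper's proof. You fix the kept set $I$, bound each replaced contribution to $nS_n^2$ between $0$ and $\psi_{\max}^2$, optimize over $I$, use monotonicity of $|\psi|$ in $|x|$ to identify the optimal $I$ through the ordering $\pi$, and then square to solve for $m$. Your two extra remarks are accurate refinements of points the paper passes over: only oddness and monotonicity of $\psi$, not the condition on $\psi'$, are needed for the ordering, and $\psi_{\max}^2$ may only be approached at $\pm\infty$.
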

\begin{proof}
Recall
$$
S_n(x^{(n)}) := \sqrt{\frac{1}{n}\sum_{i=1}^n \psi(x_i)^2},
\qquad
\psi_{\max} := \max\{|\psi(\infty)|,|\psi(-\infty)|\}.
$$
\medskip
\noindent\emph{General form.}
Fix $m$. To \emph{increase} $S_n$ under at most $m$ replacements, one
keeps some subset $I\subseteq[n]$ with $|I|=n-m$ and replaces the remaining
$m$ points. Since $|\psi(\cdot)|\le\psi_{\max}$ by boundedness,
$$
\sup_{\tilde x^{(n)}\in B_\textsf{H}(x^{(n)},m)} S_n(\tilde x^{(n)})
\le
\max_{\substack{I\subseteq[n]\\ |I|=n-m}}
\sqrt{\frac{1}{n}
      \Bigl(\sum_{i\in I} \psi(x_i)^2 + m\,\psi_{\max}^2\Bigr)}.
$$
This bound is attainable by sending the $m$ contaminated points to values where
$|\psi(\cdot)|=\psi_{\max}$, so
$$
\sup_{\tilde x^{(n)}\in B_\textsf{H}(x^{(n)},m)} S_n(\tilde x^{(n)})
=
\max_{\substack{I\subseteq[n]\\ |I|=n-m}}
\sqrt{\frac{1}{n}
      \Bigl(\sum_{i\in I} \psi(x_i)^2 + m\,\psi_{\max}^2\Bigr)},
$$
which gives the expression for $\BP_{\eta+}(S_n,x^{(n)})$ in the lemma. The
corresponding $m$-sensitivity $\eta_{m/n+}(S_n,x^{(n)})$ is simply the difference between
this supremum and $S_n(x^{(n)})$.
To \emph{decrease} $S_n$, one again keeps some $I\subseteq[n]$ with
$|I|=n-m$ and moves the remaining $m$ observations towards points where
$\psi(\cdot)$ is zero. This yields
$$
\inf_{\tilde x^{(n)}\in B_\textsf{H}(x^{(n)},m)} S_n(\tilde x^{(n)})
=
\min_{\substack{I\subseteq[n]\\ |I|=n-m}}
\sqrt{\frac{1}{n}\sum_{i\in I} \psi(x_i)^2},
$$
and hence
\begin{align*}
\eta_{m/n-}(S_n,x^{(n)})
&= S_n(x^{(n)})
   - \inf_{\tilde x^{(n)}\in B_\textsf{H}(x^{(n)},m)} S_n(\tilde x^{(n)})\\
&= S_n(x^{(n)})
   - \min_{\substack{I\subseteq[n]\\ |I|=n-m}}
     \sqrt{\frac{1}{n}\sum_{i\in I} \psi(x_i)^2},
\end{align*}
which gives the formula for $\BP_{\eta-}(S_n,x^{(n)})$.
\medskip
\noindent\emph{Closed form under additional assumptions.}
Assume now that $\psi$ is odd and $\psi'(x)$ is non-increasing in $|x|$. Then
$|\psi(t)|$ is non-decreasing in $|t|$. Let $\pi$ be a permutation such that $|x_{\pi_1}| \le |x_{\pi_2}| \le \dots \le |x_{\pi_n}|$
For the “$+$” direction, to maximize $S_n(\tilde x^{(n)})$ one keeps the
$n-m$ points with largest $|\psi(\cdot)|$, that is, the indices
$\{m+1,\dots,n\}$ in this reordered list, and sends the remaining $m$ to points
achieving $|\psi|=\psi_{\max}$. Thus
$$
\sup_{\tilde x^{(n)}\in B_\textsf{H}(x^{(n)},m)} S_n(\tilde x^{(n)})
=
\sqrt{\frac{1}{n}
      \Bigl(m\,\psi_{\max}^2
            + \sum_{i>m} \psi(x_{\pi_i})^2\Bigr)},
$$
and hence
$$
\eta_{m/n+}(S_n,x^{(n)})
= \sqrt{\frac{1}{n}
        \Bigl(m\,\psi_{\max}^2
              + \sum_{i>m} \psi(x_{\pi_i})^2\Bigr)}
  - S_n(x^{(n)}).
$$
The corresponding threshold breakdown point $\BP_{\eta+}(S_n,x^{(n)})$ is the
smallest $m$ for which this quantity exceeds $S_n(x^{(n)})+\eta$, that is,
$$
m > \frac{n\,(S_n+\eta)^2 - \sum_{i>m} \psi(x_{\pi_i})^2}{\psi_{\max}^2},
$$
which is exactly \eqref{eq:bp+se2}.
For the “$-$” direction, to minimize $S_n(\tilde x^{(n)})$ one keeps the
$n-m$ points with smallest $|\psi(\cdot)|$, that is, indices
$\{1,\dots,n-m\}$ in the reordered list, and moves the remaining $m$ towards
values where $\psi(\cdot)\approx 0$. This gives
$$
\inf_{\tilde x^{(n)} \in B_\textsf{H}(x^{(n)},m)} S_n(\tilde x^{(n)})
=
\sqrt{\frac{1}{n}\sum_{i\le n-m} \psi(x_{\pi_i})^2},
$$
and therefore
$$
\eta_{m/n-}(S_n,x^{(n)})
= S_n(x^{(n)})
  - \sqrt{\frac{1}{n}\sum_{i\le n-m} \psi(x_{\pi_i})^2},
$$
as claimed.
\end{proof}
We now specialize Theorem~\ref{thm:meta} to the score-type test. Set
$$
  V_n(x^{(n)}) := \frac{1}{\sqrt{n}}\sum_{i=1}^n \psi(x_i),
  \qquad
  S_n(x^{(n)}) := \sqrt{\frac{1}{n}\sum_{i=1}^n \psi(x_i)^2},
  \qquad
  S := \sqrt{\EE_{\theta_0}[\psi(X)^2]}.
$$
The empirical-score test rejects when $0\notin V_n \pm z_{1-\frac{\alpha}{2}} S_n$, while the
restricted-score test rejects when $0\notin V_n \pm z_{1-\frac{\alpha}{2}} S_n$.
\begin{corollary}[Score-type test]
Assume that $\psi(t-\theta)$ in \eqref{M-est} is odd, differentiable, bounded, and non-increasing in
$\theta$. Consider the two-sided $M$-score test
$$
  \phi(x^{(n)}) = \II\Bigl\{0 \notin V_n \pm z_{1-\frac{\alpha}{2}} S_n\Bigr\}.
$$
Define the lower and upper endpoints of the empirical-score interval as
$$
  L(x^{(n)}) := V_n - z_{1-\frac{\alpha}{2}} S_n,
  \qquad
  U(x^{(n)}) := V_n + z_{1-\frac{\alpha}{2}} S_n.
$$
\medskip
\noindent\textbf{Rejection breakdown.} Suppose $\phi(x^{(n)})=1$.
\smallskip
\noindent(i) If $L(x^{(n)})>0$ (i.e.\ $V_n>0$), define
\begin{align*}
  m_{\mathrm{rej},>0}^{\mathrm{up}}(x^{(n)})
  &:= \min\Bigl\{
        m\ge0:\
        V_n(\mathsf{C}^R_{m, -\infty}(x^{(n)})) - z_{1-\frac{\alpha}{2}} S_n(\mathsf{C}^R_{m, -\infty}(x^{(n)})) \le 0
      \Bigr\},\\
  m_{\mathrm{rej},>0}^{\mathrm{low}}(x^{(n)})
  &:= \min\Bigl\{
        m\ge0:\
        \eta_{m/n-}(V_n,x^{(n)})
        + z_{1-\frac{\alpha}{2}}\,\eta_{m/n+}(S_n,x^{(n)})
        \;\ge\; L(x^{(n)})
      \Bigr\},
\end{align*}
where
$$
  \mathsf{C}^R_{m, -\infty}(x^{(n)}) = \{x_{(i)}\}_{i=1}^{n-m} \cup \{-\infty\}_{i=1}^{m}.
$$
Then
$$
  \frac{1}{n}\,m_{\mathrm{rej},>0}^{\mathrm{low}}(x^{(n)})
  \;\le\;
  \BP_{\mathrm{reject}}(\phi,x^{(n)})
  \;\le\;
  \frac{1}{n}\,m_{\mathrm{rej},>0}^{\mathrm{up}}(x^{(n)}).
$$
\smallskip
\noindent(ii) If $U(x^{(n)})<0$ (i.e.\ $V_n<0$), define
\begin{align*}
  m_{\mathrm{rej},<0}^{\mathrm{up}}(x^{(n)})
  &:= \min\Bigl\{
        m\ge0:\
        V_n(\mathsf{C}^L_{m, \infty}(x^{(n)})) + z_{1-\frac{\alpha}{2}}  S_n(\mathsf{C}^L_{m, \infty}(x^{(n)})) \ge 0
      \Bigr\},\\
  m_{\mathrm{rej},<0}^{\mathrm{low}}(x^{(n)})
  &:= \min\Bigl\{
        m\ge0:\
        \eta_{m/n+}(V_n,x^{(n)})
        + z_{1-\frac{\alpha}{2}}\,\eta_{m/n+}(S_n,x^{(n)})
        \;\ge\; -U(x^{(n)})
      \Bigr\},
\end{align*}
where
$$
  \mathsf{C}^L_{m, \infty}(x^{(n)}) = \{x_{(i)}\}_{i=m+1}^{n} \cup \{\infty\}_{i=1}^{m}.
$$
Then
$$
  \frac{1}{n}\,m_{\mathrm{rej},<0}^{\mathrm{low}}(x^{(n)})
  \;\le\;
  \BP_{\mathrm{reject}}(\phi,x^{(n)})
  \;\le\;
  \frac{1}{n}\,m_{\mathrm{rej},<0}^{\mathrm{up}}(x^{(n)}).
$$
In both cases, $\eta_{m/n\pm}(V_n,x^{(n)})$ are given by Lemma~\ref{lem:v_n}, while upper bounds for
$\eta_{m/n\pm}(S_n,x^{(n)})$ come from Lemma~\ref{lem:s_n}. For the restricted-score test, one simply
replaces $S_n$ by the fixed $S$, so $\eta_{m/n\pm}(S,x^{(n)})=0$.
\medskip
\noindent\textbf{Acceptance breakdown.} Suppose now $\phi(x^{(n)})=0$, so that $L(x^{(n)})\le 0 \le U(x^{(n)})$.
Remind $\mathcal C^R_{m, \mathcal C_R}$ and $\mathsf C^L_{m, \mathcal C_L}$ are the collections of contamination schemes defined in Section \ref{sec:con}.
Define
\begin{align*}
  m_{\mathrm{acc},-}^{\mathrm{up}}(x^{(n)})
  &:= \min\Bigl\{
        m\ge0:\ \tilde x^{(n)} \in \mathcal C^R_{m, \mathcal C_R},
        V_n(\tilde x^{(n)}) + z_{1-\frac{\alpha}{2}} S_n (\tilde x^{(n)}) \le 0,
      \Bigr\},\\
  m_{\mathrm{acc},+}^{\mathrm{up}}(x^{(n)})
  &:= \min\Bigl\{
        m\ge0:\ \tilde x^{(n)} \in \mathsf C^L_{m, \mathcal C_L},
        V_n(\tilde x^{(n)}) - z_{1-\frac{\alpha}{2}} S_n (\tilde x^{(n)}) \ge 0,
      \Bigr\},
\end{align*}
and
\begin{align*}
  m_{\mathrm{acc},-}^{\mathrm{low}}(x^{(n)})
  &:= \min\Bigl\{
        m\ge0:\
        \eta_{m/n-}(V_n,x^{(n)})
        + z_{1-\frac{\alpha}{2}}\,\eta_{m/n-}(S_n,x^{(n)})
        \;\ge\; V_n + z_{1-\frac{\alpha}{2}} S_n
      \Bigr\},\\
  m_{\mathrm{acc},+}^{\mathrm{low}}(x^{(n)})
  &:= \min\Bigl\{
        m\ge0:\
        \eta_{m/n+}(V_n,x^{(n)})
        + z_{1-\frac{\alpha}{2}}\,\eta_{m/n-}(S_n,x^{(n)})
        \;\ge\; -V_n + z_{1-\frac{\alpha}{2}} S_n
      \Bigr\}.
\end{align*}
Then
\begin{align*}
  \frac{1}{n}\,
  \min\Bigl\{
    m_{\mathrm{acc},-}^{\mathrm{low}}(x^{(n)}),\,
    m_{\mathrm{acc},+}^{\mathrm{low}}(x^{(n)})
  \Bigr\}
  \;\le\;
  \BP_{\mathrm{accept}}(\phi,x^{(n)})
  \;\le\;
  \frac{1}{n}\,
  \min\Bigl\{
    m_{\mathrm{acc},-}^{\mathrm{up}}(x^{(n)}),\,
    m_{\mathrm{acc},+}^{\mathrm{up}}(x^{(n)})
  \Bigr\}.
\end{align*}
Again, for the restricted-score test we set $S_n=S$ and drop the $m$-sensitivity terms
$\eta_{m/n\pm}(S_n,x^{(n)})$.
\end{corollary}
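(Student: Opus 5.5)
The plan is to derive this corollary from Theorem~\ref{thm:meta}, with the quantities it needs supplied by Lemma~\ref{lem:v_n} and Lemma~\ref{lem:s_n}. The score test $\phi(x^{(n)})=\II\{0\notin V_n\pm z_{1-\frac{\alpha}{2}}S_n\}$ has exactly the interval structure treated there, with $\hat\theta$ replaced by $V_n$ and $\hatse$ replaced by $S_n$. The proof of Theorem~\ref{thm:meta} never uses that $\hat\theta$ is an M-estimator. It only uses two facts: the interval endpoints are linear combinations of a location-type statistic and a nonnegative scale-type statistic, and the one-sided $m$-sensitivities bound the worst-case movement of each component over $B_{\textsf H}(x^{(n)},m)$. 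I will therefore rerun that argument with $f=L=V_n-z_{1-\frac{\alpha}{2}}S_n$ and $g=U=V_n+z_{1-\frac{\alpha}{2}}S_n$.

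\emph{Lower bounds.} Take the rejection case $L(x^{(n)})>0$, and let $\tilde x^{(n)}\in B_{\textsf H}(x^{(n)},m)$. By the definitions of the one-sided sensitivities,
\[
L(x^{(n)})-L(\tilde x^{(n)})=\bigl(V_n(x^{(n)})-V_n(\tilde x^{(n)})\bigr)+z_{1-\frac{\alpha}{2}}\bigl(S_n(\tilde x^{(n)})-S_n(x^{(n)})\bigr)\le \eta_{m/n-}(V_n,x^{(n)})+z_{1-\frac{\alpha}{2}}\eta_{m/n+}(S_n,x^{(n)}).
\]
So if the right-hand side is strictly smaller than $L(x^{(n)})$, the contaminated lower endpoint stays positive and the decision cannot flip. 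This gives $m^{\mathrm{low}}_{\mathrm{rej},>0}$. The case $U<0$ is symmetric, using $\eta_{m/n+}(V_n)$ and $\eta_{m/n+}(S_n)$. For acceptance, a flip requires either $U(\tilde x^{(n)})<0$ or $L(\tilde x^{(n)})>0$. Bounding $U(x^{(n)})-U(\tilde x^{(n)})$ by $\eta_{m/n-}(V_n)+z\,\eta_{m/n-}(S_n)$, and $L(\tilde x^{(n)})-L(x^{(n)})$ by $\eta_{m/n+}(V_n)+z\,\eta_{m/n-}(S_n)$, gives the two lower counts. The bound is then the minimum of the two counts, because breakdown happens as soon as either route succeeds. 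The sensitivities are exact for $V_n$ by Lemma~\ref{lem:v_n}; for $S_n$, Lemma~\ref{lem:s_n} supplies exact values, and any upper bound would also keep the inequalities valid.

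\emph{Upper bounds.} Here I only need the explicit contaminations to lie in $B_{\textsf H}(x^{(n)},m)$. This holds because $\mathsf C^R_{m,c}(x^{(n)})$ and $\mathsf C^L_{m,c}(x^{(n)})$ each change exactly $m$ coordinates. Whenever the stated endpoint inequality holds at such a sample, the test decision flips, so the minimal such $m$ upper-bounds the breakdown point. For the restricted test, $S$ is a constant, so its sensitivities vanish and the same argument goes through verbatim.

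\emph{Main obstacle.} The delicate point is the values $\pm\infty$ in $\mathsf C^{R}_{m,-\infty}$ and $\mathsf C^L_{m,\infty}$. These are not elements of $\RR^n$, so I will read $V_n$ and $S_n$ at such samples through the limits $\psi(\pm\infty)$. Boundedness and monotonicity of $\psi$ make these limits exist. Then, if the strict version of the endpoint inequality holds in the limit, it also holds for some finite contaminating value, which yields a genuine element of the Hamming ball. The non-strict boundary case ($\le 0$ exactly) can be handled by choosing finite values close enough, or else accepted as the same limiting convention the paper already uses in Corollary~\ref{cor:wald}.
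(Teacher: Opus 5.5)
Your route is the paper's. Its proof is a one-line reduction to Corollary~\ref{cor:wald}, i.e.\ the argument of Theorem~\ref{thm:meta} with $(\hat\theta,\hatse)$ replaced by $(V_n,S_n)$. Your lower-bound half is that argument and is correct: you split each endpoint's movement into a $V_n$-part and an $S_n$-part, bound each by a one-sided sensitivity, and take the minimum of the two routes in the acceptance case.

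The upper-bound half relies on a claim that is false as stated: ``whenever the stated endpoint inequality holds at such a sample, the test decision flips.'' In case (i), $L(\tilde x^{(n)})\le 0$ yields acceptance only if also $U(\tilde x^{(n)})\ge 0$. Sending points to $-\infty$ can overshoot into rejection on the other side. Concretely, take $z_{1-\frac{\alpha}{2}}<1$, $x_1=\dots=x_{n-1}=0$ and $\psi(x_n)=a>0$. Then $V_n=S_n=a/\sqrt n$, so $L(x^{(n)})>0$. At $\mathsf C^R_{1,-\infty}(x^{(n)})$ one gets $V_n=-\psi_{\max}/\sqrt n$ and $S_n=\psi_{\max}/\sqrt n$. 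Hence $L<0$, but also $U=(z_{1-\frac{\alpha}{2}}-1)\psi_{\max}/\sqrt n<0$, and the test still rejects. The paper's proof of Corollary~\ref{cor:wald} asserts the same implication without justification.

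The bound itself survives by continuity, and you should argue it that way. Let $x^{(n)}_t$ replace the $m$ largest order statistics by $x_{(i)}-t$. Then $x^{(n)}_t\in B_{\textsf{H}}(x^{(n)},m)$, and $t\mapsto L(x^{(n)}_t)$ is continuous because $\psi$ is. It starts positive and tends to $L(\mathsf C^R_{m,-\infty}(x^{(n)}))$. If that limit is negative, $L$ has a finite first zero $t^*$. There $U=2z_{1-\frac{\alpha}{2}}S_n\ge 0$, so a genuine finite sample accepts. Case (ii) is symmetric, and this also settles your $\pm\infty$ issue in the strict case.

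At exact equality, your first option (``finite values close enough'') fails when $\psi$ never attains $\pm\psi_{\max}$ (e.g.\ $\psi=\tanh$). $L$ is jointly concave in the $\psi$-values of the replaced coordinates. Its unique minimizer over $[-\psi_{\max},\psi_{\max}]^m$ is the vertex $(-\psi_{\max},\dots,-\psi_{\max})$. So finite values on those coordinates keep $L>0$, and only the limiting convention remains.

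In the acceptance case overshoot is harmless, but equality is not. With the closed interval, $U(\tilde x^{(n)})=0$ or $L(\tilde x^{(n)})=0$ still leaves $0$ inside. So the non-strict inequalities in $m^{\mathrm{up}}_{\mathrm{acc},\pm}$ produce a flip only when they hold strictly.
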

\begin{proof}
    The proof is essentially the same as Corollary \ref{cor:wald}.
\end{proof}
\subsection{Two-stage M-estimate Test}
For a two-stage M-estimator, suppose $(\hat\theta,\hat \sigma)$ solves the joint estimating equations in \eqref{M-est-two}. Under standard regularity conditions, if $\hat \sigma \to S$ and $\hat\theta\to 0$, then
\begin{align*}
    \sqrt{n}\,\hat\theta
    \;\rightsquigarrow\;
    N\!\left(0,\,
      S^2\,\frac{\EE\big[\psi(X/S)^2\big]}
                       {\EE\big[\psi'(X/S)\big]^2}
    \right).
\end{align*}
A natural plug-in standard error for $\hat\theta$ is
\begin{align*}
    \hat \sigma \cdot
       \sqrt{
         \frac{\sum_{i=1}^n \psi\big((x_i-\hat\theta)/\hat \sigma \big)^2}
              {\Bigl(\sum_{i=1}^n \psi'\big((x_i-\hat\theta)/\hat \sigma \big)\Bigr)^2}
       }.
\end{align*}
The two-sided Wald test then rejects whenever
\begin{align*}
    0 \notin
    \hat{\theta} \pm
    z_{1-\frac{\alpha}{2}}\,
    \hat \sigma \cdot
    \sqrt{
      \frac{\sum_{i=1}^n \psi\big((x_i-\hat\theta)/\hat \sigma \big)^2}
           {\Bigl(\sum_{i=1}^n \psi'\big((x_i-\hat\theta)/\hat \sigma \big)\Bigr)^2}
    } := \hat{\theta} \pm
    z_{1-\frac{\alpha}{2}} \cdot
    \hat \sigma \cdot \hatse(\hat \theta).
\end{align*}
We next give (admittedly loose) upper bounds on the $m$-sensitivity of the ratio part $\hatse(\hat \theta)$.
\begin{lemma}
\label{lem:two_stage_se_sensitivity}
Assume that $\psi(t-\theta)$ in \eqref{M-est-two} is differentiable, bounded,
non-increasing in $\theta$, and passes through 0. Let $\hat\theta$ be the solution to
\eqref{M-est-two} based on $x^{(n)}=(x_1,\dots,x_n)$, and let $\hatse$ be the
associated scale estimate. Further assume that $\psi$ is odd and that
$\psi'(x)$ is non-increasing in $|x|$.
Define
$$
\Delta(t)
:= \sup_{z\in\RR} \bigl|\psi'(z+t)-\psi'(z)\bigr|,
$$
and
\begin{align*}
    S_{m+}
    &:= \max_{\substack{I\subseteq[n]\\ |I|=n-m}}
        \max_{\tilde\theta \in
              [\hat\theta - \eta_{m/n-}(\hat\theta,x^{(n)}),\,
               \hat\theta + \eta_{m/n+}(\hat\theta,x^{(n)})]}
        \sum_{i\in I} \II\bigl\{|x_i-\hat\theta|<|x_i-\tilde\theta|\bigr\},\\
    S_{m-}
    &:= \max_{\substack{I\subseteq[n]\\ |I|=n-m}}
        \max_{\tilde\theta \in
              [\hat\theta - \eta_{m/n-}(\hat\theta,x^{(n)}),\,
               \hat\theta + \eta_{m/n+}(\hat\theta,x^{(n)})]}
        \sum_{i\in I} \II\bigl\{|x_i-\hat\theta|>|x_i-\tilde\theta|\bigr\}.
\end{align*}
Let $\pi$ be a permutation such that $|x_{\pi_1}| \le |x_{\pi_2}| \le \dots \le |x_{\pi_n}|$.
Define the shift term
\begin{align*}
    C_{\text{shift}}(m,x^{(n)},\hat\theta,\hatse)
    &:=
    2\psi_{\max}\psi'(0)\Biggl[
          \frac{\eta_{m/n}(\hatse,x^{(n)})}
               {(\hatse-\eta_{m/n-}(\hatse,x^{(n)}))\,\hatse}
          \Bigl(\sum_{i=1}^n |x_i| + n|\hat\theta|\Bigr)
          +
          n\,\frac{\eta_{m/n}(\hat\theta,x^{(n)})}
                 {\hatse-\eta_{m/n-}(\hatse,x^{(n)})}
        \Biggr],
\end{align*}
where $\psi_{\max} := \max\{|\psi(+\infty)|,|\psi(-\infty)|\}$.
Assume $\hatse - \eta_{m/n-}(\hatse,x^{(n)})>0$.
Then the one-sided sensitivities of the plug-in standard error
$\hatse(\hat\theta)$ satisfy
\begin{align*}
    &\eta_{m/n+}\bigl(\hatse(\hat\theta),x^{(n)}\bigr) \\
    \le &
    \sqrt{
      \frac{
        5m\psi_{\max}^2
        +
        \sum_{i>m} \psi\bigl((x_{\pi_i} - \hat \theta)/\hatse\bigr)^2
        +
        C_{\text{shift}}(m,x^{(n)},\hat\theta,\hatse)
      }
      {
        \Bigl(
          \max\Bigl\{
            \sum_{i>m}
              \psi'\Bigl(\tfrac{x_{\pi_i} - \hat \theta}{\hatse-\eta_{m/n-}(\hatse,x^{(n)})}\Bigr)
            - S_{m+}\,
              \Delta\Bigl(\tfrac{\eta_{m/n}(\hat\theta,x^{(n)})}
                               {\hatse-\eta_{m/n-}(\hatse,x^{(n)})}\Bigr),
            \,0
          \Bigr\}
        \Bigr)^2
      }
    }
    - \hatse(\hat\theta),
\end{align*}
and
\begin{align*}
    &\eta_{m/n-}\bigl(\hatse(\hat\theta),x^{(n)}\bigr) \\
    \le&
    \hatse(\hat\theta)
    -
    \sqrt{\frac{\max\Biggl\{-4m\psi_{\max}^2 +
          \sum_{i\le n-m} \psi\bigl((x_{\pi_i} - \hat \theta)/\hatse\bigr)^2
          -
          C_{\text{shift}}(m,x^{(n)},\hat\theta,\hatse),
          \,0
        \Biggr \}
        }
      {
        \Bigl(
          m\psi'(0)
          +
          \sum_{i\le n-m}
            \psi'\Bigl(\tfrac{x_{\pi_i} - \hat \theta}{\hatse+\eta_{m/n+}(\hatse,x^{(n)})}\Bigr)
          +
          S_{m-}\,
          \Delta\Bigl(\tfrac{\eta_{m/n}(\hat\theta,x^{(n)})}
                           {\hatse-\eta_{m/n-}(\hatse,x^{(n)})}\Bigr)
        \Bigr)^2
      }
    }.
\end{align*}
Moreover, the combinatorial quantities $S_{m+}$ and $S_{m-}$ admit the
explicit formulas
\begin{align*}
    S_{m+}
    &= \max\Biggl\{
         \sum_{i=1}^{n-m}
            \II\Bigl\{\hat\theta+\eta_{m/n+}(\hat\theta,x^{(n)})
                       \ge 2x_{(i)}-\hat\theta\Bigr\},
         \;
         \sum_{i=m+1}^{n}
            \II\Bigl\{\hat\theta-\eta_{m/n-}(\hat\theta,x^{(n)})
                       \le 2x_{(i)}-\hat\theta\Bigr\}
       \Biggr\},\\
    S_{m-}
    &= \max\Biggl\{
         \sum_{i=1}^{n} \II\{x_{(i)}\ge\hat\theta\},
         \;
         \sum_{i=1}^{n} \II\{x_{(i)}\le\hat\theta\}
       \Biggr\}
       - m,
\end{align*}
where $x_{(1)}\le\cdots\le x_{(n)}$ are the order statistics of $x$.
As a special case, for Huber's loss with parameter $\delta$ and
$\psi'(u)=\II\{|u|\le\delta\}$, define
\begin{align*}
    q_{m+}
    &:= \min_{\tilde\theta\in
              [\hat\theta-\eta_{m/n-}(\hat\theta,x^{(n)}),\,
               \hat\theta+\eta_{m/n+}(\hat\theta,x^{(n)})]}
        \sum_{i=1}^n
          \Biggl(
            \II\Bigl\{
              \tfrac{x_i-\tilde\theta}{\hatse-\eta_{m/n-}(\hatse,x^{(n)})}
              \in [-\delta,\delta]
            \Bigr\}
            -
            \II\Bigl\{
              \tfrac{x_i-\hat\theta}{\hatse+\eta_{m/n+}(\hatse,x^{(n)})}
              \in [-\delta,\delta]
            \Bigr\}
          \Biggr),\\
    q_{m-}
    &:= \max_{\tilde\theta\in
              [\hat\theta-\eta_{m/n-}(\hat\theta,x^{(n)}),\,
               \hat\theta+\eta_{m/n+}(\hat\theta,x^{(n)})]}
        \sum_{i=1}^n
          \Biggl(
            \II\Bigl\{
              \tfrac{x_i-\tilde\theta}{\hatse+\eta_{m/n+}(\hatse,x^{(n)})}
              \in [-\delta,\delta]
            \Bigr\}
            -
            \II\Bigl\{
              \tfrac{x_i-\hat\theta}{\hatse-\eta_{m/n-}(\hatse,x^{(n)})}
              \in [-\delta,\delta]
            \Bigr\}
          \Biggr).
\end{align*}
Then the bounds simplify to
\begin{align*}
&\eta_{m/n+}\bigl(\hatse(\hat\theta),x^{(n)}\bigr)\\
    \le&
    \sqrt{
      \frac{
        5m\psi_{\max}^2
        +
        \sum_{i>m} \psi\bigl((x_{\pi_i} - \hat \theta)/\hatse\bigr)^2
        +
        C_{\text{shift}}(m,x^{(n)},\hat\theta,\hatse)
      }
      {
        \Bigl(
          \max\Bigl\{
            \sum_{i>m}
              \psi'\Bigl(\tfrac{x_{\pi_i} - \hat \theta}{\hatse-\eta_{m/n-}(\hatse,x^{(n)})}\Bigr)
            +
            \delta\,(q_{m-}-m),
            \,0
          \Bigr\}
        \Bigr)^2
      }
    }
    - \hatse(\hat\theta),
\end{align*}
and
\begin{align*}
    & \eta_{m/n-}\bigl(\hatse(\hat\theta),x^{(n)}\bigr) \\
    \le&
    \hatse(\hat\theta)
    -
    \sqrt{
      \frac{
        \max\Biggl\{
          -4m \psi_{\max}^2 + \sum_{i\le n-m} \psi\bigl((x_{\pi_i} - \hat \theta)/\hatse\bigr)^2
          -
          C_{\text{shift}}(m,x^{(n)},\hat\theta,\hatse),
          \,0
        \Biggr \}
      }
      {
        \Bigl(
          m\psi'(0)
          +
          \sum_{i\le n-m}
            \psi'\Bigl(\tfrac{x_{\pi_i} - \hat \theta}{\hatse+\eta_{m/n+}(\hatse,x^{(n)})}\Bigr)
          +
          \delta\,(q_{m-}+m)
        \Bigr)^2
      }
    }.
\end{align*}
\end{lemma}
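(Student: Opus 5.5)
The plan is to mirror the proof of Lemma~\ref{lem:opt_attack_m_est_se}, with one extra layer: besides the location estimate, the scale $\hatse$ inside $\psi$ and $\psi'$ also moves. Fix an $m$-contaminated sample $\tilde x^{(n)}\in B_{\textsf H}(x^{(n)},m)$. Write $\tilde\theta=\hat\theta(\tilde x^{(n)})$ and $\tilde s=\hatse(\tilde x^{(n)})$ for the contaminated scale. By the definition of the one-sided sensitivities, these satisfy $|\tilde\theta-\hat\theta|\le\eta_{m/n}(\hat\theta,x^{(n)})$ and $\tilde s\in[\hatse-\eta_{m/n-}(\hatse,x^{(n)}),\,\hatse+\eta_{m/n+}(\hatse,x^{(n)})]$; by assumption this interval lies in $(0,\infty)$. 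The plug-in ratio then has numerator $\sum_i\psi((\tilde x_i-\tilde\theta)/\tilde s)^2$ and denominator $(\sum_i\psi'((\tilde x_i-\tilde\theta)/\tilde s))^2$. I would bound the two factors separately and then take the worst case, exactly as in the earlier lemma.

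\emph{Numerator.} I decompose it into three pieces:
\begin{itemize}
\item the reference term $\sum_i\psi((\tilde x_i-\hat\theta)/\hatse)^2$;
\item a location-shift term that changes $\hat\theta$ to $\tilde\theta$ while keeping the scale fixed at $\tilde s$;
\item a scale-shift term that changes $\hatse$ to $\tilde s$ at $\hat\theta$.
\end{itemize}

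The reference term is handled by Lemma~\ref{lem:se_at_theta_0} applied at $\theta_0=\hat\theta$ with rescaled data: sending the $m$ points of smallest $|x_{\pi_i}-\hat\theta|$ to infinity is worst, so this term is at most $m\psi_{\max}^2+\sum_{i>m}\psi((x_{\pi_i}-\hat\theta)/\hatse)^2$. For the lower-side bound, moving those points to $\hat\theta$ instead gives at least $\sum_{i\le n-m}\psi((x_{\pi_i}-\hat\theta)/\hatse)^2$.

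The location-shift piece is handled by the $(a_i+b_i)(a_i-b_i)$ trick from the proof of Lemma~\ref{lem:opt_attack_m_est_se}: all differences share one sign, and the estimating equation $\sum_i\psi((\tilde x_i-\tilde\theta)/\tilde s)=0$ together with $\sum_i\psi((x_i-\hat\theta)/\hatse)=0$ leaves only the $m$ contaminated residuals. This yields the extra $4m\psi_{\max}^2$. However, the two estimating equations use different scales, so the cancellation is only approximate, and the mismatch has to be absorbed into the scale-shift term.

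I expect the scale-shift piece, $C_{\text{shift}}$, to be the main obstacle. My first attempt is the bound
\[
|\psi(u)^2-\psi(v)^2|\le 2\psi_{\max}\,\psi'(0)\,|u-v|,
\]
using that $\psi$ is Lipschitz with constant $\psi'(0)$, since $\psi'$ is maximal at $0$. The argument differences are
\[
\Big|\frac{x_i-\hat\theta}{\tilde s}-\frac{x_i-\hat\theta}{\hatse}\Big|\le\frac{\eta_{m/n}(\hatse,x^{(n)})}{\tilde s\,\hatse}\,\big(|x_i|+|\hat\theta|\big),
\]
and, for the location part at the perturbed scale,
\[
\Big|\frac{\tilde\theta-\hat\theta}{\tilde s}\Big|\le\frac{\eta_{m/n}(\hat\theta,x^{(n)})}{\hatse-\eta_{m/n-}(\hatse,x^{(n)})}.
\]
Using $\tilde s\ge\hatse-\eta_{m/n-}$ and summing over $i$ reproduces the two terms of $C_{\text{shift}}$. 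The delicate point is bookkeeping, namely ensuring that the $m$ contaminated coordinates are counted only once in this sum. The crude bound $\sum_i|x_i|+n|\hat\theta|$ over all coordinates is what the lemma states, and I would accept that looseness.

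\emph{Denominator.} I follow Lemma~\ref{lem:opt_attack_m_est_se} again. Contaminated points contribute at least $0$ and at most $m\psi'(0)$. Among the clean points, the worst choice of index set keeps the $n-m$ points with smallest $\psi'$. Since $\psi'$ is non-increasing in $|x|$, the extreme scale is $\hatse-\eta_{m/n-}$ for the upper bound (residuals largest, so $\psi'$ smallest) and $\hatse+\eta_{m/n+}$ for the lower-side bound. The location shift then changes at most $S_{m\pm}$ clean indices in the unfavourable direction. Each such change is bounded by $\Delta$ evaluated at the standardized shift $\eta_{m/n}(\hat\theta,x^{(n)})/(\hatse-\eta_{m/n-})$, and the explicit counting formulas for $S_{m\pm}$ carry over verbatim from the earlier proof.

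\emph{Assembling.} Combining the numerator and denominator bounds gives the two displayed inequalities, with $\max\{\cdot,0\}$ guarding against vacuous signs. The Huber specialization replaces $S_{m\pm}\Delta$ by the exact indicator-count differences $q_{m\pm}$ together with the $\pm m$ contribution of the contaminated points, as in the earlier Huber case.
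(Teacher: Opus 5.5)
Your denominator argument (replaced points contribute between $0$ and $m\psi'(0)$, the extreme scales $\hatse\mp\eta_{m/n\mp}(\hatse,x^{(n)})$, and the $S_{m\pm}\Delta(\cdot)$ correction) matches the paper, and so does your overall structure. The numerator bookkeeping, however, does not close to the stated constants, and it fails at exactly the step you flag as delicate.

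\emph{Replaced indices in the scale-shift piece.} For a replaced index the argument difference is $|\tilde x_i-\hat\theta|\,|1/\tilde s-1/\hatse|$, and $\tilde x_i$ is arbitrary (possibly $\pm\infty$). It is therefore not controlled by $|x_i|+|\hat\theta|$. Summing $|x_i|$ over the original coordinates does not bound these terms, however much looseness you accept. They need a trivial bound such as $|\psi(u)^2-\psi(v)^2|\le 2\psi_{\max}^2$, and in the paper this is precisely where the $4m\psi_{\max}^2$ comes from.

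\emph{The $(a_i+b_i)(a_i-b_i)$ trick.} In your plan that $4m\psi_{\max}^2$ budget is already spent on this trick. In the two-stage setting the trick also leaves an unbudgeted remainder. Comparing $\sum_i\psi((\tilde x_i-\hat\theta)/\tilde s)$ with $\sum_i\psi((x_i-\hat\theta)/\hatse)=0$ gives, besides $2m\psi_{\max}$, a scale mismatch. After multiplication by $2\psi_{\max}$ this mismatch is bounded only by the first term of $C_{\text{shift}}$, which is the same term your scale-shift piece needs. Meanwhile the second term of $C_{\text{shift}}$ goes to a Lipschitz bound on the location shift that duplicates what the trick already did.

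Carried out correctly, your route yields $5m\psi_{\max}^2+\sum_{i>m}\psi((x_{\pi_i}-\hat\theta)/\hatse)^2$ plus \emph{twice} the first term of $C_{\text{shift}}$, after merging the replaced indices of the scale-shift piece into the reference term. Neither term of $C_{\text{shift}}$ dominates the other in general, so this does not give the stated inequality.

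The repair is to drop the estimating-equation trick, as the paper does. Write $N=N_0+(N-N_0)$ with
$N_0=\sum_i\psi((\tilde x_i-\hat\theta)/\hatse)^2\le m\psi_{\max}^2+\sum_{i>m}\psi((x_{\pi_i}-\hat\theta)/\hatse)^2$.
Then bound $N-N_0$ index by index. On unreplaced indices use the single combined Lipschitz estimate
\[
2\psi_{\max}\psi'(0)\Bigl[\frac{\eta_{m/n}(\hatse,x^{(n)})\,(|x_i|+|\hat\theta|)}{(\hatse-\eta_{m/n-}(\hatse,x^{(n)}))\,\hatse}+\frac{\eta_{m/n}(\hat\theta,x^{(n)})}{\hatse-\eta_{m/n-}(\hatse,x^{(n)})}\Bigr],
\]
which sums to at most $C_{\text{shift}}$. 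On the at most $m$ replaced indices use the trivial bound, which the paper records as $4m\psi_{\max}^2$.

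Alternatively, you can keep your three-piece split if you make two changes. First, bound the location-shift piece by the per-index estimate $2\psi_{\max}\psi'(0)\,|\tilde\theta-\hat\theta|/\tilde s$; this holds for every index, including replaced ones, because $\tilde x_i$ cancels. Second, treat the replaced indices of the scale-shift piece trivially. This reproduces the stated bound with slack. The lower-side bound follows from the same accounting with the inequalities reversed.
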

\begin{proof}
For this lemma we only control the ratio
$$
\hatse(\theta)
:= \sqrt{
\frac{\displaystyle \sum_{i=1}^n \psi\!\left(\frac{x_i-\theta}{\hatse}\right)^2}
{\displaystyle \left(\sum_{i=1}^n \psi'\!\left(\frac{x_i-\theta}{\hatse}\right)\right)^2}
},
$$
viewed at $\theta=\hat\theta$ and at the contaminated parameter $\tilde\theta$ under contaminated data;
the outer multiplicative scale factor $\hatse$ is handled separately.
Define
$$
N(\theta,\sigma;z)
:= \sum_{i=1}^n \psi\!\left(\frac{z_i-\theta}{\sigma}\right)^2,
\qquad
D(\theta,\sigma;z)
:= \sum_{i=1}^n \psi'\!\left(\frac{z_i-\theta}{\sigma}\right),
$$
so that
$$
\hatse(\theta;z,\sigma)
= \sqrt{\frac{N(\theta,\sigma;z)}{D(\theta,\sigma;z)^2}}.
$$
We abbreviate
$$
N := N(\tilde\theta,\tilde\sigma;\tilde x^{(n)}),\quad
D := D(\tilde\theta,\tilde\sigma;\tilde x^{(n)}),
\quad
\hatse(\tilde\theta) := \hatse(\tilde\theta;\tilde x^{(n)},\tilde\sigma),
$$
and similarly
$$
N_0 := N(\hat\theta,\hatse;\tilde x^{(n)}),\quad
D_0 := D(\hat\theta,\hatse;\tilde x^{(n)}),\quad
\hatse(\hat\theta) := \hatse(\hat\theta;x^{(n)},\hatse).
$$
Let $\pi$ be a permutation such that $|x_{\pi_1}| \le |x_{\pi_2}| \le \dots \le |x_{\pi_n}|$.
By definition,
\begin{equation*}
\eta_{m/n+}\bigl(\hatse(\hat\theta),x^{(n)}\bigr)
= \sup_{\tilde x\in B_{\textsf{H}}(x^{(n)},m)}
\Bigl\{\hatse(\tilde\theta) - \hatse(\hat\theta)\Bigr\},
\end{equation*}
so it suffices to upper bound $\hatse(\tilde\theta)$ uniformly over all valid contaminations.
\emph{Step 1: Upper bound on the numerator $N$.}
We decompose
$$
N = N_0 + (N - N_0),
\qquad
N_0 = N(\hat\theta,\hatse;\tilde x^{(n)})
= \sum_{i=1}^n \psi\!\left(\frac{\tilde x_i-\hat\theta}{\hatse}\right)^2.
$$
For $N_0$, the location and scale $(\hat\theta,\hatse)$ are fixed and only the sample $\tilde x$ is contaminated.
By the same order-statistic argument as in Lemma~\ref{lem:se_at_theta_0}, the worst-case increase of $N_0$ is obtained by replacing the $m$ observations closest to $\hat\theta$ by extreme outliers.
Thus, uniformly over $\tilde x\in B_{\textsf{H}}(x^{(n)},m)$,
$$
N_0
\;\le\;
m\psi_{\max}^2
+
\sum_{i>m} \psi\!\left(\frac{x_{\pi_i} - \hat \theta}{\hatse}\right)^2.
$$
We now bound the perturbation $N-N_0$ coming from changing $(\hat\theta,\hatse)$ to $(\tilde\theta,\tilde\sigma)$.
Write
$$
A_i := \psi\!\left(\frac{\tilde x_i-\tilde\theta}{\tilde\sigma}\right),
\qquad
B_i := \psi\!\left(\frac{\tilde x_i-\hat\theta}{\hatse}\right),
$$
so that
$$
N - N_0 = \sum_{i=1}^n (A_i^2 - B_i^2).
$$
By boundedness of $\psi$ and the mean-value theorem,
\begin{align*}
|A_i^2 - B_i^2|
&= |A_i - B_i|\;|A_i + B_i|
\;\le\;
2\psi_{\max}\,|A_i - B_i| \\
&\le
2\psi_{\max}\,\psi'(0)\,
\biggl|
\frac{\tilde x_i-\tilde\theta}{\tilde\sigma}
-
\frac{\tilde x_i-\hat\theta}{\hatse}
\biggr|.
\end{align*}
Partition the index set into unreplaced and replaced points:
$$
J := \{i:\tilde x_i=x_i\},
\qquad
J^c := \{i:\tilde x_i\neq x_i\},
\quad |J^c|\le m.
$$
For $i\in J$ we have $\tilde x_i=x_i$, and
\begin{align*}
\biggl|
\frac{\tilde x_i-\tilde\theta}{\tilde\sigma}
-
\frac{\tilde x_i-\hat\theta}{\hatse}
\biggr|
&=
\biggl|
\frac{x_i-\tilde\theta}{\tilde\sigma}
-
\frac{x_i-\hat\theta}{\hatse}
\biggr| \\
&=
\biggl|
(x_i-\hat\theta)\Bigl(\frac{1}{\tilde\sigma}-\frac{1}{\hatse}\Bigr)
- \frac{\tilde\theta-\hat\theta}{\tilde\sigma}
\biggr| \\
&\le
\frac{|\tilde\sigma-\hatse|}{|\tilde\sigma|\,\hatse}
\,(|x_i|+|\hat\theta|)
+
\frac{|\tilde\theta-\hat\theta|}{|\tilde\sigma|}.
\end{align*}
By definition of the $m$-sensitivity of $\hatse$ and $\hat\theta$,
$$
|\tilde\sigma-\hatse|
\le \eta_{m/n}(\hatse,x^{(n)}),
\qquad
|\tilde\theta-\hat\theta|
\le \eta_{m/n}(\hat\theta,x^{(n)}),
$$
and by assumption $\tilde\sigma\ge\hatse-\eta_{m/n-}(\hatse,x^{(n)})>0$.
Hence for $i\in J$,
\begin{align*}
\biggl|
\frac{\tilde x_i-\tilde\theta}{\tilde\sigma}
-
\frac{\tilde x_i-\hat\theta}{\hatse}
\biggr|
&\le
\frac{\eta_{m/n}(\hatse,x^{(n)})}{(\hatse-\eta_{m/n-}(\hatse,x^{(n)}))\,\hatse}
\,(|x_i|+|\hat\theta|)
+
\frac{\eta_{m/n}(\hat\theta,x^{(n)})}{\hatse-\eta_{m/n-}(\hatse,x^{(n)})}.
\end{align*}
For $i\in J^c$ (replaced points), we cannot relate $\tilde x_i$ to $x_i$,
so we only use the trivial bound
$$
|A_i^2 - B_i^2|
\le |A_i^2| + |B_i^2|
\le 2\psi_{\max}^2.
$$
Putting these together,
\begin{align*}
|N-N_0|
&\le
2\psi_{\max}\psi'(0)
\sum_{i\in J}
\biggl[
\frac{\eta_{m/n}(\hatse,x^{(n)})}{(\hatse-\eta_{m/n-}(\hatse,x^{(n)}))\,\hatse}
\,(|x_i|+|\hat\theta|)
+
\frac{\eta_{m/n}(\hat\theta,x^{(n)})}{\hatse-\eta_{m/n-}(\hatse,x^{(n)})}
\biggr]
+ 2\cdot 2m\psi_{\max}^2 \\
&\le
2\psi_{\max}\psi'(0)
\biggl[
\frac{\eta_{m/n}(\hatse,x^{(n)})}{(\hatse-\eta_{m/n-}(\hatse,x^{(n)}))\,\hatse}
\sum_{i=1}^n(|x_i|+|\hat\theta|)
+ n\,\frac{\eta_{m/n}(\hat\theta,x^{(n)})}{\hatse-\eta_{m/n-}(\hatse,x^{(n)})}
\biggr]
+ 4m\psi_{\max}^2.
\end{align*}
Define
$$
C_{\text{shift}}(m,x^{(n)},\hat\theta,\hatse)
:=
2\psi_{\max}\psi'(0)
\biggl[
\frac{\eta_{m/n}(\hatse,x^{(n)})}{(\hatse-\eta_{m/n-}(\hatse,x^{(n)}))\,\hatse}
\sum_{i=1}^n(|x_i|+|\hat\theta|)
+ n\,\frac{\eta_{m/n}(\hat\theta,x^{(n)})}{\hatse-\eta_{m/n-}(\hatse,x^{(n)})}
\biggr].
$$
Then
$$
N
\le
5\,m\psi_{\max}^2
+
\sum_{i>m} \psi\!\left(\frac{x_{\pi_i} - \hat \theta}{\hatse}\right)^2
+
C_{\text{shift}}(m,x^{(n)},\hat\theta,\hatse).
$$
\emph{Step 2: Lower bound on the denominator $D$ for the $+$ direction.}
We now lower bound $D$ uniformly over all contaminations when we are interested in
$\eta_{m/n+}(\hatse(\hat\theta),x^{(n)})$.
Write
$$
D
=
\sum_{i=1}^n \psi'\!\left(\frac{\tilde x_i-\tilde\theta}{\tilde\sigma}\right)
=
\sum_{i\in J} \psi'\!\left(\frac{\tilde x_i-\tilde\theta}{\tilde\sigma}\right)
+
\sum_{i\in J^c} \psi'\!\left(\frac{\tilde x_i-\tilde\theta}{\tilde\sigma}\right).
$$
Since $\psi'(x)\ge 0$ and is non-increasing in $|x|$, the replacement points $i\in J^c$
can only \emph{decrease} the denominator; hence we ignore their contribution in a lower bound:
$$
D
\ge
\sum_{i\in J} \psi'\!\left(\frac{\tilde x_i-\tilde\theta}{\tilde\sigma}\right).
$$
Among the unreplaced points $J$, the $n-m$ largest contributions under the ``best'' scale
$\hatse-\eta_{m/n-}(\hatse,x^{(n)})$ come from the indices $i>m$ in the reordered residuals $x_{\pi_i} - \hat \theta$.
Thus
$$
\sum_{i\in J} \psi'\!\left(\frac{\tilde x_i-\hat\theta}{\tilde\sigma}\right)
\ge
\sum_{i>m} \psi'\!\left(\frac{x_{\pi_i} - \hat \theta}{\hatse-\eta_{m/n-}(\hatse,x^{(n)})}\right).
$$
Finally we account for the shift from $\hat\theta$ to $\tilde\theta$.
By the definition of $S_{m+}$, for any subset $I\subseteq[n]$ with $|I|=n-m$ and any
$\tilde\theta\in[\hat\theta-\eta_{m/n-}(\hat\theta,x^{(n)},\,\hat\theta+\eta_{m/n+}(\hat\theta,x^{(n)})]$,
the number of indices in $I$ for which
$$
|x_i-\hat\theta| < |x_i-\tilde\theta|
$$
is at most $S_{m+}$.
At such indices, $\psi'((\tilde x_i-\tilde\theta)/\tilde\sigma)$ can \emph{decrease} relative
to $\psi'((\tilde x_i-\hat\theta)/\tilde\sigma)$ by at most
$\Delta(\eta_{m/n}(\hat\theta,x^{(n)})/(\hatse-\eta_{m/n-}(\hatse,x^{(n)})))$.
Thus,
\begin{align*}
\sum_{i\in J} \psi'\!\left(\frac{\tilde x_i-\tilde\theta}{\tilde\sigma}\right)
&\ge
\sum_{i\in J} \psi'\!\left(\frac{\tilde x_i-\hat\theta}{\tilde\sigma}\right)
- S_{m+}\,
\Delta\!\left(\frac{\eta_{m/n}(\hat\theta,x^{(n)})}{\hatse-\eta_{m/n-}(\hatse,x^{(n)})}\right) \\
&\ge
\sum_{i>m} \psi'\!\left(\frac{x_{\pi_i} - \hat \theta}{\hatse-\eta_{m/n-}(\hatse,x^{(n)})}\right)
- S_{m+}\,
\Delta\!\left(\frac{\eta_{m/n}(\hat\theta,x^{(n)})}{\hatse-\eta_{m/n-}(\hatse,x^{(n)})}\right).
\end{align*}
Therefore
\begin{equation*}
|D|
\;\ge\;
\Biggl(
\sum_{i>m} \psi'\!\left(\frac{x_{\pi_i} - \hat \theta}{\hatse-\eta_{m/n-}(\hatse,x^{(n)})}\right)
- S_{m+}\,
\Delta\!\left(\frac{\eta_{m/n}(\hat\theta,x^{(n)})}{\hatse-\eta_{m/n-}(\hatse,x^{(n)})}\right)
\Biggr)_+.
\end{equation*}
\emph{Step 3: Upper bound for $\eta_{m/n+}(\hatse(\hat\theta),x^{(n)})$.}
Combining the bounds on $N$ and $D$ we obtain, for every $\tilde x\in B_{\textsf{H}}(x^{(n)},m)$,
\begin{align*}
\hatse(\tilde\theta)
&=
\frac{\sqrt{N}}{|D|} \\
&\le
\sqrt{
\frac{
5\,m\psi_{\max}^2
+
\sum_{i>m} \psi\!\left(\frac{x_{\pi_i} - \hat \theta}{\hatse}\right)^2
+
C_{\text{shift}}(m,x^{(n)},\hat\theta,\hatse)
}{
\Bigl(
\sum_{i>m} \psi'\!\left(\frac{x_{\pi_i} - \hat \theta}{\hatse-\eta_{m/n-}(\hatse,x^{(n)})}\right)
- S_{m+}\,
\Delta\!\left(\frac{\eta_{m/n}(\hat\theta,x^{(n)})}{\hatse-\eta_{m/n-}(\hatse,x^{(n)})}\right)
\Bigr)_+^2
}
}.
\end{align*}
Taking the supremum over all $\tilde x\in B_{\textsf{H}}(x^{(n)},m)$ and subtracting $\hatse(\hat\theta)$ gives
\begin{align*}
\eta_{m/n+}\bigl(\hatse(\hat\theta),x^{(n)}\bigr)
\;\le\;
\sqrt{
\frac{
5\,m\psi_{\max}^2
+
\sum_{i>m} \psi\!\left(\frac{x_{\pi_i} - \hat \theta}{\hatse}\right)^2
+
C_{\text{shift}}(m,x^{(n)},\hat\theta,\hatse)
}{
\Bigl(
\sum_{i>m} \psi'\!\left(\frac{x_{\pi_i} - \hat \theta}{\hatse-\eta_{m/n-}(\hatse,x^{(n)})}\right)
- S_{m+}\,
\Delta\!\left(\frac{\eta_{m/n}(\hat\theta,x^{(n)})}{\hatse-\eta_{m/n-}(\hatse,x^{(n)})}\right)
\Bigr)_+^2
}
}
- \hatse(\hat\theta),
\end{align*}
which is the desired type of upper bound in the lemma.
\emph{Step 4: The $-$ direction and the Huber special case.}
For $\eta_{m/n-}(\hatse(\hat\theta),x^{(n)})$, we reverse the inequalities:
we obtain a lower bound on $N$ (using $-C_{\text{shift}}$) and an upper bound on $|D|$ by allowing $\psi'$ to increase by at most $\Delta(\cdot)$ on indices where
$|x_i-\hat\theta|>|x_i-\tilde\theta|$.
This produces a denominator of the form
$$
m\psi'(0)
+
\sum_{i\le n-m} \psi'\!\left(\frac{x_{\pi_i} - \hat \theta}{\hatse+\eta_{m/n+}(\hatse,x^{(n)})}\right)
+
S_{m-}\,\Delta\!\left(\frac{\eta_{m/n}(\hat\theta,x^{(n)})}{\hatse-\eta_{m/n-}(\hatse,x^{(n)})}\right),
$$
which yields the stated upper bound on $\eta_{m/n-}(\hatse(\hat\theta),x^{(n)})$.
In the Huber case, $\psi'_\delta(t)=\II\{|t|\le\delta\}$, the Lipschitz modulus $\Delta(\cdot)$ can be replaced by $\delta$ times the maximal number of indices that switch between the inlier range $[-\delta,\delta]$ and its complement as $(\theta,\sigma)$ vary over their $m$-sensitivity envelopes. This is exactly where the quantities $q_{m+}$ and $q_{m-}$ in the lemma come from, and substituting these combinatorial bounds for $p_{m/n\pm}\Delta(\cdot)$ gives the Huber-specific expressions in the statement.
This completes the proof of the ratio part.
\end{proof}
With an upper bound of the ratio part, we can combine this with the upper bound of both $\eta_{m/n}(\hat \sigma, x^{(n)})$ and $\eta_{m/n}(\hat \theta_{\text{two-stage}}, x^{(n)})$ to apply to Theorem \ref{thm:meta}. However, we note that unlike other results for tests in this paper, the bound here can be quite loose and not as practical.
\endgroup
}
\end{document}